%% file: BLF-PLF-August-2026.tex
\documentclass[10pt]{article}  
\usepackage[utf8]{inputenc}
\usepackage[T1]{fontenc}
\usepackage{mathrsfs}
\usepackage{amsbsy} 
\usepackage{amsmath, amsfonts, amsthm, amssymb, amscd}
\usepackage{accents, verbatim}
\usepackage{pdfpages}
\input{epsf}
\usepackage{graphicx}
\usepackage{xcolor}
\usepackage{microtype}
\usepackage[nottoc]{tocbibind} 
\usepackage[normalem]{ulem}
\usepackage{zref-perpage} \zmakeperpage{footnote} 
\usepackage{float}
\usepackage{booktabs}
\usepackage{bookmark,hyperref}
\usepackage{a4wide}
\usepackage{titlesec}
\input{macros-DEUX-v22.tex}
\renewcommand{\thefootnote}{\arabic{footnote}}
\titleformat{\subsubsection}[runin]{\normalfont\bfseries}{}{.5em}{}[\ifnum\spacefactor<1001.\fi\ \ ]
\titlespacing{\subsubsection}{0pt}{2ex plus.1ex minus.2ex}{0pt}
\titleformat{\paragraph}[runin]{\normalfont\itshape}{}{.5em}{\hspace{-1pt}}[\ifnum\spacefactor<1001.\fi\ \ ]
\titlespacing{\paragraph}{0pt}{1ex plus.1ex minus.2ex}{0pt}

\usepackage{calligra}
\usepackage[mathscr]{euscript}
\usepackage{bbm}
\numberwithin{table}{section}

\usepackage{needspace}
\usepackage{etoolbox}

\pretocmd{\section}{\needspace{8\baselineskip}}{}{}

\begin{document}

\makeatletter
\ifnum\@ptsize=2 \pretocmd{\@maketitle}{\vspace*{-4cm}\@gobble}\fi
\makeatother

\title{\bf Finite-energy spacetimes with torus symmetry.  
\\
\Large 
Cauchy stability of Einstein--Euler and
\\
Einstein--Navier--Stokes areal flows}

\author{Bruno Le Floch\thanks{Laboratoire de Physique Théorique et Hautes Énergies, Sorbonne Universit\'e \& Centre National de la Recherche Scientifique, 4 Place Jussieu, 75252 Paris, France. Email: {\tt blefloch@lpthe.jussieu.fr}.}
\, 
and Philippe G. LeFloch\thanks{Laboratoire Jacques-Louis Lions, Sorbonne Universit\'e \& Centre National de la Recherche Scientifique, 4~Place Jussieu, 75252 Paris, France. Email: {\tt contact@philippelefloch.org}. 
\newline
\textit{2020 Mathematics Subject Classification.} Primary 83C05; Secondary 35Q76, 35B50, 76N06.
\newline 
{\it Keywords and phrases.} Einstein's Cauchy development; finite-energy spacetime; torus symmetry; areal foliation; relativistic Euler; relativistic Navier--Stokes; geometric monotonicity; maximum principle; Cauchy stability.}
}
\date{}
\maketitle

\

\begingroup
\renewcommand{\thefootnote}{\ensuremath{\ddagger}}
\begin{center}
\vspace*{-.9cm plus 5cm}
{\scshape\small In memory of Yvonne Choquet--Bruhat\footnote{\normalfont In his contribution to the collective tribute \emph{Yvonne Choquet--Bruhat 1923--2025}, Preprint arXiv:2605.01956, 2026, pp.~15--16, the second author offers a personal recollection of Yvonne's generous encouragement, her regular participation in the \emph{S\'eminaire de Relativit\'e Math\'ematique} in Paris, and the many stimulating scientific conversations they shared.
\newline
\phantom{.}  \hfill August 2026}
}
\vspace*{1cm plus 5cm}
\end{center}
\addtocounter{footnote}{-1}
\endgroup


\begin{abstract}
We establish finite-energy Cauchy stability for vacuum and matter spacetimes with $\Tbb^2$ symmetry on $\Tbb^3$, under general constitutive equations satisfying hyperbolicity and mild asymptotic conditions at vacuum and large mass-energy density. First, using the area of the symmetry orbits as a time function, we introduce the \emph{\JKL{} formulation of Einstein areal flows}, a first-order evolution--constraint system coupling a wave-map structure for the geometry to hyperbolic matter balance laws and weighted transport equations for the twists and the momentum tangent to the symmetry orbits. Second, we also define two classes of hyperbolic Einstein--Navier--Stokes models, constructed from (as we call them) a Navier--Stokes potential and a relaxation rate map. The proposed \emph{particle-production} model has divergence-free matter stress and non-negative particle-number production, while the proposed \emph{dissipated-energy} model conserves particle number and dissipates fluid mass-energy, while an auxiliary stress restores the total stress-energy conservation required by the contracted Bianchi identity. For regular finite-energy (Einstein, Euler, Navier--Stokes) flows, we establish maximum and invariant-domain principles, geometric and energy monotonicity formulas, spacelike and timelike energy estimates, and total-variation estimates. Third, together, these estimates yield finite-energy Cauchy stability in the future expanding regime and, in the contracting regime, until the spacelike volume collapses. They are independent of the viscosity (or relaxation) mechanisms and cover vacuum, unbounded mass-energy density, and arbitrary finite rapidity. They also extend to weakly regular Einstein--Euler areal flows satisfying a reference mathematical entropy inequality.
\end{abstract}

\clearpage 

{\small
 
\setcounter{secnumdepth}{2}
\setcounter{tocdepth}{2}
\tableofcontents
}

\section{Introduction}
\label{section-1}

\subsection{The class of spacetimes with torus symmetry}

\label{section-1-1}

\subsubsection{A finite-energy formulation of the Einstein--matter system}

We investigate the class of possibly non-vacuum spacetimes with two commuting spacelike Killing fields and compact Cauchy hypersurfaces. Under the dominant energy condition, the area of the symmetry orbits has a timelike gradient, and the Einstein Cauchy developments considered here admit a spacelike foliation whose time function coincides with the area of the torus-symmetry orbits~\cite{Rendall-book}. This choice is geometrically natural, and it also has several analytic advantages: it reduces Einstein's field equations to a coupled hyperbolic--elliptic system of partial differential equations (PDEs) on the two-dimensional quotient of the spacetime by the torus symmetry group. Our purpose in this paper is to establish the structural and stability properties that are relevant at the level of finite-energy regularity, in the sense explained below. At this regularity, the Einstein equations require a suitable weak formulation, which was introduced in the companion paper~\cite{TorusT2-1-2026}. The present paper establishes the propagation of finite-energy regularity from an initial areal hypersurface for Einstein--matter systems, in which the matter is either a perfect fluid governed by a general equation of state or a dissipative fluid satisfying one of the Einstein--Navier--Stokes systems introduced below. This work is part of a broader project devoted to the analytical and geometric properties of spacetimes with torus symmetry and to the weak convergence properties of sequences of such spacetimes~\cite{TorusT2-1-2026,TorusT2-3-2026,TorusT2-4-2026}. 


\subsubsection{Einstein equations with general matter models}

We thus consider $(1+3)$-dimensional spacetimes $(M,g)$, where \(M\) is a smooth, time-oriented four-dimensional manifold and \(g\) is a Lorentzian metric with signature \((-,+,+,+)\), satisfying
\be
G_{\alpha\beta} = T_{\alpha\beta}. 
\ee
Here, $G_{\alpha\beta}\coloneqq R_{\alpha\beta}-\frac12R g_{\alpha\beta}$ is the Einstein tensor, determined from the Ricci curvature $R_{\alpha\beta}$ and scalar curvature $R$ of the metric $g$. Greek indices \(\alpha,\beta,\gamma,\ldots\) are abstract Penrose indices, while Latin indices \(m,n,\ldots\), ranging over \(\{0,1,2,3\}\), will later denote components in an orthonormal frame. Indices are raised and lowered with \(g\) and its inverse \(g^{-1}=(g^{\alpha\beta})\), and repeated upper and lower indices are summed. We denote by \(\nabla\) the Levi-Civita connection of \(g\), and Einstein's gravitational coupling constant is absorbed into the normalization.

Here, $T_{\alpha\beta}$ is a symmetric stress-energy tensor invariant under the torus action. More precisely, if \(X_2,X_3\) denote the two Killing fields generating this action, invariance means that \(\mathcal L_{X_2}T=\mathcal L_{X_3}T=0\). The contracted Bianchi identity imposes the matter balance laws
\be
\nabla^\alpha T_{\alpha\beta}=0.
\ee
Recall that \(\nabla^\alpha\coloneqq g^{\alpha\gamma}\nabla_\gamma\), and this equation represents four balance laws in \(3+1\) dimensions. At finite-energy regularity, the matter balance laws are imposed in the sense of distributions, while Einstein's equations are interpreted through the first-order weak formulation described below. 

We first keep the constitutive law general, and the geometric part of the proposed reduction of the Einstein equations requires only the frame components of $T_{\alpha\beta}$, their balance laws, their local integrability, and natural positivity properties required by our finite-energy estimates. In turn, the framework applies to several matter models, including vacuum, scalar fields, compressible fluids, fluids with phase transitions, and relativistic Navier--Stokes systems. Additional constitutive variables are specified when needed, and the relevant mathematical entropy inequality is specified for each fluid model when needed.


\subsubsection{The proposed $(\Jbb,\Kpar,\Lbb)$ formulation}

An important class covered by the theory consists of perfect fluids governed by an equation of state $p=p(\mu)$, where $\mu\geq0$ denotes the mass-energy density. For a fluid state with future-directed unit velocity $u^\alpha$, normalized by \(g_{\alpha\beta}u^\alpha u^\beta=-1\), the stress-tensor is $T^{\alpha\beta}=(\mu+p(\mu))u^\alpha u^\beta+p(\mu)g^{\alpha\beta}$.  We introduce the matter momentum one-form
\be
\Jbb\coloneqq\sqrt{2\bigl(\mu+p(\mu)\bigr)} u^\flat,
\ee
where \(u^\flat_\alpha\coloneqq g_{\alpha\beta}u^\beta\). Consequently, \(g^{-1}(\Jbb,\Jbb)=-2\bigl(\mu+p(\mu)\bigr)\leq0\), and the metric-dual vector \(\Jbb^\sharp\) is future directed and causal. At finite-energy regularity, we  treat $\Jbb$ as a primary variable ranging over the closed future causal cone. Importantly, it remains meaningful at \emph{vacuum}, where the velocity~$u$ need not be defined; indeed, \(\Jbb\) becomes null at vacuum, so that the (closed) finite-energy state space also contains zero and non-zero null states.

In torus symmetry, the geometric unknowns are the so-called twist variable $\Kpar$, which measures the failure of the distribution orthogonal to the symmetry orbits to be integrable, and a first-order variable $\Lbb$ describing the geometry of the orbits, together with the lapse function $\Omega$ and the conformal length density $\amdeux$ associated with the areal foliation. More precisely, the complex-valued field 
\be
\Kpar=K_2+iK_3
\ee
collects the two twist functions, while the complex-valued field 
\be
\Lbb=\Pbb+i\Qbb=dP+i e^P dQ
\ee
collects the first derivatives of the two metric coefficients denoted by \(P,Q\) and describing the conformal geometry of the symmetry orbits. The variables \(\Kpar,\Lbb,\Omega,\amdeux\) descend to the two-dimensional quotient of spacetime by the action of the symmetry group.

Collecting the matter and first-order geometric fields $(\Jbb,\Kpar,\Lbb)$, together with $(\Omega,\amdeux)$, defines the \emph{\JKL{} formulation of Einstein areal flows}, as we call these solutions to the Einstein equations. In these variables, the evolution subsystem is hyperbolic (away from its vacuum degeneracy), while the constraints determine the remaining fields through transport and reconstruction equations. Interestingly, our  formulation is robust enough to be extended continuously to the vacuum boundary in terms of~\(\Jbb\). The nonlinear terms controlled by the energy are quadratic and enjoy a partial null structure.\!\footnote{The structure of the Euler equations \emph{without} symmetry restriction was thoughroughly investigated by Speck and his collaborators~\cite{AbbresciaSpeck-2025,DisconziLuoMazzoneSpeck-2022,Speck-2019}.} This structure naturally leads us to introduce an $L^2$-based theory in which the finite-energy variables $(\Jbb,\Kpar,\Lbb)$ are square-integrable on spacelike hypersurfaces with respect to their induced volume measure.  The lapse function and the conformal-length density are instead absolutely continuous or of bounded variation.


\subsubsection{Perfect fluids}

We first treat perfect fluids whose equation of state satisfies the hyperbolicity and subluminal-sound-speed conditions, together with mild asymptotic assumptions near vacuum and at large density. Thus, the mass-energy density may approach zero, corresponding to cavitation, or become arbitrarily large, corresponding to concentration. Compressible-fluid evolution may form \emph{shock waves} even from smooth initial data~\cite{Lax-1957,Lax-1971,Dafermos-book}. The finite-energy requirement that $\Jbb$ be square-integrable is compatible with jump discontinuities in the matter momentum. The balance laws are then imposed in the sense of distributions, while a mathematical entropy inequality selects the admissible shocks. We distinguish between two weak formulations of the Einstein--Euler equations, namely the \emph{particle-production formulation} and the \emph{dissipated-energy formulation}.


\subsubsection{Dissipative fluids with a mathematical entropy}

We also propose and study two hyperbolic models of dissipative relativistic fluids. Earlier treatments of relativistic fluid flows and their symmetric formulations include~\cite{Barnes-2004,LeFlochStewart-2005,LeFlochUkai-2009}, while several covariant dissipative theories have been proposed in~\cite{GerochLindblom-1990,BDN2019,ReintjesChaddha}. Crucially, covariance alone does not determine a unique relativistic Navier--Stokes system. Since we require finite propagation speed, we consider relaxation systems whose principal matrices are symmetric and define symmetric-hyperbolic equations, and whose characteristic cones are contained in the Lorentzian cone. The construction proposed in this paper builds on the theory of hyperbolic relaxation~\cite{ChenLevermoreLiu} and the covariant symmetric formulation of relativistic fluids~\cite{RuggeriStrumia-1981}, as well as the mathematical entropy structure going back to Godunov and Friedrichs--Lax's pioneering works~\cite{Godunov-1961,FriedrichsLax-1971}.

Both systems are defined from the same covariant scalar potential $\chi=\chi(\beta,\zeta)$. Here, $\beta$  is a rescaled fluid momentum that determines the mass-energy density and velocity, while the additional field $\zeta$ measures the departure from the Euler state and vanishes at equilibrium. Second-order derivatives of $\chi$ define a symmetric matter stress tensor $T_\chi$, an auxiliary current $A_\chi$, and a particle current $\vNumb_\chi$. On the equilibrium manifold $\zeta=0$, the tensors $T_\chi$ and $\vNumb_\chi$ agree with the perfect-fluid stress tensor and particle current, respectively. The divergence equation for $A_\chi$ determines the evolution of $\zeta$, and its source drives this variable toward the Euler equilibrium manifold.

In both formulations, we denote by $T^{\mathrm{NS}}_{\alpha\beta}$ the total symmetric stress-energy tensor of the dissipative fluid. It replaces the perfect-fluid stress tensor in Einstein's equations and reduces to it on the Euler equilibrium manifold $\zeta=0$. In the particle-production formulation, the matter stress tensor defined from the potential is divergence-free, while the particle current has non-negative divergence. In the dissipated-energy formulation, the particle current is divergence-free, while the matter stress tensor defined from the potential satisfies a balance law with a signed source. The tensor $T^{\mathrm{NS}}_{\alpha\beta}$ then includes an (explicitly determined) symmetric tensor tangent to the symmetry orbits whose divergence cancels this source. Consequently, both proposed Einstein--Navier--Stokes systems satisfy
\be
G_{\alpha\beta}=T^{\mathrm{NS}}_{\alpha\beta},
\qquad
\nabla^\alpha T^{\mathrm{NS}}_{\alpha\beta}=0,
\ee
as required by the contracted Bianchi identity. The two formulations therefore preserve the same total stress-energy balance law, but they lead to different mathematical entropy inequalities: one describes particle production, while the other describes the dissipation of mass-energy.


\subsection{Formulation and stability of the Einstein--matter equations}
\label{section-1-2}

\subsubsection{The  \JKL{} formulation}

We establish two main results concerning the formulation and Cauchy stability of \emph{Einstein--matter areal flows}.  First of all, we provide a \emph{first-order} formulation of the Einstein equations in areal gauge, designed to exhibit the hyperbolicity, constraint propagation, and a quadratic nonlinearity structure.
and to establish finite-energy properties of Einstein Cauchy developments. Our formulation organizes the geometry and matter variables into two classes with different regularity requirements. Namely, we set
\bel{equa-phipsi}
\Phi=(\Jperp,\Jpar,\Pbb,\Qbb),
\qquad
\Psi=(\ell,\log\Omega,\Kpar).
\ee
The variables in $\Phi$ arise in a divergence--curl system and in matter balance laws and are naturally controlled by a quadratic geometry-matter energy, whereas the remaining variables in $\Psi$ satisfy transport and constraint equations. In addition to an $L^2$ control of the variables $\Phi$, we require bounded-variation (or absolute-continuity) regularity of $\Psi$. In this setting, the torus-symmetric Einstein--Euler equations consist of twelve first-order nonlinear evolution balance laws and three constraints. The equations for $(\Pbb,\Qbb)$ have a divergence--curl structure, and the equations for the momentum tangent to the symmetry orbits
and the twists are transport laws. Crucially, since vacuum generally can form in the evolution of the Einstein--matter system, 
the differential operators introduced in this paper remain meaningful at \emph{vacuum}.
The same first-order presentation of the Einstein equations applies to the two classes of Einstein--Navier--Stokes systems introduced below. 
The state space of the fluid momentum $\Jbb$ is the future causal cone, whose boundary corresponds to vanishing mass density $\mu=0$.  The Einstein--Navier--Stokes systems are conveniently expressed in terms of a rescaled fluid momentum~$\beta$ that is only defined away from vacuum, but extend to the vacuum in~$\Jbb$ variables.

\begin{theorem}[The \JKL{} formulation of Einstein areal flows]
\label{theo-main-results}
Consider four-dimensional spacetimes $(\Mcal,\guntrois)$ with spatial topology $\Tbb^3$ and two commuting spacelike Killing fields with closed orbits, admitting a foliation by the area~$|t|$ of the symmetry orbits. Consider a constitutive law $p=p(\mu)$ (pressure vs.\ mass-energy density) satisfying hyperbolicity and asymptotic conditions (stated in~\eqref{hyperbolic-eos-zero}, below). Then the following properties hold.
\bei

\item \textbf{Perfect-fluid matter model.} In the class of regular flows, the Einstein--Euler system 
\bel{equa-49F}
G^{\alpha\beta} = T^{\alpha\beta},
\qquad
\nabla_\alpha T^{\alpha\beta}=0
\ee
is equivalent to an \emph{evolution--constraint, first-order differential system} in terms of the variables \eqref{equa-phipsi}, as  given in \eqref{eq:T2-1234-def}--\eqref{equa-9385}, below, and referred to as the \JKL{} system. Its evolution subsystem has essentially \emph{quadratic nonlinearities} with a partial \emph{null structure} and, away from vacuum, is strongly hyperbolic.   All differential operators in the formulation 
remain defined on the full (closed) state space, including cavitation and near-light speed fluid propagation, although the hyperbolicity degenerates at non-zero null states.  

\item \textbf{Particle-production dissipative model.} Analogous properties hold for the following class of Einstein--Navier--Stokes systems
\bel{eq:pot-production-system0}
G^{\alpha\beta}=T_\chi^{\alpha\beta},
\qquad
\nabla_\alpha T_\chi^{\alpha\beta}=0,
\qquad
\nabla_\alpha A_\chi^{\alpha\beta\gamma}
=-\mathscr B^{\beta\gamma}, 
\ee
namely an extension of the Einstein--Euler system by adjoining to the fluid variables a symmetric two-tensor relaxation field~$\zeta$. The tensors $T_\chi$ and $A_\chi$ are defined from second-order derivatives
of a \emph{Navier--Stokes potential} denoted by~$\chi$, which is assumed to satisfy suitable equilibrium, dominant-energy, and causal-hyperbolicity conditions in \autoref{def-pot-hyperbolic}. The relaxation rate tensor~$\mathscr B$ is prescribed and satisfies suitable stable-equilibrium and positivity conditions in \autoref{def-pot-source}. In particular, the coupled evolution equations are symmetric hyperbolic in the interior of the state space, and both Einstein equations and Bianchi equations are satisfied.  The associated particle-number current~$\vNumb_\chi$ satisfies $\nabla_\alpha \vNumb_\chi^\alpha = \zeta_{\alpha\beta}\mathscr B^{\alpha\beta}\geq 0$.

\item \textbf{Dissipated-energy dissipative model.} 
Analogous properties also hold for the following class of Einstein--Navier--Stokes systems
\bel{eq:pot-dissipated-system0}
\begin{aligned}
G^{\alpha\beta}
&=T_\chi^{\alpha\beta}+T_{\rm aux}^{\alpha\beta}, \quad
&
\nabla_\alpha T_\chi^{\alpha\beta}
&=-\sigma n^\beta, \quad
&
\nabla_\alpha A_\chi^{\alpha\beta\gamma}
&=-\mathscr B^{\beta\gamma},
\end{aligned}
\ee 
with 
\be
\begin{aligned}
T_{\rm aux}^{\alpha\beta}
&=\sigma \underline\pi^{\alpha\beta},
&
\sigma & \coloneqq \frac{\zeta_{\alpha\beta} \mathscr B^{\alpha\beta}}{-n_\alpha\beta^\alpha} \geq 0. 
\end{aligned}
\ee 
Here $\underline\pi^{\alpha\beta}$ is the (rescaled) projection onto the symmetry orbits, $n_\alpha$ is the future-directed normal to the areal foliation, and $\beta$ is the rescaled fluid momentum.
The model includes an auxiliary stress tensor~$T_{\rm aux}$ (\autoref{def-diss-en}), which compensates the mass-energy dissipated by~$T_\chi$. In particular, the coupled evolution equations are symmetric hyperbolic in the interior of the state space, and both Einstein equations and Bianchi equations are satisfied.
The associated particle-number current~$\vNumb_\chi$ is divergence-free.
\eei
\noindent For all three systems, the Einstein constraints propagate from one areal hypersurface, and the metric variables are reconstructed from the transport, constraint, and periodicity equations. 
\end{theorem}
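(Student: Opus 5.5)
The proof will be a direct derivation, carried out in a fixed orthonormal frame adapted to the areal foliation, followed by a check of the structural claims. \emph{First}, we write the $\Tbb^2$-symmetric metric in areal gauge,
\[
g=e^{2(\eta-U)}\bigl(-\Omega^2dt^2+\ell^{2}d\theta^2\bigr)+e^{2U}(dx+A\,dy+(G+AH)d\theta)^2+e^{-2U}t^2(dy+H d\theta)^2 ,
\]
with $P,Q$ read off from the orbit metric. We then pick the frame $e_0=n$, $e_1$ tangent to the quotient, and $e_2,e_3$ tangent to the orbits. In this frame, the Einstein tensor splits into three blocks.
\bei
\item The orbit block ($G_{22},G_{23},G_{33}$) and the trace part give second-order equations for $(P,Q)$. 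Rewritten in $\Lbb=dP+ie^PdQ$, these become the divergence--curl system $d\star(t\,\cdot)=\ldots$, $d\Lbb=\ldots$, whose sources are quadratic in $(\Pbb,\Qbb)$ plus terms with a null form (products $\Pbb\wedge\Qbb$).
\item The mixed components $G_{0a},G_{1a}$ with $a=2,3$ give the transport laws for the twists $\Kpar$. These are sourced by the tangential momenta $\Jpar$, with the appropriate weight in $t$.
\item The quotient block $G_{00},G_{01},G_{11}$ gives the constraints: $\partial_\theta\log\Omega$ and $\partial_t\ell$ are expressed through the energy density $|\Lbb|^2+|\Jbb|^2$-type quantities and $|\Kpar|^2$.
\eei
The fluid balance laws $\nabla_\alpha T^{\alpha\beta}=0$, expressed in $\Jbb=\sqrt{2(\mu+p)}u^\flat$, give four first-order laws for $(\Jperp,\Jpar)$. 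The symmetry reduces their dependence to $(t,\theta)$, and the connection coefficients enter only through $\Lbb$, $\Kpar$, $\log\Omega$ and $\ell$. Counting equations then yields the twelve evolution laws and three constraints. Equivalence in the regular class is obtained by running the computation backwards: the only information discarded is the definitions of $\Lbb$ and $\Kpar$ in terms of metric coefficients, and these are recovered by integrating $dP=\Pbb$ and $e^PdQ=\Qbb$. This integration requires exactly the curl equation together with the periodicity (zero-mean) conditions on $\Tbb^1$. The proof then turns to the structural claims.
\bei
\item \textbf{Hyperbolicity.} The principal symbol for $(\Pbb,\Qbb)$ is the $1+1$ wave operator. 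For $(\Jperp,\Jpar)$ it is the Euler symbol in the variable $\Jbb$, whose eigenvalues are the sound speeds $\pm\sqrt{p'(\mu)}$ boosted by the velocity, together with the fluid velocity. Assumption~\eqref{hyperbolic-eos-zero} ($0<p'<1$) gives a complete set of real eigenvectors wherever $\Jbb$ is timelike, which is strong hyperbolicity away from vacuum.
\item \textbf{Behaviour at vacuum.} Every coefficient is a smooth, homogeneous function of $\Jbb$ on the closed cone. This follows because $T$ itself is quadratic in $\Jbb$ up to the pressure term, and the asymptotic condition at $\mu\to0$ makes $p(\mu)$ a continuous function of $g^{-1}(\Jbb,\Jbb)$. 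At non-zero null $\Jbb$ the eigenvalues coalesce, which is the stated degeneracy.
\item \textbf{Constraint propagation.} We apply the contracted Bianchi identity to the constraint residuals $C_{\alpha}=(G-T)_{\alpha 0}$. Using the evolution equations and $\nabla\cdot T=0$, these residuals satisfy a homogeneous linear transport system. They therefore vanish if they vanish on the initial hypersurface.
\eei
For the two Navier--Stokes models the geometric part is unchanged. In the particle-production model, $T_\chi$ replaces $T$, and symmetric hyperbolicity follows from the Hessian structure of $\chi$: the principal matrices are second derivatives of $\chi$ contracted with $\xi_\alpha$, and they are positive definite for timelike $\xi$ by the causal-hyperbolicity condition in \autoref{def-pot-hyperbolic}. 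The identity $\nabla_\alpha\vNumb^\alpha_\chi=\zeta_{\alpha\beta}\mathscr B^{\alpha\beta}$ comes from contracting the balance laws with the dual variables $(\beta,\zeta)$ and using that $\vNumb_\chi$ is the corresponding potential current. Its sign then follows from \autoref{def-pot-source}.

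In the dissipated-energy model, the key step is checking $\nabla_\alpha(T_\chi+T_{\rm aux})^{\alpha\beta}=0$. Since $\underline\pi$ is built from the Killing directions and $\sigma$ is invariant, the divergence $\nabla_\alpha(\sigma\underline\pi^{\alpha\beta})$ should reduce to $\sigma$ times $\nabla_\alpha\underline\pi^{\alpha\beta}$. The normalization of $\underline\pi$ (the rescaling by the area $t$) must be chosen so that this equals $\sigma n^\beta$. I expect this to be the main obstacle. It requires computing $\nabla_\alpha\underline\pi^{\alpha\beta}$ exactly in areal gauge: the expansion of the orbits produces a term proportional to $n^\beta$, and the twists must contribute nothing tangential. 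These cancellations must hold for all twist values and at vacuum. I would first verify the computation with $\Kpar=0$, then track the twist terms through the frame connection coefficients. With conservation established, constraint propagation proceeds exactly as in the Euler case.
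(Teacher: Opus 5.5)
Your overall route is the paper's: frame components of the Einstein tensor, characteristic speeds of the $\Jbb$-block, Hessian structure and the chain identity for $\chi$, and the divergence of the orbit projector. The equivalence and constraint-propagation steps, however, have a genuine gap.

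\emph{A dropped Einstein equation.} The \JKL{} system does not contain all ten Einstein equations. In the paper the components are allocated as follows.
\begin{itemize}
\item $R_{22}-R_{33}$ and $R_{23}$ give $\opL_{00},\opL_{10}$.
\item $R_{22}+R_{33}$ gives the \emph{evolution} equation $\opC$ for $\amdeux$; it is not a constraint, as you state.
\item $R_{00}+R_{11}-R_{22}-R_{33}$ gives $\opO_0$, and $R_{12},R_{13}$ give $\opK_{20},\opK_{30}$.
\item The three constraints are exactly $R_{01},R_{02},R_{03}$, i.e.\ the equations for $(\log\Omega)_x$, $(K_2)_x$, $(K_3)_x$.
\end{itemize}
The remaining combination $R_{00}-R_{11}+R_{22}+R_{33}$ is dropped. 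Equivalently this is $G_{22}+G_{33}=T_{22}+T_{33}$, the second-order wave equation for the lapse. It is replaced by the energy law $\opJ^0=0$, and the two agree only after substituting $\opO_0=\opO_1=0$ (the identity $\mathcal W=-\tfrac12\opJ^0$ in Appendix~A). This redundancy is what turns $10+4+2$ equations into $12+3$. It is precisely what ``running the computation backwards'' must recover, so your claim that only the definitions of $\Lbb$ and $\Kpar$ are discarded is not correct. The reconstruction also needs $G,H$, obtained by integrating the definitions of $K_2,K_3$ in $t$ with gauge-fixed spatial averages at $t_0$, and $\tau$ at one point.

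\emph{The Bianchi argument.} For the same reason, the ADM-style Bianchi argument cannot be invoked as you state it. It presupposes that all spatial components $(G-T)_{ij}$ vanish and that $\nabla_\alpha T^{\alpha\beta}=0$. Neither holds here:
\begin{itemize}
\item The \JKL{} evolution subsystem enforces $(G-T)_{00}$ (a combination of $\opC$ and $\opO_0$), but not $(G-T)_{22}+(G-T)_{33}$.
\item $\opJ^1,\opJ^2,\opJ^3$ were obtained as cross-derivative compatibility conditions \emph{after} the constraints were used to eliminate first derivatives of $\log\Omega$ and $\Kpar$. So they coincide with the Euler equations only modulo the constraint residuals, and ``$\nabla\cdot T=0$'' is not available as independent input.
\end{itemize}
You would have to show that every discrepancy is linear in the residuals $\opO_1,\opK_{21},\opK_{31}$ and their derivatives, and that the resulting residual system is closed. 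The paper avoids this with exact identities. For the twists,
\[
\bigl(|t|^{3/2}\opK_{m1}\bigr)_t=\bigl(|t|^{3/2}\opK_{m0}\bigr)_x+|t|\,\amdeux\Omega^2\opJ^m\qquad(m=2,3),
\]
and analogously $(2\opO_1)_t-(2\opO_0)_x$ is a nonvanishing multiple of $\opJ^1$. Once the evolution equations hold, each constraint residual is independent of $t$. The same identities, with $\chi$-corrected operators, serve both Navier--Stokes models.

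\emph{Smaller points.}
\begin{itemize}
\item The step you flag as the main obstacle takes two lines. $\nabla^\alpha\pi^\parallel_{\alpha\beta}$ is the mean-curvature covector of the flat orbits. Its $e_0$-part is $e_0(\log|t|)=1/(t\Omega)$ and its $e_1$-part is $e_1(\log|t|)=0$. The twists live in the antisymmetric part of $\nabla e_0$, which drops out of the trace. Hence $\nabla^\alpha\pi^\parallel_{\alpha\beta}=(t\Omega)^{-1}n_\beta$, and $\nabla^\alpha(\sigma\underline\pi_{\alpha\beta})=\sigma n_\beta$ because $d(\sigma t\Omega)$ annihilates the Killing fields.
\item You omit $\nabla_\alpha\vNumb_\chi^\alpha=0$ in the dissipated-energy model. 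It follows from the chain identity with $I^\beta=-\sigma n^\beta$ and $\sigma(-n_\alpha\beta^\alpha)=\zeta_{\alpha\beta}\mathscr B^{\alpha\beta}$.
\item The principal matrices are the \emph{third} derivatives $\partial^3\chi/\partial\beta_\alpha\partial\Xi_A\partial\Xi_B$, not second derivatives.
\item The coefficients are only continuous on the closed cone; $q_\Jbb$ is neither smooth nor homogeneous for a general pressure law. That continuity comes from $y=2(\mu+p)$ being a $C^1$ increasing bijection, not from the asymptotic condition.
\end{itemize}
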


Propositions~\ref{theo-struct} and~\ref{theo-struct-bis} establish these properties for the Einstein--Euler system, and \autoref{prop-pot-JKL} establishes them for both Einstein--Navier--Stokes models.

The formulation also has a direct extension to weakly regular finite-energy Einstein--Euler flows, as proved earlier in~\cite{TorusT2-1-2026}. Indeed, the Einstein equations remain meaningful in the sense of distributions, and may contain an additional measure-valued tensor field, a non-negative stress-energy corrector~\cite{LeFlochLeFloch-1,LeFlochLeFloch-port}, representing the possible  oscillatory contributions arising from weak limits of quadratic geometry terms~\cite{TorusT2-1-2026}. The variables and first-order equations in our formulation therefore apply robustly to regular solutions, limits thereof, and weak solutions, to either Einstein--Euler or Einstein--Navier--Stokes systems.


\subsubsection{Stability of future Cauchy developments}

Our second main result concerns the initial value problem for geometry and matter data prescribed on an areal hypersurface. We control the corresponding future Cauchy development in both regimes: the area of the symmetry orbits increases when \(t_0>0\) and decreases when \(t_0<0\).
We prove stability in the sense of propagation of finite-energy bounds from the initial hypersurface; stability here does not refer to a Lipschitz estimate for the difference of two developments.
The estimates rely on several geometric and analytical properties of torus-symmetric spacetimes. The volume and conformal length satisfy monotonicity formulas, while weighted energy identities yield spacelike and timelike integral estimates. Exact transport equations provide pointwise control. For regular Einstein--Euler flows and dissipated-energy flows, the initial ranges of the momentum components tangent to the symmetry orbits and of the twist variables are preserved. For regular positive-density flows governed by the particle-production system, the normalized parallel momentum is contracted radially toward the origin.
These estimates are robustly expressed as inequalities between stress-tensor momentum components along the symmetry orbits (spanned by the Killing vectors $Y,Z$) and the particle number: the parallel momentum per particle is said to be bounded by a constant $C^\parallel>0$ provided
\bel{intro-parall-perpart-bounded}
|n^\alpha T_{\alpha\beta}Y^\beta|+|n^\alpha T_{\alpha\beta}Z^\beta|\leq C^\parallel (-n_\alpha\vNumb_\chi^\alpha) .
\ee
For the selected matter system, let \(\mathcal E(t)\) denote the energy on the areal hypersurface at time~\(t\), as defined in~\eqref{V-L-E-expr}. It combines the spatial integral of the normal matter energy \(n^\alpha T_{\chi\,\alpha\beta}n^\beta\), the quadratic \(L^2\) energy of the first-order geometric variables \((\Kpar,\Pbb,\Qbb)\), and a weighted conformal-length term involving~\(\mathcal L(t)\). Since the reduced fields depend on a single spatial variable, the bounded-variation norms appearing in \autoref{theo-intro-stability} also control the corresponding supremum norms.

In the geometric description by an initial data set \((\mathring g,\mathring k,\mathring\Jbb,\mathring\zeta)\), the sign of~\(t_0\) is determined by the mean curvature. More precisely, under the dominant energy condition, the Einstein constraints imply that \(\Tr_{\mathring g}\mathring k\) has a constant sign on the initial hypersurface, and our orientation convention gives
\be
\sgn(t_0)=-\sgn\bigl(\Tr_{\mathring g}\mathring k\bigr).
\ee
By contrast, the description by the \JKL{} variables \((\mathring\Phi,\mathring\Psi,\mathring\zeta)\) does not encode this sign, which must therefore be prescribed separately.  Two integration constants must also be specified to fully reconstruct the geometry from \JKL{} variables, namely the conformal modulus \(\tau\in\CC\) with \(\Im(\tau)>0\) of a single $\Tbb^2$-symmetry orbit.

\begin{theorem}[The Cauchy stability of finite-energy  Einstein areal flows]
\label{theo-intro-stability}
Consider one of the three Einstein--matter models in \autoref{theo-main-results} with $\Tbb^2$ symmetry on~$\Tbb^3$.
In the Einstein--Navier--Stokes case, assume that the model is admissible and torus-adapted, and that its matter stress tensor~$T_\chi$ obeys the coercive stress condition in \autoref{def-Tchi-coercive}.
In the Einstein--Euler case, we use the conventions $T_\chi=T_{\rm E}$, $\vNumb_\chi=\vNumb_\Jbb$, and $\zeta=0$.
Prescribe\footnote{Equivalently, prescribe a signed areal time~$t_0$ and initial data $(\mathring\Phi,\mathring\Psi,\mathring\zeta)$ for the \JKL{} formulation of the areal flow, together with the conformal modulus of one symmetry orbit.}
 an initial data set $(\mathring g,\mathring k,\mathring\Jbb,\mathring\zeta)$, consisting of a Riemannian metric, second fundamental form, fluid momentum, and relaxation field (absent for the Einstein--Euler model) satisfying Einstein's constraint equations and invariant under two commuting Killing fields $Y,Z$ with closed orbits, and assume that the orbit area constant and equal to $|t_0|\neq 0$.

Consider a regular Cauchy development $(\Phi,\Psi,\zeta)$ on $\Tbb^3\times[t_0,t_1]$ of these initial data for the chosen Einstein--Euler or Einstein--Navier--Stokes model in the areal foliation, with an areal time interval $I = [t_0,t_1] \subset \RR\setminus\{0\}$.
\bei
\item Assume that the initial spatial volume, the initial energy norm~$\mathcal E(t_0)$, and the bounded-variation norm of $\mathring\Psi=(\mathring\ell,\log\mathring\Omega,\mathring\Kpar)$ on~$\Tbb^3$ are bounded by a constant~$\mathcal I_0$.

\item In the future-contracting regime, assume that the conformal length has a positive lower bound $\mathcal L_{\min}$ on $I$; in the future-expanding regime, this lower bound is $\mathcal L_{\min}=\mathcal L(t_0)>0$, which is part of the initial data controlled by~$\mathcal I_0$.

\item Assume either that the fluid velocity is orthogonal to the symmetry orbits or, more generally, that the parallel momentum per particle is uniformly bounded in the sense that~\eqref{intro-parall-perpart-bounded} holds at $t=t_0$ for some constant $C^\parallel>0$.
\eei
Then the following Cauchy stability properties hold.

\begin{enumerate}

\item The parallel momentum per particle remains uniformly bounded at all times by~$C^\parallel$ in the sense that~\eqref{intro-parall-perpart-bounded} holds on $[t_0,t_1]\times\Tbb^3$.

\item There exists a constant
\be
C=C\bigl(\mathcal I_0,t_0,t_1,\mathcal L_{\min}^{-1},C^\parallel\bigr)
\ee
such that the $L^2$ norms of~$\Phi$ and bounded-variation norms of~$\Psi$ on areal time slices (constant~$t$) and on timelike slices orthogonal to the areal foliation (constant~$x$ in the \JKL{} formulation) are controlled as
\bel{equajKD93}
\aligned
& \sup_{t\in I}\bigl(
\|\Phi\|_{L^2(\Tbb^3,\dVtrois)}
+ \|\Psi\|_{\BV(\Tbb^3)}
\bigr)
\\
& + \sup_{x\in\Sbb^1}\bigl(
\|\Phi\|_{L^2(\Interval\times\Tbb^2,\dVundeux)}
+ \|\Psi\|_{\BV(\Interval\times\Tbb^2)}
\bigr)
+\int_{I\times\Tbb^3}\!\!\mathfrak d_*\,\dVuntrois
\leq C.
\endaligned
\ee
Here, $\dVuntrois,\dVtrois,\dVundeux$ are volume forms of the (Lorentzian) metric~$g$ and of (Riemannian) induced metrics on spatial and temporal slices.

\item The last term in \eqref{equajKD93} represents the total non-negative production measure and vanishes for the Einstein--Euler system.  For the particle-production Einstein--Navier--Stokes system it is the produced particle number density $\mathfrak d_*=\zeta_{\alpha\beta}\mathscr B^{\alpha\beta}$, while for the dissipated-energy Einstein--Navier--Stokes system it is the dissipated mass-energy density $\mathfrak d_*=\zeta_{\alpha\beta}\mathscr B^{\alpha\beta}/(-n_\alpha\beta^\alpha)$.

\item Moreover, these conclusions allow vacuum, arbitrarily large mass-energy density, and arbitrary finite rapidity. For the two Einstein--Navier--Stokes systems, the constant $C$ is independent of the relaxation rate map~$\mathscr B$.
\end{enumerate}
\end{theorem}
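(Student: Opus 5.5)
The proof is a continuity (bootstrap) argument on $I=[t_0,t_1]$, built from the structural results behind \autoref{theo-main-results}. Since the development is regular and the constants depend only on the stated data, I will derive a priori estimates in a fixed order: pointwise bounds, then monotonicity, then energy, then timelike (flux) bounds, then $\BV$ bounds for $\Psi$. Throughout, the variables are the \JKL{} ones from Propositions~\ref{theo-struct}, \ref{theo-struct-bis} and \autoref{prop-pot-JKL}.

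\textbf{Step 1: parallel momentum per particle and twists.} The twists $\Kpar$ satisfy weighted transport equations along the normal $n$, and their initial range is preserved by a maximum principle. The tangential components $n^\alpha T_{\alpha\beta}Y^\beta$ and $n^\alpha T_{\alpha\beta}Z^\beta$ are handled as follows.

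For Euler and dissipated-energy flows, the Killing symmetry together with $\nabla^\alpha T_{\alpha\beta}=0$, or with the source $-\sigma n_\beta$, which has no tangential component, gives exact conservation laws for the currents $T_{\alpha\beta}Y^\beta$ and $T_{\alpha\beta}Z^\beta$. The auxiliary stress $T_{\rm aux}=\sigma\underline\pi$ is tangent to the orbits, so I must check that it does not break this. For the dissipated-energy model $\vNumb_\chi$ is divergence-free. I then compare the two conserved transported densities: the ratio (tangential momentum)/(particle number) is transported along a common characteristic field, namely the fluid velocity direction. The invariant-domain principle gives conclusion~1.

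For the particle-production model, $\nabla_\alpha\vNumb_\chi^\alpha\ge 0$ only increases the denominator, which yields the radial contraction, using the coercive stress condition of \autoref{def-Tchi-coercive}.

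Near vacuum I work with $\Jbb$ and pass to the limit from positive-density states. This is why the estimate is phrased as the product inequality \eqref{intro-parall-perpart-bounded} rather than as a ratio.

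\textbf{Step 2: geometric monotonicity and energy.} The spatial volume and the conformal length $\mathcal L(t)$ satisfy monotonicity formulas. In the expanding regime $\mathcal L(t)\ge\mathcal L(t_0)$; in the contracting regime the assumed $\mathcal L_{\min}$ plays this role. I then differentiate $\mathcal E(t)$ from \eqref{V-L-E-expr}. The divergence--curl structure of $(\Pbb,\Qbb)$ and the balance laws give $\frac{d}{dt}\mathcal E$ as a sum of terms with a sign plus terms bounded by $|t|^{-1}\mathcal E$. The exception is the contribution of the tangential matter momentum, which is not sign-controlled. I bound it by $C^\parallel$ times the particle number, which in turn is bounded by the normal energy using the coercive stress condition. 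The dissipation terms carry a favourable sign, which yields the integral of $\mathfrak d_*$ in \eqref{equajKD93} and makes the bound independent of $\mathscr B$. Grönwall on $I\subset\RR\setminus\{0\}$ then gives the $L^2$ bound on constant-$t$ slices, with constants depending on $t_0,t_1$.

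\textbf{Step 3: timelike estimates and $\BV$ control of $\Psi$.} I integrate the same energy current over the region between two timelike slices $x=\text{const}$ (plus the initial slice). Using periodicity in $x$ and dominant energy, the flux through a constant-$x$ slice is bounded by the spacelike energy already controlled. This gives the $L^2(\Interval\times\Tbb^2)$ bounds. For $\Psi=(\ell,\log\Omega,\Kpar)$, the constraint and transport equations express $\partial_x\Psi$ and $\partial_t\Psi$ as quadratic expressions in $\Phi$, weighted by $\Omega$ and $\amdeux$. A first pass gives $L^\infty$ bounds on $\log\Omega$ and $\ell$ via $\BV\subset L^\infty$ in one dimension; the resulting total variations are then bounded by the $L^2$ energies. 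The constant-$x$ $\BV$ bounds follow the same way from the time derivatives.

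\textbf{Main obstacle.} I expect Step 2 to be the crux, specifically absorbing the non-null nonlinear coupling between the tangential momentum and the twist/geometry terms uniformly near vacuum and at large rapidity. The point is to show that the per-particle bound of Step 1 closes the energy inequality without any lower bound on density. If coercivity fails near null states, I would instead bound the bad term by the normal energy density directly, using $|T(n,Y)|\le T(n,n)|Y|$ from dominant energy together with the orbit-metric weights. That would leave a linear Grönwall in which the $\Omega$-dependence is closed by the bootstrap.
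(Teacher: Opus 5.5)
The genuine gap is in Step~3. Integrating the energy law $\opJ^0_\chi\leq 0$ between two constant-$x$ slices only controls the timelike flux $\int\Mbf_\chi^{01}\Omega\,\dVundeux$, where $\Mbf_\chi^{01}=2T_\chi^{01}-\Ebf_1(\Pbb)-\Ebf_1(\Qbb)$. This density is sign-indefinite: for Euler it vanishes whenever $J_1=0$ and $\Pbb=\Qbb=0$, however large $J_0$ is. Dominant energy only gives the upper bound $|\Mbf_\chi^{01}|\leq\Mbf_\chi^{00}$, so no $L^2$ control along timelike slices follows.

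The missing idea is to integrate the $e_1$-momentum law $\opJ^1_\chi=\divuntrois(\Omega\Mbf_\chi^{1\bullet})=0$ instead. Its spacelike boundary terms involve $\Mbf_\chi^{10}=\Mbf_\chi^{01}$, which is bounded by $\mathcal E_\chi$. Its flux through constant-$x$ slices is $\Mbf_\chi^{11}=2T_\chi^{11}+\Ebf_0(\Pbb,\Qbb)-\Ebf_0(\Kpar)$. This is coercive in $(\Pbb,\Qbb)$ once the $\Ebf_0(\Kpar)$ term is removed using the bounded-variation bounds on $\Kpar$ and $\Omega$. The result is a BV-in-$x$ bound (\autoref{prop-section-8-timelike-slices}). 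It is anchored through $\amdeux\geq\amdeux_{\rm ref}$, which follows from monotonicity of $\amdeux$ under $\opC^\chi=0$, together with $\int\amdeux_{\rm ref}\,dx=\mathcal L_{\min}$.

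Finally, $T_\chi^{11}$ does not see $\Jpar$; already $T_{\rm E}^{11}=\frac12J_1^2+p(\mu)$. This is exactly where the propagated per-particle bound and the coercive stress condition $T_\chi^{11}+C_1\geq c_2(T_\chi^{00}+J_0^2)$ of \autoref{def-Tchi-coercive} enter. You use these two hypotheses in Steps~1--2, where they are not needed:
the radial contraction only uses $\mathfrak d_{\mathscr B}\geq0$;
every source in the energy identity, including the tangential matter terms in $\Msource^\chi$, is bounded by $\Mbf_\chi^{00}$ via \autoref{lem:conseq-DEC-M};
and the coercive condition does not bound $\Numb_\chi^0$ by the energy anyway.
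You never use them in Step~3, where they are essential. Without the timelike $L^2$ bound, the constant-$x$ BV bounds for $\log\Omega$ and $\Kpar$ also fail, since their time derivatives are quadratic in $\Phi$.

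Two further steps need repair. First, in the particle-production model $\opJ^0_\chi=0$ exactly. The energy identity then has no dissipation term and cannot yield $\int\mathfrak d_*\,\dVuntrois$. That bound must come from integrating $\divuntrois\vNumb_\chi=\mathfrak d_{\mathscr B}$ over the slab and controlling the final particle number by the energy through a sub-quadratic growth bound as in \eqref{eq:section-4-particle-growth}. Only in the dissipated-energy model does the energy inequality supply $\int\Sigma$.

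Second, ``$\BV\subset L^\infty$'' only bounds the oscillation of $\log\Omega$ on a slice (by $\frac12\mathcal E_\chi$, from $\opO_1^\chi=0$), not its level. The paper anchors the level by noting that $|t|^{-1}\mathcal V/\mathcal L$ is a weighted average of $\Omega$, and then closing a Gr\"onwall inequality for $|t|^{-3/4}\mathcal V$ (\autoref{lem-section-8-Euler-lapse-closure}). This is where $\mathcal L_{\min}>0$ actually enters. The resulting two-sided lapse bound is what turns $\mathcal E_\chi$, which carries an $\Omega$ weight, into unweighted $L^2$ bounds, and what controls $\Var(K_m)\leq\mathcal E_\chi\sup\Omega^{-1}$.
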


\autoref{part-two} is this paper is devoted to the proof of this theorem, and is outlined next.


\subsection{Organization of the paper}
\label{section-1-3}

\subsubsection{\autoref{part-one}: \JKL{} formulation of finite-energy Einstein areal flows} 

\autoref{part-one} consists of Sections~\ref{section-2} to~\ref{section-6}. 
 
\begin{itemize}

\item \autoref{section-2} introduces the areal gauge, the adapted frame, the first-order geometry and matter variables, and the pressure assumptions at vacuum and at large density.

\item \autoref{section-3} defines the \JKL{} operators, the flow notation $(\Phi,\Psi)$, and the Einstein--Euler evolution--constraint system.

\item \autoref{section-4} constructs the particle current and proves the convexity of the two reference currents, leading to the particle-production and dissipated-energy mathematical entropy inequalities.

\item \autoref{section-5} introduce two classes of Einstein--Navier--Stokes system, based on a notion of Navier--Stokes potential denoted by $\chi$. Remarkably, 
the Einstein, Bianchi, and constraint equations are all satisfied, and a mathematical entropy structure is readily available. 

\item \autoref{section-6} carries out the torus reduction and determines the \JKL{} equations for the proposed Einstein--Navier--Stokes models. 

\end{itemize}


\subsubsection{\autoref{part-two}: Cauchy stability of finite-energy Einstein areal flows}
 
\autoref{part-two} consists of Sections~\ref{section-7} to~\ref{section-9}.

\begin{itemize}

\item \autoref{section-7} proves the Einstein--Euler maximum and bounded-variation principles, together with their extension to the
 dissipated-energy and particle-production systems.

\item \autoref{section-8} derives geometric monotonicity properies, spacelike and timelike energy estimates for~$\Phi$, and bounded-variation estimates for~$\Psi$, and also analyzes total-stress positivity for both selected systems.

\item \autoref{section-9} presents some perspectives.

\end{itemize}

\autoref{section-A} derives the \JKL{} equations and their structural features. \autoref{section-C} establishes the spacelike and timelike trace results used at weak regularity.
In addition, we present the notation used in this paper in three tables, encompassing the main notation for the geometric variables, the matter variables, and the Einstein flows, respectively.  

\begin{table}[H]
\centering
\small
\begin{tabular}{l p{0.64\textwidth}}
\toprule
$(\Mcal,\guntrois)$ & spacetime with signature $(-,+,+,+)$ and $\Tbb^2$ symmetry \\
$\Ncal= \Mcal/\Tbb^2$ & quotient manifold with coordinates $(t,x)$ \\
$\Omega,\amdeux$ & positive lapse and conformal length density \\
$\Pbb=(P_0,P_1),\ \Qbb=(Q_0,Q_1)$ & first-order modular variables \\
$\Lbb= \Pbb+i\Qbb,\ \Kpar=K_2+iK_3$ & complex geometry variable and twist variable\\
\bottomrule
\end{tabular}
\caption{Geometric variables.}
\label{table:notation-geometry}
\end{table}

\begin{table}[H]
\centering
\small
\begin{tabular}{l p{0.64\textwidth}}
\toprule
$\mu,p(\mu),q(\mu)$ & mass-energy density, pressure, and pressure ratio \\
$\Jbb=(J_0,J_1,J_2,J_3)$ & matter momentum, with $\Jperp=(J_0,J_1)$ and $\Jpar=J_2+iJ_3$ \\
$\Ebf_0,\Ebf_1$ & quadratic energy and energy-flux forms \\
$\Mbf^{00},\Mbf^{01},\Mbf^{11}$ & reduced stress-energy components\\
\bottomrule
\end{tabular}
\caption{Matter variables.}
\label{table:notation-fluid}
\end{table} 

\begin{table}[H]
\centering
\small
\begin{tabular}{l p{0.64\textwidth}}
\toprule
$\Phi,\Psi$ & Einstein--Euler variables \\
$\opL,\opJ,\opC,\opO,\opK$ & geometry, fluid, length, lapse, and twist operators \\
$\divuntrois,\curluntrois$ & weighted quotient divergence and curl \\
$\dVuntrois,\dVtrois$ & spacetime and spatial-slice volume forms\\
\bottomrule
\end{tabular}
\caption{Flow notation.}
\label{table:notation-operators}
\end{table}


\

\part{\JKL{} formulation of finite-energy Einstein areal flows} 
\label{part-one}

\section{A choice of geometry and matter variables}
\label{section-2}

\subsection{Areal geometry and an adapted frame}
\label{section-2-1}

\subsubsection{The geometric normalization}

This section introduces the geometric and matter variables used throughout the paper. We first describe the areal gauge and the quotient geometry, then construct an adapted orthonormal frame and calculate its spin connection. We next replace the original fluid variables by a matter momentum which remains meaningful at vacuum, and we gather all notation associated with the pressure law and the quadratic energy forms. The first-order equations themselves are stated in \autoref{section-3}.


\subsubsection{Global areal time}

Throughout, we consider a spacetime $(\Mcal,\guntrois)$ that is $\Tbb^2$-symmetric with spatial topology $\Tbb^3$. By $\Tbb^2$-symmetry, we mean that the spacetime is endowed with two commuting spacelike Killing fields with closed orbits. In particular, the symmetry orbits are two-tori, and the quotient by the symmetry is globally well-defined. We foliate this spacetime by spacelike hypersurfaces of constant areal time, denoted by~$t$. Namely, we introduce a global coordinate $t\colon \Mcal \to \Interval \subset \RR\setminus\{0\}$ that coincides, up to a sign, with the area of the $\Tbb^2$-orbits of symmetry. Here, $\Interval$ denotes a compact or non-compact interval that does not contain~$0$. Most of our definitions and results will be stated on a compact interval $\Interval=[t_0, t_1]$; however, our definition extends immediately to semi-open intervals $\Interval = [t_0, t_*) \subset (0, + \infty)$ or $\Interval= [t_0, t_*) \subset (- \infty, 0)$. Under the standard non-degeneracy and dominant energy condition \cite{LeFlochRendall-2011}, Einstein's constraint equations imply that the gradient of the orbit-area function is timelike; this is the usual motivation for seeking a global areal foliation (cf.~\cite{Rendall-book} and the references cited therein). We work throughout on such an areal development and choose the sign of~$t$ so that $t$ increases toward the future. With our signature and the metric convention~\eqref{quotient-metric-gN} below, this is equivalently the requirement that $-\nabla t$ be future-directed. Hence, positive and negative values of~$t$ correspond to \emph{future-expanding} and \emph{future-contracting} spacetimes, respectively. In this section, we assume that all objects under consideration are sufficiently regular.


\subsubsection{Modular parameter}

By symmetry, each $\Tbb^2$~orbit is isometric to a discrete quotient of~$\CC$ by a rank~$2$ lattice. Thus, besides the area~$|t|$, the metric induced on each orbit is characterized {\cadmiumgreen by} the \emph{modular parameter}, denoted by $\tau\in\CC$ with $\Im\tau>0$, which describes the conformal structure as that of the discrete quotient $\CC/(\ZZ+\tau\ZZ)$.  The parameter is defined up to discrete transformations 
\be
\tau\mapsto \frac{a\tau+b}{c\tau+d} \quad \text{with }  \begin{pmatrix}a&b\\c&d\end{pmatrix} \in SL(2,\ZZ), 
\ee
which can be gauge-fixed by choosing a pair of reference one-cycles inside~$\Tbb^2$ (with intersection number~$1$). We focus our attention on spacelike hypersurfaces with $\Tbb^3$ topology, which ensures a global choice of such cycles; thus the modular parameter $\tau$ is periodic in space.  We decompose 
\be
\tau \eqqcolon Q+ie^{-P}
\ee
into its real and positive imaginary parts. Although we assume $\Tbb^3$ topology, our method is general and should also apply to the case where Cauchy surfaces are realized as certain non-trivial $\Tbb^2$ bundles over the circle~$\Sbb^1$, including the comparatively simpler case of (vacuum) \emph{twisted Gowdy spacetimes} introduced in~\cite{Rendall-twisted}. In that setting, the spatial periodicity of~$\tau$ is replaced by periodicity up to an $SL(2,\ZZ)$ transformation.


\subsubsection{Lapse function and conformal length density}

We consider the (two-dimensional) \emph{quotient manifold}
\be
\Ncal \coloneqq \Mcal/\Tbb^2
\ee
and the associated \emph{quotient metric}~$g_{\Ncal}$, induced by the four-dimensional metric through the action of the symmetry group~$\Tbb^2$. By construction, the areal time~$t$ remains constant along the $\Tbb^2$-orbits, and thus serves as a time coordinate on this $(1+1)$-dimensional Lorentzian manifold. We define the \emph{conformal factor} $\Omega>0$ through the squared norm of~$dt$, namely
\be
\Omega^{-2} \coloneqq - g_{\Ncal}^{-1}(dt,dt).
\ee
In spatial coordinates adapted to the areal foliation, the quotient metric $g_{\Ncal}$ takes the form
\bel{quotient-metric-gN}
g_{\Ncal} = \Omega^2 (- dt^2 + \amdeux^2 dx^2).
\ee

More precisely, once the time coordinate~$t$ has been fixed, the spatial coordinate~$x$ is defined, up to an overall constant shift and orientation, by choosing a parametrization of the initial quotient circle $\{t=t_0\}\simeq \Tbb^3/\Tbb^2\simeq \Sbb^1$ and requiring that $x$ be periodic with period~$1$. Equivalently, we may prescribe the initial profile $\amdeux|_{t=t_0}$, which determines the spatial gauge\footnote{We may in particular choose $\amdeux|_{t=t_0}$ to be constant. In the vacuum case under Gowdy symmetry, this normalization is especially natural, since $\amdeux$ then remains constant throughout the evolution. In general, however, $\amdeux$ evolves nontrivially, so this choice is merely an initial gauge normalization and depends on the selected initial slice.} on the initial slice. In the regularity classes considered below, we work with areal coordinates $(t,x)$ for which the quotient metric admits the representation \eqref{quotient-metric-gN}. The conformal coefficient $\Omega$ is commonly referred to as the \emph{lapse function} of the foliation, while we call the function $\amdeux$ the \emph{conformal length density}. The null directions of the quotient metric satisfy
\be
\frac{dx}{dt}= \pm \amdeux^{-1},
\ee
so that $\amdeux$ plays the role of an inverse local propagation speed in the quotient geometry. For this reason, it appears as a characteristic speed in the differential operators governing the geometric variables.


\subsubsection{Adapted moving frame}

We now construct an \emph{adapted moving frame}, namely an \emph{orthonormal basis} of vector fields on~$\Mcal$,
\be
e_0,e_1,e_2,e_3, \text{ invariant under the } \Tbb^2\text{ symmetry,}
\ee
as follows. First, $e_0$ is the future-directed unit vector normal to the constant-$t$ hypersurfaces, while $e_1$ is a unit vector orthogonal to $e_0$ and to the $\Tbb^2$~symmetry orbits. Next, $e_2$ is a unit vector tangent to the $\Tbb^2$~orbits (hence orthogonal to $e_0$ and $e_1$) and aligned with the first reference one-cycle mentioned above. Finally, $e_3$ is a unit vector orthogonal to the other three basis vectors. For any given spacetime with $\Tbb^2$~symmetry on~$\Tbb^3$, the choice of adapted moving frame depends only on \emph{discrete data:} the cycle of~$\Tbb^2$ along which $e_2$ is aligned (with possible choices differing by $SL(2,\ZZ)$ transformations), and the orientations of $e_3$ and~$e_1$. The modular parameter~$\tau$ depends also on the second reference cycle.


\subsubsection{Metric decomposition}

In the so-called areal gauge, with the coordinates $t,x$ introduced above and after choosing angular coordinates {$y,z$} along the closed symmetry orbits, any $\Tbb^2$-symmetric spacetime metric $\guntrois$ on the torus $\Tbb^3$ can be written in the form 
\bel{metric:areal}
\guntrois = \Omega^2 (- dt^2 + \amdeux^2 dx^2) + \abs{t} e^P \bigl( dy + Q\,dz + (G+QH) \, dx\bigr)^2 + \abs{t} e^{-P} (dz + H\,dx)^2, 
\ee  
where the metric coefficients $P,Q,\Omega, \amdeux, G, H$, with $\Omega,\amdeux>0$, depend only on $t \in \Interval$ and $x \in \Sbb^1 \simeq [0,1]$, whereas the remaining `parallel' variables $y,z$ describe coordinates on $\Tbb^2 \simeq [0,1]^2$. Observe that the vector fields $\del/\del y$ and $\del / \del z$ are Killing fields; for the existence of this metric decomposition, we refer to Rendall~\cite{Rendall-book}. In our analysis, we will sometimes find it convenient to specialize the general results to the following important cases. 
\bei 

\item  \emph{Gowdy-symmetric spacetimes} are characterized geometrically by vanishing twists (introduced below). In the areal gauge used here, after a choice of orbit coordinates, we represent this subclass by\footnote{In view of \eqref{eq:221a}--\eqref{eq:221b} below, the vanishing of twists also implies the vanishing of the parallel fluid variables~$J_2,J_3$.} $G=H=0$ throughout the spacetime. In addition, \emph{polarized} Gowdy-symmetric spacetimes are further characterized by $Q=0$.

\item \emph{Vacuum $\Tbb^2$-symmetric spacetimes} are defined by setting $\mu = 0$ throughout the spacetime, in which case the Euler equations are automatically satisfied for an arbitrary (and physically irrelevant) velocity field~$u$. 

\eei


\subsection{First-order variables for the quotient geometry}
\label{section-2-2}

\subsubsection{A choice of frame}

While the present work focuses mainly on regular solutions, we express the field equations for the metric~\eqref{metric:areal} in a robust form that is meaningful for finite-energy (weakly regular) solutions and whose nonlinearities can converge along weakly convergent limits~\cite{TorusT2-1-2026,TorusT2-4-2026}. First of all, we rewrite the metric and its inverse as 
\bel{equa-metric-inverse}
\guntrois_{\alpha\beta} = - e^0_\alpha e^0_\beta + e^1_\alpha e^1_\beta + e^2_\alpha e^2_\beta + e^3_\alpha e^3_\beta,
\qquad 
\guntrois{}^{\alpha\beta} 
 = - e_0^\alpha e_0^\beta + e_1^\alpha e_1^\beta + e_2^\alpha e_2^\beta + e_3^\alpha e_3^\beta, 
\ee 
namely in terms of the adapted moving frame defined above, also called \emph{Vielbein} and inverse Vielbein, respectively. 

In areal gauge, the Vielbein one-forms and vectors associated with the decomposition~\eqref{equa-metric-inverse} of the spacetime metric~$\guntrois$ take the explicit form
\bel{eq:ourframe}
\aligned
e^0 & \coloneqq \Omega\,dt, \
& e^1 & \coloneqq \Omega\, \amdeux dx, \
\\
e^2 & \coloneqq |t|^{1/2} e^{P/2} \bigl( dy + Q\,dz + (G+QH) \, dx\bigr), \
& e^3 & \coloneqq |t|^{1/2} e^{-P/2} (dz + H\,dx),
\\
e_0 & \coloneqq \Omega^{-1} \del_t, \
& e_1 & \coloneqq \Omega^{-1} \amdeux^{-1} (\del_x - G\del_y - H\del_z), \ 
\\
e_2 & \coloneqq |t|^{-1/2} e^{-P/2} \del_y, \
& e_3 & \coloneqq |t|^{-1/2} e^{P/2} (\del_z - Q \del_y).
\endaligned
\ee 
Denoting by $m,n, \ldots= 0, 1, 2, 3$ the frame indices corresponding to $(e_m)$, we observe that the components $\omega^{mn}_p \coloneqq e_p^\alpha e_\beta^m\nabla_\alpha e^{\beta n}=- \omega^{nm}_p$ of the \emph{spin connection} take a particularly simple form in terms of the variables introduced in~\eqref{eq:definevar-01} below: 
\bse\label{equa-touslesomega}
\be
\aligned
& \omega^{20}_2 = \frac{1}{2} \Bigl(P_0+ \frac{1}{t} \Omega^{-1} \Bigr), \,
&& \omega^{03}_3 = \frac{1}{2} \Bigl(P_0- \frac{1}{t} \Omega^{-1} \Bigr), \, 
&& \omega^{12}_2 = \omega^{31}_3 = \frac{1}{2} P_1, 
\\
& \omega^{20}_3 = \omega^{30}_2 = \omega^{23}_0 = \frac{1}{2} Q_0, \,
&& \omega^{12}_3 = \omega^{13}_2 = \omega^{23}_1 = \frac{1}{2} Q_1, \, 
&& 
\\
& \omega^{10}_2 = \omega^{20}_1 = \omega^{21}_0 = \frac{1}{2} K_2, \quad 
&& \omega^{10}_3 = \omega^{30}_1 = \omega^{31}_0 = \frac{1}{2} K_3, \quad 
\\
& \omega^{01}_0 = \amdeux^{-1}  (\Omega^{-1})_x, \quad
&&
\omega^{01}_1 = \amdeux (\Omega^{-1} \amdeux^{-1} )_t. 
\endaligned
\ee
All other coefficients not determined by the above relations and by antisymmetry vanish, namely
\be
\aligned   
&& \omega^{02}_0 = \omega^{03}_0 = \omega^{12}_1 = \omega^{13}_1 = \omega^{23}_2 = \omega^{23}_3 = 0.
\endaligned
\ee
\ese

\subsubsection{A choice of geometry variables}

Motivated by components of the spin connection, we introduce the notation 
\bel{eq:definevar-01}
\aligned 
P_0 & \coloneqq e_0(P) = \Omega^{-1} P_t, 
\,
& Q_0 & \coloneqq e^P e_0(Q) = \Omega^{-1} e^P Q_t, 
\\
P_1 & \coloneqq e_1(P) = \Omega^{-1} \amdeux^{-1} \, P_x,  
\,
& Q_1 & \coloneqq e^P e_1(Q) = \Omega^{-1}\amdeux^{-1} \, e^P Q_x,  
\\
K_2 & \coloneqq \Omega^{-2}   \amdeux^{-1}  \, |t|^{1/2} e^{P/2} \bigl ( G_t + QH_t \bigr),  
\, \qquad
& K_3 & \coloneqq \Omega^{-2}  \amdeux^{-1} \, |t|^{1/2} e^{-P/2} H_t,
\endaligned
\ee  
in which $K_2, K_3$ are referred to as the \emph{twist functions}, also defined geometrically as \eqref{equa-K2K3} below. 


The geometric unknowns used throughout our work to express the Einstein equations are as follows: the lapse~$\Omega$ and the conformal length density~$\amdeux$ (which describe the quotient metric~\eqref{quotient-metric-gN}), the two one-forms $\Pbb$ and $\Qbb$ (whose components in the Vielbein basis are $(P_0, P_1)$ and $(Q_0, Q_1)$), and a complex scalar $\Kpar$ (associated with $K_2$ and $K_3$). Specifically, the \emph{first-order geometry variables} are the triple $(\Kpar,\Pbb,\Qbb)$, where
\bse\label{eq:definevar-KL}
\bel{eq:definevar}
\aligned
\Pbb & \coloneqq (P_0, P_1) = e_0(P) e^0 + e_1(P) e^1 = dP , 
\\
\Qbb & \coloneqq (Q_0, Q_1) = e^P e_0(Q) e^0 + e^P e_1(Q) e^1 = e^P dQ ,
\endaligned
\ee
together with the \emph{twist functions} 
\bel{equa-K2K3}
K_2  \coloneqq  \guntrois\bigl(e_2,[e_1,e_0]\bigr), 
\quad
K_3  \coloneqq  \guntrois\bigl(e_3,[e_1,e_0]\bigr),
\ee
\ese
where $[\, \cdot\,,\, \cdot\,]$ denotes the commutator of two vector fields on~$\Mcal$.
Throughout, we tacitly assume the \emph{vanishing integral conditions}
\bel{equa-P1Q1}
\int_{\Tbb^3} P_1 \, \dVtrois = \int_{\Tbb^3} e^{-P} Q_1 \, \dVtrois = 0 \quad \text{  at  } t \in \Interval,  
\ee 
where $\dVtrois = |t|\,\Omega\,\amdeux\,dx\,dy\,dz$. These conditions are required for periodicity of $P,Q$. Using the evolution equations derived below, we easily check that these conditions hold for all times provided they hold on an initial Cauchy surface

\subsubsection{Complex-valued variables}

We may also introduce the following \emph{complex-valued} geometry variables: a \emph{one-form}~$\Lbb$ and the \emph{scalar field}~$\Kpar$ on~$\Ncal$, defined by
\be
\aligned
\Lbb & \coloneqq \Pbb + i\,\Qbb = \frac{i\,d\tau}{\Im\tau},
\qquad\quad
\Kpar \coloneqq K_2 + iK_3 = \guntrois\bigl(e_2+ie_3,[e_1,e_0]\bigr). 
\endaligned
\ee 
Recall that $\tau=Q+ie^{-P}$ is the modular parameter.  With some abuse of notation, we also write 
\be
\Lbb  \coloneqq  (\Pbb, \Qbb), \qquad \Kpar = (K_2, K_3). 
\ee


\subsection{Pressure law and quadratic fluid variables}
\label{section-2-3}

\subsubsection{Pressure law}

The matter model is determined by a pressure law $p=p(\mu)$, where $\mu\geq 0$ is the mass-energy density. We assume that $p\in C^1([0,+\infty))\cap C^2((0,+\infty))$, together with  
\bse\label{hyperbolic-eos-zero}
\bel{hyperbolic-eos}
p(0)=0, \qquad
0\leq p'(0)<1, \qquad
0<p'(\mu)<1 \quad \text{for } \mu>0 ,
\ee 
which express \emph{strict hyperbolicity away from vacuum} and the fact that the sound speed stays \emph{below the speed of light}, normalized to unit. We also impose two asymptotic conditions, one near vacuum and one at large density, stated as follows.
\bei

\item 
\emph{Asymptotically polytropic or isothermal near vacuum.} We assume that, for some $\gamma>1$,
\bel{equa-asym}
\lim_{\mu\to 0}\frac{p(\mu)}{\mu^\gamma} \in (0, +\infty)
\quad\text{ or }\quad
p'(0) \in (0,1).
\ee
\item
\emph{Asymptotically isothermal at large density.} 
We assume that
\bel{hyperbolic-eos-2}
\lim_{\mu\to+\infty}p'(\mu) \in (0,1).
\ee
\eei
\noindent
The isothermal law $p(\mu)=k^2\mu$, with $k\in(0,1)$, is the main special case considered later; throughout, the phrase \emph{admissible pressure law} refers to a law satisfying the hyperbolicity condition and the two asymptotic conditions above. 
\ese 
%


\subsubsection{A choice of fluid variables}

\bse\label{Jdef}
Throughout, the matter flow is assumed to enjoy the same $\Tbb^2$~symmetry as the spacetime, so that the fluid unknowns depend on the variables $t,x$ only. However, \emph{all four components} of the velocity vector may be non-vanishing. Using the orthonormal frame $(e_0, e_1, e_2, e_3)$, which was defined above in geometric terms and is adapted to the assumed symmetry, we decompose the \emph{fluid velocity} $u$ as $u=u^m e_m$, with frame components $(u^0,u^1,u^2,u^3)$. We define the \emph{matter momentum one-form} $\Jbb$, by lowering the index with the metric~$\guntrois$ and rescaling the velocity vector by the scalar factor $\sqrt{2(\mu+p)}$, namely
\bel{appendixJmatter}
\Jbb \coloneqq (J_0, J_1, J_2, J_3), 
\qquad
J_m \coloneqq \sqrt{2(\mu+p)} \, u_m, 
\ee
Hence, we have 
\bel{equa-211b}
- \Jbb \cdot \Jbb = J_0^2 - J_1^2 - J_2^2 - J_3^2 = 2 (\mu+p(\mu)). 
\ee
Since $\mu+p(\mu) \geq 0$, this field obeys the \emph{fluid causality inequalities}
\bel{eq:J0J} 
J_0 \leq 0,
\qquad 
- \Jbb \cdot \Jbb \coloneqq J_0^2 - J_1^2 - J_2^2 - J_3^2 \geq 0, 
\ee
namely, $\Jbb$ is future-oriented and causal, possibly vanishing. 

\emph{By extension,} our theory is formulated directly in terms of the current $\Jbb$, which continues to make sense in the degenerate vacuum regime $\mu=0$, even though the velocity field $u$ is then no longer defined. This extension is required for stability under evolution and weak convergence.

In contrast to the geometric variables, which primarily reside on the quotient manifold~$\Ncal$, the one-form $\Jbb$ is naturally defined on the full spacetime~$\Mcal$.  When projected onto~$\Ncal$, the one-form decomposes into components that are \emph{transverse} or \emph{parallel} to the $\Tbb^2$~orbits of symmetry, namely, a (real) one-form $\Jperp$ and a complex scalar field $\Jpar$, defined as
\be
\Jperp \coloneqq (J_0, J_1), \qquad \Jpar \coloneqq J_2+iJ_3. 
\ee
\ese
The formulation of the Einstein--Euler system proposed here is based on the fluid and geometric variables $(\Jbb,\Kpar,\Lbb)$ together with $(\Omega,\amdeux)$. 

Through~\eqref{equa-211b}, the density is uniquely recovered from the matter momentum vector as
\bel{eq:section-2-density-change}
\mu = y^{-1}(-\Jbb\cdot\Jbb) , \qquad y(\mu)\coloneqq2(\mu+p(\mu)) ,
\ee
in terms of a map $y$ that is a strictly increasing bijection from $[0,+\infty)$ onto itself and that has a continuously differentiable inverse on $[0,+\infty)$.  Indeed, $y(0)=0$, $y(\mu)\to+\infty$, and $y'(\mu)=2(1+p'(\mu))>0$ for $\mu\geq 0$.

The pressure ratio used below is the function
\bse
\bel{eq:defq}
q(\mu) \coloneqq \frac{\mu-p(\mu)}{\mu+p(\mu)} \in (0,1), \qquad \mu>0,
\ee
and we write $q_\Jbb=q(\mu)$ when the density is recovered from the momentum vector  through~\eqref{equa-211b}. Moreover, we define
\be
q(0) \coloneqq \lim_{\mu\to 0}q(\mu)=\frac{1-p'(0)}{1+p'(0)}\in(0,1],
\ee
\ese
so $q(0)=1$ whenever $p'(0)=0$, as in the polytropic case. The asymptotic assumptions imply the uniform lower bound
\bel{equa-qminqmax-1}
0 < q_{\min} \coloneqq \inf_{[0,+\infty)} q \leq q(\mu) \quad \text{for all }\mu\in[0,+\infty),
\ee
and the high-density upper bound
\bel{equa-qminqmax-2}
q(\mu) \leq q_\infty^+ < 1 \quad \text{for all sufficiently large }\mu.
\ee
These properties follow directly from the pressure hypotheses. Indeed, $p(0)=0$ and $0<p'(\mu)<1$ imply $0<p(\mu)<\mu$ for $\mu>0$ so $q(\mu)\in(0,1)$. The two alternatives in~\eqref{equa-asym} imply $q(0)\in(0,1]$, while $p'(\mu)\to k_\infty^2$ implies $p(\mu)/\mu\to k_\infty^2$ and therefore $q(\mu)\to(1-k_\infty^2)/(1+k_\infty^2)\in(0,1)$ as $\mu\to+\infty$. The preceding limits, together with continuity on compact density intervals, give the stated bounds.


\subsubsection{Quadratic forms}

We now introduce nonlinear quantities that play a crucial role in our analysis below. For any one-form~$\Xbb$, chosen among $\Pbb, \Qbb, \Jperp$, we define 
\bse
\label{eq:E0E1-T2-3lignes}
\bel{eq:E0E1-T2}
\aligned
\Ebf_0(\Xbb) & \coloneqq \frac{1}{2} (X_0^2 + X_1^2),
& \qquad
\Ebf_1(\Xbb) & \coloneqq X_0 X_1. 
\endaligned
\ee
It is convenient to also set $\Ebf_0(\Lbb)= \Ebf_0(\Pbb,\Qbb)$ and use the shorthand notation $\Ebf_0(\Xbb, \Ybb) \coloneqq \Ebf_0(\Xbb) + \Ebf_0(\Ybb)$, and similarly for additional arguments and for the quadratic form~$\Ebf_1$. For $\Xpar=X_2+iX_3$, chosen among $\Kpar, \Jpar$, we use
\bel{equa-Ezero-etc}
\Ebf_0(\Xpar) \coloneqq \frac{1}{2}(X_2^2 + X_3^2), \qquad
\Re(\Xpar^2)=X_2^2-X_3^2, \qquad \Im(\Xpar^2)=2 X_2X_3.
\ee
We also write
\be
\Ebf_0(\Jbb) \coloneqq \Ebf_0(\Jperp) + \Ebf_0(\Jpar) = \frac{1}{2}(J_0^2 + J_1^2 + J_2^2 + J_3^2).
\ee
We collectively refer to these expressions as energy and energy-flux terms.
\ese
\bse 
We will also use the following \emph{null} quadratic forms in $\Xbb, \Ybb$, chosen among $\Pbb, \Qbb, \Jperp$, 
\be 
- \Xbb \cdot \Ybb \coloneqq X_0 Y_0 - X_1 Y_1, 
\qquad 
\Xbb \wedge \Ybb \coloneqq X_0 Y_1 - X_1 Y_0.
\ee 
Finally, for the matter momentum $\Jbb$ and its orthogonal part $\Jperp$, we have
\bel{equa-2122}
- \Jperp \cdot \Jperp = J_0^2 - J_1^2 \geq - \Jbb \cdot \Jbb 
= J_0^2 - J_1^2 - J_2^2 - J_3^2 \geq 0.
\ee 
\ese
%


\section{The \JKL{} formulation of Einstein--Euler areal flows}
\label{section-3}

\subsection{Evolution equations and constraints}
\label{section-3-1}

\subsubsection{Divergence and curl operators}

\bse
\label{equa-div-curl}
To proceed, we introduce notation for several operators defined on $(\Mcal, \guntrois)$. We consider one-form fields $\Xbb= (X_0,X_1)$ on the quotient manifold~$\Ncal$, and we introduce their \emph{divergence with respect to the quotient metric}~$g_{\Ncal}$ 
\bel{equa-divN} 
\divN(\Xbb)
\coloneqq \amdeux^{-1} \Omega^{-2}  \bigl(-( \amdeux \Omega X_0)_t + (\Omega X_1)_x\bigr).  
\ee
The divergence operator depends only on the metric coefficients $(\Omega,\amdeux)$. This formula differs slightly from the divergence with respect to the four-dimensional metric. The \emph{divergence with respect to the spacetime metric} $\guntrois$ reads 
\bel{equa-def-div13}
\divuntrois \Xbb \coloneqq |t|^{-1} \divN(|t| \, \Xbb).
\ee
The factor of $|t|$ arises from the $\Tbb^2$~directions, with an absolute value to account for both time orientations. As usual, we implicitly identify scalar fields on~$\Ncal$ with $\Tbb^2$-invariant scalar fields on~$\Mcal$. We are using the same notation $\Xbb$ (by a mild abuse) for the one-form field on~$\Mcal$ with the same components $X_0,X_1$ and $X_2=X_3=0$ in the Vielbein basis. 
\ese

We identify vector fields and one-forms on the quotient manifold~$\Ncal$ using the metric~$g_{\Ncal}$. This identification maps a one-form $\Xbb$ with components $X_0,X_1$ in the inverse Vielbein basis to a vector with the same components in the Vielbein basis, up to a sign, namely ${X^0=-X_0}$ and ${X^1=X_1}$. Hence, the divergence operator can be applied equally well to vector fields or one-forms. When working with contravariant components, the time derivative thus has the \emph{opposite sign} from the one-form formula~\eqref{equa-divN}.  Explicitly,
\bel{divuntrois-vector}
\divuntrois \Xbb = |t|^{-1} \amdeux^{-1} \Omega^{-2}  \bigl( ( |t|\amdeux \Omega X^0)_t + (|t|\Omega X^1)_x\bigr).
\ee

\bse\label{equa-def-curl-0}
To the one-form field $\Xbb$, we associate its differential $d\Xbb$, a two-form field, which has a single non-vanishing component, namely
\be
d\Xbb= \bigl( \curl_\Ncal  \Xbb \bigr) \, e^0\wedge e^1, 
\ee
which leads us to define $\curl_\Ncal (\Xbb)= (d\Xbb)_{01}$. This is equivalent to applying the Hodge star operator ($X_0\leftrightarrow -X_1$) before taking the divergence, and yields the \emph{curl operator:}
\be
\curl_\Ncal\Xbb 
\coloneqq \amdeux^{-1} \Omega^{-2}  \bigl(( \amdeux \Omega X_1)_t - (\Omega X_0)_x\bigr),  
\ee
which, by analogy with the expression of the divergence above, leads us to set 
\bel{equa-def-curl}
\curluntrois \Xbb \coloneqq |t|^{-1} \curl_\Ncal( |t| \, \Xbb).
\ee
In the following, it will be convenient to use the spacetime notation $\divuntrois $ and $\curluntrois$ when stating the field equations. 
\ese
%


\subsubsection{Ricci and matter components in the adapted frame}

We denote by $(\eta_{mn})$ the Minkowski metric with signature $(-1, 1, 1, 1)$. In the adapted frame introduced above, the Einstein equations take the following form for $m,n = 0, 1, 2, 3$: 
\bel{eq:Einsframe}
\aligned
  e_m^\alpha R_{\alpha\beta} e_n^\beta 
& = 
e_m^\alpha T_{\alpha\beta} e_n^\beta - \frac{1}{4-2}\bigl(T_{\alpha\beta} {\guntrois{}^{\alpha\beta}}\bigr)  \eta_{mn} 
\\
& = (\mu+p) \, u_m u_n + \frac{1}{2} (\mu-p) \eta_{mn}
= \frac{1}{2} J_m J_n + \frac{q_\Jbb}{4} (-\Jbb\cdot\Jbb) \eta_{mn} , 
\endaligned
\ee
where $R_{\alpha\beta}$ denotes the components of the Ricci curvature $\Ric$. When expressed in terms of the vector~$\Jbb$, the last term $\frac{1}{2} J_m J_n$ \emph{does not} explicitly depend on the equation of state, whereas the second term still depends on the equation of state. 


Moreover, we define the tensor field
\be
\Mbf= \Mbf(\Jbb, \Kpar, \Lbb) = \Mbf(\Jperp, \Jpar, \Kpar, \Pbb, \Qbb), 
\ee
which incorporates both geometric and matter contributions, via its components\footnote{Observe that $\Mbf^{01}(\Jbb, \Kpar,\Lbb)$ does not depend on~$\Kpar$, hence this argument will be freely omitted.}
\bel{eq:T2-Mdef-0}
\begin{alignedat}{2}
\Mbf^{00} & \coloneqq \Ebf_0(\Jbb, \Kpar, \Lbb) + \frac{q_\Jbb}{2}  \, (- \Jbb \cdot \Jbb)
&& = \Ebf_0(\Jperp, \Pbb, \Qbb) + \Ebf_0(\Jpar, \Kpar) + \frac{q_\Jbb}{2}  \, (- \Jbb \cdot \Jbb), 
\\
\Mbf^{01} & \coloneqq - \Ebf_1(\Jperp, \Lbb) 
&& = - \Ebf_1(\Jperp) - \Ebf_1(\Pbb) - \Ebf_1(\Qbb),
\\
\Mbf^{11} & \coloneqq \Ebf_0(\Jperp, \Lbb) - \Ebf_0(\Jpar, \Kpar) - \frac{q_\Jbb}{2}  \, (- \Jbb \cdot \Jbb)
&& = \Ebf_0(\Jperp, \Pbb, \Qbb) - \Ebf_0(\Jpar, \Kpar) - \frac{q_\Jbb}{2}  \, (- \Jbb \cdot \Jbb),
\end{alignedat}
\ee
together with $\Mbf^{10} = \Mbf^{01}$. Observe that $\Mbf^{00} \geq 0$. Moreover, these functions obey the inequalities
\bel{eq:T2-Mdef-1}
2 \, |\Mbf^{01}|\leq \Mbf^{00}+\Mbf^{11}, \qquad \quad - \Ebf_0(\Kpar)\leq \Mbf^{11}\leq \Mbf^{00}.
\ee 
To write the Euler equations, we will also need the notation
\bel{eq:T2-Euler-perp2-def0}
\aligned
\Msource(\Jbb, \Kpar, \Lbb)
& \coloneqq \Ebf_0(\Jpar) + 5 \, \Ebf_0(\Kpar)
- \Pbb \cdot \Pbb -  \Qbb \cdot \Qbb - \Jperp \cdot \Jperp 
+ \frac{q_\Jbb}{2}  \, \Jbb \cdot \Jbb, 
\endaligned
\ee
which obeys the inequality (proven in~\autoref{section-A-5})
\be
\abs{\Msource}\leq 5 \, \Mbf^{00}.
\ee
 

\subsubsection{Perfect-fluid stress-energy tensor}

 The components of the stress-energy tensor in the orthonormal frame~$(e_m)_{m=0,1,2,3}$ are given by $T^{mn}= \frac{1}{2}J^m J^n + \frac{1}{4} (1-q_\Jbb)(-\Jbb\cdot\Jbb) \eta^{mn}$, and in particular for $m,n=0,1$ they coincide with the fluid part of $(1/2)\Mbf^{mn}$: 
\bel{eq:T2-Texpr}
\begin{alignedat}{2}
T^{00} & = \frac{1}{2} \Mbf^{00}(\Jbb, 0, 0) = \frac{1}{2} J_0^2 - \frac{1-q_\Jbb}{4} (- \Jbb \cdot \Jbb), \qquad
& \Mbf^{00}(\Jbb, \Kpar, \Lbb) & = 2 T^{00} + \Ebf_0(\Kpar, \Lbb),
\\
T^{01} & = \frac{1}{2} \Mbf^{01}(\Jbb, 0, 0) = - \frac{1}{2} J_0 J_1, \qquad
& \Mbf^{01}(\Jbb, \Kpar, \Lbb) & = 2 T^{01} - \Ebf_1(\Lbb),
    \\
    T^{11} & = \frac{1}{2} \Mbf^{11}(\Jbb, 0, 0) = \frac{1}{2} J_1^2 + \frac{1-q_\Jbb}{4} (- \Jbb \cdot \Jbb), \qquad
    & \Mbf^{11}(\Jbb, \Kpar, \Lbb) & = 2 T^{11} + \Ebf_0(\Lbb) - \Ebf_0(\Kpar).
  \end{alignedat}
\ee
Throughout this work, the fluid contributions appearing in the equations could equally well be expressed in terms of the components of the stress-energy tensor. This observation also indicates how the calculations extend to more general matter models, as discussed later on. Since the fluid mass-energy density may vanish on regions of the spacetime, it is essential for our theory to allow for such vacuum regions.


\subsubsection{Field equations}

In terms of the variables introduced in~\autoref{section-2-2}, we now exhibit the basic algebraic and differential structure of the Einstein equations.  The field equations form a coupled system\footnote{A single equation is not included in this list (namely \eqref{eq:waveconffac}, below), since it follows from the other equations and will not be included in our first-order formulation.} involving the following partial differential operators. 
\bei 

\item Evolution of the first-order geometry: 
\bse\label{eq:T2-1234-def}
\begin{align}
\aligned 
\opL_{00} & \coloneqq
\divuntrois(\Pbb)
-  \Qbb \cdot \Qbb + \frac{1}{2} \Re(\Kpar^2 + \Jpar^2),
\\ 
\opL_{01}  & \coloneqq \curluntrois (|t|^{-1} \Pbb),
\label{eq:T2-1234-def-a}
\endaligned
\\
\aligned 
\opL_{10}  
& \coloneqq \divuntrois(\Qbb)
+ \Pbb \cdot \Qbb + \frac{1}{2} \Im(\Kpar^2 + \Jpar^2),
\\ 
\opL_{11} & \coloneqq \curluntrois(|t|^{-1} \Qbb)
- |t|^{-1} \Pbb \wedge \Qbb.
\endaligned
\label{eq:T2-1234-def-b}
\end{align}%
\ese

\par\vspace{-\lastskip}

\item Evolution of the matter:
\bse
\label{eq:T2-Euler-perp-def}
\begin{align} 
\opJ^0 & \coloneqq \divuntrois \bigl( \Omega \, \Mbf^{0 \bullet}(\Jbb, \Kpar, \Lbb) \bigr)
+ \frac{1}{2t} \Msource(\Jbb, \Kpar, \Lbb),
\label{eq:T2-Euler-perp-def-a}
\\ 
\opJ^1 & \coloneqq \divuntrois \bigl( \Omega \, \Mbf^{1 \bullet}(\Jbb, \Kpar, \Lbb) \bigr),
\label{eq:T2-Euler-perp-def-b}
\\ 
\opJ^2 & \coloneqq \divuntrois \bigl(|t|^{1/2} J_2 \Jperp\bigr)
+ \curluntrois  \bigl(|t|^{1/2} K_2 \Pbb / 2 \bigr),
\label{eq:T2-fluid23-def-a}
\\ 
\opJ^3 & \coloneqq \divuntrois  \bigl(|t|^{1/2} J_3 \Jperp\bigr)
+ \curluntrois  \bigl(|t|^{1/2} ( K_2 \Qbb - K_3 \Pbb / 2)\bigr).
\label{eq:T2-fluid23-def-b}
\end{align}%
\ese

\par\vspace{-\lastskip}

\item Evolution of the speed, lapse, and twists:
\bse\label{eq:T2-all-suite-def}
\begin{align} 
\opC & \coloneqq (\amdeux)_t
 - \frac{t}{2}(\Mbf^{00}- \Mbf^{11})(\Jbb, \Kpar, \Lbb)\,\Omega^2 \amdeux,
\label{eq:T2-8-def}
\\ 
\opO_0 & \coloneqq (\log\Omega)_t
 + \frac{1}{4t} - \frac{t}{2}\Mbf^{11}(\Jbb, \Kpar, \Lbb)\,\Omega^2,
\label{eq:evollambda-def}   
\\ 
\opK_{20} & \coloneqq |t|^{-3/2} \bigl(  |t|^{3/2} K_2 \bigr)_t
 - \bigl( J_1 J_2 -  P_0 K_2/2 \bigr)\Omega,
\label{eq:T2-9101112-evol-def-a}
\\ 
\opK_{30} & \coloneqq |t|^{-3/2} \bigl( |t|^{3/2} K_3 \bigr)_t
 - \bigl( J_1 J_3 + P_0 K_3/2 - Q_0 K_2 \bigr)\Omega. 
\label{eq:T2-9101112-evol-def-b} 
\end{align}%
\ese

\par\vspace{-\lastskip}

\item Constraints for the lapse and twists:
\bse
\label{eq:theconstraints-def}
\begin{align}
\opO_1 & \coloneqq (\log\Omega)_x
 + \frac{t}{2} \, \Mbf^{01}(\Jperp, \Lbb)\, \Omega^2 \amdeux,
\label{eq:T2-567-def}
\\ 
\opK_{21} & \coloneqq ( K_2 )_x
 - \bigl( J_0 J_2 - P_1 K_2/2 \bigr) \, \Omega \amdeux, 
\label{eq:221a}
\\ 
\opK_{31} & \coloneqq ( K_3 )_x
 - \bigl( J_0 J_3  - Q_1 K_2 + P_1 K_3/2 \bigr) \, \Omega \amdeux. 
\label{eq:221b}
\end{align}
\ese

\par\vspace{-\lastskip}

\eei
We formulate the field equations as follows. The derivation of this first-order system is given in \autoref{section-A}.

\begin{definition}
\label{def-quadr}
With the operators defined above, the \textbf{\JKL{} formulation of Einstein--Euler areal flows} is the system
\bel{equa-9385}
\begin{alignedat}{2}
\opL_{00}=\opL_{01}=\opL_{10}=\opL_{11} &=0,\qquad&
\opJ^0=\opJ^1=\opJ^2=\opJ^3 &=0,
\\
\opC=\opO_0=\opK_{20}=\opK_{30} &=0,\qquad&
\opO_1=\opK_{21}=\opK_{31} &=0.
\end{alignedat}
\ee
These are the evolution equations and constraints in~\eqref{eq:T2-1234-def}--\eqref{eq:theconstraints-def}. Their unknowns are $(\Jbb,\Kpar,\Lbb)$ and $(\Omega,\amdeux)$, subject to the algebraic inequalities
\bel{eq:causal}
\amdeux>0, \quad \Omega>0, \quad J_0\leq 0, \quad -\Jbb\cdot\Jbb=J_0^2-J_1^2-J_2^2-J_3^2\geq 0,
\ee
as well as the periodicity constraints
\bel{equa-pericons}
\int_{\Tbb^3}P_1\,\dVtrois= \int_{\Tbb^3}e^{-P}Q_1\,\dVtrois=0.
\ee
\end{definition}

Observe that the extended vacuum condition $\mu=0$ is equivalent to $\Jbb\cdot\Jbb=0$, which means that the matter momentum is null or vanishes. For a regular state described in terms of $(\mu,u)$ in~\eqref{Jdef}, we necessarily have $\Jbb=0$ at vacuum; the finite-energy state space has a richer closure that allows non-zero null vectors~$\Jbb$. Observe also that the constraints~\eqref{eq:theconstraints-def} imply, by integration, the three periodicity conditions
\bel{equa-peri-OmK}
\int \Mbf^{01}(\Jbb, \Lbb) \, \dVtrois
= \int e^{P/2} J_2 J_0 \dVtrois
= \int \bigl( Q e^{P/2} J_2 + e^{-P/2} J_3 \bigr) J_0 \dVtrois = 0. 
\ee
The particular case of isothermal flows is discussed in~\autoref{section-3-3}.


\subsection{Areal flows}
\label{section-3-2}

\subsubsection{Einstein--Euler areal flows}

We group the unknowns according to the norms used in the estimates:
\bel{eq:section-2-5-flow-variables}
\Phi\coloneqq(\Jperp,\Jpar,\Pbb,\Qbb),
\qquad
\Psi\coloneqq(\ell,\log\Omega,\Kpar),
\qquad
d_x\ell\eqqcolon\amdeux\,dx.
\ee
The variables in~\(\Phi\) are controlled by quadratic energies on spacelike and timelike hypersurfaces, whereas the variables in~\(\Psi\) are related to transport, constraint, and gauge equations and are absolutely continuous or of bounded variation. Here \(\ell\) is a primitive of the positive measure \(d_x\ell\) on the quotient circle.

\begin{definition}
\label{def-einstein-euler-areal-flow}
A \textbf{regular Einstein--Euler areal flow} is a pair \((\Phi,\Psi)\) satisfying pointwise all the evolution equations and constraints in \autoref{def-quadr}, together with the causality conditions~\eqref{eq:causal} and the periodicity constraints~\eqref{equa-pericons}. In particular, one has
\be
\opJ^0=\opJ^1=\opJ^2=\opJ^3=0.
\ee
\end{definition}


\subsubsection{Finite-energy regularity}

Let \(\Interval\Subset\RR\setminus\{0\}\) be a compact areal interval. As we will demonstrate in the present paper, the natural norm for the principal variables is
\be
\label{equa-flow-energy-norm}
\|\Phi\|_{\mathrm{energy}}
\coloneqq
\|\Phi\|_{L^\infty(\Interval;L^2(\Tbb^3,\dVtrois))}
+
\|\Phi\|_{L^\infty(\Sbb^1;L^2(\Interval\times\Tbb^2,\dVundeux))}.
\ee
where $\dVtrois$ and $\dVundeux$ denotes the volume elements induced by~$g$ on horizontal and vertical hypersurfaces of constant $x$ and constant $t$ respectively. Writing \(\BVac=W^{1,1}\), a natural norm for the variables $\Psi$ will be found to be
\be
\label{equa-flow-variation-norm}
\begin{aligned}
\|\Psi\|_{\mathrm{BV}}
\coloneqq{}&
\|(\log\Omega,\Kpar)\|_{L^\infty(\Interval;\BVac(\Tbb^3))}
+\|(\log\Omega,\Kpar)\|_{L^\infty(\Sbb^1;\BVac(\Interval\times\Tbb^2))}
\\
&+\|\ell\|_{L^\infty(\Interval;\BVac(0,1))}
+\|\ell\|_{L^\infty((0,1);\BVac(\Interval))}.
\end{aligned}
\ee
We proved in~\cite{TorusT2-1-2026} that, under the finite-energy regularity assumptions, all the operator expressions in~\eqref{equa-9385} are well defined in the sense of distributions. The notion of weak solutions to the Einstein constraint equations will also be developed in this paper; cf.~\autoref{section-9}. 


\subsection{Structural properties and symmetry reductions}
\label{section-3-3}

\subsubsection{General flows with \texorpdfstring{$\Tbb^2$}{T2} symmetry}

The first-order formulation is meaningful, in particular, in vacuum regions. The following properties are established in~\autoref{section-A}. Propositions~\ref{theo-struct} and~\ref{theo-struct-bis} below, taken together, prove \autoref{theo-main-results}. The first establishes hyperbolicity for timelike~$\Jbb$ and identifies the quadratic and null structures, while the second proves equivalence, reconstruction, and propagation of the constraints.

\begin{proposition}[Structure of the first-order formulation: hyperbolicity and quadratic nonlinearities]
\label{theo-struct}
Consider the Einstein--Euler system under $\Tbb^2$ symmetry on $\Tbb^3$ for a general equation of state  satisfying~\eqref{hyperbolic-eos-zero}. After expressing the spacetime metric in areal gauge~\eqref{metric:areal}, consider the metric unknowns~$\amdeux, \Omega, P, Q, G, H$, together with the fluid unknowns $\mu, u$ entering the stress-energy tensor~\eqref{eq:Einsframe}. Assume furthermore that the solutions under consideration are sufficiently regular.
\bei

\item \emph{Hyperbolicity.}

\bei

\item \emph{Under the non-vacuum condition $- \Jbb \cdot \Jbb>0$,} in terms of $(\amdeux, \Omega)$ and of the unknowns $(\Jbb, \Kpar, \Lbb)$ defined in~\eqref{eq:definevar-KL} and \eqref{Jdef}, subjected to the causality inequalities~\eqref{eq:causal}, the evolution system consisting of~\eqref{eq:T2-1234-def},~\eqref{eq:T2-Euler-perp-def}, and~\eqref{eq:T2-all-suite-def} is a first-order hyperbolic system of $12$ nonlinear balance laws, supplemented with the $3$ constraints~\eqref{eq:theconstraints-def}. In addition, the wave equation for the lapse~\eqref{eq:waveconffac}, below, follows from this set of equations; in vacuum, it is a consequence of the geometry equations alone.

\hfuzz=0.3pt
\item At every non-zero null state, the two acoustic speeds and the (double) matter speed coalesce. Thus the diagonalization established on the timelike cone \emph{does not extend} uniformly to its null boundary; at $\Jbb=0$, the fluid direction is undefined.

\item If $p'(0)=0$, the limiting equations displayed in~\eqref{equa-Euler-Minko-trivial} have an explicit nontrivial Jordan structure at every non-zero null state; this is a degenerate, weakly hyperbolic regime. 

\eei 

\item \emph{Nonlinearities.}

\bei

\item The double characteristic family associated with the speed ${-\amdeux^{-1}\frac{J_1}{J_0}}$ is \emph{linearly degenerate} (even on the vacuum).  The other two nonlinear wave speeds are distinct away from the vacuum. If $p(\mu)/\mu$ tends to zero with $\mu$ tends to zero, these two nonlinear wave speeds coincide on the vacuum on which hyperbolicity is lost.  If $p(\mu)/\mu$ tends to a constant in $(0,1)$ (isothermal limit), these two nonlinear wave speeds remain distinct even on the vacuum and strict hyperbolicity holds. 

\item The metric evolution equations~\eqref{eq:T2-1234-def} and~\eqref{eq:T2-all-suite-def} and the fluid evolution equations~\eqref{eq:T2-Euler-perp-def} contain only terms that are quadratic in $(\Jbb,\Kpar,\Lbb)$, except that the fluid quadratic forms are multiplied by the bounded state-dependent coefficient $q_\Jbb$. 
They are all exactly quadratic in the isothermal case, since $q_\Jbb$ is then a constant.
\item With the exception of the lapse equations in~\eqref{eq:T2-567-def} and~\eqref{eq:evollambda-def}, all quadratic source terms involving only the variables $\Jperp, \Lbb$ are null forms\footnote{This is not the case for nonlinearities involving the variables $\Jpar, \Kpar$.}.

\eei
\eei
\end{proposition}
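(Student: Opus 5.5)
The plan is to prove each part by direct computation from the frame equations \eqref{eq:Einsframe} and the spin connection \eqref{equa-touslesomega}, taking the derivation of \autoref{section-A} as given for the form of the operators. I would split the argument into a count of equations, a characteristic analysis of the matter block, and an inspection of the nonlinear terms.

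\emph{Counting and lapse wave equation.} The evolution unknowns are $(\Pbb,\Qbb)$ (four components), $(\Jperp,\Jpar)$ (four), $\amdeux$, $\log\Omega$, and $\Kpar$ (two). Against these I would match the twelve evolution equations \eqref{eq:T2-1234-def}, \eqref{eq:T2-Euler-perp-def}, \eqref{eq:T2-all-suite-def}. The three constraints \eqref{eq:theconstraints-def} are the $x$-derivative equations for $\log\Omega$ and $\Kpar$. To obtain the wave equation \eqref{eq:waveconffac}, I would differentiate $\opO_0$ in $x$ and $\opO_1$ in $t$, then eliminate the mixed derivatives using $\opC$, $\opL$ and $\opJ^0,\opJ^1$. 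In vacuum $\opJ$ is absent, so only the geometric identities are needed.

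\emph{Hyperbolicity.} The geometric block $\opL_{00},\dots,\opL_{11}$ is a div--curl system for the one-forms $|t|\Pbb,|t|\Qbb$. Its principal part is the $1+1$ wave operator of $g_\Ncal$, with speeds $\pm\amdeux^{-1}$. The equations $\opC,\opO_0,\opK_{20},\opK_{30}$ are ODEs in $t$, with speed $0$. The matter block is the essential part. Writing $\opJ^0,\opJ^1$ as balance laws for $\Omega T^{0\bullet},\Omega T^{1\bullet}$ (via \eqref{eq:T2-Texpr}, the geometric parts of $\Mbf$ being lower order in $\Jbb$), and $\opJ^2,\opJ^3$ as transport of $J_2,J_3$ along $\Jperp$, I would compute the Jacobian with respect to $\Jbb$. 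This is best done in the local rest frame, i.e. a boost in the $(e_0,e_1)$ plane, as for the 1D relativistic Euler equations with transverse velocity. In that frame the eigenvalues are:
\begin{itemize}
\item the double material speed $-\amdeux^{-1}J_1/J_0$;
\item the two acoustic speeds, given by the relativistic velocity addition of $\pm c_s$, $c_s^2=p'(\mu)$, with the fluid velocity, modified by the transverse components.
\end{itemize}
Strong hyperbolicity for $-\Jbb\cdot\Jbb>0$ follows from a complete eigenvector basis. Symmetrizability, via the convex entropy of the relativistic Euler system, gives this directly.

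\emph{Null boundary and nonlinearities.} For a nonzero null $\Jbb$ the fluid velocity is luminal, so all speeds tend to the same limit and the eigenvector basis degenerates. For $p'(0)=0$, I would compute the limiting Jacobian explicitly at a null state and exhibit a nontrivial Jordan block, giving \eqref{equa-Euler-Minko-trivial}. Linear degeneracy of the material family is checked by verifying that $\nabla_\Jbb(J_1/J_0)\cdot r=0$ on both eigenvectors. This holds because they change only $J_2,J_3$ and $\mu$ at fixed $u^1/u^0$ direction. Coincidence or separation of the acoustic speeds at vacuum follows from $c_s\to0$ versus $c_s\to$ const. Quadraticity is read off from \eqref{eq:T2-Mdef-0}, \eqref{eq:T2-Euler-perp2-def0} and the operators: every source is a product of two of $(\Jbb,\Kpar,\Lbb)$, times $q_\Jbb$ where applicable. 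For the null forms, one inspects that the terms involving only $\Jperp,\Lbb$ appear as $\Xbb\cdot\Ybb$ or $\Xbb\wedge\Ybb$, for example in $\opL_{00},\opL_{10},\opL_{11}$ and in $\Msource$. Divergence terms are of conservative form. The exceptions, $\Ebf_1$ and $\Ebf_0$, occur precisely in $\opO_0,\opO_1$.

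\emph{Main obstacle.} I expect the null-boundary analysis to be the hardest step: showing explicitly the coalescence of speeds and the Jordan structure, uniformly in the direction of $\Jpar$. My first attempt would be to reduce by a boost and a rotation in $(e_2,e_3)$ to $\Jbb=(-1,1,0,0)\,s$, and then expand the Jacobian in $\mu$.
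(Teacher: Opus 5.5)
The counting, the interior hyperbolicity argument and the null-form inspection are fine, but two steps fail as written.

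\emph{The lapse wave equation.} Your derivation of \eqref{eq:waveconffac} uses the wrong operation. Differentiating $\opO_0$ in $x$ and $\opO_1$ in $t$ and cancelling $(\log\Omega)_{xt}$ is the curl (compatibility) computation. It produces a multiple of $\opJ^1$; this is exactly how the lapse constraint is propagated in \autoref{section-A-4}. No second derivatives $(\log\Omega)_{tt}$, $(\log\Omega)_{xx}$, $\amdeux_{tt}$ survive, yet these are the principal terms of \eqref{eq:waveconffac}. What you need is the divergence. Apply $\divuntrois$ to the one-form with components $t^{-1}\Omega^{-1}(\log(\Omega\amdeux))_t$ and $t^{-1}\Omega^{-1}\amdeux^{-1}(\log\Omega)_x$, i.e.\ differentiate $\opO_0$ and $\opC$ in $t$ and $\opO_1$ in $x$. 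Substituting $\opO_0=\opC=\opO_1=0$ turns this one-form into $\bigl(\tfrac12\Omega\Mbf^{00}-\tfrac{1}{4t^2\Omega},\,-\tfrac12\Omega\Mbf^{01}\bigr)$, and the $t^{-3}\Omega^{-2}$ terms then cancel. Since $\Mwave=\Msource+\tfrac12(\Mbf^{00}-\Mbf^{11})$, the left-hand side of \eqref{eq:waveconffac} is identically $-\tfrac12\opJ^0$ (\autoref{section-A-2}). So the wave equation is the first Euler equation in disguise, which is why it follows from the system. In vacuum $\opJ^0$ is not absent: it becomes the energy identity of the $(\Pbb,\Qbb,\Kpar)$ subsystem, and that identity is what must be derived from the $\opL$ and $\opK$ equations.

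\emph{The null boundary.} Your plan rests on a reduction that is not available. The only frame changes compatible with the $1+1$ characteristic problem (covectors in the span of $e^0,e^1$) are boosts in the $(e_0,e_1)$ plane and rotations in the $(e_2,e_3)$ plane. Both preserve $|\Jpar|$, and at a null state $J_0^2-J_1^2=|\Jpar|^2$. Hence a null state with $\Jpar\neq0$ can be boosted to $J_1=0$ but never to $s(-1,1,0,0)$. A boost mixing $e_0$ with $e_2$ would move the characteristic covector out of the $(e^0,e^1)$ plane, so the $1+1$ Jacobian in the new frame is not the one you need. The same restriction applies to your ``rest frame'' in the timelike case: it only removes $J_1$.

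No reduction is needed, and the direct argument is uniform in $\Jpar$. Substituting $-\Jbb\cdot\Jbb=0$ into the explicit acoustic roots $\xi_\pm$ of $\det(\xi A_0+\amdeux^{-1}A_1)$ gives $\xi_\pm=-\amdeux^{-1}J_1/J_0$ at every nonzero null state. If $p'(0)=0$, then $(yq)'=1$ there, and $A_0$ is invertible with $\det A_0=2J_0^4$. Every row of $A_1-(J_1/J_0)A_0$ is then a multiple (by $J_0,J_1,J_2,J_3$ respectively) of $(-J_1/J_0,1,0,0)$. This matrix has rank one, so the quadruple eigenvalue has a three-dimensional eigenspace, i.e.\ a $2\times2$ Jordan block, for every direction of $\Jpar$.

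A minor correction on linear degeneracy: along the material eigenvectors it is $y=-\Jbb\cdot\Jbb$ (hence $\mu$) that stays fixed, together with $J_1/J_0$. For $\Jpar\neq0$, the fact that $J_1/J_0$ is fixed is read off from the last two rows of $\xi A_0+\amdeux^{-1}A_1$ at $\xi=-\amdeux^{-1}J_1/J_0$. These reduce to $\amdeux^{-1}J_m\bigl(r_1-(J_1/J_0)r_0\bigr)=0$ for $m=2,3$. If $\Jpar=0$, the eigenvectors are simply the $J_2,J_3$ directions.
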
 

\begin{proof} 
The matrices of the fluid and geometry blocks are calculated in \autoref{section-A-3}. Their real eigenvalues and eigenvectors give strong hyperbolicity for timelike~$\Jbb$. At a non-zero null state ($\Jbb\neq 0$, $-\Jbb\cdot\Jbb=0$), direct substitution into the characteristic roots shows that all wave speeds coincide; when $p'(0)=0$, the limiting matrices displayed there have a nontrivial Jordan structure. Inspection of~\eqref{eq:T2-1234-def}--\eqref{eq:T2-all-suite-def}, together with the bounds for $q$ in~\eqref{equa-qminqmax-1}--\eqref{equa-qminqmax-2}, gives the classification of nonlinearities, as carried out in \autoref{section-A-3}. For instance, the terms $\Qbb\cdot\Qbb$, $\Pbb\cdot\Qbb$, and $\Pbb\wedge\Qbb$ in~\eqref{eq:T2-1234-def} are null forms. 
\end{proof}

\begin{proposition}[Structure of the first-order formulation: constraint propagation and equivalence]
\label{theo-struct-bis}
Under the hypotheses of~\autoref{theo-struct}, the following properties also hold.
\bei

\item \emph{Constraint propagation.}  Given a solution $(\Jbb, \Kpar, \Lbb,\amdeux, \Omega)$ to the equations~\eqref{eq:T2-1234-def},~\eqref{eq:T2-Euler-perp-def}, and~\eqref{eq:T2-all-suite-def}, and provided the constraints~\eqref{eq:theconstraints-def} on $\log\Omega, K_2, K_3$ are satisfied on an initial hypersurface of constant area~$t_0$, they propagate to all future times.

\item \emph{Equivalence property.}

\bei

\item Given such a solution $(\Jbb, \Kpar, \Lbb,\amdeux, \Omega)$, together with the value of the modular parameter~$\tau$ at one spacetime point and the spatial averages of $G,H$ at initial time, one recovers a unique Einstein--Euler solution.

\item The fluid variables $\mu$ and $u$ are determined from~\eqref{Jdef} in every non-vacuum region. In vacuum regions, one has $\mu=0$, while the velocity field $u$ is undefined. In addition, the wave equation~\eqref{eq:waveconffac}, stated below, holds for all areal times.

\eei

\eei
\end{proposition}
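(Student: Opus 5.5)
We first prove constraint propagation, then reconstruct the metric from the \JKL{} variables, and then check that the reconstructed metric solves the Einstein--Euler system. The tools are the commutation of $\del_t$ and $\del_x$ and the contracted Bianchi identity.

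\textbf{Constraint propagation.} We show that the constraint quantities $\opO_1,\opK_{21},\opK_{31}$ obey a homogeneous linear transport system in $t$. For $\opO_1$ we compute
\be
(\opO_1)_t=\bigl((\log\Omega)_t\bigr)_x+\Bigl(\tfrac t2\Mbf^{01}\Omega^2\amdeux\Bigr)_t .
\ee
In the first term we substitute $\opO_0=0$, which gives $\bigl(\tfrac t2\Mbf^{11}\Omega^2\bigr)_x$. In the second term we expand $(\amdeux)_t$ using $\opC=0$ and $(\log\Omega)_t$ using $\opO_0=0$. The remaining derivatives $(\Mbf^{01})_t$ and $(\Mbf^{11})_x$ are eliminated with the balance laws $\opJ^1=0$ and $\opL_{\bullet\bullet}=0$. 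Expanding $\divuntrois(\Omega\Mbf^{1\bullet})$ produces terms $(\log\Omega)_x\Omega\Mbf^{11}$; where these occur we write $(\log\Omega)_x=\opO_1-\tfrac t2\Mbf^{01}\Omega^2\amdeux$. We expect everything else to cancel identically, leaving
\be
(\opO_1)_t=a\,\opO_1+b_2\opK_{21}+b_3\opK_{31},
\ee
with coefficients depending on the solution. The twist constraints are handled the same way. We differentiate $\opK_{21}$ in $t$, use $\opK_{20}=0$ to replace $(K_2)_t$, use $\opJ^2=0$ to rewrite $(J_0J_2)_t$, and use $\opL=0$ to rewrite $(P_1)_t$ through the curl equation $\opL_{01}$. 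Here the $\curluntrois(|t|^{1/2}K_2\Pbb/2)$ term in $\opJ^2$ is designed to cancel the $K_2\,\curl$ contributions. The result should be a linear system of ODEs in $t$, pointwise in $x$, for $(\opO_1,\opK_{21},\opK_{31})$ with continuous coefficients. Grönwall's inequality then gives vanishing for all $t$ once the constraints vanish at $t_0$. We expect this bookkeeping to be the main obstacle, especially confirming that no residual terms remain, including at vacuum. A safer route is to invoke the derivation in \autoref{section-A}: the constraints are frame components $G_{01}-T_{01}$ and $G_{1a}-T_{1a}$ with $a=2,3$ of the Einstein tensor. Their propagation then follows from $\nabla^\alpha(G_{\alpha\beta}-T_{\alpha\beta})=0$, since by the twice-contracted Bianchi identity and the matter equations encoded in $\opJ$ this is a linear homogeneous system in the constraint components once the evolution components vanish.

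\textbf{Reconstruction and equivalence.} Given $(\Jbb,\Kpar,\Lbb,\amdeux,\Omega)$, we recover $P$ and $Q$ from $\Pbb=dP$ and $\Qbb=e^PdQ$, that is, from $\Lbb=i\,d\tau/\Im\tau$. The equations $\opL_{01}=0$ and $\opL_{11}=0$ are exactly the statements $d(\Pbb)=0$ and $d(e^{-P}\Qbb)=0$. The last is the closedness of the complex form $d\tau$, equivalently $d\Lbb=i\,\Lbb\wedge\Lbb$-type relations, written in weighted form. Together with the periodicity constraints \eqref{equa-pericons} and simple connectivity of the universal cover, this integrates uniquely once $\tau$ is fixed at one point. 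Next, $G$ and $H$ are recovered by integrating \eqref{eq:definevar-01} in $t$ from $K_2,K_3$. Their $x$-dependence at the initial time is fixed up to the prescribed spatial averages. Here the constraint equations \eqref{eq:221a}--\eqref{eq:221b}, together with the consistency relation \eqref{equa-peri-OmK}, ensure that the twists defined geometrically by \eqref{equa-K2K3} coincide with the given $\Kpar$. Then $\mu$ and $u$ follow from \eqref{eq:section-2-density-change} and \eqref{Jdef} wherever $-\Jbb\cdot\Jbb>0$. In vacuum, $\mu=0$ and $T=0$, so $u$ is irrelevant.

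\textbf{Einstein equations and the lapse wave equation.} The frame Einstein equations follow by comparing with the Ricci components computed from the spin connection \eqref{equa-touslesomega}. By the derivation in \autoref{section-A}, each \JKL{} operator is a combination of $e_m^\alpha(R_{\alpha\beta}-\ldots)e_n^\beta$ and the Euler components. The only frame equation omitted from the list is \eqref{eq:waveconffac}, the $R_{00}-R_{11}$-type equation. It is obtained by differentiating $\opO_0$ in $x$ and $\opO_1$ in $t$ and subtracting, once all other equations hold. In vacuum only the geometric equations enter, since the matter terms vanish.
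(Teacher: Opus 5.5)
\paragraph{Gap in your last step: the lapse wave equation.} Differentiating $\opO_0$ in $x$ and $\opO_1$ in $t$ and subtracting does \emph{not} give the lapse wave equation \eqref{eq:waveconffac}. Since $\tfrac1{4t}$ does not depend on $x$, it gives exactly
\[
(2\opO_1)_t-(2\opO_0)_x=(t\amdeux\Omega^2\Mbf^{01})_t+(t\Omega^2\Mbf^{11})_x=t\,\amdeux\Omega^2\,\opJ^1 .
\]
That is the second Euler equation; this is precisely how $\opJ^1$ arises (\autoref{section-A-2}). Equation \eqref{eq:waveconffac} is instead a \emph{divergence}, not a curl, of the lapse one-form. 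Insert $\opO_0$ and $\opC$, which combine into $(\log(\Omega\amdeux))_t=-\tfrac1{4t}+\tfrac t2\Mbf^{00}\Omega^2$, together with $\opO_1$, into its left-hand side $\mathcal W$. Using $\Mwave-\tfrac12(\Mbf^{00}-\Mbf^{11})=\Msource$, this regroups as $\mathcal W=-\tfrac12\opJ^0$. So the tenth Einstein equation is equivalent to the energy equation $\opJ^0=0$, and your argument never uses $\opJ^0$. As written, you therefore recover neither \eqref{eq:waveconffac} nor the full Einstein--Euler system in the non-vacuum case. The equivalence claim is not established.

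\paragraph{Constraint propagation.} Your first route is the paper's route, but the bookkeeping you anticipate is unnecessary and partly misdirected. Cross-differentiation gives \emph{exact} identities:
\[
(2\opO_1)_t=(2\opO_0)_x+t\,\amdeux\Omega^2\opJ^1,\qquad
\bigl(|t|^{3/2}\opK_{m1}\bigr)_t=\bigl(|t|^{3/2}\opK_{m0}\bigr)_x+|t|\,\amdeux\Omega^2\opJ^m,\quad m=2,3.
\]
These need no use of $\opC$ or $\opL_{\bullet\bullet}$, and no rewriting of $(\log\Omega)_x$ through $\opO_1$. The quadratic terms $P_aK_m$ and $Q_aK_2$ cancel directly against the $\curluntrois$ terms of $\opJ^2,\opJ^3$. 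Hence $\opO_1$ and $|t|^{3/2}\opK_{m1}$ are constant in $t$, with no coupling among the three constraints and no Gr\"onwall argument (\autoref{section-A-4}). Also, $\opL_{\bullet\bullet}$ cannot eliminate $(\Mbf^{01})_t$ and $(\Mbf^{11})_x$ separately, since both contain fluid terms. Only the combination in $\opJ^1$ is available, and it is exactly the one produced.

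\paragraph{The Bianchi route.} Your ``safer'' route is not safer. The twist constraints are the components $R_{02},R_{03}$; the components $G_{12},G_{13}$ you name are the evolution equations $\opK_{20},\opK_{30}$. Moreover, $\opJ^1,\opJ^2,\opJ^3$ agree with $e_m^\beta\nabla^\alpha T_{\alpha\beta}$ only after the constraints have been used to replace $(\log\Omega)_x$ and $(K_m)_x$. So ``$\opJ=0$'' cannot be fed into the contracted Bianchi identity without redoing this same bookkeeping.

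\paragraph{Reconstruction of $G,H$.} The geometric twists \eqref{equa-K2K3} coincide with the given $\Kpar$ by construction once $G_t,H_t$ are defined through \eqref{eq:definevar-01}, because $[e_1,e_0]$ only sees $G_t+QH_t$ and $H_t$. The constraints play no role there. Finally, the initial $x$-profile of $G,H$ is pure gauge, removable by $x$-dependent shifts of $y,z$; only its spatial average is data.
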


\begin{proof} 
The metric coefficients $P,Q$ are determined by integration of~\eqref{eq:definevar-01}, with the compatibility of their space and time derivatives being ensured by the curl equations and the period conditions.
The same relations~\eqref{eq:definevar-01} determine $G,H$ from their initial values at $t=t_0$, which can be freely gauge-fixed to be constant in~$x$ by a change of coordinates that shifts $y$ by the primitive of $G(t_0,\cdot)-\int_0^1 G(t_0,x)dx$ and likewise for~$z$. The relation~\eqref{equa-211b} reconstructs $\mu$ and $u$ on the timelike state cone. The remaining Ricci identity is the lapse equation~\eqref{eq:waveconffac}, which is a consequence of the other equations. Finally, the differential system for the three constraint residuals is homogeneous once the evolution equations hold, as proven in \autoref{section-A-4}, thus proving propagation from one areal slice.
\end{proof}


\subsubsection{Isothermal flows with \texorpdfstring{$\Tbb^2$}{T2} symmetry}

The isothermal equation of state leads to algebraic simplifications in the first-order formulation: the pressure ratio $q$ is constant and the reduced stress-energy components are quadratic. By definition, isothermal matter flows are characterized by the relation $p=k^2\mu$ for some constant $k\in(0,1)$, representing the sound speed, and~\eqref{equa-211b} then reduces to 
\be
- \Jbb \cdot \Jbb = 2(1+ k^2) \mu \quad \text{ (isothermal fluids),}
\ee
namely, the Lorentzian norm of the fluid momentum is proportional to the mass-energy density. Moreover, the pressure ratio $q$ defined by~\eqref{eq:defq} is constant and is given by
\bel{equa-case-iso}
q = \frac{1-k^2}{1+k^2} \eqqcolon 2 \, \kappa \quad \text{ (isothermal fluids)}. 
\ee
Here, $\kappa\in(0,1/2)$ is a convenient parameter associated with the isothermal equation of state, allowing us to distinguish this special case from a general pressure law. In particular, when the function $q$ is constant, the Euler equations become quadratic and, for instance, the energy $\Mbf^{00}$ becomes 
\bel{eq:energyGowdy} 
\Mbf^{00}(\Jbb, \Kpar, \Lbb) 
= \Ebf_0(\Jbb, \Kpar, \Lbb) + \kappa \, \bigl( - \Jbb \cdot \Jbb\bigr) \quad \text{ (isothermal fluids).}
\ee
The terms involving $\Jbb$ take the form $\Mbf^{00}(\Jbb, 0, 0) = (\frac{1}{2}+\kappa)J_0^2+(\frac{1}{2}- \kappa)(J_1^2+J_2^2+J_3^2)$. Since $\kappa<1/2$, this is a strictly convex quadratic form in $(J_0,J_1,J_2,J_3)$.

\paragraph{Structure of the first-order formulation for isothermal fluids.}

For isothermal fluids $p=k^2\mu$,~\autoref{theo-struct} can be strengthened as follows: the \JKL{} formulation of Einstein--Euler areal flows consists exclusively of quadratic terms in the variables $(\Jbb, \Kpar, \Lbb)$. Moreover, in this case, the energy density $(\Jbb, \Kpar, \Lbb) \mapsto \Mbf^{00}(\Jbb, \Kpar, \Lbb)$ is a \emph{non-negative}, \emph{strictly convex} quadratic form in all of its arguments.


\subsubsection{Isothermal flows with Gowdy symmetry}

When, in addition, $G=H=0$ so that $\Kpar=0$, and consequently $\Jpar=0$, the isothermal case becomes especially simple, and we state here the Einstein--Euler system in this setting. From~\eqref{eq:T2-1234-def}--\eqref{eq:theconstraints-def}, we then obtain the Einstein equations 
\be  
\left.
\hskip-.18cm
\aligned
\divuntrois \Pbb & = \Qbb \cdot \Qbb,
\, 
& \curluntrois (|t|^{-1} \Pbb) & = 0, 
\\
\divuntrois \Qbb & = - \Pbb \cdot \Qbb,
\, 
& \curluntrois (|t|^{-1} \Qbb) & = |t|^{-1} \Pbb \wedge \Qbb, 
\\
(\amdeux)_t  & = t \,  \kappa \, \bigl( - \Jperp \cdot \Jperp\bigr) \, \Omega^2 \amdeux,
\\
(\log\Omega)_t
&= - \frac{1}{4t} + \frac{t}{2}\Mbf^{11} \, \Omega^2,
\\
(\log\Omega)_x
&= - \frac{t}{2} \, \Mbf^{01} \, \Omega^2 \amdeux,
\endaligned
\right\} 
\text{(Gowdy, isothermal)}
\ee  
where $\kappa = q/2 = (1-k^2)/(2(1+k^2))$, while the Euler equations take the form 
\bel{eq:T2-567-two-G}
\left.
\aligned
\divuntrois \bigl( \Omega \, \Mbf^{0 \bullet} \bigr)
& = - \frac{1}{2\,t} \Msource,
\\
\qquad \divuntrois \bigl( \Omega \, \Mbf^{1 \bullet} \bigr) 
& = 0,
\endaligned
\right\}
\quad \text{(Gowdy, isothermal),}
\ee 
in which the tensor~\eqref{eq:T2-Mdef-0} and the source term~\eqref{eq:T2-Euler-perp2-def0} are given by 
\bel{eq:T2-567-two2}
\left.
\aligned
\Mbf^{00}
& = \Ebf_0(\Jperp, \Pbb, \Qbb) + \kappa (- \Jperp \cdot \Jperp),
\qquad
\\
\Mbf^{01} & = - \Ebf_1(\Jperp, \Pbb, \Qbb),
\\
\Mbf^{11}
& = \Ebf_0(\Jperp, \Pbb, \Qbb) - \kappa (- \Jperp \cdot \Jperp),
\qquad
\\
\Msource 
& = - \Pbb \cdot \Pbb -  \Qbb \cdot \Qbb + (1 - \kappa) \, (- \Jperp \cdot \Jperp),       
\endaligned
\right\}
\quad \text{(Gowdy, isothermal).}
\ee
Moreover, the source obeys the sharper inequality proven in~\autoref{section-A-5},
\be
\abs{\Msource} \leq 2 \, \Mbf^{00} \quad \text{(Gowdy, isothermal).}
\ee


\section{Convex currents and two mathematical entropy inequalities}
\label{section-4}

\subsection{Mass-energy and particle-number currents}
\label{section-4-1}

\subsubsection{Two balance laws before shocks}

This section presents a pair of identities that enter the maximum principles and the geometric energy estimates in \autoref{part-two}. These hold for regular solutions but become incompatible for finite-energy solutions, which leads to two different weak formulations proposed earlier in~\cite{TorusT2-1-2026}. We distinguish three features. First, every sufficiently regular Euler flow satisfies both the mass-energy and particle-number balance laws as \emph{equalities}. Second, the appropriate timelike component of each current is a convex mathematical entropy only after a suitable choice of main evolution variables (also named conservative variables) has been made. Third, at the weak regularity level, \emph{we must choose} which balance law remains an equality and which one becomes the \emph{reference mathematical entropy inequality}.
(See Dafermos' textbook~\cite{Dafermos-book} for this central notion of the theory of nonlinear hyperbolic systems.)

The first-order variables were defined in~\autoref{section-2-3}. 

\bei 

\item The first \emph{reference current} of interest in our theory is the mass-energy current already present in the \JKL{} system. By \eqref{eq:T2-Euler-perp-def-a}, its regular balance law is $\opJ^0=0$, that is, 
\bel{eq:reg-en-bal}
\divuntrois\bigl(\Omega\Mbf^{0\bullet}(\Jbb,\Kpar,\Lbb)\bigr)=-\frac{1}{2t}\Msource(\Jbb,\Kpar,\Lbb).
\ee

\item The second \emph{reference current} of interest here is the particle-number current~$\vNumb$ introduced momentarily (cf.~\eqref{equa--Nbb}, below), which obeys the homogeneous balance law
\bel{equa-entr}
\divuntrois\vNumb=0.
\ee
\eei
\noindent 
Both identities follow from the Euler system for regular solutions. It is not difficult to check that once jump discontinuities are admitted, one of these balance laws must be replaced by an inequality. The three remaining momentum equations $\opJ^1= \opJ^2= \opJ^3=0$ are kept as equalities in either weak formulation.


\subsubsection{Construction of the particle current}

We write $y \coloneqq -\Jbb\cdot\Jbb=2(\mu+p(\mu))\geq 0$, and recall $J_0\leq 0$. By the monotonicity in~\eqref{eq:section-2-density-change}, the density map is invertible; we denote its inverse by $\mu=\funmu(y)$ and abbreviate $p(y) \coloneqq p(\funmu(y))$ and likewise for other functions of density. All statements involving differentiability are first proven on the non-vacuum set $y>0$ and then extended to vacuum by the growth estimates below.  The normalization of the particle density is immaterial: multiplying it by a positive constant changes neither conservation, convexity, nor any mathematical entropy inequality.

Define the \emph{particle-number density} $\Numb= \Numb(\mu)$, uniquely up to a positive multiplicative constant, by
\bel{eq:section-4-particle-density}
\frac{d\Numb}{\Numb} = \frac{d\mu}{\mu+p(\mu)}, \qquad
\mu > 0, \qquad
\Numb(0) \coloneqq 0.
\ee
Since $p'\geq 0$, we have $\Numb'>0$ and
\bel{eq:section-4-particle-concavity}
\Numb''(\mu) = -\frac{\Numb(\mu) p'(\mu)}{(\mu+p(\mu))^2} \leq 0.
\ee
Thus $\Numb$ is increasing and concave. For a regular perfect-fluid solution, contraction of $\nabla_\alpha T^{\alpha\beta}=0$ with $u_\beta$, together with~\eqref{eq:section-4-particle-density}, gives $\nabla_\alpha(\Numb u^\alpha)=0$, namely~\eqref{equa-entr}.

The primitive velocity is not defined in vacuum, so the expression $\Numb u$ is unsuitable for the finite-energy theory. Using $\Jbb= \sqrt{2(\mu+p)}\,u$, introduce the enthalpy-normalized density
\bel{eq:section-4-hN-definition}
\hNumb(\mu) \coloneqq \frac{\Numb(\mu)}{\sqrt{2(\mu+p(\mu))}}
= \frac{\Numb(\mu)}{\sqrt{-\Jbb\cdot\Jbb}}.
\ee
The \emph{particle-particle current} can then be written entirely in the finite-energy variable $\Jbb$:
\bel{equa--Nbb}
\vNumb = \hNumb_\Jbb\Jbb, \qquad
\hNumb_\Jbb \coloneqq  \hNumb\bigl(\funmu(-\Jbb\cdot\Jbb)\bigr).
\ee
Here the one-form $\Jbb$ is identified with its metric-dual vector; in frame components, $\Numb^0=-\hNumb_\Jbb J_0$ and $\Numb^m=\hNumb_\Jbb J_m$ for $m=1,2,3$. Unlike $u$, the right-hand side extends continuously to the vacuum.  Differentiating~\eqref{eq:section-4-hN-definition} and using the pressure ratio $q=(\mu-p)/(\mu+p)$, we can rewrite \eqref{eq:section-4-particle-density} as
\bel{Sdef-scaled}
d\log\hNumb = \frac{1-p'(\mu)}{2(\mu+p(\mu))}\,d\mu
= \frac{1}{2}(\mu+p)^{-1} d\bigl((\mu+p)q\bigr).
\ee
Hence $0<p'<1$ implies that $\hNumb$ is increasing. This monotonicity is used in \autoref{lem-convexM} to recover the primitive fluid state from the main evolution variables.


\subsubsection{Growth at vacuum and at large density}

The differential identity
\bel{eq:section-4-logarithmic-growth}
\frac{d\log\Numb}{d\log\mu}=\frac{\mu}{\mu+p(\mu)}=\frac{1+q(\mu)}{2}
\ee
turns the pressure hypotheses~\eqref{hyperbolic-eos-zero} into growth estimates. With the constants $q_{\min}$ and $q_\infty^+$ defined there, integration on the near-vacuum and high-density regimes yields
\bel{near-vacuum-Numb}
\mu \lesssim \Numb(\mu) \lesssim \mu^{(1+q_{\min})/2}, \qquad
\mu^{1/2} \lesssim \hNumb(\mu) \lesssim \mu^{q_{\min}/2}, \qquad \mu\to 0,
\ee
\bel{eq:large-dens-growth}
\mu^{(1+q_{\min})/2}\lesssim\Numb(\mu)\lesssim\mu^{(1+q_\infty^+)/2}, \qquad
\mu^{q_{\min}/2}\lesssim\hNumb(\mu)\lesssim\mu^{q_\infty^+/2}, \qquad
\mu\to+\infty.
\ee
The constants implicit in these estimates depend on the normalization of $\Numb$ and on the pressure-law bounds, but not on the fluid state. In particular, $\hNumb(\mu)\to 0$ at vacuum and grows strictly slower than $\mu^{1/2}$ at high density because $q_\infty^+<1$. Consequently, the current~$\vNumb$ is sub-quadratic relative to the quadratic mass-energy variables.

That the map $\Jbb\mapsto\vNumb= \hNumb_\Jbb\Jbb$ is continuously differentiable on the closed causal state cone, that it vanishes along its boundary $-\Jbb\cdot\Jbb=0$, that $\Numb^0>0$ in the cone interior, and that the growth at infinity is bounded as
\bel{eq:section-4-particle-growth}
|\Numb^0|+\sum_{m=1}^3|\Numb^m|\lesssim1+|J_0|^{1+q_\infty^+},\qquad \frac{|\Numb^0|+\sum_{m=1}^3|\Numb^m|}{1+\Mbf^{00}(\Jbb,0,0)}\longrightarrow0\quad\text{as }|J_0|\to+\infty ,
\ee
so that $\vNumb$ is controlled in~$L^1$ by quadratic finite-energy bounds that will be available later on, even near vacuum. Indeed, continuity and vacuum behavior follow from~\eqref{near-vacuum-Numb}. On the future causal cone, $|J_m|\leq|J_0|$, while~\eqref{eq:large-dens-growth} and $-\Jbb\cdot\Jbb\leq J_0^2$ imply $\hNumb_\Jbb\lesssim1+|J_0|^{q_\infty^+}$. This gives the first estimate in~\eqref{eq:section-4-particle-growth}. Since $q_\infty^+<1$ and $\Mbf^{00}(\Jbb,0,0)\geq J_0^2/2$, the ratio tends to zero.


\subsubsection{Isothermal formula}

For $p=k^2\mu$, the pressure ratio $q=(1-k^2)/(1+k^2)$ is constant. Setting $\kappa=q/2\in(0,1/2)$, integration gives, up to one harmless positive normalization,
\bel{eq:section-4-isothermal-density}
\Numb(\mu)= \bigl(2(1+k^2)\bigr)^{\kappa+1/2}\mu^{\kappa+1/2},\qquad \hNumb(\mu)= \bigl(2(1+k^2)\bigr)^\kappa\mu^\kappa.
\ee
Since $-\Jbb\cdot\Jbb=2(1+k^2)\mu$, the particle current takes the homogeneous form
\bel{interesting-entropy}
\vNumb=(-\Jbb\cdot\Jbb)^\kappa\Jbb\qquad\text{(isothermal fluid).}
\ee
This formula illustrates the scaling difference between the quadratic mass-energy current and the sub-quadratic particle current.


\subsection{Future-convexity of reference currents}
\label{section-4-2}

\subsubsection{Main evolution variables}

For a balance-law system describing conservation of some currents (up to possible source terms), a mathematical entropy must be a convex function of the conservative variables, which are the \emph{time components} of conserved currents. This point is essential here because the two reference currents require two different choices of variables. The time components of the fluid stress tensor in the orthonormal frame are
\bel{eq:fl-str-comp}
T^{00} = \frac{1}{2}\Mbf^{00}(\Jbb,0,0), \quad
T^{01} = -\frac{1}{2}J_0J_1, \quad
T^{02} = -\frac{1}{2}J_0J_2, \quad
T^{03} = -\frac{1}{2}J_0J_3.
\ee
We also set $\Numb^0 \coloneqq -\hNumb_\Jbb J_0\geq 0$. The two collections of evolution variables are
\bel{eq:two-princ-vars}
Z_{\rm N} \coloneqq  \bigl(T^{00},T^{01},T^{02},T^{03}\bigr),\qquad
Z_{\rm M} \coloneqq  \bigl(\Numb^0,T^{01},T^{02},T^{03}\bigr),
\ee
where the subscript indicates whether the negative particle number or mass-energy is used as the reference entropy. The two coordinate systems have the same momentum entries but exchange their timelike entry.
This leads us to two ``dual'' lemmas, which state that the remaining current, used as a reference entropy, is convex as a function of timelike components of the other currents.


\subsubsection{First convexity lemma}

When the particle number is used as a reference current, or mathematical entropy, the following property is relevant.

\begin{lemma}[Convexity of the negative particle-number current]
\label{lem-convexN}
Under the pressure hypotheses~\eqref{hyperbolic-eos-zero}, every state in the future causal cone is uniquely recovered from $Z_{\rm N}$, and the map
\bel{eq:section-4-convex-particle-map}
Z_{\rm N}\longmapsto-\Numb^0= \hNumb_\Jbb J_0
\ee
is well-defined, non-positive, and convex. It vanishes continuously at vacuum.
\end{lemma}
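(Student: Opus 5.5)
The plan is to exhibit $-\Numb^0$ as the Legendre-dual of a convex generating potential. Convexity is thereby reduced to the hyperbolicity condition $0<p'<1$, as in the Godunov and Friedrichs--Lax entropy framework and the Ruggeri--Strumia covariant formulation. I first work on the interior of the cone, $-\Jbb\cdot\Jbb>0$, and parametrize states by $(\mu,u)$, with $u=u^me_m$ a unit future timelike vector.

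\emph{Step 1: the gradient of the entropy.} Introduce the enthalpy per particle $h\coloneqq(\mu+p)/\Numb$. By~\eqref{eq:section-4-particle-density}, the Gibbs relations hold in the form
\be
d\mu=h\,d\Numb,\qquad dp=\Numb\,dh .
\ee
Set $\xi_\beta\coloneqq u_\beta/h$, so that $\xi$ is a past-directed timelike covector with $-\xi\cdot\xi=h^{-2}$. Write $T^{0\beta}=(\mu+p)u^0u^\beta+p\,\eta^{0\beta}$ and $\Numb^0=\Numb u^0$. A direct computation using $u_\beta\,du^\beta=0$ and the Gibbs relations should give
\be
d(-\Numb^0)=\xi_\beta\,dT^{0\beta},
\ee
possibly up to an overall sign convention that I will fix during the computation. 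Thus $\xi$ plays the role of the entropy variables, and the gradient of $Z_{\rm N}\mapsto-\Numb^0$ is $\xi$.

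\emph{Step 2: Legendre structure.} Define the potential $\Lambda(\xi)\coloneqq p\,u^0/h$, expressed through $\xi$ alone: $h=(-\xi\cdot\xi)^{-1/2}$, $p=p(h)$ by inverting $h(\mu)$, and $u=h\xi$. I would check that
\be
\partial\Lambda/\partial\xi_\beta=T^{0\beta}
\qquad\text{and}\qquad
-\Numb^0=\xi_\beta T^{0\beta}-\Lambda .
\ee
So $-\Numb^0$ is the Legendre transform of $\Lambda$, restricted to the open convex set of past timelike $\xi$. It then suffices to prove that $\Lambda$ is strictly convex there. This gives two conclusions at once. First, $\xi\mapsto T^{0\bullet}$ is injective, so the state is uniquely recovered from $Z_{\rm N}$. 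Second, the Legendre dual $-\Numb^0$ is convex and $C^1$ on the image.

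\emph{Step 3: Hessian of $\Lambda$ (the main obstacle).} I use Lorentz covariance and work in the rest frame of $u$, splitting $\xi$ into its magnitude and boost directions. In the magnitude direction the second derivative is $\propto d\mu/dh>0$, since $\mu'(h)=\Numb$ up to positive factors and $h$ is increasing. In the boost directions the second derivative is governed by $(\mu+p)$. The cross terms arise because the time component $u^0$ is not boost-invariant. Tracking them, I expect the determinant condition to reduce to $p'(\mu)<1$, which is exactly subluminality, while $p'>0$ handles the radial term. Here lies the real difficulty: the Hessian is taken with respect to time components rather than covariantly, and must be shown positive definite in a fixed frame, not merely on the orthogonal complement of $u$. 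If the direct computation becomes unwieldy, I would instead verify the equivalent statement that the symmetric matrix $\partial T^{0\beta}/\partial\xi_\gamma$ is positive definite, checking it at $u=e_0$ and then propagating it along boosts via the known hyperbolicity of the relativistic Euler system.

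\emph{Step 4: boundary and vacuum.} Non-positivity is immediate, since $-\Numb^0=\hNumb_\Jbb J_0\le0$. Uniqueness at null states follows from~\eqref{eq:fl-str-comp}: when $-\Jbb\cdot\Jbb=0$, one has $T^{00}=J_0^2/2$ and $T^{0i}=-J_0J_i/2$, which determine $J_0\le0$ and then the $J_i$. The map $Z_{\rm N}\mapsto\Jbb$ is continuous up to the boundary by monotonicity of $\funmu$. The growth bounds~\eqref{near-vacuum-Numb} show that $\hNumb_\Jbb J_0\to0$ at vacuum, so the function extends continuously by $0$. Finally, convexity on the closed image follows because a continuous function that is convex on the interior of a convex set is convex on its closure. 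For this I also need that the image of the closed cone under $Z_{\rm N}$ is convex, which I would check as the closed set $\{T^{00}\ge|T^{0\bullet}_{\rm spatial}|\}$-type region cut out by the map.
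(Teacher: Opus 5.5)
Your Steps~1--2 are correct, with no sign ambiguity. Since $d\mu=h\,d\Numb$, one finds $\xi_\beta\,dT^{0\beta}=h^{-1}\bigl(-u^0\,d\mu-(\mu+p)\,du^0\bigr)=-d(\Numb u^0)$ and $\xi_\beta T^{0\beta}-\Lambda=-(\mu+p)u^0/h=-\Numb^0$.

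Your $\xi$ is the covector dual to the rescaled momentum $\beta$ of \autoref{section-5-2}, and $\Lambda=-n_\alpha\,\partial\chi_{\rm E}/\partial\beta_\alpha$ with $n=e_0$. So your Step~3 Hessian is $-n_\alpha K_{\rm E}^{\alpha\beta\gamma}$. Its positivity is the identity $\lambda\frac{\mu+p}{\sqrt s}\bigl(|\dot z+\dot a\omega|_g^2+(k^{-2}-|\omega|_g^2)\dot a^2\bigr)$ displayed there, and $D^2(-\Numb^0)=(-n_\alpha K_{\rm E}^\alpha)^{-1}$ is \autoref{prop-pot-relax} applied to $\chi=\chi_{\rm E}$. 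You are therefore following the paper's Section~5 mechanism, not the proof given for this lemma, which recovers the state from a scalar monotone root and computes the Hessian directly. Your route is cleaner for convexity.

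Note that at $u=e_0$ the Hessian is $\mathrm{diag}\bigl((\mu+p)h/k^2,(\mu+p)h,(\mu+p)h,(\mu+p)h\bigr)$, and $k<1$ enters only for boosted states, through $k^2|\omega|_g^2<1$. So your fallback must use causality, i.e.\ positivity of $-n_\alpha K^\alpha$ for every future timelike $n$, not symmetric hyperbolicity alone.

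The step that fails as written is the claim in Step~2 that $\xi$ ranges over the convex set of all timelike covectors. Since $-\xi\cdot\xi=h^{-2}$, with $h$ increasing and $h\to\infty$ at large density, the range is $\{0<-\xi\cdot\xi<h(0^+)^{-2}\}$. Under the polytropic alternative in \eqref{equa-asym}, $\Numb(\mu)/\mu$ tends to a positive constant, so $h(0^+)=\lim_{\mu\to0}(\mu+p)/\Numb>0$. The region between the null cone and this hyperboloid is not convex. Hence ``strictly convex on a convex set implies injective gradient'' does not apply. You get neither that $-\Numb^0$ is a well-defined function of $Z_{\rm N}$, nor, from the Hessian alone, more than local convexity.

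Two further points are left open. You do not prove that the image is the convex set $\{T^{00}>|T^{0\bullet}|\}$, which you need to pass from local to global convexity. And continuity of $Z_{\rm N}\mapsto\Jbb$ up to the null boundary does not follow from monotonicity of $\funmu$.

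The paper's scalar argument closes all three points. Set $E=T^{00}$ and $|P|^2=\sum_m(T^{0m})^2$. The function $F(\mu)=(E+p(\mu))(E-\mu)-|P|^2$ is strictly decreasing with $F(0)=E^2-|P|^2\geq0\geq F(E)$. So it has a unique root in $[0,E]$, positive exactly when $E>|P|$, and depending continuously on $(E,P)$. Then $J_0^2=2(E+p)$ and $J_0J_m=-2T^{0m}$ give a continuous bijection from $\{E\geq|P|\}$ onto the closed cone, with timelike states corresponding to $E>|P|$. With that in hand, your Hessian on the convex open set $\{E>|P|\}$ and your closure argument in Step~4 complete the proof.

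Alternatively, extend $\Lambda$ by zero beyond the vacuum hyperboloid. The extension is $C^1$ because $\nabla\Lambda=T^{0\bullet}\to0$ there, and it is convex on the whole cone. This restores injectivity, but you still need the surjectivity and boundary-continuity points above.
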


\begin{proof}  We refer to~\cite{TorusT2-3-2026} for a proof, and provide only an outline of the argument. 
We set $E \coloneqq T^{00}$ and $|P|^2 \coloneqq  \sum_{m=1}^3(T^{0m})^2$. The density is the unique root in $[0,E]$ of $F(\mu) \coloneqq (E+p(\mu))(E-\mu)-|P|^2=0$, because $F'(\mu)=p'(\mu)(E-\mu)-(E+p(\mu))<0$ under $0<p'(\mu)<1$.  We then recover $\Jbb$ from $J_0^2=2(E+p)$ and $J_0J_m=-2T^{0m}$, with no singularity at vacuum ($\mu=0$, or equivalently $E=|P|$).  Thus $Z_{\rm N}$ uniquely determines the future fluid state in the finite-energy variables. 
After a tedious calculation of the relevant Hessian required for strict convexity in the interior of the future cone, the desired sign follows from $J_0\leq 0$ and $\hNumb\geq 0$, while the vacuum extension follows from~\eqref{eq:section-4-particle-growth}.
\end{proof}


\subsubsection{Second convexity lemma}

When the mass-energy is chosen as a reference current, or mathematical entropy, the following property is relevant.

\begin{lemma}[Convexity of the timelike mass-energy component]
\label{lem-convexM}
Under the pressure hypotheses~\eqref{hyperbolic-eos-zero}, every state in the future causal cone is uniquely recovered from~$Z_{\rm M}$, and the map
\bel{eq:section-4-convex-energy-map}
Z_{\rm M}\longmapsto\Mbf^{00}(\Jbb,0,0)
\ee
is well-defined, non-negative, and convex. The conclusion extends continuously to vacuum.
\end{lemma}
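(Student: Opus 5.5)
The plan is to deduce \autoref{lem-convexM} from \autoref{lem-convexN} by a duality (epigraph) argument rather than by computing a new Hessian. As in the proof of \autoref{lem-convexN}, write $E\coloneqq T^{00}$ and $P\coloneqq(T^{01},T^{02},T^{03})$. Then $Z_{\rm N}=(E,P)$, $Z_{\rm M}=(\Numb^0,P)$, and the target map is $2E$ regarded as a function of $(\Numb^0,P)$. The admissible set of $Z_{\rm N}$ is the convex cone $\{E\geq|P|\}$. By \autoref{lem-convexN}, the function $\varphi(E,P)\coloneqq-\Numb^0$ is well defined, continuous, convex and non-positive on this cone, and it vanishes exactly on the vacuum boundary $E=|P|$.

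\textbf{Step 1: monotonicity in $E$.} I will show that, for fixed $P$, the map $E\mapsto\Numb^0(E,P)$ is strictly increasing on $[|P|,+\infty)$ and tends to $+\infty$ as $E\to+\infty$. It equals $0$ at $E=|P|$. For $E>|P|$ it is positive, since $\Numb^0>0$ in the cone interior. Strict increase then follows from convexity of $\varphi(\cdot,P)$: a convex function that vanishes at the left endpoint and is negative at some later point cannot increase afterwards without first returning to $0$. Together with strict convexity in the interior, which is available from the proof of \autoref{lem-convexN}, this gives $\partial_E\varphi<0$ for $E>|P|$.

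For the limit, fix $P$ and let $E\to+\infty$. From $F(\mu)=0$ in the proof of \autoref{lem-convexN} we get $\mu\to\infty$, comparable to $E$, and $|J_0|^2=2(E+p)\sim E$. The lower growth bound \eqref{eq:large-dens-growth} then gives $\Numb^0=\hNumb_\Jbb|J_0|\gtrsim \mu^{q_{\min}/2}E^{1/2}\to\infty$.

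Consequently, for each $(n,P)$ with $n\geq0$ there is a unique $E=E(n,P)\geq|P|$ with $\Numb^0(E,P)=n$. This gives the unique recovery of the state from $Z_{\rm M}$: from $(E,P)$ the state $\Jbb$ is recovered by \autoref{lem-convexN}. The function $E(n,P)$ is continuous by monotonicity and the intermediate value theorem. At $n=0$ it equals $|P|$, which gives the continuous extension to vacuum, including the null states.

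\textbf{Step 2: convexity via the epigraph.} By the strict monotonicity of Step 1,
\be
\bigl\{(n,P,E): E\geq E(n,P)\bigr\}=\bigl\{(n,P,E): E\geq|P|,\ \varphi(E,P)+n\leq 0\bigr\}.
\ee
The function $(n,E,P)\mapsto\varphi(E,P)+n$ is jointly convex, being a convex function plus a linear one. Its sublevel set, intersected with the convex cone $\{E\geq|P|\}$, is therefore convex. So the epigraph of $(n,P)\mapsto E(n,P)$ is convex, hence $E$ is convex in $Z_{\rm M}$, and so is $\Mbf^{00}(\Jbb,0,0)=2E$ by \eqref{eq:fl-str-comp}. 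Non-negativity is immediate from $E\geq|P|\geq0$.

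\textbf{Main obstacle.} The delicate point is Step 1 near the boundary $E=|P|$, where the map $Z_{\rm N}\mapsto\Jbb$ is only continuous and $\varphi$ may fail to be differentiable. I will rely only on convexity and the sign information, not on derivatives there; this is why I argue through convexity rather than by differentiating. I also need to check that $\varphi(E,P)<0$ for every $E>|P|$, not only somewhere in the interior. This follows from $\mu>0$ there (because $F(0)=E^2-|P|^2>0$) together with $\hNumb>0$ for $\mu>0$, which holds since $\hNumb$ is increasing and $\hNumb(0)=0$.

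If a direct verification is preferred, a fallback is the explicit Hessian in the rest-frame parametrization $(\mu,\text{rapidity},\text{direction})$. The sign of that Hessian reduces to $0<p'<1$.
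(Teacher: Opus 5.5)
Your argument is correct, but it takes a genuinely different route from the paper.

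\textbf{The paper's route.} The paper proves the lemma directly, without using \autoref{lem-convexN}. It shows that at fixed $|P|$ the particle component $\rho=\Numb^0$ is a strictly increasing function of $y=-\Jbb\cdot\Jbb$, which gives recovery of the state. It then writes $\Mbf^{00}(\Jbb,0,0)=\sup_{y>0}Q_y(\rho,P)$ with $Q_y(\rho,P)=\sqrt{y\hNumb(y)^{-2}\rho^2+4|P|^2}-2p(y)$, the supremum being justified by a sign check on $\partial_yQ_y$. Each $Q_y$ is a shifted Euclidean norm of a linear function of $(\rho,P)$, so convexity follows by taking a supremum, and the vacuum case is the limit $y\downarrow0$.

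\textbf{Your route.} You deduce the lemma from \autoref{lem-convexN} via the fact that the partial inverse, in $E$, of a jointly concave function increasing in $E$ is jointly convex: the epigraph of $E(n,P)$ is a sublevel set of $\varphi(E,P)+n$.
\begin{itemize}
\item This makes explicit the duality the paper only announces between the two lemmas. The same argument run backwards (the hypograph of $n(E,P)$ is a sublevel set of $2E(n,P)-2E$) recovers \autoref{lem-convexN} from the present lemma, so one Hessian or supremum computation would suffice for both.
\item It treats the closed domain, null states included, in a single step.
\item The price is that all the analytic content moves into \autoref{lem-convexN}, whose proof the paper only sketches. The paper's supremum formula, by contrast, is self-contained and gives an explicit variational representation of the energy.
\end{itemize}

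\textbf{Step 1 needs tightening.} Convexity of $\varphi(\cdot,P)$, together with vanishing at $E=|P|$ and negativity afterwards, only shows that $\Numb^0$ is non-decreasing in $E$. For example, $\max(|P|-E,-1)$ has all three properties but is eventually constant. Strictness therefore has to come from elsewhere.

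You appeal to the strict convexity mentioned only in passing in the sketch of \autoref{lem-convexN}. It is cleaner to differentiate $F(\mu)=0$ implicitly:
\[
\partial_E\mu=\frac{2E-\mu+p}{E+p-p'(\mu)(E-\mu)}>0,
\]
using $\mu\leq E$ and $p'<1$. Hence $|J_0|=\sqrt{2(E+p(\mu))}$ and $\hNumb(\mu)$ both increase, so $\Numb^0=\hNumb(\mu)|J_0|$ is strictly increasing in $E$.

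Similarly, joint continuity of $E(n,P)$ in $(n,P)$, including at $n=0$, needs the standard compactness argument rather than the intermediate value theorem alone. This is routine.
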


\begin{proof}  We refer to~\cite{TorusT2-3-2026} for a proof, and provide only an outline of the argument. 
For convenience, we write $y=-\Jbb\cdot\Jbb$ and denote components of~$Z_{\rm M}$ as $\rho\coloneqq\Numb^0$ and $P_m\coloneqq T^{0m}$, $m=1,2,3$.  At fixed $|P|^2=(P_1)^2+(P_2)^2+(P_3)^2\geq 0$, one can express $\rho\in[0,+\infty)$ as a $C^1$~function of $y\in[0,+\infty)$ with strictly positive derivative in $(0,+\infty)$. Its inverse determines \(y\) continuously up to the null boundary and smoothly when \(\rho>0\); the algebraic relations then determine the matter momentum~\(\Jbb\).
The representation
\bel{eq:conv-en-sup}
\Mbf^{00}(\Jbb,0,0)= \sup_{y>0} Q_y(\rho,P) , \qquad Q_y(\rho,P)\coloneqq\Bigl( \sqrt{y\hNumb(y)^{-2}\rho^2+4|P|^2}-2p(y) \Bigr)
\ee
is shown by proving that $\del_y Q_y(\rho,P)\gtrless 0$ for $y$ less/greater than the physical value of~$y$. For fixed $y$, $Q_y(\rho,P)$ is a Euclidean norm of a linear function of $(\rho,P)$, shifted by a constant, and is therefore convex. Its supremum is convex.
 When $\rho=0$, the supremum is the limit as $y\downarrow 0$ and gives the continuous extension to vacuum.
\end{proof}


\subsubsection{Convexity in the presence of geometry}

These two lemmas are expressed in terms of fluid variables, only. For the full Einstein--Euler system the geometry variables $(\Kpar,\Lbb)$ add non-negative quadratic terms to the fluid stress-tensor.  In changes of variables concerning~$\Jbb$ they act simply as constant shifts of the conservative variables, and do not affect convexity.  This is why the same entropy choice remains compatible with the coupled \JKL{} system.


\subsection{Mathematical entropy inequalities}
\label{section-4-3} 
 
As already mentioned in~\autoref{section-3-2}, our first-order operator notation is meaningful for weakly regular flows with finite energy in the sense defined by~\eqref{equa-flow-energy-norm}--\eqref{equa-flow-variation-norm}, namely
\bel{finite-energy-meaning}
\|\Phi\|_{\mathrm{energy}}+\|\Psi\|_{\mathrm{BV}}<+\infty.
\ee
The estimates in \autoref{part-two} will control this quantity in terms of the analogous initial quantity defined on a Cauchy hypersurface of constant areal time. 

At this stage, we must explain the role of the reference current in the full definition of weakly regular flows. We use the following standard terminology. The inequality $\opJ^0\leq 0$ means
\be
\langle\opJ^0,\varphi\rangle\leq 0
\ee
for every non-negative test function $\varphi\in C^\infty_c(\Mcal)$. The inequality $\divuntrois\vNumb\geq 0$ is understood analogously in the sense of distributions. Thus, the \emph{mathematical entropy production measures}, defined respectively by
\bel{eq:section-4-production-measures}
\mathfrak d_{\rm M} \coloneqq -\opJ^0,
\qquad
\mathfrak d_{\rm N} \coloneqq \divuntrois\vNumb,
\ee
are non-negative distributions. Since every non-negative distribution is a distribution of order zero, they are therefore non-negative Radon measures. The sign convention is chosen so that both measures represent a non-negative mathematical entropy production; cf.~the discussion of the non-relativistic limit in~\autoref{section-4-4}. More precisely, with the orientation convention entering the definition of~$\divuntrois$, the measures $\mathfrak d_{\rm M}$ and $\mathfrak d_{\rm N}$ describe, respectively, a \emph{decrease of the mass-energy} and an \emph{increase of the particle number}.

While a full definition of \emph{finite-energy Einstein--Euler areal flows} is postponed to \autoref{section-9}, we indicate here how the reference current enters the weak formulation. The three momentum equations remain equalities, while one of the following two mathematical entropy formulations is selected:
\be
\opJ^1=\opJ^2=\opJ^3=0,
\qquad
\begin{cases}
\opJ^0=0,\quad \divuntrois\vNumb\geq0
& \text{in the particle-production formulation},
\\
\divuntrois\vNumb=0,\quad \opJ^0\leq0
& \text{in the dissipated-energy formulation}.
\end{cases}
\ee
For regular flows, \emph{both inequalities reduce to equalities}, and the two formulations coincide. Indeed, both production measures vanish, so every regular flow satisfies both systems.

For weakly regular flows, which may contain shock waves, the two mathematical entropy formulations generally do not coincide: exchanging the equality and inequality \emph{changes the Rankine--Hugoniot locus}. If a discontinuity is represented by $x=\gamma(t)$ with speed $s=\gamma'(t)$, a balance law $\divuntrois X=S$ with a locally integrable source imposes
\be
[X^1-s\amdeux X^0]=0,
\ee
whereas the corresponding mathematical entropy inequality imposes the associated one-sided inequality on this jump. Since the locally integrable source terms in the areal equations have no singular part supported on the shock curve, the distinction between the two formulations comes entirely from the selected reference current.

Before returning to weakly regular flows in \autoref{section-9}, we are going derive structural properties for regular Einstein--Euler flows and for the two Einstein--Navier--Stokes systems introduced in the next section. Interestingly, the same finite-energy quantities will be used for all three systems. 


\subsection{The non-relativistic limit}
\label{section-4-4}

\subsubsection{Scaling regime and limiting system}

To justify our formalism for weak solutions of the relativistic isentropic compressible Euler equations, especially the choice of which of the fluid equations is weakened to an inequality, we make contact with the literature on nonrelativistic fluids.  We consider the plane-symmetric Euler equations in Minkowski space, which can be read off from the Euler--Einstein equations by suppressing the geometry (setting $\amdeux= \Omega=1$, $P_0=P_1=Q_0=Q_1=0$ and eliminating factors of $|t|$ in the operators). It is convenient to return to the notation $(\mu,u)$ instead of $J_m = \sqrt{2(\mu+p)} u_m$ for $m=0,1,2,3$. The Euler equations read
\bel{relativistic-Euler-isentropic}
\aligned
\del_t \Bigl( (\mu+p) u_0^2 - p \Bigr) - \del_x ( (\mu+p) u_0 u_1 ) & = 0, \\
\del_t ( (\mu+p) u_0 u_1 ) - \del_x \Bigl( (\mu+p) u_1^2 + p \Bigr) & = 0, \\
\del_t ( (\mu+p) u_0 u_m ) - \del_x ( (\mu+p) u_1 u_m ) & = 0, \quad m=2,3,
\endaligned
\ee
supplemented by the conservation of particle number~$\Numb(\mu)$,
\bel{relativistic-Euler-isentropic-N}
\del_t ( \Numb(\mu) u_0 ) - \del_x ( \Numb(\mu) u_1 ) = 0, \qquad \frac{\Numb'(\mu)}{\Numb(\mu)} = \frac{1}{\mu+p(\mu)}.
\ee

The non-relativistic limit corresponds to taking, for some small constant $\eps\in(0,1)$, the regime $|u_m|<\eps$ for $m=1,2,3$, hence $u_0= \sqrt{1+u_1^2+u_2^2+u_3^2}=1+\Obig(\eps^2)$, and an equation of state with $0<p'(\mu)<\eps^2$ hence $0\leq p(\mu)\leq \eps^2\mu$.  The \emph{formal} $\eps\to 0$ limit of the system~\eqref{relativistic-Euler-isentropic} is easily found to be the non-relativistic Euler equations
\bel{nonrel-Euler}
\aligned
\del_t \mu - \underline{\del}_x (\mu \underline{u}_1) & = 0, \\
\del_t (\mu \underline{u}_1) - \underline{\del}_x(\mu \underline{u}_1^2 + \underline{p}(\mu)) & = 0, \\
\del_t ( \mu \underline{u}_m ) - \underline{\del}_x ( \mu \underline{u}_1 \underline{u}_m ) & = 0, \quad m=2,3,
\endaligned
\ee
with $\underline{\del}_x= \eps\del_x$ and $\underline{p}(\mu)=p(\mu)/\eps^2$ and $\underline{u}=(u_1,u_2,u_3)/\eps$.


\subsubsection{Limiting entropy balance}

The equation for the particle number can be integrated from some reference density~$\muref$ as
\be
\frac{\Numb(\mu)}{\Numb(\muref)}
= \frac{\mu}{\muref} - \eps^2 \frac{\mu}{\muref} \int_{\muref}^\mu \frac{\underline{p}(\mu')d\mu'}{\mu'^2} + \Obig(\eps^4)
= \frac{\mu}{\muref} + \Obig(\eps^2),
\ee
where the constants implicit in $\Obig$ depend on the sup norm of $\log(\mu/\muref)$. Up to rescaling by $\Numb(\muref)/\muref$, the formal limit of~\eqref{relativistic-Euler-isentropic-N} is $\del_t\mu - \underline{\del}_x(\mu\underline{u}_1) = 0$, just as the $0$-th Euler equation. To account for all equations, we consider the difference between energy-momentum and particle number currents, whose components are
\be
\aligned
\Bigl( (\mu+p(\mu)) u_0^2 - p(\mu) \Bigr) - \frac{\muref}{\Numb(\muref)} \Numb(\mu) u_0
& = \eps^2 \Bigl( \mu |\underline{u}|^2 / 2 + \mu \int_{\muref}^\mu \frac{\underline{p}(\mu')d\mu'}{\mu'^2} \Bigr) + \Obig(\eps^4),
\\
\frac{1}{\eps} \Bigl( (\mu+p) u_0 u_1 - \frac{\muref}{\Numb(\muref)} \Numb(\mu) u_1 \Bigr)
& = \eps^2 \underline{u}_1 \Bigl( \mu |\underline{u}|^2 / 2
+ \underline{p}(\mu)
+ \mu \int_{\muref}^\mu \frac{\underline{p}(\mu')d\mu'}{\mu'^2} \Bigr)
+ \Obig(\eps^4).
\endaligned
\ee
The leading-order term in the $\eps\to 0$ limit reproduces the nonrelativistic energy and energy flux. It is then standard to consider weak solutions to the nonrelativistic Euler equations~\eqref{nonrel-Euler} that are solutions to the energy inequality
\be
\del_t \Bigl( \mu |\underline{u}|^2 / 2 + \mu \int_{\muref}^\mu \frac{\underline{p}(\mu')d\mu'}{\mu'^2} \Bigr)
- \underline{\del}_x \Bigl( \Bigl( \mu |\underline{u}|^2 / 2
+ \underline{p}(\mu)
+ \mu \int_{\muref}^\mu \frac{\underline{p}(\mu')d\mu'}{\mu'^2} \Bigr) \underline{u}_1 \Bigr)
\leq 0.
\ee
The standard nonrelativistic energy inequality can be understood as a formal limit of either one of our formulations.
\bei
\item In the particle-production formulation, the relativistic energy is conserved and the particle current has a positive production, so that their difference dissipates.
\item In the dissipated-energy formulation, the particle current is conserved, and upon subtracting it from the relativistic energy the resulting non-relativistic energy still dissipates.
\eei
We retain both choices in the relativistic theory; deciding which limiting admissibility condition is appropriate is a separate question.


\section{Einstein--Navier--Stokes systems}
\label{section-5} 

\subsection{Two hyperbolic systems and a parabolic model}
\label{section-5-1}

\subsubsection{Background on (non-)relativistic fluids}

We propose two hyperbolic relativistic Navier--Stokes systems. Each system is obtained by adjoining to the perfect-fluid variables a symmetric tensor field~$\zeta$, called the \emph{relaxing field}, and by deriving the constitutive tensors from a scalar potential~$\chi=\chi(\beta,\zeta)$. The relaxing field satisfies a tensorial system of hyperbolic balance laws with a source that drives~$\zeta$ toward the equilibrium manifold~$\{\zeta=0\}$. On this set, the constitutive stress tensor and the particle-number current coincide with their Euler counterparts. In the particle-production system, the total stress-energy tensor is divergence-free and the particle-number current has non-negative divergence. In the dissipated-energy system, the particle-number current is divergence-free and the source in the potential-generated stress tensor has the sign required by the mass-energy mathematical entropy inequality; an additional symmetric tensor cancels this source so that the total stress-energy tensor remains divergence-free. Thus both systems satisfy the contracted Bianchi identity, but they select different mathematical entropy inequalities. In passing, we also mention a projected shear--bulk model whose principal part is parabolic and which will not be studied in the present paper.

Our construction relies on several classical ideas from the analysis of compressible fluids and hyperbolic balance laws. Godunov~\cite{Godunov-1961} introduced variables associated with a convex supplementary conservation law in which the state and flux maps are defined from scalar potentials, while Friedrichs and Lax~\cite{FriedrichsLax-1971} established that a strictly convex extension yields a symmetric hyperbolic formulation. Ruggeri and Strumia~\cite{RuggeriStrumia-1981} developed the corresponding covariant structure for relativistic Euler flows, and Geroch and Lindblom~\cite{GerochLindblom-1990} introduced generating potentials for dissipative relativistic theories in divergence form. Chen, Levermore, and Liu~\cite{ChenLevermoreLiu} analyzed hyperbolic systems with stiff relaxation and identified structural conditions compatible with mathematical entropy dissipation. 

We build upon this circle of ideas with an additional requirement imposed by the Einstein equations: the potential, the relaxation source, and, in the dissipated-energy system, the additional symmetric tensor are chosen jointly so that the selected mathematical entropy production has a fixed sign while the total symmetric stress-energy tensor is divergence-free.
In particular, our theory guarantees that the total symmetric stress tensor remains divergence-free and can be coupled consistently to Einstein's equations, such that the contracted Bianchi equations hold.


\subsubsection{Proposed methodology}

We first formulate the two models on an arbitrary time-oriented four-dimensional Lorentzian spacetime, without imposing any symmetry. For an equation of state \(p=p(\mu)\), the perfect-fluid stress-energy tensor is\footnote{The labels \({\rm E}\) and \({\rm NS}\) are placed as superscripts or subscripts according to typographical convenience.}
\bel{eq:section-5-total-stress}
T^{\rm E}_{\alpha\beta} = (\mu+p)u_\alpha u_\beta + p g_{\alpha\beta}.
\ee
 Here \(u\) is a future-directed unit timelike vector and \(\mu>0\) is the mass-energy density for a classical fluid state. The constitutive tensors and the estimates introduced below extend continuously to the vacuum boundary \(\mu=0\), which is an important feature in our theory of weakly regular flows.

An Einstein--Navier--Stokes system consists of the fluid fields \((\mu,u)\), additional relaxing fields, a symmetric total stress-energy tensor \(T^{\rm NS}\) depending algebraically on these fields, and hyperbolic balance laws for the relaxing fields. The coupled Einstein and matter equations satisfy
\bel{eq:section-5-complete-system}
G_{\alpha\beta}=T^{\rm NS}_{\alpha\beta},
\qquad
\nabla^\alpha T^{\rm NS}_{\alpha\beta}=0.
\ee
Once Einstein's equations hold, the second identity follows from the contracted Bianchi identity. It is therefore the compatibility condition that the constitutive stress tensor and the equations for the relaxing fields must satisfy.

We impose four requirements on each of the two proposed systems.
\bei

\item On the equilibrium manifold, where the relaxing field vanishes, the total stress-energy tensor agrees algebraically with the Euler stress-energy tensor.

\item On the interior of the state space, the principal system is symmetric hyperbolic and causal with respect to the spacetime metric.

\item The system possesses a particle-number current whose contraction with every future timelike covector is strictly convex in the corresponding normal conserved variables.

\item The relaxation source has a fixed sign and yields one of the two mathematical entropy inequalities.
\eei

These properties are obtained from a scalar potential~\(\chi\) on the state space. Its derivatives generate a symmetric stress tensor~\(T_\chi\), the currents associated with the relaxing field, and the particle-number current~\(\vNumb_\chi\). The same potential and relaxing variables are used in both systems. The systems differ in the distribution of the relaxation source between the stress-energy balance law and the particle-number balance law. For both systems, the constitutive inequalities and comparison estimates used below hold on the full density range, \emph{extend continuously to vacuum}, and impose \emph{no upper bound on the finite rapidity.}

\subsubsection{Particle-production model}

The first system is defined on an arbitrary time-oriented Lorentzian spacetime, without symmetry restriction. Its matter variables are the potential variable~\(\beta\), determined by~\((\mu,u)\), and a symmetric relaxing field~\(\zeta\). A scalar potential~\(\chi(\beta,\zeta)\) generates the stress-energy tensor~\(T_\chi\), the relaxation current~\(A_\chi\), and the particle-number current~\(\vNumb_\chi\), while a relaxation map~\(\mathscr B(\beta,\zeta)\) determines their source terms. The stress-energy tensor~\(T_\chi\) is divergence-free, whereas~\(\vNumb_\chi\) has non-negative production. The system is defined in \autoref{def-prod-pot}, and a concrete choice of~\(\chi\) and~\(\mathscr B\) is given in \autoref{rem-example-chi}.

 
\subsubsection{Dissipated-energy model} 

The second system uses the same variables, potential~\(\chi(\beta,\zeta)\), and relaxation map~\(\mathscr B(\beta,\zeta)\), but is adapted to a \(\Tbb^2\)-symmetric spacetime endowed with its areal foliation. The future unit normal to this foliation is denoted by~\(n\). The source in the equation for~\(A_\chi\) is the same as in the particle-production system, whereas the divergence of~\(T_\chi\) is assigned a negative source in the direction~\(n\). This source is adjusted in such a way that the particle-number current~\(\vNumb_\chi\) is divergence-free. To recover the divergence-free total stress-energy tensor required by the contracted Bianchi identity, we introduce an additional symmetric tensor~\(T^{\rm aux}\), proportional to the projector onto the symmetry orbits. Torus invariance ensures that its divergence contains no derivative of the scalar coefficient multiplying this projector. Its divergence exactly cancels the source in the equation for~\(T_\chi\), and therefore the total stress-energy tensor \(T^{\rm NS}=T_\chi+T^{\rm aux}\) is divergence-free. This construction dissipates mass-energy while preserving particle number and is defined in \autoref{def-diss-en}.


\subsubsection{Projected shear--bulk model}

We finally recall a model arising in classical first-order relativistic fluid theories~\cite{Eckart1940,LandauLifshitz}. This model introduces no relaxing field. Its principal part is parabolic in the fluid variables~$(\mu,u)$ and therefore does not have finite propagation speed. Let
\be
h_{\alpha\beta}\coloneqq g_{\alpha\beta}+u_\alpha u_\beta,
\qquad
\vartheta\coloneqq\nabla_\gamma u^\gamma,
\ee
where \(h\) is the spatial projector orthogonal to~\(u\) and \(\vartheta\) is the fluid expansion. The trace-free spatial strain is
\bel{eq:section-5-reference-strain}
\sigma_{\alpha\beta}
\coloneqq
h_\alpha{}^\gamma h_\beta{}^\delta\nabla_{(\gamma}u_{\delta)}
-\frac13\vartheta h_{\alpha\beta}.
\ee
Given two non-negative viscosity coefficients \(\eta_{\rm sh}=\eta_{\rm sh}(\mu)\) and \(\eta_{\rm bk}=\eta_{\rm bk}(\mu)\), the total stress-energy tensor is
\bel{eq:section-5-reference-stress}
T^{\rm NS,para}_{\alpha\beta}
\coloneqq
T^{\rm E}_{\alpha\beta}
-2\eta_{\rm sh}(\mu)\sigma_{\alpha\beta}
-\eta_{\rm bk}(\mu)\vartheta h_{\alpha\beta},
\qquad
\eta_{\rm sh},\eta_{\rm bk}\geq0.
\ee
The two additional terms represent shear and bulk dissipation, respectively. This parabolic system provides a useful comparison with the two hyperbolic relaxation systems, and its detailed analysis is developed in~\cite{TorusT2-4-2026}.


\subsection{The (Godunov, Lax, Friedrichs) Euler potential}
\label{section-5-2}

\subsubsection{Rescaled momentum}

Let us first rewrite the Euler stress-energy tensor~\eqref{eq:section-5-total-stress} as the Hessian of a potential~$\chi$ in field space with respect to a suitably \emph{rescaled momentum} denoted by~$\beta$.  Specifically, we define
\bel{eq:pot-enthalpy}
\beta \coloneqq \frac{\Numb(\mu)}{\mu+p(\mu)} u = \frac{2\hNumb_\Jbb}{-\Jbb\cdot\Jbb} \Jbb ,
\qquad
s \coloneqq - \beta_\alpha \beta^\alpha = \Bigl(\frac{\Numb(\mu)}{\mu+p(\mu)}\Bigr)^2 .
\ee
From~\eqref{eq:section-4-particle-density}, we obtain
\(\del_\mu s=-2s\,\del_\mu p/(\mu+p)<0\) at positive density. Hence \(s\), and therefore~\(\beta\), determines \((\mu,u)\) uniquely away from vacuum. In contrast to~\(\Jbb\), the variable~\(\beta\) does not provide a regular parametrization of the vacuum: the ratio \(\Numb/(\mu+p)\) does not tend to zero and, in the asymptotically isothermal case, becomes unbounded. The variable~\(\beta\) is nevertheless convenient for exhibiting the symmetric hyperbolic structure of the Euler equations.


\subsubsection{Euler system from a potential}

\bse\label{te-from-potential}
We also introduce the \emph{Euler potential} $\chi_{\rm E}(\mu)$ by
\bel{eq:pot-equil}
\chi_{\rm E}'(\mu) = - \frac{1}{2} p(\mu) s'(\mu) 
\ee
and for definiteness the normalization condition $\chi_{\rm E}(0)=0$, which is allowed because the derivative is integrable near vacuum under our pressure assumptions~\eqref{hyperbolic-eos-zero}. Then the Euler stress-energy tensor takes the form
\bel{eq:pot-Euler-id} 
T_{\rm E}^{\alpha\beta} = \frac{\del^2\chi_{\rm E}}{\del\beta_\alpha\del\beta_\beta} , 
\ee
and the following two identities also hold: 
\bel{eq:pot-Euler-id-2}
p\beta^\alpha = \frac{\del\chi_{\rm E}}{\del\beta_\alpha},  
\qquad 
\Numb u^\alpha = \frac{\del\chi_{\rm E}}{\del\beta_\alpha} - \beta_\gamma T_{\rm E}^{\alpha\gamma}.
\ee
Thus the perfect-fluid stress and particle current are defined from a single scalar potential for every pressure law.  While this construction in terms of the $\beta$ variable is  first derived away from vacuum, the resulting tensorial formulas extend by continuity to the vacuum boundary, when expressed in terms of the original momentum variable~$\Jbb$.
\ese

Indeed, regarding \(\chi_{\rm E}\) as a function of~\(s=-\beta_\alpha\beta^\alpha\), we have \(d\chi_{\rm E}/ds=-p/2\), and therefore
\be
\frac{\del\chi_{\rm E}}{\del\beta_\alpha}=p\beta^\alpha,
\qquad
\frac{\del^2\chi_{\rm E}}{\del\beta_\alpha\del\beta_\beta}
=p g^{\alpha\beta}+\frac{\mu+p}{s}\beta^\alpha\beta^\beta
=T_{\rm E}^{\alpha\beta}.
\ee
The second identity in~\eqref{eq:pot-Euler-id-2} follows by contraction with~\(\beta\).

Let \(K_{\rm E}^{\alpha\beta\gamma}\coloneqq \del^3\chi_{\rm E}/(\del\beta_\alpha\del\beta_\beta\del\beta_\gamma)\).
As we discuss more generally in \autoref{section-5-3}, hyperbolicity (and causality with respect to the metric) reduces to showing positivity of the two-tensor $-n_\alpha K_{\rm E}^{\alpha\beta\gamma}$ for every future timelike vector~$n$.
Decompose \(n=\lambda(u+\omega)\), where \(\lambda>0\), \(g(u,\omega)=0\), and \(|\omega|_g<1\), and decompose a variation as \(\dot\beta=\dot a u^\flat+\dot z\), with \(g(u,\dot z)=0\).
Let \(k(\mu)\coloneqq(\del_\mu p(\mu))^{1/2}\) be the sound speed.
A direct calculation gives
\be
-n_\alpha K_{\rm E}^{\alpha\beta\gamma}\dot\beta_\beta\dot\beta_\gamma
= \lambda\frac{\mu+p}{\sqrt{s}} \left(|\dot z+\dot a\omega|_g^2 +\bigl(k^{-2}-|\omega|_g^2\bigr) \dot a^2 \right)>0.
\ee
Thus \(0<k<1\) implies that the Euler system is symmetric hyperbolic and causal with respect to~\(g\).

For the isothermal law \(p=k^2\mu\) we have a constant sound speed \(k\in(0,1)\), and, up to the positive normalization of~\(\Numb\),
\be
s(\mu)=C_s\mu^{-2k^2/(1+k^2)},
\qquad
\chi_{\rm E}(\mu)
=\frac{k^4}{1-k^2}\,\mu s(\mu),
\qquad C_s>0.
\ee
In particular, the potential is explicit and the hyperbolicity inequality above holds throughout the positive-density state space.


\subsection{Navier--Stokes potentials}
\label{section-5-3}

\subsubsection{Potential-generated hyperbolic operators}

Both hyperbolic models involve a symmetric relaxing field~\(\zeta\) taking values in a smooth vector subbundle
\be
\mathscr E\subseteq\Sym^2T^*\Mcal.
\ee
The bundle~\(\mathscr E\) is referred to as the \emph{relaxation bundle}. For example, it may be restricted to the metric-trace line \(\zeta_{\alpha\beta}=\xi g_{\alpha\beta}\), parametrized by a scalar field~\(\xi\). We proceed to describe the models in terms of the variables $(\beta,\zeta)$ in this section; the relation to $\mu,u$ or~$\Jbb$ variables used in later sections will be taken to be~\eqref{eq:pot-enthalpy} independently of the relaxing field~$\zeta$.  In particular, we emphasize that the maps always involve the pressure law~$p$ and particle number~$\Numb$ of the Euler system at $\zeta=0$. 

The main constitutive object is a smooth scalar function \(\chi=\chi(\beta,\zeta)\), called a \emph{Navier--Stokes potential}, which is decomposed as
\be
\chi(\beta,\zeta)
=\chi_{\rm E}(\beta)+\chi_{\rm relax}(\beta,\zeta),
\qquad
\chi_{\rm relax}(\beta,0)=0.
\ee
Taking derivatives with respect to~\(\beta\) and in the fiber directions of~\(\mathscr E\), we define the potential-generated stress-energy tensor and relaxation current by
\bel{eq:pot-currents}
T_\chi^{\alpha\beta}
\coloneqq
\frac{\del^2\chi}{\del\beta_\alpha\del\beta_\beta},
\qquad
A_\chi^{\alpha\beta\gamma}
\coloneqq
\frac{\del^2\chi}{\del\beta_\alpha\del\zeta_{\beta\gamma}}.
\ee
The tensor~\(T_\chi\) is symmetric, while~\(A_\chi\) is symmetric in its last two indices. Moreover, \(T_\chi(\beta,0)=T_{\rm E}(\beta)\). The associated homogeneous principal system is
\bel{eq:pot-general-sources-zero}
\nabla_\alpha T_\chi^{\alpha\beta}=0,
\qquad
\nabla_\alpha A_\chi^{\alpha\beta\gamma}=0.
\ee
Its unknowns are~\((\beta,\zeta)\). Since the principal matrices are third derivatives of the scalar potential~\(\chi\), they are symmetric; causal hyperbolicity reduces to a positivity condition on their contraction with future timelike covectors, as specified below.

 
\begin{definition}
\label{def-pot-hyperbolic}
Let \(\mathscr E\subseteq\Sym^2T^*\Mcal\) be a relaxation bundle and let \(\mathscr F\subset\mathscr E\) be an open fiber domain containing its zero section. A \textbf{Navier--Stokes potential} is a smooth scalar function
\be
\chi=\chi(\beta,\zeta),
\qquad
\beta\ \text{future timelike},
\qquad
\zeta\in\mathscr F,
\ee
satisfying the following conditions.
\bei

\item \emph{Euler equilibrium.} For every future timelike~\(\beta\), one has
\be
\chi(\beta,0)=\chi_{\rm E}(\beta).
\ee

\item \emph{Dominant energy condition.} The potential-generated stress-energy tensor satisfies
\be 
T_\chi^{\alpha\beta}(\beta, \zeta) X_\alpha Y_\beta
=
\frac{\del^2\chi}{\del\beta_\alpha\del\beta_\beta}(\beta, \zeta)
X_\alpha Y_\beta
\geq0
\ee
for all future-directed causal vectors~\(X,Y\), all future timelike~$\beta$ and all $\zeta\in\mathscr F$. 

\item \emph{Causal hyperbolicity.}  With \(\Xi=(\beta,\zeta)\), define the symmetric principal matrices
\bel{eq:pot-principal}
K^{\alpha AB}(\Xi)
\coloneqq
\frac{\del^3\chi}
{\del\beta_\alpha\del\Xi_A\del\Xi_B}(\Xi).
\ee
For every future-directed timelike vector~\(n\), the quadratic form
\be
-n_\alpha K^{\alpha AB}(\Xi)Z_AZ_B
\ee
is positive for every non-zero variation~\(Z\) of\/~\(\Xi\) and every state \((\beta,\zeta)\) in the constitutive domain.
\eei
\end{definition}

Positivity for one future timelike vector~\(n\) suffices for symmetric hyperbolicity. Requiring it for every such~\(n\) ensures that the characteristic propagation cone is contained in the Lorentz causal cone. Strict hyperbolicity is imposed only for future timelike~\(\beta\); the continuous extension of the generated tensors and comparison estimates to the causal boundary in~$\Jbb$ variables is verified separately for the potentials used below.

In view of the required causal hyperbolicity condition, a natural choice, explicited in an example in \eqref{rem-example-chi}, is to take $\chi$ to be quadratic in the relaxing field~\(\zeta\). In that case, the current~\(A_\chi\) is affine in~\(\zeta\), and its divergence equation provides the evolution equations for the relaxing field, coupled to the conservation equation for~\(T_\chi\).


\subsubsection{Navier--Stokes particle-number current}

Associated with the potential~\(\chi\), we introduce the particle-number current
\bel{eq:pot-particle-current}
\vNumb_\chi^\alpha
\coloneqq
\frac{\del\chi}{\del\beta_\alpha}
-\beta_\beta T_\chi^{\alpha\beta}
-\zeta_{\beta\gamma}A_\chi^{\alpha\beta\gamma}.
\ee
Due to~\eqref{eq:pot-Euler-id-2}, on the equilibrium manifold~\(\zeta=0\), this current agrees with the Euler particle-number current:
\be
\vNumb_\chi^\alpha(\beta,0)
=\Numb(\mu)u^\alpha
=(\vNumb_\Jbb)^\alpha .
\ee
Its divergence is determined algebraically by the source terms in the equations for~\(T_\chi\) and~\(A_\chi\).

\begin{proposition}
\label{prop-pot-relax}
Let~\(\chi\) be a Navier--Stokes potential and let \(I^\beta\) and \(I^{\beta\gamma}\) be algebraic source terms. Then the system
\bel{eq:pot-general-sources}
\nabla_\alpha T_\chi^{\alpha\beta}=I^\beta,
\qquad
\nabla_\alpha A_\chi^{\alpha\beta\gamma}=I^{\beta\gamma}
\ee
is symmetric hyperbolic in the interior of the state space, and the particle-number current satisfies
\bel{eq:pot-chain-identity}
\nabla_\alpha\vNumb_\chi^\alpha
=-\beta_\beta I^\beta
-\zeta_{\beta\gamma}I^{\beta\gamma}.
\ee
Moreover, let \(n\) be future timelike and set
\be
U^{\alpha A}
\coloneqq
\frac{\del^2\chi}{\del\beta_\alpha\del\Xi_A},
\qquad
Y^A\coloneqq-n_\alpha U^{\alpha A},
\qquad
\eta_n\coloneqq n_\alpha\vNumb_\chi^\alpha,
\qquad
\Xi=(\beta,\zeta).
\ee
Then \(\eta_n\) is strictly convex as a function of the normal conserved variables~\(Y\).
\end{proposition}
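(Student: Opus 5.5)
The plan is to exploit the Godunov--Friedrichs--Lax structure: every object in \eqref{eq:pot-currents} and \eqref{eq:pot-particle-current} is a derivative of the single scalar~\(\chi\). All three claims then reduce to the chain rule and a Legendre-transform computation. Throughout, I assume that \(\chi\) is a \emph{natural} scalar. By this I mean that it is built covariantly from \(\beta\), \(\zeta\), and the metric~\(g\) only, with no further explicit dependence on the spacetime point, so that \(\nabla g=0\) yields a covariant chain rule. For instance, for any such function one has
\[
\nabla_\alpha\Bigl(\frac{\del\chi}{\del\beta_\alpha}\Bigr)
=T_\chi^{\alpha\beta}\nabla_\alpha\beta_\beta+A_\chi^{\alpha\beta\gamma}\nabla_\alpha\zeta_{\beta\gamma},
\]
and similarly \(\nabla_\alpha U^{\alpha A}=K^{\alpha AB}\nabla_\alpha\Xi_B\) for the second derivatives.

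\emph{Step 1: symmetric hyperbolicity.} Applying the covariant chain rule to the second derivatives in \eqref{eq:pot-general-sources}, the system becomes \(K^{\alpha AB}(\Xi)\nabla_\alpha\Xi_B=I^A\), with \(I^A=(I^\beta,I^{\beta\gamma})\). For the relaxing field, the connection terms involve the subbundle~\(\mathscr E\); if \(\mathscr E\) is not parallel, its second fundamental form contributes only zeroth-order terms, which I move into the sources. The coefficient matrices \(K^{\alpha AB}\) are third derivatives of~\(\chi\), hence symmetric in~\((A,B)\). By the causal-hyperbolicity condition of \autoref{def-pot-hyperbolic}, \(-n_\alpha K^{\alpha AB}\) is positive definite for every future timelike~\(n\). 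This is exactly symmetric hyperbolicity in the interior of the state space, with characteristic cone inside the light cone.

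\emph{Step 2: the chain identity \eqref{eq:pot-chain-identity}.} I take the divergence of \eqref{eq:pot-particle-current} term by term. The first term is given by the chain rule above. The product rule gives
\[
\nabla_\alpha(\beta_\beta T_\chi^{\alpha\beta})=T_\chi^{\alpha\beta}\nabla_\alpha\beta_\beta+\beta_\beta I^\beta,
\qquad
\nabla_\alpha(\zeta_{\beta\gamma}A_\chi^{\alpha\beta\gamma})=A_\chi^{\alpha\beta\gamma}\nabla_\alpha\zeta_{\beta\gamma}+\zeta_{\beta\gamma}I^{\beta\gamma},
\]
where the source terms come from \eqref{eq:pot-general-sources}. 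The derivative terms cancel exactly against those of the first term, leaving \(-\beta_\beta I^\beta-\zeta_{\beta\gamma}I^{\beta\gamma}\).

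\emph{Step 3: convexity via a Legendre transform.} Fix the point and the covector~\(n\), and set \(\varphi(\Xi)\coloneqq n_\alpha\,\del\chi/\del\beta_\alpha\). Then \(\del\varphi/\del\Xi_A=n_\alpha U^{\alpha A}=-Y^A\), and the Hessian of \(-\varphi\) is \(-n_\alpha K^{\alpha AB}\), which is positive definite. Contracting \eqref{eq:pot-particle-current} with~\(n\) gives \(\eta_n=\varphi+\Xi_AY^A\), because \(U^{\alpha A}\) collects exactly \(T_\chi\) and \(A_\chi\). Differentiating,
\[
d\eta_n=d\varphi+Y^A\,d\Xi_A+\Xi_A\,dY^A=\Xi_A\,dY^A .
\]
Hence \(\del\eta_n/\del Y^A=\Xi_A\), and the Hessian of \(\eta_n\) in~\(Y\) is \((\del Y/\del\Xi)^{-1}=(-n_\alpha K^{\alpha AB})^{-1}\), which is positive definite. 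So \(\eta_n\) is strictly convex in~\(Y\).

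\emph{Main obstacle.} Step 3 needs \(Y\) to be a genuine global coordinate system, i.e.\ the map \(\Xi\mapsto Y=-\nabla\varphi\) must be injective. I would obtain this from the strict convexity of \(-\varphi\): the gradient of a strictly convex function is injective on a convex domain. The future timelike cone is convex, but the fiber domain~\(\mathscr F\) must also be convex. If it is not, I would either assume it or state the convexity of~\(\eta_n\) locally on the image of convex pieces of the domain. A secondary subtlety is that the covariant chain rule in Steps 1 and 2 holds only when \(\chi\) depends on the point solely through the metric.
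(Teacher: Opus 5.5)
Your proposal is correct and follows essentially the same route as the paper. Hyperbolicity comes from the symmetry of the third-derivative matrices $K^{\alpha AB}$ together with positivity of $-n_\alpha K^{\alpha AB}$. The divergence identity comes from the chain-rule/product-rule cancellation. Convexity comes from the relation $d\eta_n=\Xi_A\,dY^A$, which gives $D_Y^2\eta_n=(-n_\alpha K^{\alpha})^{-1}>0$.

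Your two caveats are legitimate refinements that the paper leaves implicit. First, $\chi$ must be natural so that the covariant chain rule holds. Second, the fiber domain must be convex to upgrade the paper's local invertibility of $\Xi\mapsto Y$ to a global Legendre-type statement.
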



\begin{proof}
\bse
The currents \(U^{\alpha A}\) collect \(T_\chi^{\alpha\beta}\) and \(A_\chi^{\alpha\beta\gamma}\). Their Jacobian matrices are
\be
\frac{\del U^{\alpha A}}{\del\Xi_B}
=K^{\alpha AB}
=\frac{\del^3\chi}
{\del\beta_\alpha\del\Xi_A\del\Xi_B}.
\ee
They are symmetric by commutation of derivatives. Since \(-n_\alpha K^{\alpha AB}\) is positive definite by \autoref{def-pot-hyperbolic}, the map \(\Xi\mapsto Y\) is locally invertible and the system is symmetric hyperbolic.

From~\eqref{eq:pot-particle-current}, we obtain the differential identity
\(d\vNumb_\chi^\alpha=-\Xi_A\,dU^{\alpha A}\). Consequently, we have 
\bel{eq:pot-convexity}
D_Y\eta_n=\Xi,
\qquad
D_Y^2\eta_n
=\bigl(-n_\alpha K^\alpha\bigr)^{-1}>0.
\ee
This proves the strict convexity assertion. The chain rule gives a (space-time) divergence
\bel{eq:pot-chain}
\nabla_\alpha\frac{\del\chi}{\del\beta_\alpha}
=
T_\chi^{\alpha\beta}\nabla_\alpha\beta_\beta
+
A_\chi^{\alpha\beta\gamma}\nabla_\alpha\zeta_{\beta\gamma}.
\ee
We differentiate~\eqref{eq:pot-particle-current}, use~\eqref{eq:pot-chain}, and cancel the terms containing derivatives of~\(\beta\) and~\(\zeta\). We find
\bel{eq:pot-chain-prod}
\nabla_\alpha\vNumb_\chi^\alpha
=
-\beta_\beta\nabla_\alpha T_\chi^{\alpha\beta}
-\zeta_{\beta\gamma}\nabla_\alpha A_\chi^{\alpha\beta\gamma}.
\ee
Substitution of~\eqref{eq:pot-general-sources} proves~\eqref{eq:pot-chain-identity}.
\ese
\end{proof}


\subsection{Proposed Navier--Stokes models}
\label{section-5-4}

\subsubsection{Relaxation sources}

We now specify source terms that drive the relaxing field~$\zeta$ toward the equilibrium manifold~$\{\zeta=0\}$. According to \autoref{prop-pot-relax}, the sources in the equations for $T_\chi$ and $A_\chi$ determine the divergence of the particle-number current~$\vNumb_\chi$. We first prescribe the source in the equation for~$A_\chi$ through a map~$\mathscr B$ and define the corresponding non-negative production~$\mathfrak d_{\mathscr B}$. This production is then inserted either in the particle-number balance law or in the stress-energy balance law, so that the other current is conserved. An explicit choice of~$\mathscr B$ is given in~\eqref{B-barrier}.

\begin{definition}
\label{def-pot-source}
Let $\chi$ be a Navier--Stokes potential on an open fiber domain  \(\mathscr F\subset\mathscr E\) (containing its zero section) of a relaxation bundle \(\mathscr E\subseteq\Sym^2T^*\Mcal\). Then, a \textbf{relaxation rate map} consists of a smooth fiberwise map $\mathscr B=\mathscr B(\beta,\zeta)\in\mathscr E^*$ defined for all future timelike $\beta$ and all $\zeta \in \mathscr F$, satisfying the following conditions.

\bei 

\item \emph{Stable equilibrium property:}
\bel{eq:pot-relax-map}
\mathscr B(\beta,0)=0,
\qquad
\zeta_{\alpha\beta}\mathscr B^{\alpha\beta}(\beta,\zeta)\geq0,
\qquad
\zeta_{\alpha\beta}\mathscr B^{\alpha\beta}(\beta,\zeta)=0
\Longleftrightarrow
\zeta=0.
\ee

\item \emph{Positivity property:}  Its linearization along the equilibrium manifold $\mathbb M$ given by 
\bel{eq:pot-relax-linearization}
\mathbb M^{\alpha\beta\gamma\delta}(\beta)
\coloneqq
\frac{\del\mathscr B^{\alpha\beta}}{\del\zeta_{\gamma\delta}}(\beta,0)
\ee
must be symmetric under interchange of the two index pairs and positive definite on~$\mathscr E$, namely
\be
\eta_{\alpha\beta}\mathbb M^{\alpha\beta\gamma\delta}(\beta)\eta_{\gamma\delta}>0
\qquad
\text{for every nonzero }\eta\in\mathscr E.
\ee
\eei
Furthermore, the associated \textbf{production term} is
\bel{production-term-value}
\mathfrak d_{\mathscr B}
\coloneqq
\zeta_{\alpha\beta}\mathscr B^{\alpha\beta}
\geq0.
\ee
\end{definition}

\begin{definition}
\label{def-NS-triple}
A \textbf{Navier--Stokes triple} $(\mathscr F,\chi,\mathscr B)$ consists of an open fiber domain  \(\mathscr F\subset\mathscr E\) (containing its zero section) of a relaxation bundle \(\mathscr E\subseteq\Sym^2T^*\Mcal\), 
a Navier--Stokes potential~$\chi$ (in the sense of \autoref{def-pot-hyperbolic}), and a relaxation rate map~$\mathscr B$ (in the sense of \autoref{def-pot-source}).
\end{definition}

\subsubsection{Particle-production model}

The first model is defined on an arbitrary time-oriented Lorentzian spacetime and does not require any symmetry. Its total stress tensor is conserved, while the particle-number current has a non-negative production.

\begin{definition}
\label{def-prod-pot}
The \textbf{particle-production Einstein--Navier--Stokes model} associated with a Navier--Stokes triple $(\mathscr F,\chi,\mathscr B)$ is the Einstein--matter system
\bel{eq:pot-production-system}
G^{\alpha\beta}=T_\chi^{\alpha\beta},
\qquad
\nabla_\alpha T_\chi^{\alpha\beta}=0,
\qquad
\nabla_\alpha A_\chi^{\alpha\beta\gamma}
=-\mathscr B^{\beta\gamma}.
\ee
The unknowns are the spacetime metric and the fields $(\beta,\zeta)$, representing the rescaled matter momentum and the relaxing field, respectively. The tensors $T_\chi$ and $A_\chi$ are defined from the potential according to~\eqref{eq:pot-currents}.
\end{definition}

The chain identity in \autoref{prop-pot-relax} immediately gives the particle-number balance law
\bel{prod-pot-dNchi}
\nabla_\alpha\vNumb_\chi^\alpha
=\zeta_{\beta\gamma}\mathscr B^{\beta\gamma}
=\mathfrak d_{\mathscr B}
\geq0.
\ee
Thus, the Einstein equation is compatible with the contracted Bianchi identity, while the particle-number current has a non-negative production.


\subsubsection{Dissipated-energy model}

The second model is adapted to a prescribed spacelike foliation. Its particle-number current is conserved, while the potential-generated stress tensor loses energy in the direction of the future unit normal~$n$. An additional symmetric stress tensor compensates this loss and makes the total stress tensor divergence-free.

We specialize to $\Tbb^2$ symmetry and to the orthonormal frame~\eqref{eq:ourframe}, with future-directed unit normal $n=e_0=\Omega^{-1}\del_t$ to the areal foliation. We denote by~$\pi^\parallel$ the orthogonal projector onto the tangent spaces of the symmetry orbits:
\be
\pi^\parallel=e^2\otimes e^2+e^3\otimes e^3.
\ee
Its divergence is
\be
\nabla^\alpha\pi^\parallel_{\alpha\beta}
=t^{-1}\Omega^{-1}n_\beta.
\ee
We therefore introduce the (signed) rescaled tensor
\be
\underline\pi\coloneqq t\Omega\pi^\parallel.
\ee
For every $\Tbb^2$-invariant scalar field~$\sigma$, its gradient is orthogonal to the symmetry orbits, and consequently
\bel{eq:pot-orbit-projector-div}
\nabla^\alpha\bigl(\sigma\underline\pi_{\alpha\beta}\bigr)
=\sigma n_\beta.
\ee
In particular, no derivative of~$\sigma$ occurs in this divergence identity.

\begin{definition}
\label{def-diss-en}
Assume that all fields are $\Tbb^2$-invariant and work in the areal gauge. The \textbf{dissipated-energy Einstein--Navier--Stokes model} associated with a Navier--Stokes triple $(\mathscr F,\chi,\mathscr B)$ is
\bel{eq:pot-dissipated-system}
\begin{aligned}
G^{\alpha\beta}
&=T_\chi^{\alpha\beta}+T_{\rm aux}^{\alpha\beta},
&
\nabla_\alpha T_\chi^{\alpha\beta}
&=-\frac{\mathfrak d_{\mathscr B}}{\nu}n^\beta,
&
\nabla_\alpha A_\chi^{\alpha\beta\gamma}
&=-\mathscr B^{\beta\gamma},
\\
T_{\rm aux}^{\alpha\beta}
&=\frac{\mathfrak d_{\mathscr B}}{\nu}\underline\pi^{\alpha\beta},
&
\nu&\coloneqq-n_\alpha\beta^\alpha>0.
\end{aligned}
\ee
The unknowns are the spacetime metric and the fields $(\beta,\zeta)$. The tensors $T_\chi$ and $A_\chi$ are defined from~$\chi$ according to~\eqref{eq:pot-currents}, while $T_{\rm aux}$ transfers to the orbital directions the energy dissipated by~$T_\chi$.
\end{definition}

\begin{corollary}
\label{prop-diss-flow}
In the setting of \autoref{def-diss-en}, the additional stress tensor satisfies
\bel{nabTauxdB}
\nabla_\alpha T_{\rm aux}^{\alpha\beta}
=\frac{\mathfrak d_{\mathscr B}}{\nu}n^\beta.
\ee
The total stress tensor $T_{\rm NS}\coloneqq T_\chi+T_{\rm aux}$ thus satisfies
\bel{eq:pot-dissipated-system-3}
G^{\alpha\beta}=T_{\rm NS}^{\alpha\beta},
\qquad
\nabla_\alpha T_{\rm NS}^{\alpha\beta}=0.
\ee
The particle-number current~$\vNumb_\chi$ defined in~\eqref{eq:pot-particle-current} is also conserved:
\be
\nabla_\alpha\vNumb_\chi^\alpha=0.
\ee
\end{corollary}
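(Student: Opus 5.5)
The corollary reduces to three short computations, each resting on an identity already stated: the orbit-projector divergence identity~\eqref{eq:pot-orbit-projector-div} for the auxiliary stress, and the chain identity~\eqref{eq:pot-chain-identity} of \autoref{prop-pot-relax} for the particle current.

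\emph{Step 1: divergence of the auxiliary stress.} The scalar $\sigma\coloneqq\mathfrak d_{\mathscr B}/\nu$ is built algebraically from the $\Tbb^2$-invariant fields $(\beta,\zeta)$ and from the invariant normal~$n$, so it is itself $\Tbb^2$-invariant. We apply~\eqref{eq:pot-orbit-projector-div} to this~$\sigma$ and raise the free index with~$g$, which gives $\nabla_\alpha T_{\rm aux}^{\alpha\beta}=\sigma n^\beta$. This is~\eqref{nabTauxdB}.

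Before relying on~\eqref{eq:pot-orbit-projector-div}, I would briefly recheck it, since it is the only geometric input. The argument has two parts.
\begin{itemize}
\item \emph{The gradient term vanishes.} Because $\sigma$ is invariant, $d\sigma$ lies in the span of $e^0,e^1$, so $\nabla^\alpha\sigma\,\pi^\parallel_{\alpha\beta}=0$.
\item \emph{The remaining term is $n_\beta$.} We compute $\nabla^\alpha(t\Omega\pi^\parallel_{\alpha\beta})=t\Omega\nabla^\alpha\pi^\parallel_{\alpha\beta}$. The derivative of $t\Omega$ drops out for the same reason: $d(t\Omega)$ is orthogonal to the orbits. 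Substituting $\nabla^\alpha\pi^\parallel_{\alpha\beta}=t^{-1}\Omega^{-1}n_\beta$ then leaves $n_\beta$.
\end{itemize}
The formula $\nabla^\alpha\pi^\parallel_{\alpha\beta}=t^{-1}\Omega^{-1}n_\beta$ can be confirmed from the spin connection~\eqref{equa-touslesomega}. The combination $\omega^{20}_2+\omega^{30}_3=t^{-1}\Omega^{-1}$ gives the normal component. The terms $\omega^{12}_2+\omega^{13}_3=\tfrac12P_1-\tfrac12P_1=0$ show the $e_1$ component vanishes, and the remaining contributions cancel by antisymmetry.

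\emph{Step 2: the total stress is divergence-free.} We add~\eqref{nabTauxdB} to the balance law $\nabla_\alpha T_\chi^{\alpha\beta}=-\sigma n^\beta$ in~\eqref{eq:pot-dissipated-system}. The two sources cancel, so $\nabla_\alpha T_{\rm NS}^{\alpha\beta}=0$. Together with the Einstein equation $G=T_\chi+T_{\rm aux}$ in~\eqref{eq:pot-dissipated-system}, this gives~\eqref{eq:pot-dissipated-system-3}, consistent with the contracted Bianchi identity.

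\emph{Step 3: conservation of the particle current.} We invoke \autoref{prop-pot-relax} with the sources $I^\beta=-\sigma n^\beta$ and $I^{\beta\gamma}=-\mathscr B^{\beta\gamma}$. Formula~\eqref{eq:pot-chain-identity} then gives
\[
\nabla_\alpha\vNumb_\chi^\alpha=\sigma\,\beta_\beta n^\beta+\zeta_{\beta\gamma}\mathscr B^{\beta\gamma}.
\]
Since $\beta_\beta n^\beta=-\nu$, the first term equals $-\mathfrak d_{\mathscr B}$. The second term is $\mathfrak d_{\mathscr B}$ by~\eqref{production-term-value}, so the divergence vanishes.

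The only point needing care is that $\nu>0$. This holds because $\beta$ and $n$ are both future timelike, so the quotient $\mathfrak d_{\mathscr B}/\nu$ is well defined in the interior of the state space. The main potential obstacle is the projector-divergence identity, but it is already stated in the paper, so the argument consists only of these direct substitutions.
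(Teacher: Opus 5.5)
Your proof is correct and follows the paper's argument step for step. You apply \eqref{eq:pot-orbit-projector-div} with $\sigma=\mathfrak d_{\mathscr B}/\nu$ to get \eqref{nabTauxdB}, add it to the $T_\chi$ balance law to get $\nabla_\alpha T_{\rm NS}^{\alpha\beta}=0$, and insert the two sources into \eqref{eq:pot-chain-identity} with $\beta_\alpha n^\alpha=-\nu$ to get $\nabla_\alpha\vNumb_\chi^\alpha=0$; your extra recheck of the projector-divergence identity through the spin coefficients is a harmless supplement, since the paper simply quotes that identity.
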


\begin{proof}
\bse
Since $\mathfrak d_{\mathscr B}/\nu$ is $\Tbb^2$-invariant, we apply~\eqref{eq:pot-orbit-projector-div} with $\sigma=\mathfrak d_{\mathscr B}/\nu$ and obtain~\eqref{nabTauxdB}.
Together with the stress source in~\eqref{eq:pot-dissipated-system}, this identity gives
\be
\nabla_\alpha T_{\rm NS}^{\alpha\beta}
=\nabla_\alpha T_\chi^{\alpha\beta}
+\nabla_\alpha T_{\rm aux}^{\alpha\beta}
=0.
\ee
We next use the chain identity~\eqref{eq:pot-chain-identity}, the relation $\beta_\alpha n^\alpha=-\nu$, and the definition~\eqref{production-term-value}. We find
\be
\nabla_\alpha\vNumb_\chi^\alpha
=-\beta_\beta\left(-\frac{\mathfrak d_{\mathscr B}}{\nu}n^\beta\right)
+\zeta_{\beta\gamma}\mathscr B^{\beta\gamma}
=-\mathfrak d_{\mathscr B}+\mathfrak d_{\mathscr B}
=0.
\ee
Thus, the total stress tensor is divergence-free and the particle-number current is conserved.
\ese
\end{proof}

\begin{remark}
\label{rem-pot-two-src}
The particle-production and dissipated-energy systems have the same principal matrices~$K^\alpha$, since they are defined from the same potential and differ only in their source terms. On the equilibrium manifold~$\{\zeta=0\}$, we have $\mathscr B=0$, $\mathfrak d_{\mathscr B}=0$, and $T_{\rm aux}=0$, while $T_\chi$ and $\vNumb_\chi$ coincide with the Euler stress tensor and particle-number current. The equilibrium system is therefore the Einstein--Euler system. The zero-relaxation limit is studied in~\cite{TorusT2-4-2026}.
\end{remark}


\subsubsection{Finite-time exclusion of the constitutive boundary}

Symmetric hyperbolicity is available only while the relaxing field takes its values in the \emph{open constitutive domain}~\(\mathscr F\), which may be bounded in the fiber directions. It does not, by itself, prevent a regular solution from approaching the boundary~\(\del\mathscr F\). We therefore supplement the algebraic conditions on the potential and the relaxation rate map with a \emph{dynamical condition} ensuring that the boundary of the constitutive domain cannot be reached in finite time. This condition is related to the classical notion of a positively invariant domain for nonlinear hyperbolic systems. (See, for instance, Dafermos' textbook~\cite{Dafermos-book}). Here we use a slightly stronger version, since we also exclude convergence toward the boundary at a finite terminal time.

\begin{definition}
\label{def-pot-admissible-triple}
A \textbf{Navier--Stokes triple} \((\mathscr F,\chi,\mathscr B)\) is called \textbf{admissible} provided the following \textbf{invariant-domain property} holds for each of the two associated Einstein--Navier--Stokes models 
\eqref{eq:pot-production-system} and \eqref{eq:pot-dissipated-system}, as follows. 

Let \(\{\Sigma_t\}_{t_0\leq t<t_1}\) be a spacelike foliation of a compact spacetime slab, and let \((g,\beta,\zeta)\) be a regular solution of the corresponding hyperbolic evolution-constraint system\footnote{A gauge must be chosen to deal with the spacetime metric and formulate a well-posed initial value problem; this is a standard topic, not reviewed here \cite{Choquet-book}.} on \([t_0,t_1)\). Assume that
\be
\zeta(t_0,\Sigma_{t_0})\subset\mathcal K_0
\qquad\text{for some closed fiber subset }\mathcal K_0\Subset\mathscr F.
\ee
Then one has
\be
\zeta(t,\Sigma_t)\subset\mathscr F
\qquad\text{for every }t_0\leq t<t_1.
\ee
Moreover, if the fields admit a continuous limit in the ambient bundles as \(t\uparrow t_1\), while the nonsingular coefficients in the equation for~\(\zeta\) remain bounded, then the terminal relaxing field satisfies
\be
\zeta(t_1,\Sigma_{t_1})\subset\mathscr F,
\ee
rather than merely taking its values in~\(\overline{\mathscr F}\). Equivalently, the constitutive boundary~\(\del\mathscr F\) cannot be the first cause of breakdown of a regular solution.
\end{definition}

The stable-equilibrium condition
\be
\zeta_{\alpha\beta}\mathscr B^{\alpha\beta}(\beta,\zeta)>0
\qquad\text{for }\zeta\neq0
\ee
does not imply this property by itself. It controls the \emph{direction} of the relaxation near the equilibrium manifold, but lower-order terms in the equation for~\(\zeta\) may still drive the solution toward~\(\del\mathscr F\). A separate boundary estimate is therefore required.
The invariant-domain property can be established by means of a scalar function that diverges at the constitutive boundary.
When the bundle $\mathscr E$ is equipped with a positive-definite bundle metric~$g_{\mathscr E}$ (e.g., for the trace bundle $\mathscr E=\operatorname{span}(g)$), one can select a relaxation rate map based on a barrier function: for some $\delta>0$,
\bel{B-barrier}
\mathscr B = \frac{\delta^2}{\delta^2 - g_{\mathscr E}(\zeta, \zeta)} g_{\mathscr E}(\zeta, \cdot) , \qquad
\mathscr F = \bigl\{\zeta \bigm| g_{\mathscr E}(\zeta,\zeta) < \delta^2 \bigr\} .
\ee
The divergence of $\mathscr B$ at the constitutive boundary ensures that it prevents the relaxation field from exiting~$\mathscr F$.
In the following sections, we always work with an admissible Navier--Stokes triple. 


\section{The \JKL{} formulation of Einstein--Navier--Stokes areal flows}
\label{section-6}

\subsection{Navier--Stokes corrections to \JKL{} operators}
\label{section-6-1}

\subsubsection{Stress-tensor corrections}

We now express the Einstein--Navier--Stokes equations of \autoref{section-5} with $\Tbb^2$ symmetry in the language of \autoref{section-3}.  After presenting the operators, we express in \autoref{section-6-2} the particle-production and dissipated-energy models and justify basic features such as constraint propagation. In \autoref{section-6-3} we determine a class of bundles~$\mathscr E$ (for the relaxation variables~$\zeta$) and scalar potentials~$\chi$ that are adapted to the $\Tbb^2$-symmetric setting, in the sense that the relaxation terms preserve transport equations for $\Jpar$ and~$\Kpar$ that are crucial for maximum principles in \autoref{section-7}.  This finalizes our choice of Einstein--Navier--Stokes system studied in the rest of our work.

Navier--Stokes systems were described in the previous section in terms of variables $(\beta,\zeta)$.  The fluid momentum~$\Jbb$ used in our first-order formulation is defined from~$\beta$ by inverting~\eqref{eq:pot-enthalpy} for non-vacuum states:
\be
\Jbb = \frac{2^{1/2}(\mu+p(\mu))^{3/2}}{\Numb(\mu)} \beta , \qquad
\mu \text{ solution of} \quad \frac{\Numb(\mu)}{\mu+p(\mu)} = (- \beta_\alpha \beta^\alpha)^{1/2} .
\ee
The change of variables involves the particle number and pressure law of the equilibrium Euler system at $\zeta=0$, and has no $\zeta$~dependence.

Compared to the Einstein--Euler system, the Euler stress tensor~$T_{\rm E}$ is replaced by the Navier--Stokes tensor~$T_{\rm NS}$ in source terms of the Einstein equations, and in the conservation equation of the stress-tensor (contracted Bianchi identities).  The system also includes the sourced evolution equations for the relaxing field,
\be
\nabla_\alpha A_\chi^{\alpha\beta\gamma}
= - \mathscr B^{\beta\gamma} .
\ee
As described in \autoref{section-5}, the Navier--Stokes tensor is obtained from some potential $\chi=\chi_{\rm E}+\chi_{\rm relax}$:
\be
T_{\rm NS} = T_{\rm E} + T_{\chi_{\rm relax}} + T_{\rm aux} , \qquad
T_\chi^{\alpha\beta} = T_{\rm E}^{\alpha\beta} + T_{\chi_{\rm relax}}^{\alpha\beta} = \frac{\del^2 \chi}{\del\beta_\alpha\del\beta_\beta} .
\ee
Here, $T_{\rm aux}=0$ for the particle-production model, while this auxiliary stress-tensor is non-trivial in the dissipated-energy model.

We introduce a notation for components of the relaxation term $T_{\chi_{\rm relax}} = T_\chi - T_{\rm E}$,
\bel{eq:section-7-full-reduced-stress}
\mathcal S_{mn} \coloneqq 2T_{\chi_{\rm relax}}(e_m,e_n) = 2 \frac{\del^2 \chi_{\rm relax}}{\del\beta^m \del\beta^n}, \qquad
\mathcal S \coloneqq \eta^{mn}\mathcal S_{mn}, \qquad m,n\in\{0,1,2,3\} ,
\ee
and accordingly $\mathcal S^{mn} = \eta^{mm'} \eta^{nn'} \mathcal S_{m'n'}$,
where $\beta^m$ are components of $\beta=\frac{\Numb(\mu)}{\mu+p(\mu)} u$ in the frame~$e$.
In the dissipated-energy model, the auxiliary stress tensor defined in~\eqref{eq:pot-dissipated-system} has only two equal orbital components. We set
\bel{Sigma-explicit}
\Sigma \coloneqq \frac{2}{t} T_{\rm aux}(e_2,e_2)=\frac{2\Omega}{\beta^0}\mathfrak d_{\mathscr B}\geq 0, \qquad
T_{\rm aux}(e_m, e_n) = \begin{cases}
  \frac{1}{2} t \Sigma & \text{if } m=n\in\{2,3\} , \\
  0 & \text{otherwise}.
\end{cases}
\ee
We set $\Sigma=0$ and $T_{\rm aux}=0$ in the particle-production model.  Thus $T_{\rm aux}$ appears as a pressure term projected onto symmetry orbits; as a result it only affects one of the Einstein equations below. Consequently, the total stress components satisfy
\bel{TNS-JJ-explicit}
\aligned
2 T_{\rm NS}(e_m,e_n) & = J_m J_n + \frac{1-q_\Jbb}{2} (-\Jbb\cdot\Jbb) \eta_{mn} + \mathcal S_{mn} + \delta_{m=n\in\{2,3\}} t \Sigma ,
\\
2 \eta^{mn} T_{\rm NS}(e_m,e_n) & = (1-2q_\Jbb) (-\Jbb\cdot\Jbb) + \mathcal S + 2 t \Sigma .
\endaligned
\ee
The operators below include~$\mathcal S$, namely the potential-generated correction $2(T_\chi-T_{\rm E})$.  The contributions of~$T_{\rm aux}$, or equivalently~$\Sigma$, are treated as sources instead.  This convention separates the common principal part of the two systems from the additional orbital term in the dissipated-energy system.


\subsubsection{Geometric operator corrections}

We then obtain the Einstein--Navier--Stokes equations by replacing all quadratic terms in~$J$ in the Einstein--Euler equations of \autoref{section-3-1} by their Navier--Stokes analogue according to~\eqref{TNS-JJ-explicit}.
The four wave-map operators are
\bse\label{eq:expl-NS-op}
\be
\aligned
\opL_{00}^\chi & \coloneqq \opL_{00}+\frac{1}{2}(\mathcal S_{22}-\mathcal S_{33}), \qquad &
\opL_{01}^\chi & \coloneqq \opL_{01}, \\
\opL_{10}^\chi & \coloneqq \opL_{10}+\mathcal S_{23}, \qquad &
\opL_{11}^\chi & \coloneqq \opL_{11},
\endaligned
\ee
The evolution operator for the speed, lapse, and twists are
\be
\aligned
\opC^\chi & \coloneqq \opC - \frac{t}{2}\Omega^2\amdeux(\mathcal S_{00}-\mathcal S_{11}), \qquad &
\opO_0^\chi & \coloneqq \opO_0-\frac{t}{2}\Omega^2\mathcal S_{11},
\\
\opK_{20}^\chi & \coloneqq \opK_{20}-\Omega\mathcal S_{12}, \qquad &
\opK_{30}^\chi & \coloneqq \opK_{30}-\Omega\mathcal S_{13},
\endaligned
\ee
and the constraint operators for the lapse and twists are
\bel{opchi-constr}
\opO_1^\chi \coloneqq \opO_1-\frac{t}{2}\Omega^2\amdeux\mathcal S_{01}, \qquad
\opK_{21}^\chi \coloneqq \opK_{21}-\Omega\amdeux\mathcal S_{02}, \qquad
\opK_{31}^\chi \coloneqq \opK_{31}-\Omega\amdeux\mathcal S_{03}.
\ee
\ese
These $11$~operators correspond to two compatibility operators and $9$~components of the Einstein equations.  As in the Einstein--Euler system in \autoref{section-3}, the last Einstein equation will be seen as being equivalent to the time component of the stress-tensor conservation law.


\subsubsection{Fluid operator corrections}

In the Einstein--Euler system, fluid variables evolve according to the operators $\opJ^0,\opJ^1,\opJ^2,\opJ^3$.  These are neither quite the contracted Bianchi identity $\nabla^\alpha G_{\alpha\beta}=0$ nor the conservation $\nabla^\alpha T_{\alpha\beta}=0$ as they include derivatives of both fluid and geometric variables.  The spatial operators $\opJ^1,\opJ^2,\opJ^3$ can be understood in two useful ways:
\bei

\item by decomposing $\nabla^\alpha T_{\alpha\beta}=0$ into frame components and rewriting some stress-tensor terms (without derivatives) in terms of the geometry using $T_{\alpha\beta}=G_{\alpha\beta}$;

\item as compatibility conditions for the evolution and constraint equations of $\log\Omega,K_2,K_3$.

\eei
\noindent Navier--Stokes corrections are thus included as in~\eqref{eq:expl-NS-op}.

The operator $\opJ^0$ is obtained either from the component $e_0^\beta \nabla^\alpha T_{\alpha\beta}=0$ or from the last Einstein equation. In~\eqref{eq:T2-Euler-perp2-def0} we defined a source term for the first Euler equation
\bel{Msource-Euler-rewrite}
\aligned
\Msource
& = 5 \, \Ebf_0(\Kpar)
- \Pbb \cdot \Pbb -  \Qbb \cdot \Qbb
+ \frac{3}{2} \bigl(J_2^2 + J_3^2 + (1-q_{\Jbb})(-\Jbb\cdot\Jbb)\bigr)
- \frac{1}{2} (1-2q_\Jbb) (-\Jbb\cdot\Jbb)
\\
& = 5 \, \Ebf_0(\Kpar)
- \Pbb \cdot \Pbb -  \Qbb \cdot \Qbb
+ 3 T_{\rm E}(e_2,e_2) + 3 T_{\rm E}(e_3,e_3)
- \eta^{mn} T_{\rm E}(e_m,e_n) .
\endaligned
\ee
The Navier--Stokes analogue of~$\opJ^0$ is obtained by replacing $T_{\rm E}$ by~$T_\chi$ in this source as well as in the divergence parts of the operators. Altogether, the operators are
\bel{eq:expl-more-NS-op}
\aligned
\opJ^0_\chi & \coloneqq \opJ^0 + \divuntrois \bigl( \Omega \, \mathcal S^{0\bullet} \bigr)
+ \frac{1}{4t} \bigl( 3 \mathcal S_{22} + 3 \mathcal S_{33} - \mathcal S \bigr) ,
\\ 
\opJ^1_\chi & \coloneqq \opJ^1 + \divuntrois \bigl( \Omega \, \mathcal S^{1\bullet} \bigr),
\\ 
\opJ^2_\chi & \coloneqq \opJ^2 + \divuntrois \bigl(|t|^{1/2} \mathcal S^{2\bullet}\bigr) ,
\\ 
\opJ^3_\chi & \coloneqq \opJ^3 + \divuntrois \bigl(|t|^{1/2} \mathcal S^{3\bullet}\bigr) ,
\endaligned
\ee
where we recall that indices of~$\mathcal S$ are raised using~$\eta^{mn}$.
The contributions $\mathcal S_{mn}$ are algebraic in the fluid momentum~$\Jbb$ and relaxation variables~$\zeta$.  In contrast to~\eqref{eq:expl-NS-op} which involve these components without derivatives, \eqref{eq:expl-more-NS-op} includes their first-order derivatives, as befitting an evolution equation.

In the dissipated-energy model, the total stress tensor is $T_{\rm NS} = T_\chi + T_{\rm aux}$.  Accordingly, the source term~\eqref{Msource-Euler-rewrite} receives an additional contribution
\be
3T_{\rm aux}(e_2,e_2) + 3T_{\rm aux}(e_3,e_3) - \eta^{mn} T_{\rm aux}(e_m,e_n) = 2t \Sigma .
\ee
This results in correcting $\opJ^0$ to $\opJ^0_\chi + \Sigma$ in the dissipated-energy model.


\subsubsection{A geometry-matter tensor}

Recall from \autoref{section-3-1} that the evolution and constraint of the speed and lapse, and the first two Euler equations are conveniently expressed in terms of a combination $\Mbf^{\bullet\bullet}$ of the fluid and geometry variables, as well as a source term $\Msource$.
By expressing their fluid part in terms of $T_{\rm E}$ and replacing it by $T_\chi$ we are led to defining
\bel{Mbfchi-expr}
\aligned
\Mbf_\chi^{00}(\Jbb,\Kpar,\Lbb,\zeta) & \coloneqq 2 T_\chi^{00} + \Ebf_0(\Kpar, \Lbb) = \Mbf^{00}(\Jbb,\Kpar,\Lbb) + \mathcal S^{00} ,
\\
\Mbf_\chi^{01}(\Jbb,\Lbb,\zeta) & \coloneqq 2 T_\chi^{01} - \Ebf_1(\Pbb) - \Ebf_1(\Qbb) = \Mbf^{01}(\Jbb,\Lbb) + \mathcal S^{01} ,
\\
\Mbf_\chi^{11}(\Jbb,\Kpar,\Lbb,\zeta) & \coloneqq 2 T_\chi^{11} + \Ebf_0(\Lbb) - \Ebf_0(\Kpar) = \Mbf^{11}(\Jbb,\Kpar,\Lbb) + \mathcal S^{11} ,
\\
\Msource^\chi(\Jbb, \Kpar, \Lbb,\zeta)
& \coloneqq 3 T_\chi^{22} + 3 T_\chi^{33} - \eta_{mn} T_\chi^{mn}
+ 5 \, \Ebf_0(\Kpar) - \Pbb \cdot \Pbb -  \Qbb \cdot \Qbb
\\
& \, = \Msource(\Jbb, \Kpar, \Lbb) + \frac{1}{2} \bigl( 3 \mathcal S_{22} + 3 \mathcal S_{33} - \mathcal S \bigr) ,
\endaligned
\ee
where $T_\chi^{mn}$ denote frame components of~$T_\chi$.  For the Euler stress tensor these reduce to $\Mbf^{\bullet\bullet}$ and $\Msource$.
This notation allows us to write
\bel{using-Mbfchi-expr}
\aligned
\opC^\chi & = (\amdeux)_t - \frac{t}{2}(\Mbf_\chi^{00}- \Mbf_\chi^{11})\,\Omega^2 \amdeux ,
\\
\opO_0^\chi & = (\log\Omega)_t + \frac{1}{4t} - \frac{t}{2}\Mbf_\chi^{11}\,\Omega^2 ,
\\
\opO_1^\chi & = (\log\Omega)_x + \frac{t}{2} \, \Mbf_\chi^{01}\, \Omega^2 \amdeux ,
\\
\opJ^0_\chi & = \divuntrois \bigl( \Omega \, \Mbf_\chi^{0 \bullet}(\Jbb, \Kpar, \Lbb) \bigr)
+ \frac{1}{2t} \Msource^\chi(\Jbb, \Kpar, \Lbb) ,
\\ 
\opJ^1_\chi & = \divuntrois \bigl( \Omega \, \Mbf_\chi^{1 \bullet}(\Jbb, \Kpar, \Lbb) \bigr) .
\endaligned
\ee
This form will help us exhibit positivity properties when deriving geometric estimates in \autoref{section-8-1}.


\subsection{Two models and their structure}
\label{section-6-2}

\subsubsection{Particle-production and dissipated-energy systems}

The models of interest are built from a fiber domain $\mathscr F$, a scalar potential $\chi(\beta,\zeta)$ and a relaxation rate map~$\mathscr B$ satisfying Definitions~\ref{def-pot-hyperbolic} and~\ref{def-pot-source}. Here we express the particle-production system of \autoref{def-prod-pot} and the dissipated-energy system of \autoref{def-diss-en} in $\Tbb^2$~symmetry using the operators~\eqref{eq:expl-NS-op} and~\eqref{eq:expl-more-NS-op} above.
The two systems share the three constraint equations for $\log\Omega,K_2,K_3$,
\bel{eqchi-constr}
\opO_1^\chi = \opK_{21}^\chi = \opK_{31}^\chi = 0 ,
\ee
and the following evolution equations
\bel{eqchi-evol}
\aligned
\opL_{00}^\chi = \opL_{01}^\chi = \opL_{10}^\chi = \opL_{11}^\chi & = 0, \qquad &
\opJ^1_\chi = \opJ^2_\chi = \opJ^3_\chi & = 0, \\
\opC^\chi = \opO_0^\chi = \opK_{20}^\chi = \opK_{30}^\chi & = 0 , \qquad &
\nabla_\alpha A_\chi^{\alpha\beta\gamma} & = -\mathscr B^{\beta\gamma} .
\endaligned
\ee
The divergence equation for~$A_\chi$ is left in covariant form here for brevity. It consists of $\dim\mathscr E$ evolution equations. In total,~\eqref{eqchi-evol} consists of $11+\dim\mathscr E$ evolution equations.

\begin{definition}
\label{def-prod-areal-flow}
A \textbf{particle-production Einstein--Navier--Stokes areal flow} consists of $(\Phi,\Psi,\zeta)$ with $\Phi=(\Jperp,\Jpar,\Pbb,\Qbb)$ and $\Psi=(\ell,\log\Omega,\Kpar)$, satisfying the constraints~\eqref{eqchi-constr}, the evolution equations~\eqref{eqchi-evol}, and the energy conservation equation
\bel{prod-energy-conservation}
\opJ^0_\chi = 0 ,
\ee
together with the same algebraic inequalities~\eqref{eq:causal} on $\amdeux,\Omega,\Jbb$ and periodicity conditions~\eqref{equa-pericons} on $P_1,Q_1$ as in \autoref{def-quadr}.

A \textbf{dissipated-energy Einstein--Navier--Stokes areal flow} consists of $(\Phi,\Psi,\zeta)$ satisfying the inequalities~\eqref{eq:causal}, periodicity~\eqref{equa-pericons}, constraints~\eqref{eqchi-constr}, evolution equations~\eqref{eqchi-evol}, and the dissipated-energy equation
\bel{energy-dissip-Sigma}
\opJ^0_\chi = - \Sigma , \qquad
\Sigma \coloneqq
\frac{\Omega(-\Jbb\cdot\Jbb)}{\Numb^0} \, \mathfrak d_{\mathscr B} ,
\ee
where $\mathfrak d_{\mathscr B} = \zeta_{\alpha\beta} \mathscr B^{\alpha\beta} \geq 0$ is the production term defined in \autoref{def-pot-source}.
\end{definition}

\begin{remark}
The coefficient $(-\Jbb\cdot\Jbb)/N^0=2(\mu+p(\mu))/(N(\mu)u^0)$ is non-negative and bounded above by $1+\mu^{1/2}$ up to a constant, thanks to $u^0\geq 1$ and the asymptotics \eqref{near-vacuum-Numb}--\eqref{eq:large-dens-growth}.
\end{remark}


\subsubsection{Structural properties of the models}

We then have an analogue of Propositions~\ref{theo-struct} and~\ref{theo-struct-bis} for the particle-production and dissipated-energy Navier--Stokes models.
It describe the system as an evolution-constraint system, equivalent to the geometric equations of Definitions~\ref{def-prod-pot} and~\ref{def-diss-en}, and provides conservation of the particle current.

\begin{proposition}[Structural properties of the Einstein--Navier--Stokes models]
\label{prop-pot-JKL}
Consider the system of equations in \autoref{def-prod-areal-flow} under the pressure assumptions~\eqref{hyperbolic-eos-zero}.
Then the following properties hold for sufficiently regular solutions~$(\Phi,\Psi,\zeta)$.
\bei

\item \emph{Hyperbolicity.} \emph{Under the non-vacuum condition $- \Jbb \cdot \Jbb>0$,} the evolution system consisting of~\eqref{eqchi-evol} and either~\eqref{prod-energy-conservation} or~\eqref{energy-dissip-Sigma} is a first-order hyperbolic system of $12+\dim\mathscr E$ nonlinear balance laws with characteristic cones in the causal cone.

\item \emph{Constraint propagation.}  Given a solution of this hyperbolic system, if the constraints~\eqref{eqchi-constr} on $\log\Omega, K_2, K_3$ are satisfied at initial time they propagate to all future times.

\item \emph{Equivalence property.}
Given such a solution $(\Phi,\Psi,\zeta)$, together with the value of the modular parameter~$\tau$ at one spacetime point and the spatial averages of $G,H$ at initial time, one recovers a unique solution $(g,\beta,\zeta)$ of the Einstein--Navier--Stokes equations, specifically a particle-production relaxation flow (\autoref{def-prod-pot}) or dissipated-energy relaxation flow (\autoref{def-diss-en}).
In particular, the wave equation for the lapse~\eqref{eq:waveconffac-NS} below holds.

\item \emph{Particle-number current.}
The convex current derived from the potential~$\chi$ satisfies
\bel{eq:pot-areal-production}
\aligned
\divuntrois\vNumb_\chi
& = \mathfrak d_{\mathscr B}
\quad && \text{(particle-production areal flow),}
\\
\divuntrois\vNumb_\chi & = 0
\quad && \text{(dissipated-energy areal flow),}
\endaligned
\ee
where $\mathfrak d_{\mathscr B} = \zeta_{\alpha\beta}\mathscr B^{\alpha\beta}(\beta,\zeta) \geq 0$.
\eei
\end{proposition}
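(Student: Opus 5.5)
The plan is to reduce each item to results already established, treating the Navier--Stokes system as a perturbation of the Einstein--Euler one by the tensor $\mathcal S_{mn}$ and the orbital term $\Sigma$.

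\emph{Equivalence.} The first step is the equivalence property, since everything else is read off from it. The \JKL{} operators of \autoref{section-3} were obtained from the frame components of $R_{\alpha\beta}=T_{\alpha\beta}-\frac12(\Tr T)g_{\alpha\beta}$ using the spin connection~\eqref{equa-touslesomega}. In that derivation the matter enters only algebraically, through $e_m^\alpha T_{\alpha\beta}e_n^\beta$. Replacing $T_{\rm E}$ by $T_{\rm NS}$ via~\eqref{TNS-JJ-explicit} therefore gives exactly the corrections~\eqref{eq:expl-NS-op}. We will check, component by component, that the Ricci components $(22)-(33)$, $(23)$, $(00)\pm(11)$, $(11)$, $(12)$, $(13)$, $(01)$, $(02)$, $(03)$ pick up $\mathcal S$ with the displayed coefficients. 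The term $t\Sigma\,\delta_{m=n\in\{2,3\}}$ cancels in $\mathcal S_{22}-\mathcal S_{33}$ and in all off-orbital components. In the trace-reversed form it enters only the $(22)+(33)$ combination, which is the equation equivalent to $\opJ^0$; this produces the extra $\Sigma$ in~\eqref{energy-dissip-Sigma}. Here we use $\Sigma=2\Omega\mathfrak d_{\mathscr B}/\beta^0$ together with $\beta^0=\Numb^0\cdot 2/(-\Jbb\cdot\Jbb)$, from~\eqref{eq:pot-enthalpy}, to match the coefficient. The equations $\opJ^m_\chi$ for $m=1,2,3$ are the frame components $e_m^\beta\nabla^\alpha T^\chi_{\alpha\beta}=0$. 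They hold because $\nabla T_{\rm aux}$ is proportional to $n$ by~\eqref{eq:pot-orbit-projector-div}, so it has no $e_1,e_2,e_3$ components. Reconstruction of $P,Q,G,H,\mu,u$ from $(\Phi,\Psi)$, $\tau$ at one point and the averages of $G,H$ is verbatim the argument of \autoref{theo-struct-bis}, since it uses only~\eqref{eq:definevar-01}, the curl equations and periodicity, none of which changed. The lapse wave equation~\eqref{eq:waveconffac-NS} then follows as in the Euler case as the remaining Ricci identity.

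\emph{Constraint propagation.} Compute $\del_t$ of the residuals $\opO_1^\chi,\opK_{21}^\chi,\opK_{31}^\chi$ and subtract $\del_x$ of the evolution operators $\opO_0^\chi,\opK_{20}^\chi,\opK_{30}^\chi$. As in \autoref{section-A-4}, the result should be a linear homogeneous system in the residuals once $\opJ^1_\chi=\opJ^2_\chi=\opJ^3_\chi=0$, the $\opL^\chi$ equations and $\opC^\chi=0$ hold. The $\mathcal S$ terms enter the compatibility conditions exactly as $T_{\rm E}$ did, because $\opJ^m_\chi$ were built as those compatibility conditions. Uniqueness for this linear ODE-in-time system (or Gronwall) then gives propagation from zero initial data. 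I expect this bookkeeping to be the main obstacle. The difficulty is verifying that no $\mathcal S$-derivative or $\Sigma$ term survives outside the residuals, especially in the $\opO$ pair, where $\opC^\chi$ and $\opJ^0_\chi+\Sigma$ interact through $\Mbf_\chi^{\bullet\bullet}$. I would use the compact form~\eqref{using-Mbfchi-expr}, which makes this formally identical to the Euler computation with $\Mbf$ replaced by $\Mbf_\chi$ and $\Msource$ replaced by $\Msource^\chi+2t\Sigma$.

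\emph{Hyperbolicity and the particle current.} Hyperbolicity is obtained by splitting the system into two blocks:
\begin{itemize}
\item a matter block $(\beta,\zeta)$ governed by $\nabla T_\chi=I$, $\nabla A_\chi=-\mathscr B$, which is symmetric hyperbolic with causal characteristic cone by \autoref{prop-pot-relax} and \autoref{def-pot-hyperbolic}. Its coupling to the geometry occurs only through lower-order connection terms, the metric appearing undifferentiated in the principal part.
\item a geometric block whose principal part (divergence--curl for $\Lbb$, ODE transport for $\amdeux,\Omega,\Kpar$) is unchanged from \autoref{theo-struct}, with characteristic speeds $\pm\amdeux^{-1}$ and $0$.
\end{itemize}
The block-triangular principal symbol then has real eigenvalues in the causal cone and a complete set of eigenvectors. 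The count is $11$ equations in~\eqref{eqchi-evol} minus the $\dim\mathscr E$ relaxation equations, plus $\opJ^0_\chi$ and the $\dim\mathscr E$ relaxation equations, giving $12+\dim\mathscr E$. Finally, for~\eqref{eq:pot-areal-production}, apply~\eqref{eq:pot-chain-identity} with $I^{\beta\gamma}=-\mathscr B^{\beta\gamma}$ and $I^\beta=0$, respectively $I^\beta=-(\mathfrak d_{\mathscr B}/\nu)n^\beta$. This is exactly \autoref{def-prod-pot} and Corollary~\ref{prop-diss-flow}, and it uses the identification $\divuntrois=\nabla_\alpha$ for $\Tbb^2$-invariant currents.
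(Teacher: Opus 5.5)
Your proposal is correct and follows essentially the paper's own route. Hyperbolicity comes from the causal-hyperbolicity condition on $\chi$ for the matter block and from \autoref{theo-struct} for the geometric block. Constraint propagation comes from the exact compatibility identities such as $(2\opO_1^\chi)_t=|t|\amdeux\Omega^2\opJ^1_\chi+(2\opO_0^\chi)_x$; the $\opO$ pair therefore involves only $\opJ^1_\chi$, not $\opC^\chi$ or $\opJ^0_\chi+\Sigma$, and the weighted residuals are simply constant in time, so the obstacle you anticipated does not arise. Reconstruction is as in \autoref{theo-struct-bis}, the lapse wave equation is recovered as in \autoref{section-A-2}, and the chain identity of \autoref{prop-pot-relax} gives the balance laws for $\vNumb_\chi$.

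One clarification on your trace-reversed bookkeeping. The Ricci combination $R_{22}+R_{33}=T_{00}-T_{11}$ is the $\opC^\chi$ equation and receives no $\Sigma$. The auxiliary stress enters only through $T_{22}+T_{33}$ in $R_{00}-R_{11}+R_{22}+R_{33}=T_{00}-T_{11}+T_{22}+T_{33}$, the wave-equation combination equivalent to $\opJ^0_\chi$, and this is what produces the extra $\Sigma$.
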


\begin{remark}
Particle-production areal flows have $\opJ^0_\chi=0$ and $\divuntrois\vNumb_\chi\geq 0$ and dissipated-energy flows have $\opJ^0_\chi\leq 0$ and $\divuntrois\vNumb_\chi=0$.
Formally, in the zero-relaxation limit $\zeta\to 0$ these equalities and inequalities reduce to those for (particle-production or dissipated-energy) weakly regular Einstein--Euler flows.
\end{remark}


\begin{proof}
\bse
For the fluid evolution equations, the conditions on~$\chi$ in \autoref{def-pot-hyperbolic} ensure hyperbolicity with a characteristic cone in the causal cone.  The geometric evolution equations have a principal part that coincides with the one in the \JKL{} formulation of the Einstein--Euler system, whose hyperbolicity is proven in \autoref{theo-struct}.

The three constraint operators are propagated for the same reason as for the \JKL{} formulation.  The equations $\opJ^1_\chi = \opJ^2_\chi = \opJ^3_\chi = 0$ were obtained as compatibility conditions between the constraint and evolution equations of $\log\Omega,K_2,K_3$.  This gives
\be
\aligned
(2 \opO_1^\chi )_t
& = |t| \amdeux \Omega^2 \opJ^1_\chi + (2 \opO_0^\chi)_x ,
\\
\bigl( |t|^{3/2} \opK_{21}^\chi \bigr)_t
& = |t| \amdeux \Omega^2 \opJ^2_\chi + \bigl( |t|^{3/2} \opK_{20}^\chi \bigr)_x ,
\\
\bigl( |t|^{3/2} \opK_{31}^\chi \bigr)_t
& = |t| \amdeux \Omega^2 \opJ^3_\chi + \bigl( |t|^{3/2} \opK_{30}^\chi \bigr)_x ,
\endaligned
\ee
and these right-hand sides vanish for solutions of the evolution equations.

The reconstruction of the Lorentzian metric~$g$ from $(\Phi,\Psi)$ is identical to~\autoref{theo-struct-bis}. The equations for $(\Phi,\Psi,\zeta)$ then give the Einstein and fluid equations except for the remaining lapse wave equation. As derived in \autoref{section-A-2} for the Einstein--Euler system, this equation follows from the evolution equations and constraints; the matter tensor enters only through the source combination on the right-hand side. We obtain
\bel{eq:waveconffac-NS}
\aligned
& \divuntrois \Bigl( t^{-1}  \Omega^{-1} (\log(\Omega \amdeux) )_t, \ t^{-1}  \Omega^{-1} \amdeux^{-1} (\log\Omega)_x \Bigr)
+ \frac{1}{4t^3} \Omega^{-2}
\\
& \quad = \frac{1}{4t} \, \Bigl( \Mwave(\Jbb, \Kpar, \Lbb) + \mathcal S_{00} - \mathcal S_{11} + \mathcal S_{22} + \mathcal S_{33} \Bigr)
+  \frac{1}{2} \Sigma .
\endaligned
\ee
Finally, the balance laws for $\Nbb_\chi$ are as determined in \autoref{section-5-3}.
\ese
\end{proof}


\subsection{Relaxation models adapted to torus symmetry}
\label{section-6-3}

\subsubsection{A bundle for relaxation variables}

In a $\Tbb^2$-symmetric spacetime, we consider the projectors $\pi^\perp$ and $\pi^{\parallel}$ orthogonal and parallel to the symmetry orbits, which can be expressed in the orthonormal coframe as
\be
g = \pi^\perp + \pi^{\parallel} , \qquad
\pi^\perp = - e^0 \otimes e^0 + e^1 \otimes e^1 , \qquad
\pi^{\parallel} = e^2 \otimes e^2 + e^3 \otimes e^3 .
\ee
We select a rank-$2$ sub-bundle of $\Sym^2 T^*\Mcal$, whose section~$\zeta$ we parametrize using two scalar variables $\xi_\perp,\xi_\parallel$:
\bel{rank-2-bundle}
\mathscr E_\pi = \operatorname{span} \{ \pi^\perp , \pi^{\parallel} \} ,
\qquad
\zeta = \xi_\perp \pi^\perp + \xi_\parallel \pi^{\parallel} .
\ee
Since $\pi^\perp_{\alpha\beta}\pi^{\perp\,\alpha\beta} = \pi^\parallel_{\alpha\beta}\pi^{\parallel\,\alpha\beta} = 2$ and $\pi^\perp_{\alpha\beta} \pi^{\parallel\,\alpha\beta}=0$, the basis of $\mathscr E_\pi^*$ dual to $(\pi^\perp,\pi^\parallel)$ is $(\pi^\perp/2,\pi^\parallel/2)$. This normalization is used in the formula for~\(A_\chi\) below.

\begin{remark}\label{rem-example-chi}
The quadratic form on~$\mathscr E_\pi$ induced by~$g$ defines a positive-definite bundle metric
\be
g_{\mathscr E_\pi}(\zeta,\zeta) = g^{\alpha\gamma} g^{\beta\delta} \zeta_{\alpha\beta} \zeta_{\gamma\delta} = 2 (\xi_\perp)^2 + 2(\xi_\parallel)^2 .
\ee
In \cite{TorusT2-4-2026} we prove that the quadratic potential
\be
\chi = \chi_{\rm E}(\mu) + a(\mu) \theta_{b,\kappa}(\zeta) , \qquad
\theta_{b,\kappa}(\zeta) = 2 \kappa(\xi_\perp + \xi_\parallel) + b (\xi_\perp)^2 + b (\xi_\parallel)^2 ,
\ee
with a suitable function $a(\mu)$ chosen to coincide with $\chi_{\rm E}(\mu)$ at large densities and decay faster as $\mu\to 0$, together with the relaxation rate map~\eqref{B-barrier} for a suitable $\delta>0$, gives an admissible torus-adapted Navier--Stokes triple.
The same model, restricted to $\xi_\parallel=\xi_\perp$, namely to the trace bundle $\mathscr E=\operatorname{span}\{g\}$, also gives a torus-adapted Navier--Stokes triple.
\end{remark}


\subsubsection{Models that respect $\Tbb^2$ symmetry}

\begin{definition}
\label{def-NS-torus-adapted}
Consider a Navier--Stokes potential~$\chi$ on a domain $\mathscr F\subset\mathscr E$ of a bundle as defined in \autoref{def-pot-hyperbolic}.  It is said to be \textbf{torus-adapted} if $\mathscr E \subset \mathscr E_\pi$ is a sub-bundle of the rank-$2$ bundle~\eqref{rank-2-bundle} and the potential depends covariantly on the vector~$\beta$ and on $\zeta\in\mathscr E$, namely can be written as a function $\chi=\chi(s_\perp,s_\parallel,\xi_\perp,\xi_\parallel)$ of four scalar variables, where
\be
s_\perp = - \beta_\alpha \pi^{\perp\,\alpha\beta} \beta_\beta , \qquad
s_\parallel = \beta_\alpha \pi^{\parallel\,\alpha\beta} \beta_\beta , \qquad
s_\perp \geq s_\parallel \geq 0 .
\ee
By extension, a Navier--Stokes triple $(\mathscr F, \chi, \mathscr B)$ is said to be torus-adapted if~$\chi$ is torus-adapted.
\end{definition}

\begin{proposition}[Particle and mixed-stress factorization]
\label{prop-NS-torus-adapted}
Consider a torus-adapted Navier--Stokes potential~$\chi$.
The orthogonal components of the particle-number current, and the mixed parallel-orthogonal stress-tensor components, take the following form, for some scalar functions~$\Numb_\chi,U_\chi$:
\be
\vNumb_\chi^a = \Numb_\chi(s_\perp, s_\parallel, \xi_\perp, \xi_\parallel) u^a ,
\qquad
T_\chi^{am} = \frac{1}{2} U_\chi(s_\perp, s_\parallel, \xi_\perp, \xi_\parallel) u^a u^m , \qquad
a=0,1 , \quad m=2,3 .
\ee
\end{proposition}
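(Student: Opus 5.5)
The plan is to compute the relevant derivatives of $\chi$ explicitly by the chain rule through the four scalar arguments $(s_\perp,s_\parallel,\xi_\perp,\xi_\parallel)$, and then read off the mixed frame components. At a fixed spacetime point the projectors $\pi^\perp,\pi^\parallel$ do not depend on $\beta$. The fiber coordinates $\xi_\perp,\xi_\parallel$ of $\zeta\in\mathscr E\subset\mathscr E_\pi$ are held fixed when differentiating in $\beta_\alpha$. Hence
\[
\frac{\del s_\perp}{\del\beta_\alpha}=-2(\pi^\perp\beta)^\alpha,\qquad
\frac{\del s_\parallel}{\del\beta_\alpha}=2(\pi^\parallel\beta)^\alpha .
\]
It follows that
\[
\frac{\del\chi}{\del\beta_\alpha}=-2\chi_{s_\perp}(\pi^\perp\beta)^\alpha+2\chi_{s_\parallel}(\pi^\parallel\beta)^\alpha .
\]
The key structural fact is purely algebraic. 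In the adapted frame, for $a\in\{0,1\}$ and $m\in\{2,3\}$, we have $(\pi^\perp\beta)^a=\beta^a$, $(\pi^\parallel\beta)^a=0$, $(\pi^\perp\beta)^m=0$ and $(\pi^\parallel\beta)^m=\beta^m$. Moreover, both projectors have vanishing mixed components $\pi^{\perp\,am}=\pi^{\parallel\,am}=0$.

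First, for the stress tensor, I will differentiate once more to obtain
\[
T_\chi^{\alpha\beta}=-2\chi_{s_\perp}\pi^{\perp\,\alpha\beta}+2\chi_{s_\parallel}\pi^{\parallel\,\alpha\beta}
+4\chi_{s_\perp s_\perp}(\pi^\perp\beta)^\alpha(\pi^\perp\beta)^\beta
-4\chi_{s_\perp s_\parallel}\bigl((\pi^\perp\beta)^\alpha(\pi^\parallel\beta)^\beta+(\pi^\parallel\beta)^\alpha(\pi^\perp\beta)^\beta\bigr)
+4\chi_{s_\parallel s_\parallel}(\pi^\parallel\beta)^\alpha(\pi^\parallel\beta)^\beta .
\]
Taking the $(a,m)$ component kills every term except the cross term, which gives $T_\chi^{am}=-4\chi_{s_\perp s_\parallel}\beta^a\beta^m$. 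Next, I will use $\beta=\frac{\Numb(\mu)}{\mu+p(\mu)}u$ from \eqref{eq:pot-enthalpy}. By construction this relation is independent of $\zeta$, and $\mu$ is a function of $s=-\beta_\alpha\beta^\alpha=s_\perp-s_\parallel$. This yields
\[
U_\chi=-8\,s\,\chi_{s_\perp s_\parallel},\qquad s=s_\perp-s_\parallel,
\]
which is a scalar function of the four variables, as claimed.

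Second, for the particle current \eqref{eq:pot-particle-current}, I will treat its three terms separately and show that each orthogonal component $a\in\{0,1\}$ is a scalar multiple of $\beta^a$.
\begin{itemize}
\item The term $\del\chi/\del\beta_a$ equals $-2\chi_{s_\perp}\beta^a$.
\item For the term $\beta_\beta T_\chi^{a\beta}$, every summand of $T_\chi^{a\beta}$ carries either $\pi^{\perp\,a\beta}$ or a factor $(\pi^\perp\beta)^a=\beta^a$, because all $\pi^\parallel$-type contributions vanish in the index $a$. Contracting with $\beta_\beta$ and using $\beta_\beta(\pi^\perp\beta)^\beta=-s_\perp$ and $\beta_\beta(\pi^\parallel\beta)^\beta=s_\parallel$ then produces $\beta^a$ times an explicit combination of $\chi$-derivatives and $s_\perp,s_\parallel$.
\item For the relaxation term, I will use the dual basis normalization $(\pi^\perp/2,\pi^\parallel/2)$ to write $\zeta_{\beta\gamma}\,\del\chi/\del\zeta_{\beta\gamma}=\xi_\perp\chi_{\xi_\perp}+\xi_\parallel\chi_{\xi_\parallel}\eqqcolon\Theta$. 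Since $\zeta$ is held fixed under $\del/\del\beta_\alpha$, we get $\zeta_{\beta\gamma}A_\chi^{\alpha\beta\gamma}=\del\Theta/\del\beta_\alpha=-2\Theta_{s_\perp}(\pi^\perp\beta)^\alpha+2\Theta_{s_\parallel}(\pi^\parallel\beta)^\alpha$. Its $a$-component is therefore $-2\Theta_{s_\perp}\beta^a$.
\end{itemize}
Collecting the three contributions gives $\vNumb_\chi^a=\widetilde N\beta^a$ for an explicit scalar $\widetilde N$. Setting $\Numb_\chi=\widetilde N\sqrt{s}$ then gives the stated form $\vNumb_\chi^a=\Numb_\chi u^a$.

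The main obstacle is bookkeeping rather than analysis. Three points need care: the sign conventions in $s_\perp$ (with $\pi^\perp=-e^0\otimes e^0+e^1\otimes e^1$ raised by $g$), the factor $2$ from the dual basis normalization in the $A_\chi$ contraction, and the legitimacy of holding $\xi_\perp,\xi_\parallel$ fixed while differentiating in $\beta$. I will therefore first verify the identities $(\pi^\perp\beta)^a=\beta^a$ and $\pi^{\perp\,am}=\pi^{\parallel\,am}=0$ directly in the frame \eqref{eq:ourframe}. After that, the rest follows by inspection, with no need to compute the scalar coefficients beyond showing that they depend only on $(s_\perp,s_\parallel,\xi_\perp,\xi_\parallel)$.
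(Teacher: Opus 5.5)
Your proposal is correct and is essentially the paper's proof. You use the same chain-rule formula for $\del\chi/\del\beta_\alpha$ and the same explicit Hessian for $T_\chi$, which give $U_\chi=-8s\,\del^2\chi/\del s_\perp\del s_\parallel$. Your term-by-term treatment of $\vNumb_\chi^a$ is the unpacked form of the paper's identity $\vNumb_\chi^\alpha=\bigl(1-\beta_\beta\frac{\del}{\del\beta_\beta}-\zeta_{\beta\gamma}\frac{\del}{\del\zeta_{\beta\gamma}}\bigr)\frac{\del\chi}{\del\beta_\alpha}$, and collecting your three contributions reproduces the paper's coefficient $\Numb_\chi=2s^{1/2}\mathscr D_\pi(\del\chi/\del s_\perp)$, where $\mathscr D_\pi=2s_\perp\del_{s_\perp}+2s_\parallel\del_{s_\parallel}+\xi_\perp\del_{\xi_\perp}+\xi_\parallel\del_{\xi_\parallel}$.
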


\begin{proof}
\bse
Upon decomposing $\beta=\beta^\perp+\beta^\parallel$ with $\beta^\perp_\alpha=\pi^\perp_{\alpha\beta}\beta^\beta$ and $\beta^\parallel_\alpha=\pi^\parallel_{\alpha\beta}\beta^\beta$,
one gets
\bel{factor-dchidbeta}
\frac{\del\chi}{\del\beta_\alpha}
= - 2 \frac{\del\chi}{\del s_\perp} \beta^{\perp\,\alpha} + 2 \frac{\del\chi}{\del s_\parallel} \beta^{\parallel\,\alpha} .
\ee
Since $s_\perp$ and $s_\parallel$ are quadratic in~$\beta$, whereas $\xi_\perp$ and $\xi_\parallel$ are linear in~$\zeta$, we introduce the weighted scaling operator
\be
\mathscr D_\pi
\coloneqq2s_\perp\del_{s_\perp}+2s_\parallel\del_{s_\parallel}
+\xi_\perp\del_{\xi_\perp}+\xi_\parallel\del_{\xi_\parallel}.
\ee
 The definition~\eqref{eq:pot-particle-current}, together with~\eqref{factor-dchidbeta}, then gives the exact decomposition
\be
\aligned
\vNumb_\chi^\alpha
& = \biggl( 1 - \beta_\beta \frac{\del}{\del\beta_\beta} - \zeta_{\beta\gamma} \frac{\del}{\del\zeta_{\beta\gamma}} \biggr) \frac{\del\chi}{\del\beta_\alpha}
\\
& = \biggl( - 1 + \beta_\beta \frac{\del}{\del\beta_\beta} + \zeta_{\beta\gamma} \frac{\del}{\del\zeta_{\beta\gamma}} \biggr) \biggl(
 2 \frac{\del\chi}{\del s_\perp} \beta^{\perp\,\alpha} - 2 \frac{\del\chi}{\del s_\parallel} \beta^{\parallel\,\alpha} \biggr)
\\
& = 2 \mathscr D_\pi \biggl(\frac{\del\chi}{\del s_\perp}\biggr) \beta^{\perp\,\alpha}
- 2 \mathscr D_\pi \biggl(\frac{\del\chi}{\del s_\parallel}\biggr) \beta^{\parallel\,\alpha} .
\endaligned
\ee
On the other hand, the stress-tensor is a second-order derivative, explicitly calculated to be
\bel{explicit-Tchiab}
\aligned
T_\chi^{\alpha\beta} = \frac{\del\chi}{\del\beta_\alpha\del\beta_\beta}
& = 4 \frac{\del^2\chi}{(\del s_\perp)^2} \beta^{\perp\,\alpha} \beta^{\perp\,\beta}
- 4 \frac{\del^2\chi}{\del s_\perp\del s_\parallel} ( \beta^{\perp\,\alpha} \beta^{\parallel\,\beta} + \beta^{\perp\,\beta} \beta^{\parallel\,\alpha})
\\
& \quad + 4 \frac{\del^2\chi}{(\del s_\parallel)^2} \beta^{\parallel\,\alpha} \beta^{\parallel\,\beta} - 2 \frac{\del\chi}{\del s_\perp} \pi^{\perp\,\alpha\beta} + 2 \frac{\del\chi}{\del s_\parallel} \pi^{\parallel\,\alpha\beta} .
\endaligned
\ee
One has $\beta^{\perp a} = s^{1/2}u^a$ and $\beta^{\parallel m} = s^{1/2} u^m$ with $s = -\beta\cdot\beta = s_\perp - s_\parallel$.
Therefore the coefficients in the proposition are
\be
\Numb_\chi
= 2 s^{1/2} \mathscr D_\pi\!\left(\frac{\del\chi}{\del s_\perp}\right),
\qquad
U_\chi
= -8s\frac{\del^2\chi}{\del s_\perp\del s_\parallel}.
\qedhere
\ee
\ese
\end{proof}

\begin{proposition}[Compatibility with the Gowdy subclass]
\label{prop-Gow-compat}
Fix a torus-adapted Navier--Stokes triple $(\mathscr F, \chi, \mathscr B)$.  Gowdy symmetry is propagated in the sense that any particle-production or dissipated-energy Navier--Stokes areal flow $(\Phi,\Psi,\zeta)$ with $\Kpar=\Jpar=0$ on one areal slice has $\Kpar=\Jpar=0$ identically. In this symmetry class, the wave-map operators $\opL_{\bullet\bullet}^\chi$ coincide with those of the vacuum Einstein equations, the operators $\opJ^2,\opJ^3,\opK_{\bullet\bullet}$ vanish identically, and the operators $\opC^\chi,\opO_0^\chi,\opO_1^\chi,\opJ^0_\chi,\opJ^1_\chi$ contain the potential-generated corrections in~\eqref{eq:expl-NS-op}--\eqref{eq:expl-more-NS-op}; in the dissipated-energy system, $\opJ^0_\chi$ has the source term $\Sigma = 2\Omega (\beta^0)^{-1}\mathfrak d_{\mathscr B}$.
\end{proposition}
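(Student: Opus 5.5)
The argument is a uniqueness statement for a linear homogeneous system in the parallel variables $(\Jpar,\Kpar)$. I would first show that, for a torus-adapted potential, every equation governing $\Jpar$ and $\Kpar$ is linear and homogeneous in the pair $(\Jpar,\Kpar)$, with coefficients depending on the remaining fields $(\Jperp,\Lbb,\Omega,\amdeux,\zeta)$. Then I would conclude by a Gronwall-type energy estimate. The structure comes from \autoref{prop-NS-torus-adapted}. The mixed stress components satisfy $\mathcal S_{am}=2T_{\chi_{\rm relax}}(e_a,e_m)\propto u^a u^m$ for $a\in\{0,1\}$ and $m\in\{2,3\}$, hence $\propto J_m$. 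Similarly, $T_\chi^{am}=\tfrac12 U_\chi u^au^m$ is linear in $J_m$ with a bounded coefficient. Also, $s_\parallel=\beta_\alpha\pi^{\parallel\,\alpha\beta}\beta_\beta$ is quadratic in $\Jpar$, so $\chi$ depends on $\Jpar$ only through $|\Jpar|^2$. It follows that the components $\mathcal S_{22},\mathcal S_{33},\mathcal S_{23}$ are even in $\Jpar$, while the mixed components are odd.

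With this in hand, I would inspect the reduced equations. The twist transport equations $\opK^\chi_{20}=\opK^\chi_{30}=0$ have the form $|t|^{-3/2}(|t|^{3/2}\Kpar)_t=\Omega\,(\text{linear in }\Jpar,\Kpar)$. The extra terms $\Omega\mathcal S_{12},\Omega\mathcal S_{13}$ are again proportional to $J_2,J_3$. For $\Jpar$, I would use the equations $\opJ^2_\chi=\opJ^3_\chi=0$. These are divergence laws for the currents $|t|^{1/2}T_\chi^{m\bullet}$ plus curl terms that are linear in $\Kpar$. Since the relaxation equation for $\zeta\in\mathscr E\subset\mathscr E_\pi$ has no orbital index, it does not feed $\Jpar$. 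After expanding the divergence, I expect the $\Jpar$ equations to read $\del_t(\text{coef}\cdot J_m)+\del_x(\text{coef}\cdot J_m)=$ (terms linear in $\Jpar,\Kpar$, and in their first derivatives through the curl of $K_2\Pbb$, $K_2\Qbb$).

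The main obstacle is the loss of derivatives coming from the curl terms $\curluntrois(|t|^{1/2}K_2\Pbb/2)$, which contain $\del_x K$. To handle this, I would use the twist constraints $\opK^\chi_{21}=\opK^\chi_{31}=0$, which hold for all times by the constraint propagation in \autoref{prop-pot-JKL}. They give $(K_m)_x$ algebraically as a linear expression in $(\Jpar,\Kpar)$, with coefficients $J_0,P_1,Q_1,\mathcal S_{0m}/J_m$. The time derivatives $(K_m)_t$ are handled in the same way by $\opK_{\bullet0}^\chi$. After these substitutions, the system for $W=(\Jpar,\Kpar)$ becomes a linear symmetric-hyperbolic system (a transport system) with continuous coefficients, given the assumed regularity. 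Its only characteristic speeds are the fluid speed and the zero speed.

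Finally, I would multiply by $W$, integrate over $\Tbb^3$ with $\dVtrois$, and use that the principal coefficient of $\del_t J_m$ is positive. This follows from the causal hyperbolicity in \autoref{def-pot-hyperbolic}: the mixed block of $-n_\alpha K^{\alpha AB}$ restricted to orbital directions is positive, at least at the states considered. I would then obtain $\frac{d}{dt}\int|W|^2\leq C\int|W|^2$ on compact subintervals, and Gronwall gives $W\equiv0$ starting from $W(t_0)=0$. Near vacuum the coefficient may degenerate. There I would instead argue with the parallel part $\beta^\parallel$ of the potential variable $\beta$, for which symmetric hyperbolicity is available. Once $\Kpar=\Jpar=0$, the remaining assertions follow by substitution: the terms $\Re,\Im(\Kpar^2+\Jpar^2)$ drop from $\opL_{00},\opL_{10}$. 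One still needs $\mathcal S_{22}=\mathcal S_{33}$ and $\mathcal S_{23}=0$ at $\beta^\parallel=0$; this holds because $T_\chi$ in \eqref{explicit-Tchiab} then reduces to a multiple of $\pi^\parallel$ on orbital indices. With that, $\opL^\chi$ reduces to the vacuum form and $\opJ^2,\opJ^3,\opK$ vanish identically. The $\Sigma$ formula comes from \eqref{Sigma-explicit}.
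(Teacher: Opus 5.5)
\paragraph{A correct proof by a different route.}
For regular flows with positive density your argument is correct, but it is not the paper's proof.

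\emph{The paper's route.} The paper does not linearize. It invokes the maximum principle \autoref{them:7-4}: the Killing twists $\Gammazero$ and the Killing momentum per particle $\Gammaunpar_\chi$ are transported along the characteristics of $\velo$. The transport is exact in the dissipated-energy model and damped in the particle-production model. Vanishing initial values therefore stay zero, and $\Kpar,\Jpar$ are recovered through the invertible Killing-frame matrix $\mathsf M_{\rm K}$. The operator reductions are then read off from \eqref{explicit-Tchiab}, exactly as in your last step.

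\emph{Your route.} You freeze all other fields, including $\widehat U_\chi$, along the given solution. You then use the factorization of \autoref{prop-NS-torus-adapted} to see that the $(\Jpar,\Kpar)$ subsystem is linear and homogeneous, and close with Gronwall. This needs neither the particle current nor the reconstruction of $P,Q$, and it treats both models identically.

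\emph{What each buys.} Your route is lighter on structure. The paper's exact identities give more: invariance of the whole range of $(\Gammazero,\Gammaunpar_\chi)$, not just of $\{0\}$, with no Gronwall constant.

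\paragraph{Two steps can be tightened.}
First, the hedge ``at least at the states considered'' is unnecessary, so no bootstrap near $\Jpar=0$ is needed. The orbital block of $\del T_\chi^{0m}/\del\beta_{m'}$ is positive definite by \autoref{def-pot-hyperbolic}. Evaluated on the unit orbital direction orthogonal to $\beta^\parallel$, it equals $\tfrac12\beta^0U_\chi/s$, so $U_\chi>0$ at every interior state. The same fact is what the paper's route tacitly needs to pass from $T_\chi^{0m}=0$ to $J_m=0$.

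Second, the derivative loss you worry about does not actually occur. Once the twist equations are substituted, $\opJ^m_\chi=0$ is just $e_m^\beta\nabla^\alpha T_{\chi\,\alpha\beta}=0$, in which all $\Kpar$ terms cancel (cf.\ \eqref{J2Jperp-cons}). The system is therefore triangular: first $\Jpar\equiv0$, then $\Kpar\equiv0$ from its linear ODE.

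\paragraph{The vacuum fallback fails.}
As noted after \eqref{eq:pot-enthalpy}, $\beta$ is not a regular parametrization of vacuum. Moreover, the orbital positivity of $-n_\alpha K^{\alpha AB}$ degenerates as $\mu\to0$; for Euler its prefactor is $(\mu+p)^2/\Numb\lesssim\mu$.

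A repair that stays within your framework is to work with the conservative variables $w_m\coloneqq|t|\amdeux\Omega\,T_\chi^{0m}$. These satisfy $\del_t w+\del_x(\velo w)=(\text{bounded})\,w$. Then run an $L^1$ (Kato) estimate instead of an $L^2$ one. It needs only $|\velo|\leq\amdeux^{-1}$ and the regularity of $w$, not a bound on $\del_x\velo$. This is essentially the paper's $H$-divergence argument with $H=|\cdot|$.
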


\begin{proof}
The vanishing of $\Kpar,\Jpar$ at all times is an immediate corollary of the maximum principle \autoref{them:7-4} stated and proven in the next section, which bounds sup norms of suitably rescaled variables $\Gammazero,\Gammaunpar$ by those at initial time, which are here assumed to vanish.
Thus, Gowdy symmetry is preserved.  In this class, the stress-tensor~\eqref{explicit-Tchiab} reduces to
\be
T_\chi^{\alpha\beta}
= 4 \frac{\del^2\chi}{(\del s_\perp)^2} \beta^{\perp\,\alpha} \beta^{\perp\,\beta}
- 2 \frac{\del\chi}{\del s_\perp} \pi^{\perp\,\alpha\beta} + 2 \frac{\del\chi}{\del s_\parallel} \pi^{\parallel\,\alpha\beta} .
\ee
In particular, $\mathcal S_{22}=\mathcal S_{33}$ and $\mathcal S_{23}=0$, whence the lack of sources in~$\opL_{\bullet\bullet}^\chi$.  Substitution in~\eqref{eq:expl-NS-op}--\eqref{eq:expl-more-NS-op} proves the stated reduction.
\end{proof}


\needspace{20\baselineskip}
\part{Cauchy stability of finite-energy Einstein areal flows}
\label{part-two} 

\section{Maximum and total-variation principles}
\label{section-7}

\subsection{Einstein--Euler areal flows}
\label{section-7-1}

\subsubsection{From structural identities to estimates}

We now establish, from the structural identities of \autoref{part-one}, \emph{quantitative regularity properties} and a priori estimates. Our results encompass first regular solutions in three settings: Einstein--Euler areal flows, dissipated-energy and particle-production Einstein--Navier--Stokes areal flows.  All integral and pointwise estimates for the Navier--Stokes systems are uniform with respect to the relaxation rate field. They lead to the stability properties in \autoref{theo-intro-stability}.

Each estimate combines structural properties of the Einstein equations in $\Tbb^2$ symmetry and features of the matter stress-tensor.  For the Navier--Stokes systems, these require the Navier--Stokes potential~$\chi$ to be causal and dominant (\autoref{def-pot-hyperbolic}) and torus-adapted (\autoref{def-NS-torus-adapted}).
\bei
\item In this section, we obtain maximum principles for the twist variables and for the fluid momentum parallel to the symmetry orbits from transport equations.  The key idea is that the particle number and parallel momentum obey conservation equations that have the same velocity (the fluid velocity), hence their ratio is transported at that speed. (See~\autoref{proposition-JJJ}.)  For Navier--Stokes models, this relies on $\chi$~being torus-adapted (cf.~Proposition~\ref{proposition-JJJ-NS} and~\ref{prop-pot-maximum}). We also derive a total-variation principle for the Einstein--Euler system (cf.~\autoref{proposition-JJJ-TV}) and one of the Einstein--Navier--Stokes models (cf.~\autoref{proposition-JJJ-NS-TV}). Finally, in \autoref{them:7-4}, we summarize our results in the present section and discuss the extension to vacuum and weakly regular solutions. 

\item The following sections establish geometric monotonicity properties and weighted energy inequalities for these flows.  This requires the fluid stress tensor~$T_\chi$ to obey the dominant energy condition, and the mass-energy to be either conserved (which occurs in particle-production models) or dissipated.
\eei

Some details of the conclusions depend on the model. To obtain maximum principles in this section, we consider first (regular) Einstein--Euler areal flows. Appropriately scaled twists and parallel momenta are exactly transported, hence their range at some areal time~$t$ coincides with that at initial time~$t_0$. For dissipated-energy Navier--Stoke areal flows, the same conclusions hold. For particle-production Navier--Stoke areal flows, the particle current is produced rather than conserved, but the two momentum equations remain exactly conserved; the transport equations therefore have a damping term of favorable sign.  As a result, star-shaped regions around the origin are invariant domains for this system.  We also present a maximum-principle result based on entropy inequalities that is more robust but yields only that convex regions containing the origin are invariant domains.


\subsubsection{Parallel momentum per particle}

The particle number component $\Numb^0=-\hNumb(\mu)J_0\geq 0$ plays an important role here.  From the statements above~\eqref{eq:section-4-particle-growth}, it vanishes if and only if $\mu=0$, namely in vacuum.
We work first in a non-vacuum regime in which $J_0<0$ and $\Numb^0>0$. The resulting identities extend to vacuum whenever the normalized variables introduced below possess continuous bounded representatives.

The parallel fluid evolution operators $\opJ^2,\opJ^3$ include first derivatives of the fluid variables as well as derivatives of twists that can be rewritten using the constraints $\opK_{\bullet\bullet}$.  Doing so retrieves the usual form of (these components of) the Euler equations $e_m^\beta\nabla^\alpha T^{\rm E}_{\alpha\beta} = 0$ for $m=2,3$.  Explicitly, these are
\bel{J2Jperp-cons}
\aligned
\divuntrois(J_2\Jperp)
& = - \frac{1}{2} J_2(\Pbb\cdot\Jperp) + \frac{1}{2t\Omega} J_2 J_0 ,
\\
\divuntrois(J_3\Jperp)
& = \frac{1}{2} J_3(\Pbb\cdot\Jperp)-J_2\,(\Qbb\cdot\Jperp) + \frac{1}{2t\Omega} J_3 J_0 .
\endaligned
\ee
On the other hand, recall from \autoref{section-4-1} that the particle current is $\vNumb= \hNumb_\Jbb\Jbb$.  For regular flows it has vanishing divergence
\bse\label{hNumbJperp-cons-zero}
\bel{hNumbJperp-cons}
\divuntrois(\vNumb^\perp) = 0 .
\ee
Here we can omit the parallel components of~$\vNumb$, since $\divuntrois$ only depends on orthogonal components. To exhibit the structure more explicitly, we introduce variables $(\rho,\velo)$ that account for the geometric weights in the operator $\divuntrois$, and rewrite \eqref{hNumbJperp-cons} as
\bel{eq:section-8-Euler-speed}
\rho \coloneqq |t|\amdeux\Omega\Numb^0>0, \qquad
\velo \coloneqq -\amdeux^{-1}\frac{J_1}{J_0} , \qquad
\del_t \rho + \del_x (\rho \velo) = 0 .
\ee
\ese
Due to $J_0<0$ and $\amdeux>0$ the velocity~$\velo$ has the same sign as~$J_1$. The causality inequalities imply $|J_1/J_0| \leq 1$, hence $|\velo| \leq \amdeux^{-1}$.

The conservation laws~\eqref{J2Jperp-cons} have the same velocity~$\velo$ as~\eqref{hNumbJperp-cons-zero}. This motivates us to introduce the parallel momentum per particle in the orthonormal frame
\bel{equa-jhat}
\Jhatpar= \widehat J_2+i\widehat J_3 \coloneqq  \hNumb_\Jbb^{-1}\Jpar,
\ee
which is subject to a sourced transport system
\bel{eq:Jhat-transp}
\aligned
\del_t\widehat J_2+\velo\del_x\widehat J_2 & = \Omega\frac{\Pbb\cdot\Jperp}{2J_0} \widehat J_2 - \frac{1}{2t} \widehat J_2, \\
\del_t\widehat J_3+\velo\del_x\widehat J_3 & = \Omega\frac{\Qbb\cdot\Jperp}{J_0}\widehat J_2 - \Omega\frac{\Pbb\cdot\Jperp}{2J_0} \widehat J_3 - \frac{1}{2t}\widehat J_3.
\endaligned
\ee
This system can be combined with particle conservation. For every $H\in C^1(\RR^2)$, it gives the sourced first-order balance law
\bel{eq:Jhat-H}
\aligned
\divuntrois\bigl(H(\widehat J_2,\widehat J_3)\vNumb\bigr)
& = \frac{1}{2} \bigl(-\widehat J_2\del_{\widehat J_2}H + \widehat J_3\del_{\widehat J_3}H\bigr) (\Pbb\cdot\vNumb)
- \widehat J_2 \del_{\widehat J_3} H\,(\Qbb\cdot\vNumb) \\
& \quad + \frac{1}{2t\Omega}\bigl(\widehat J_2 \del_{\widehat J_2} H + \widehat J_3 \del_{\widehat J_3} H \bigr)\Numb_0.
\endaligned
\ee
Its right-hand side contains no derivative of a main unknown, but it has no sign regardless of the (non-constant) choice of~$H$. The origin of these sources is that the normalization~\eqref{equa-jhat} removes the density carried by the matter momentum but does not yet remove the rotation and dilation of the orthonormal frame $e_2,e_3$ along the quotient.


\subsubsection{Killing momentum per particle}

To eliminate all source terms, we turn to components in the basis $(\del_y,\del_z)$ of Killing vectors. This requires reconstructing the metric components $P,Q$ from the first-order variables, up to their harmless additive normalization at one point, from
\bel{reconstruct-P-Q}
\aligned P_t & = P_0\Omega, & \qquad
P_x & =P_1\Omega\amdeux,
\\
Q_t & =e^{-P}Q_0\Omega, & Q_x & = e^{-P}Q_1\Omega\amdeux.
\endaligned
\ee
These four relations are compatible thanks to the curl equations $\opL_{01}=\opL_{11}=0$. We introduce the Killing-frame matrix
\bel{eq:Kill-mat}
\mathsf M_{\rm K}(t,P,Q) \coloneqq |t|^{1/2} \begin{pmatrix}e^{P/2}&0\\ Qe^{P/2}&e^{-P/2}\end{pmatrix},\qquad
\mathsf M_{\rm K}^{-1} = |t|^{-1/2} \begin{pmatrix}e^{-P/2}&0\\-Qe^{P/2}&e^{P/2}\end{pmatrix}.
\ee
The components of $\Jhatpar$ in the fixed Killing basis $(\del_y,\del_z)$, rather than in the moving orthonormal frame, are therefore $\Gammaunpar=\mathsf M_{\rm K}(\widehat J_2,\widehat J_3)^T$, that is,
\bel{equawidetildeJ}
\Gammaun_2 \coloneqq |t|^{1/2}e^{P/2}\widehat J_2,\qquad
\Gammaun_3 \coloneqq |t|^{1/2}\bigl(Qe^{P/2}\widehat J_2+e^{-P/2}\widehat J_3\bigr).
\ee
The two-vector $\Gammaunpar=(\Gammaun_2,\Gammaun_3)$ is the \emph{Killing (parallel) momentum per particle}.

Combining the two parallel Euler equations with the reconstruction identities~\eqref{reconstruct-P-Q} gives the \emph{diagonal transport system}
\bel{equa-jhat-transp-cons}
\del_t\Gammaun_m + \velo\del_x\Gammaun_m = 0, \qquad m=2,3,
\ee
in which the source terms containing $P_0,P_1,Q_0,Q_1$ cancel.
If $\velo$ is locally Lipschitz, its characteristic flow $X(t;s,x)$ is defined by
\bel{eq:section-8-characteristics}
\frac{d}{dt}X(t;s,x)=\velo\bigl(t,X(t;s,x)\bigr),\qquad X(s;s,x)=x,
\ee
and~\eqref{equa-jhat-transp-cons} is equivalent to $\Gammaun_m(t,X(t;s,x))= \Gammaun_m(s,x)$. Thus the full range of the vector $\Gammaunpar$ is conserved by the regular Euler flow. As in~\eqref{eq:Jhat-H}, we also deduce a conservative form which we state below in \autoref{lem-transp-renorm} after describing a similar normalization of twists.


\subsubsection{Killing twists}

The same Killing-frame normalization applies to the twists. Define
\bel{equa-Ktilde}
\Gammazero_2 \coloneqq |t|^{3/2}e^{P/2}K_2,\qquad \Gammazero_3 \coloneqq |t|^{3/2}\bigl(Qe^{P/2}K_2+e^{-P/2}K_3\bigr).
\ee
Equivalently, $\Gammazero=|t|\mathsf M_{\rm K}(K_2,K_3)^T$ using the same triangular change from the moving orbit frame to the fixed Killing frame as for parallel momenta. In vacuum the quantities~$\Gammazero_m$ reduce to the classical twist constants. In the presence of matter they need not be constant in spacetime, but the Einstein evolution and constraint equations give
\bel{eq:T2-9101112-tilde}
\aligned
\del_t\Gammazero_2&=|t|^{3/2}\Omega\, e^{P/2}J_2J_1,\qquad& \del_t\Gammazero_3&=|t|^{3/2}\Omega\bigl(Qe^{P/2}J_2+e^{-P/2}J_3\bigr)J_1,
\\
\del_x\Gammazero_2&=|t|^{3/2}\Omega\,e^{P/2}J_2J_0\amdeux,\qquad& \del_x\Gammazero_3&=|t|^{3/2}\Omega\bigl(Qe^{P/2}J_2+e^{-P/2}J_3\bigr)J_0\amdeux.
\endaligned
\ee
Consequently, the twists obey the same transport equation as the parallel momentum, namely
\bel{equa-KDL3}
\del_t\Gammazero_m+\velo\del_x\Gammazero_m=0,\qquad m=2,3 ,
\ee
describing transport along the characteristics~\eqref{eq:section-8-characteristics}.
The relation between twists and momentum is particularly transparent in the weighted variables:
\bel{eq:tw-mom-rel}
\del_x\Gammazero_m = |t|\Omega\amdeux\Numb_0\Gammaun_m = -\rho \Gammaun_m,\qquad m=2,3.
\ee
Thus the twist is a potential for the Killing momentum density, whereas $\Gammaunpar$ is the corresponding momentum per particle.


\subsubsection{The $H$-divergence law.}

By combining the transport equations of $\Gammaun_m,\Gammazero_m$ with the conservation of~$\vNumb$ we get a family of conservation laws.

\begin{lemma}[Transport and renormalization of the parallel momentum]
\label{lem-transp-renorm}
Let $(\Phi,\Psi)$ be a regular Einstein--Euler areal flow on $[t_0,t_1]\times\Tbb^3$.
Assume that $\mu>0$ (non-vacuum case).  Then the variables $\Gammaun_m$ defined by~\eqref{equawidetildeJ} satisfy~\eqref{equa-jhat-transp-cons}. Moreover, for every locally Lipschitz function $H$ on~$\RR^2$, one has the \textbf{$H$-divergence law}
\bel{eq:transp-renorm}
\divuntrois\bigl(H(\Gammaunpar)\vNumb\bigr)=0.
\ee
Without assumption on the density~$\mu$, the law~\eqref{eq:transp-renorm} holds provided~$H$ is a convex function.
Moreover, the same balance law~\eqref{eq:transp-renorm} holds for $H(\Gammazero)$ any locally Lipschitz function of $\Gammazero$, defined by~\eqref{equa-Ktilde}, without assumption on the density~$\mu$.
\end{lemma}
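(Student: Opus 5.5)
The proof has four parts: the transport equation for $\Gammaunpar$ away from vacuum, the conservative $H$-divergence law via a product rule, its vacuum extension for convex $H$, and the analogous law for the Killing twists.

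\emph{Transport away from vacuum.} I would first establish \eqref{equa-jhat-transp-cons} under $\mu>0$. Write $\vNumb^\perp=\hNumb_\Jbb\Jperp$. Then $\divuntrois(\vNumb^\perp)=0$ is equivalent to $\del_t\rho+\del_x(\rho\velo)=0$ with $\rho=|t|\amdeux\Omega\Numb^0$, using \eqref{divuntrois-vector}.

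The parallel Euler equations \eqref{J2Jperp-cons} have the same structure. For any scalar $f$ and the current $f\vNumb^\perp$, one has $|t|\amdeux\Omega^2\divuntrois(f\vNumb^\perp)=\Omega(\rho(\del_t f+\velo\del_x f))$, up to the sign convention, once particle conservation holds. Applying this with $f=\widehat J_m$ and using \eqref{J2Jperp-cons} gives \eqref{eq:Jhat-transp}.

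Next I would form $\Gammaunpar=\mathsf M_{\rm K}(\widehat J_2,\widehat J_3)^T$ and differentiate along the vector field $\del_t+\velo\del_x$. The derivatives of $\mathsf M_{\rm K}$ come from $|t|^{1/2}$, $P$ and $Q$, and I would express them through \eqref{reconstruct-P-Q}. The resulting combination $P_t+\velo P_x=\Omega(P_0-P_1J_1/J_0)=\Omega\,\Pbb\cdot\Jperp/J_0$ should cancel exactly the $\Pbb$ sources in \eqref{eq:Jhat-transp}. The factor $|t|^{1/2}$ should cancel the $-\widehat J_m/(2t)$ terms, and the $Q$ derivative should cancel the $\Qbb\cdot\Jperp$ term. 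Checking these cancellations is a routine bookkeeping step.

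\emph{$H$-divergence law.} For locally Lipschitz $H$ and $\mu>0$, I would apply the product rule: $\divuntrois(H(\Gammaunpar)\vNumb)=H\,\divuntrois\vNumb+\vNumb\cdot\nabla H(\Gammaunpar)$. The first term vanishes by particle conservation. The second is proportional to $\rho(\del_t+\velo\del_x)H(\Gammaunpar)=\rho\,\nabla H\cdot(\del_t+\velo\del_x)\Gammaunpar=0$ by the transport equation. For merely Lipschitz $H$, the chain rule holds almost everywhere because $\Gammaunpar$ is $C^1$ where $\mu>0$.

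The twist statement is handled the same way, with one difference. From \eqref{eq:T2-9101112-tilde}, $\Gammazero_m$ is $C^1$ everywhere, and its derivative along $\del_t+\velo\del_x$ vanishes wherever $J_0\neq0$. It then remains to see that $\vNumb\cdot d\Gammazero_m$ is a combination of $J_0J_1-J_1J_0=0$ times $\hNumb_\Jbb$. This is an identity that holds pointwise with no density assumption, and where $\vNumb=0$ there is nothing to prove. So $\divuntrois(H(\Gammazero)\vNumb)=H'\cdot(\text{that zero})+H\divuntrois\vNumb=0$ everywhere, with no density assumption.

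\emph{Vacuum extension for convex $H$ (main obstacle).} Here $\Gammaunpar$ may fail to be defined or bounded where $\Numb^0\to0$. I would approximate $H$ by convex, smooth, globally Lipschitz functions $H_k$ with $H_k\uparrow H$, for instance Moreau--Yosida or inf-convolution regularizations. I would also regularize the density cut-off through $\mu_\varepsilon$-weighted truncations of the set $\{\mu>\varepsilon\}$.

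On $\{\mu>0\}$ the identity holds for each $H_k$. Near vacuum, $H_k(\Gammaunpar)\vNumb$ stays bounded by $C_k(1+|\Jpar|/\hNumb_\Jbb)\hNumb_\Jbb|\Jbb|\lesssim C_k(\hNumb_\Jbb+|\Jpar|)|\Jbb|$, which tends to $0$ at vacuum by \eqref{near-vacuum-Numb}. Hence the current is continuous across vacuum and the distributional divergence carries no singular part there. Passing $k\to\infty$ by monotone convergence, with convexity guaranteeing the $H_k$ are Lipschitz with linear growth, then yields \eqref{eq:transp-renorm}.

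I expect the delicate point to be justifying this growth bound uniformly in $k$. This is exactly where convexity is used: $H$ grows at most linearly on rays of the parallel momentum, so $H(\Gammaunpar)\Numb^0\lesssim \Numb^0+|t|^{1/2}|\Jpar|$.
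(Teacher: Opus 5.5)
Your non-vacuum transport computation is fine. Your twist argument is correct and more explicit than the paper's one-line remark: by \eqref{eq:T2-9101112-tilde}, $\vNumb\cdot d\Gammazero_m$ is proportional to $\hNumb_\Jbb(-\amdeux J_0J_1+J_1\amdeux J_0)=0$ at every point. There are two small slips. With $\Xbb\cdot\Ybb=-X_0Y_0+X_1Y_1$ one has $P_t+\velo P_x=-\Omega\,\Pbb\cdot\Jperp/J_0$, and this is the sign that produces the cancellation against \eqref{eq:Jhat-transp}. Also, $|t|\amdeux\Omega^2\divuntrois(f\vNumb)=\rho(\del_tf+\velo\del_xf)$, without the extra factor $\Omega$.

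The genuine gap is the vacuum extension for convex $H$. Its load-bearing step is the claim that convexity makes $H$ grow at most linearly along rays, so that $H(\Gammaunpar)\Numb^0\lesssim\Numb^0+|t|^{1/2}|\Jpar|$. This is false: convexity gives an affine \emph{lower} bound, not an upper one ($H(z)=|z|^2$ is convex). Without it, the Lipschitz constants of your approximants $H_k$ blow up and there is no $k$-uniform control of $H_k(\Gammaunpar)\vNumb$ near vacuum. Monotone convergence then passes the identity to the limit only if you already know $H(\Gammaunpar)\vNumb\in L^1_{\rm loc}$. Moreover, for each fixed $H_k$ your argument never uses convexity. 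The step ``the current is continuous across vacuum, so its divergence has no singular part there'' is asserted rather than proved. Since $\velo=-\amdeux^{-1}J_1/J_0$ is undefined at vacuum, characteristics are unavailable there, and you do not show that the boundary term $\int\varphi\,H_k(\Gammaunpar)\,\vNumb\cdot\nabla\theta_\varepsilon$ produced by your cut-off tends to zero.

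The missing idea is to avoid dividing by $\hNumb$ altogether, as the paper does. Set $\rho=|t|\amdeux\Omega\Numb^0$ and $m\coloneqq\rho\Gammaunpar$. By \eqref{hNumbGammaun}, $m=|t|\amdeux\Omega(-J_0)\,(\hNumb\Gammaun_2,\hNumb\Gammaun_3)$ is regular through vacuum. Both $\rho$ and $m$ obey the same law $\del_t(\cdot)+\del_x(\velo\,\cdot)=0$, with regular fluxes $\rho\velo=|t|\Omega\hNumb_\Jbb J_1$ and $m\velo$. The paper writes $\rho H(\Gammaunpar)$ as a function of these conservative variables and applies the chain rule. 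That function is the perspective $F(\rho,m)=\rho H(m/\rho)$. For convex $H$ it is jointly convex and positively $1$-homogeneous, hence extends to $\rho=0$ through its recession function. Euler's identity $F=\rho\,\del_\rho F+m\cdot\del_mF$, together with the $0$-homogeneity of $\del_\rho F$ and $\del_m F$, then turns $\del_tF+\del_x(\velo F)$ into a combination of the two conservation laws. This is where convexity enters.

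For the functions actually used in \autoref{proposition-JJJ}, e.g.\ $H(\Gammaunpar)=(\Gammaun_2-C)^+$, one gets $F=(m_2-C\rho)^+$. This is a Lipschitz function of $C^1$ conservative variables, so the chain rule is immediate. For superlinear convex $H$, by contrast, the perspective equals $+\infty$ on $\{\rho=0,\ m\neq0\}$ and is unbounded near $(\rho,m)=(0,0)$. That case therefore needs extra control in any proof, such as linear growth of $H$ or a bound on $\Gammaunpar$.
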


\begin{proof}
For $\Gammazero$ the algebraic manipulations above are valid because the Killing-frame matrix is bounded and invertible.  For $\Gammaun$ they are only valid away from vacuum as the definition of $\Gammaun$ involves division by~$\hNumb$, which vanishes at vacuum.
The vacuum case is reached by writing $\rho H$ as a function of the conservative variables $\rho,\rho M$ and applying the chain rule.
\end{proof}

\begin{remark}
The $H$-divergence law~\eqref{eq:transp-renorm} can also be written as $\del_t(\rho H) + \del_x(\rho\velo H)=0$. The law includes for the particle-number conservation for $H=1$, the two parallel momentum equations when $H(\Gammaunpar)=\Gammaun_m$, and every nonlinear renormalization of these equations.  For particle-production Navier--Stokes areal flows, we will find that these equations have a non-trivial source term, which has a sign under specific restrictions on the choice of~$H$.
\end{remark}


\subsubsection{Maximum principle.}
To state the following principle, for general density, to avoid dividing by zero one considers
\bel{hNumbGammaun}
\hNumb\Gammaun_2 = |t|^{1/2} e^{P/2} J_2 , \qquad
\hNumb\Gammaun_3 = |t|^{1/2} (Q e^{P/2} J_2 + e^{-P/2} J_3) ,
\ee
and keep the notation $\hNumb\Gammaun_2$ for brevity, understood as the right-hand sides here.

\begin{proposition}[Maximum principle for Einstein--Euler flows]
\label{proposition-JJJ}
Let $(\Phi,\Psi)$ be a sufficiently regular Einstein--Euler areal flow on $[t_0,t_1]\times\Tbb^3$ with $\mu>0$ throughout.
Then, for every $t\in[t_0,t_1]$ and $m=2,3$,
\bel{equa-widetildemax}
\aligned
\inf_{\Sbb^1}\Gammaun_m(t_0,\cdot)&\leq\Gammaun_m(t,x)\leq\sup_{\Sbb^1}\Gammaun_m(t_0,\cdot),\\
\inf_{\Sbb^1}\Gammazero_m(t_0,\cdot)&\leq\Gammazero_m(t,x)\leq\sup_{\Sbb^1}\Gammazero_m(t_0,\cdot).
\endaligned
\ee
Then the range of $\Gammaunpar(t,\cdot)$, respectively $\Gammazero(t,\cdot)$, coincides with the range of the corresponding initial map, namely the range is an invariant set.
The statement about $\Gammazero$ extends to vacuum.
The bound on~$\Gammaun$ extends to vacuum as the statement that if $C_1\hNumb\leq\hNumb\Gammaun_m\leq C_2\hNumb$ at initial time $t=t_0$ for some constants $C_1,C_2\in\RR$ then these inequalities hold for all times.
As a result, the convex hull of the range of $\Gammaunpar(t,\cdot)$ coincides with that at initial time.
\end{proposition}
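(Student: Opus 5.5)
The plan is to read the maximum principle off the exact transport structure in \autoref{lem-transp-renorm}, and to treat vacuum through the $H$-divergence law rather than through characteristics.

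\emph{Step 1: characteristics, non-vacuum.} With $\mu>0$ throughout and the flow regular, the velocity $\velo=-\amdeux^{-1}J_1/J_0$ is locally Lipschitz, since $J_0<0$ is bounded away from zero on the compact slab. Moreover $|\velo|\leq\amdeux^{-1}$ is bounded. So the characteristic flow $X(t;s,x)$ of~\eqref{eq:section-8-characteristics} exists on all of $[t_0,t_1]$ and never leaves $\Sbb^1$. For each fixed $t$, the map $x\mapsto X(t;t_0,x)$ is a homeomorphism of $\Sbb^1$, because backward characteristics exist as well.

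\emph{Step 2: range invariance.} By \eqref{equa-jhat-transp-cons} and \eqref{equa-KDL3}, we have $\Gammaun_m(t,X(t;t_0,x))=\Gammaun_m(t_0,x)$, and likewise for $\Gammazero_m$. Since the characteristic map is a bijection, the range of $\Gammaunpar(t,\cdot)$ equals the initial range, and the same holds for $\Gammazero(t,\cdot)$. The componentwise bounds~\eqref{equa-widetildemax} and the equality of convex hulls follow immediately.

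\emph{Step 3: vacuum, for $\Gammazero$.} The twist equations~\eqref{eq:T2-9101112-tilde} do not divide by~$\hNumb$, so $\Gammazero$ is regular up to vacuum. Where $\Jbb=0$, the time derivative vanishes. I would handle this in two ways. First, pass to the limit in positive-density approximations. Second, and more robustly, use the $H$-divergence law for $\Gammazero$ from \autoref{lem-transp-renorm}, which holds without density assumption, via an $L^1$ contraction argument. Concretely, take $H(\Gammazero)=(\Gammazero_m-\sup\Gammazero_m(t_0))_+$. Integrating $\divuntrois(H\vNumb)=0$ over the slab $[t_0,t]\times\Tbb^3$ shows that $\int\rho H\,dx$ is constant, hence zero, so $H=0$ wherever $\rho>0$. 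In vacuum regions I would instead use the $x$-constraint $\del_x\Gammazero_m=-\rho\Gammaun_m=0$ together with $\del_t\Gammazero_m=0$, so $\Gammazero$ is locally constant there. Matching across the boundary of the vacuum region by continuity then gives the bound everywhere.

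\emph{Step 4: vacuum, for $\Gammaun$.} Here I would apply the convex case of \autoref{lem-transp-renorm} to $H(\Gammaun)=(\Gammaun_m-C_2)_+$, which is convex and Lipschitz. Writing $\rho H$ through the conservative variables $\rho$ and $\rho\Gammaun_m=|t|\amdeux\Omega(-J_0)\,\hNumb\Gammaun_m$, which are continuous at vacuum, the law $\del_t(\rho H)+\del_x(\rho\velo H)=0$ integrates over $\Sbb^1$. This gives $\int\rho H\,dx=0$ for all times, i.e.\ $\hNumb\Gammaun_m\leq C_2\hNumb$ wherever $\rho>0$. The inequality is trivial where $\hNumb=0$, since both sides vanish. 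The lower bound with $C_1$ is symmetric.

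\emph{Main obstacle.} The hard part is justifying the renormalized law up to vacuum, i.e.\ the chain rule for $\rho H(\rho\Gammaun/\rho)$ when $\rho$ may vanish. I would first prove it with $H$ replaced by a perspective function of $(\rho,\rho\Gammaun)$. That function is convex and positively homogeneous, hence Lipschitz on the conservative variables, so its distributional chain rule is valid.
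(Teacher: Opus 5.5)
Your proposal is correct. For the positive-density statement and for the vacuum form of the bound on $\Gammaun_m$, it coincides with the paper's proof:
\begin{itemize}
\item constancy along the characteristics of $\velo$, whose time-$t$ maps are diffeomorphisms of $\Sbb^1$;
\item then the $H$-divergence law with $H=(\Gammaun_m-C)^+$, which shows that the non-negative quantity $\int\rho H\,dx$, zero at $t_0$, stays zero.
\end{itemize}
Your perspective-function justification of the chain rule at vacuum is the same one sketched in the proof of \autoref{lem-transp-renorm}.

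Where you genuinely differ is the vacuum extension for $\Gammazero$. The paper obtains it only from its characteristic step. That step carries the extra assumption that $\velo$ admits a locally Lipschitz extension through vacuum, and $\Gammaunpar$ a bounded continuous one.

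Your argument runs as follows. You get the bound on $\{\rho>0\}$ from the $H$-law for $H(\Gammazero)$. You observe from \eqref{eq:T2-9101112-tilde} that $\nabla\Gammazero_m=0$ wherever $\Jbb=0$. You then conclude by continuity.

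The matching step is best phrased as a connectedness argument. The open set $\{\Gammazero_m>\sup\Gammazero_m(t_0,\cdot)\}$ would lie in the vacuum set, where $\Gammazero_m$ is locally constant. Each of its components is therefore open and, by continuity, closed in the connected slab. Such a component would be the whole slab, which is impossible at $t=t_0$.

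This buys you the $\Gammazero$ bounds with no hypothesis on $\velo$ at vacuum. It has two limitations:
\begin{itemize}
\item It relies on regular vacuum states having $\Jbb=0$; at non-zero null states $\nabla\Gammazero$ need not vanish.
\item It yields only the inequalities at vacuum, not equality of ranges. The paper's route, when its extension hypothesis holds, gives full range invariance.
\end{itemize}
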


The proposition controls the normalized geometric quantities directly. The original orthonormal components are recovered by the exact identities
\bel{eq:section-8-reconstruction}
\aligned
\widehat J_2 & = |t|^{-1/2}e^{-P/2}\Gammaun_2, & \qquad
\widehat J_3 & =|t|^{-1/2}e^{P/2}(\Gammaun_3-Q\Gammaun_2),
\\
K_2 & =|t|^{-3/2}e^{-P/2}\Gammazero_2, &
K_3 & =|t|^{-3/2}e^{P/2}(\Gammazero_3-Q\Gammazero_2).
\endaligned
\ee
Hence, pointwise bounds on $P,Q$ obtained later on in \autoref{lem:orb-rec} convert the sharp maximum principle into bounds for $\Jhatpar$ and~$\Kpar$. The distinction is useful: the maximum principle for Killing components is independent of any estimate for $P,Q$, while reconstruction is not.

\subsubsection{Proof of \autoref{proposition-JJJ}}

Step 1: Assume first that either the density $\mu>0$ is positive or that $\Gammaunpar$ admits a bounded continuous extension through vacuum and $\velo$ admits a locally Lipschitz extension there.  
The characteristic flow~\eqref{eq:section-8-characteristics} is a diffeomorphism of the spatial circle at every time. Equations~\eqref{equa-jhat-transp-cons} and~\eqref{equa-KDL3} show that both vector-valued quantities are constant along each characteristic. Their ranges are therefore transported without enlargement, which proves the componentwise inequalities and the invariant-set statement.

\vskip.3cm

Step 2:  Case including the vacuum.   Assume that $\hNumb\Gammaun_m\leq C\hNumb$ at initial time, hence the positive part of their difference vanishes, so that
\be
\int_{\Tbb^3} (\hNumb\Gammaun_m-C\hNumb)^+ \dVtrois = 0 , \qquad t=t_0 .
\ee
The $H$-divergence law, applied to $H=(\Gammaun_m-C)^+$, gives the conservation in time of this integral, thus it vanishes at all times.  From positivity of the integrand one deduces that it vanishes as desired.


\subsubsection{Total variation identity}

The results above have analogues in all three settings of interest to us. We now turn to features that are only partially available for the particle-production Navier--Stokes system. The transport equation on $\Gammaun_m$ implies a transport equation for its weighted spatial derivative
\bel{eq:Kill-der}
\Lambda \coloneqq (\Lambda_2,\Lambda_3) , \qquad
\Lambda_m \coloneqq \rho^{-1} \del_x\Gammaun_m = \frac{1}{|t|\amdeux\Omega\Numb^0} \del_x \Gammaun_m , \qquad m=2,3 .
\ee
Differentiating~\eqref{equa-jhat-transp-cons} gives $\del_t(\del_x\Gammaun_m)+\del_x(\velo\del_x\Gammaun_m)=0$. Comparison with $\del_t\rho+\del_x(\velo\rho)=0$ then yields
\bel{eq:Kill-der-transp}
\del_t\Lambda_m+\velo\del_x\Lambda_m=0,\qquad m=2,3.
\ee
Consequently, for every locally Lipschitz function $H$ on~$\RR^4$, we obtain the first-order $H$-divergence law
\bel{eq:Kill-der-H}
\divuntrois\bigl(H(\Lambda,\Gammaunpar)\vNumb\bigr)=0.
\ee
An interesting choice, for the purposes of obtaining regularity bounds on the parallel momentum, is $H(\Lambda,\Gammaunpar)=|\Lambda_m|$, which yields
\bel{equa-Jx}
\del_t|\del_x\Gammaun_m|+\del_x\bigl(\velo|\del_x\Gammaun_m|\bigr)=0,\qquad m=2,3.
\ee
Since the spatial variable is periodic, integration gives the exact total-variation identity, stated now. 

\begin{proposition}[Total variation principle for Einstein--Euler flows]
\label{proposition-JJJ-TV}
Under the conditions in \autoref{proposition-JJJ}, the following total variation bound holds for sufficiently regular solutions: 
\bel{equa-Jx-Var}
\Var\bigl(\Gammaun_m(t,\cdot);\Sbb^1\bigr)= \Var\bigl(\Gammaun_m(s,\cdot);\Sbb^1\bigr),\qquad s,t\in\Interval.
\ee
\end{proposition}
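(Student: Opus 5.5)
The plan is to derive a conservative transport law for $\del_x\Gammaun_m$ itself and then integrate over the periodic circle. By \autoref{lem-transp-renorm}, under the conditions of \autoref{proposition-JJJ} ($\mu>0$, regular flow), each Killing momentum per particle satisfies the diagonal transport equation $\del_t\Gammaun_m+\velo\del_x\Gammaun_m=0$, with $\velo=-\amdeux^{-1}J_1/J_0$ regular, since $J_0<0$ away from vacuum.

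\emph{First step.} Differentiate the transport equation in $x$. This gives the conservative form
\[
\del_t(\del_x\Gammaun_m)+\del_x\bigl(\velo\,\del_x\Gammaun_m\bigr)=0 ,
\]
so $w\coloneqq\del_x\Gammaun_m$ obeys a linear continuity equation with the same velocity as $\rho=|t|\amdeux\Omega\Numb^0$.

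\emph{Second step.} Pass to $|w|$. The cleanest route is to use $\rho>0$: the ratio $\Lambda_m=w/\rho$ is transported by \eqref{eq:Kill-der-transp}. The $H$-divergence law \eqref{eq:Kill-der-H} with $H=|\Lambda_m|$, rewritten as $\del_t(\rho H)+\del_x(\rho\velo H)=0$, then yields \eqref{equa-Jx}, namely
\[
\del_t|w|+\del_x(\velo|w|)=0 .
\]
This uses only that $\rho|\Lambda_m|=|w|$. Since $H$ is merely Lipschitz, I would justify this step either by applying \eqref{eq:Kill-der-H} with smooth convex approximations $H_\varepsilon(\lambda)=\sqrt{\lambda^2+\varepsilon^2}$ and letting $\varepsilon\to0$, or directly by characteristics. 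Along the flow $X(t;s,x)$ of \eqref{eq:section-8-characteristics}, which is a diffeomorphism of $\Sbb^1$ because $\velo$ is Lipschitz, one has $w(t,X)\,\del_xX=w(s,x)$. Because $\del_xX>0$, taking absolute values commutes with this relation.

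\emph{Final step.} Integrate over $\Sbb^1$. Periodicity kills the flux term, so $\frac{d}{dt}\int_{\Sbb^1}|\del_x\Gammaun_m|\,dx=0$. For regular $\Gammaun_m$ this integral equals $\Var(\Gammaun_m(t,\cdot);\Sbb^1)$, which proves \eqref{equa-Jx-Var} for all $s,t\in\Interval$, in either time direction. Alternatively, the characteristic map gives the equality directly: $x\mapsto X(t;s,x)$ is an orientation-preserving homeomorphism of the circle, and $\Gammaun_m(t,X(t;s,\cdot))=\Gammaun_m(s,\cdot)$. Total variation is invariant under such reparametrizations, and this argument needs no differentiability of $|w|$.

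The main obstacle is regularity. It lies in ensuring that the characteristic flow is a global circle diffeomorphism, which needs $\velo$ Lipschitz and hence $\mu>0$, and in handling the non-smoothness of $|\cdot|$. I would lead with the reparametrization-invariance argument, since it avoids both the renormalization limit and any vacuum issue, provided $\velo$ is regular.
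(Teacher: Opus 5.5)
Your proof is correct. Its main chain is exactly the paper's argument: differentiate the transport law to get the continuity equation for $\del_x\Gammaun_m$, normalize by $\rho$ to obtain the transported quantity $\Lambda_m$, apply the $H$-divergence law with $H=|\Lambda_m|$ to reach \eqref{equa-Jx}, and integrate over the periodic circle.

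The reparametrization shortcut you say you would lead with is a genuinely different closing step, and it is also valid. It uses only two facts. The first is that $x\mapsto X(t;s,x)$ is an orientation-preserving homeomorphism of $\Sbb^1$, which is the same fact invoked in Step~1 of the proof of \autoref{proposition-JJJ}. The second is that $\Var$ is invariant under homeomorphisms. It therefore needs neither $\del_x\velo$ nor $\del_x\Gammaun_m$, nor any renormalization of $|\cdot|$. It also applies verbatim when $\Gammaun_m(s,\cdot)$ is merely of bounded variation, as long as $\velo$ stays Lipschitz, which requires $\mu>0$ exactly as in the paper.

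The paper's route gives something different: a local balance law in divergence form for $|\del_x\Gammaun_m|$, stated within the same $H$-divergence framework as \eqref{eq:transp-renorm}. That conservative form is the natural starting point for weakly regular or limiting flows, where classical characteristics need not exist. Your argument, by contrast, is tied to the existence of a regular characteristic flow.
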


Let us point out that the above property is not used in the present paper, but plays a role in our companion papers. 
The same conservative equation also gives quantitative continuity in time. Indeed, for $t_0\leq s<t\leq t_1$, and using $|\amdeux\velo|\leq 1$,
\bel{eq:section-8-time-continuity}
\aligned
\|\Gammaun_m(t,\cdot)-\Gammaun_m(s,\cdot)\|_{L^1(\Sbb^1,\amdeux dx)} 
& \leq \| \del_t \Gammaun_m\|_{L^1(\Sbb^1\times[s,t],\amdeux dx\,dt)}
= \| \velo \del_x \Gammaun_m\|_{L^1(\Sbb^1\times[s,t],\amdeux dx\,dt)}
\\
&  \leq \int_s^t \int_{\Sbb^1} |\del_x \Gammaun_m| dx\,dr
= (t-s) \Var\bigl(\Gammaun_m(t,\cdot);\Sbb^1\bigr) .
\endaligned
\ee
Hence the map $t\mapsto\Gammaun_m(t)$ is Lipschitz-continuous with values in $L^1(\Sbb^1,\amdeux dx)$, and its Lipschitz modulus is controlled by the initial total variation.


\subsection{Dissipated-energy Einstein--Navier--Stokes areal flows}
\label{section-7-2}

\subsubsection{Killing twists in torus-adapted Navier--Stokes models}

We now include relaxation terms in the discussion and reproduce analogous exact transport equations and maximum principles, culminating in \autoref{proposition-JJJ-NS}. For now, we consider dissipated-energy models, since these models admit a conserved particle-number current. Throughout, we assume that the Navier--Stokes potential~$\chi$ defining the model is torus-adapted in the sense of \autoref{def-NS-torus-adapted}.
By \autoref{prop-NS-torus-adapted}, this implies that the components of the particle-number current orthogonal to symmetry orbits are proportional to the fluid velocity, and likewise for the mixed orthogonal--parallel block of components of the stress-tensor:
\bel{diss-ENS-factorization}
\vNumb_{\chi\,a} = \hNumb_\chi J_a , \qquad
T_{\chi}(e_a,e_m) = \frac{1}{2} \widehat U_\chi J_a J_m , \qquad
a=0,1, \quad m=2,3
\ee
for some scalar functions $\hNumb_\chi$ and~$\widehat{U}_\chi$ of the momentum and relaxation variables $(\Jbb,\zeta)$.
This factorization ensures that the balance equations for the currents~$\vNumb_\chi$ and $T_\chi(e_m,\cdot)$ have the same speed~$\velo$ as the fluid.
With notation analogous to~\eqref{eq:section-8-Euler-speed}, the particle-number conservation equation $\divuntrois\vNumb_\chi=0$ takes a simple form:
\bel{dtrhochi-diss}
\del_t \rho_\chi + \del_x (\rho_\chi \velo) = 0 , \qquad
\rho_\chi \coloneqq |t| \amdeux \Omega \Numb_\chi^0 > 0 , \qquad
\velo \coloneqq -\amdeux^{-1}\frac{J_1}{J_0} .
\ee
Note that we do not include an index~$\chi$ on the velocity variable, and likewise on the Killing twists below, as the expression of these objects in terms of \JKL{} variables is identical to the Einstein--Euler case.

The Killing twists $(\Gammazero_2,\Gammazero_3)$ are components of $|t|(K_2 e_2+K_3 e_3)$ in the Killing frame $(\del_y,\del_z)$.  Their expression~\eqref{equa-Ktilde} only involves geometric variables, hence remains unchanged here.  Their evolution and constraint equations~\eqref{eq:T2-9101112-tilde} arise from operators $\opK_{20},\opK_{21},\opK_{30},\opK_{31}$ that are corrected by relaxation terms as explained in \autoref{section-6-1}.
Specifically, each product $J_a J_m=2T_{\rm E}(e_a,e_m)$ with $a=0,1$ and $m=2,3$ is corrected to $2T_\chi(e_a,e_m)=\widehat U_\chi J_a J_m$, namely all sources in~\eqref{eq:T2-9101112-tilde} are rescaled by this scalar function.
Consequently, the twists obey the same transport equation~\eqref{equa-KDL3} at speed~$\velo$ as in the Euler case.

\subsubsection{Transport equations for the momentum}

As in the Euler system (in which $2T_{\rm E}(e_0,e_m) = J_0J_m$), we introduce the parallel momentum per particle in the orthonormal frame $e_2,e_3$ and the Killing frame $\del_y,\del_z$,
\be
\aligned
\widehat J_m^\chi & \coloneqq \frac{2T_\chi(e_0,e_m)}{\Numb_{\chi0}}
= \frac{\widehat U_\chi}{\hNumb_\chi} J_m , \qquad m=2,3,
\\
\Gammaun_2^\chi & \coloneqq \frac{2T_\chi(e_0,\del_y)}{\Numb_{\chi0}} = |t|^{1/2}e^{P/2}\widehat J_2^\chi,
\\
\Gammaun_3^\chi & \coloneqq \frac{2T_\chi(e_0,\del_z)}{\Numb_{\chi0}} = |t|^{1/2}\bigl(Qe^{P/2}\widehat J_2^\chi+e^{-P/2}\widehat J_3^\chi\bigr).
\endaligned
\ee
The factorization~\eqref{diss-ENS-factorization} implies that $e_0$ and $e_1$ stress-tensor components involve the same parallel momentum per particle factors:
\be
2T_\chi(e_a,e_m) = \Numb_{\chi a} \widehat J_m^\chi , \qquad
2T_\chi(e_a,\del_y) = \Numb_{\chi a} \Gammaun_2^\chi , \qquad
2T_\chi(e_a,\del_z) = \Numb_{\chi a} \Gammaun_3^\chi ,
\quad
a=0,1 .
\ee


The evolution equations are obtained by noting that contracting the stress-tensor with a Killing vector~$X$ gives a conserved current $X_\alpha T_\chi^{\alpha\bullet}$:
\be
\nabla_\beta\bigl( X_\alpha T_\chi^{\alpha\beta} \bigr)
= X_\alpha \nabla_\beta T_\chi^{\alpha\beta}
+ (\nabla_\beta X_\alpha) T_\chi^{\alpha\beta}
= 0 ,
\ee
with the first term vanishing due to stress-tensor conservation and the second due to the symmetry of~$T_\chi$ and the Killing equation $\nabla_\alpha X_\beta + \nabla_\beta X_\alpha = 0$. We deduce exact conservation of $T_\chi(\del_y,\bullet)$ and $T_\chi(\del_z,\bullet)$.  Since these are vectors, their divergence is given by the $\divuntrois$ operator of~\eqref{equa-def-div13} in terms of their frame components. This yields the exact conservation laws
\bel{conserve-Killing-part-mom-chi}
\del_t( \rho_\chi \Gammaun_m^\chi ) + \del_x( \rho_\chi\velo \Gammaun_m^\chi ) = 0 , \qquad m=2,3
\ee
and combining with the conservation of particle-number~\eqref{dtrhochi-diss} gives a transport equation
\be
\del_t \Gammaun_m^\chi + \velo \del_x \Gammaun_m^\chi = 0 , \qquad m=2,3.
\ee
Passing from these Killing components to the orthonormal frame components~$\widehat J_m^\chi$ is done using the same Killing-frame matrix~\eqref{eq:Kill-mat} as for the Einstein--Euler system.  We deduce that the parallel momentum per particle $(\widehat J_2^\chi, \widehat J_3^\chi)$ satisfies the same linear transport equations~\eqref{eq:Jhat-transp} as in the Einstein--Euler system.


\subsubsection{Maximum principle}

The Killing twists $\Gammazero_m$ and Killing momenta per particle $\Gammaun_m^\chi$ are transported at velocity~$\velo$ by the system considered here.  The rest of \autoref{section-7-1} thus goes through upon changing $\rho\to\rho_\chi$ and $\Gammaun_m\to\Gammaun_m^\chi$. In particular, the maximum principle on $\Gammazero,\Gammaunpar_\chi$ holds as stated below, the reconstruction of~$K_m$ and~$\widehat J_m^\chi$ from these Killing components is unchanged, as is the $H$-divergence law
\be
\divuntrois\bigl(H(\Gammazero,\Gammaunpar_\chi)\vNumb_\chi\bigr)=0 .
\ee
One has $\Gammaun_m^\chi=-\rho_\chi^{-1}\del_x\Gammazero_m$, and the weighted spatial derivatives $\Lambda_m^\chi=\rho_\chi^{-1}\del_x\Gammaun_m^\chi$ obey the same transport equations~\eqref{eq:Kill-der-transp}, which implies that the total variation of $\Gammaun_m^\chi(t,\cdot)$ is constant in time, and controls the Lipschitz modulus of $\Gammaun_m^\chi$ in~$L^1(\Sbb^1,\amdeux dx)$. Owing to its importance we repeat here the same maximum principle as \autoref{proposition-JJJ}, collecting the relevant assumptions.

\begin{proposition}[Maximum principle for dissipated-energy Einstein--Navier--Stokes flows]
\label{proposition-JJJ-NS}
Fix a Navier--Stokes triple $(\mathscr F,\chi,\mathscr B)$ that is admissible (\autoref{def-pot-admissible-triple}) and torus-adapted (\autoref{def-NS-torus-adapted}). Let $(\Phi,\Psi,\zeta)$ be a sufficiently regular dissipated-energy Einstein--Navier--Stokes areal flow on $[t_0,t_1]\times\Tbb^3$, and assume that the density $\mu>0$ is positive. Then the range of the function $\Sbb^1\ni x\mapsto\bigl(\Gammaunpar_\chi(t,x),\Gammazero(t,x)\bigr)$ is a subset of~$\RR^4$ that does not depend on areal time $t\in [t_0,t_1]$.  In particular, for $t\in[t_0,t_1]$ and $m=2,3$, one has 
\be
\label{eq:relax-uncond-max}
\aligned \inf_{\Sbb^1}\Gammaun_m^\chi(t_0,\cdot) & \leq\Gammaun_m^\chi(t,x)\leq\sup_{\Sbb^1}\Gammaun_m^\chi(t_0,\cdot), \\
\inf_{\Sbb^1}\Gammazero_m(t_0,\cdot) & \leq\Gammazero_m(t,x)\leq\sup_{\Sbb^1}\Gammazero_m(t_0,\cdot).\endaligned
\ee
The second line also holds at vacuum.
At vacuum, the first line is understood as the statement that if $C_1\rho_\chi\leq 2T_\chi^{0m} \leq C_2 \rho_\chi$ for some $C_1,C_2\in\RR$ at initial time then this inequality holds for all times.
The convex hull of the range of $\Gammaunpar_\chi$ is thus preserved in time, without density assumption.
\end{proposition}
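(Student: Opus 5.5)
The argument repeats the proof of \autoref{proposition-JJJ}, substituting the Navier--Stokes identities derived just above. First, I would record the two transport laws at speed $\velo=-\amdeux^{-1}J_1/J_0$.

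For the Killing momentum, torus-adaptedness gives the factorization~\eqref{diss-ENS-factorization} via \autoref{prop-NS-torus-adapted}. Combined with exact conservation of the Killing currents $T_\chi(\del_y,\cdot)$, $T_\chi(\del_z,\cdot)$, this yields~\eqref{conserve-Killing-part-mom-chi}. Particle conservation $\divuntrois\vNumb_\chi=0$ gives~\eqref{dtrhochi-diss}; it holds for dissipated-energy flows by \autoref{prop-pot-JKL}. Dividing by $\rho_\chi>0$ then gives $\del_t\Gammaun_m^\chi+\velo\del_x\Gammaun_m^\chi=0$.

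For the Killing twists, the sources in~\eqref{eq:T2-9101112-tilde} are only rescaled by $\widehat U_\chi$. Moreover, $\del_x\Gammazero_m=-\rho_\chi\Gammaun_m^\chi$ and $\del_t\Gammazero_m=\rho_\chi\velo\Gammaun_m^\chi$, so~\eqref{equa-KDL3} holds exactly as before.

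Next I would use characteristics. Since the flow is regular and $\mu>0$ (so $J_0<0$), $\velo$ is $C^1$ on the compact slab $[t_0,t_1]\times\Sbb^1$, with $|\velo|\leq\amdeux^{-1}$. The characteristic flow~\eqref{eq:section-8-characteristics} therefore exists on $[t_0,t_1]$ and is a diffeomorphism of $\Sbb^1$ at each time. The four-vector $(\Gammaunpar_\chi,\Gammazero)$ is constant along characteristics, so $x\mapsto(\Gammaunpar_\chi,\Gammazero)(t,x)$ is the initial map composed with a bijection of $\Sbb^1$. Hence its range is independent of $t$, and~\eqref{eq:relax-uncond-max} and the convex-hull statement follow.

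Admissibility of the triple is used only to guarantee that $\zeta$ stays in $\mathscr F$. This keeps $\chi$, $\hNumb_\chi$ and $\widehat U_\chi$ smooth along the solution, so the identities above hold on the whole interval.

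The vacuum statements are the main obstacle, since $\Gammaun^\chi$ and $\velo$ may degenerate there. For $\Gammazero$, I would use that it is defined purely geometrically and its equation has the conservative form $\del_t(\rho_\chi H(\Gammazero))+\del_x(\rho_\chi\velo H(\Gammazero))=0$ for Lipschitz $H$. This form never divides by density. Taking $H=(\Gammazero_m-C)^+$ with $C=\sup\Gammazero_m(t_0)$, periodicity makes $\int\rho_\chi H\,dx$ constant, hence zero.

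This only controls the set where $\rho_\chi>0$. On vacuum regions, $\del_x\Gammazero_m=0$ and $\del_t\Gammazero_m=0$, so $\Gammazero$ is locally constant there. It therefore takes values already attained at the boundary of the vacuum region, or keeps its initial value if the region persists from $t=t_0$. Either way the bound extends, and the lower bound is symmetric.

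For the first line at vacuum, I would apply the same renormalization to the conserved quantities $(\rho_\chi,\rho_\chi\Gammaun^\chi_m)$, which are continuous through vacuum. I take $H=(\Gammaun_m^\chi-C_2)^+$, written as the function $(\rho_\chi\Gammaun^\chi_m-C_2\rho_\chi)^+$ of these conservative variables, which is convex and Lipschitz. The entropy identity then holds by the chain rule, as in the proof of \autoref{lem-transp-renorm}. The conserved integral vanishes at $t_0$, hence at all times, which gives $2T_\chi^{0m}\leq C_2\rho_\chi$ (in the appropriate normalization); the lower bound with $C_1$ is symmetric.
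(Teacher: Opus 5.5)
Your proposal is correct and follows the paper's own route. The paper obtains exact transport of $(\Gammaunpar_\chi,\Gammazero)$ at speed $\velo$ from the torus-adapted factorization and from conservation of the Killing currents and of $\vNumb_\chi$. It then argues by characteristics when $\mu>0$, and at vacuum uses the $H$-divergence law with a positive-part $H$, exactly as in the proof of \autoref{proposition-JJJ}.

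Your extra step for $\Gammazero$ on vacuum regions (local constancy, used to pass from the $\rho_\chi$-weighted bound to every point) completes something the paper leaves implicit. Note that it uses $T_\chi^{am}=0$ at vacuum, i.e.\ $\Jbb=0$ there, which holds for regular states. At a non-zero null state you would instead transport $\Gammazero$ along $\velo$, which is still defined there since $J_0\neq0$.
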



\subsubsection{Total variation bound.}

Following the same arguments as for the Euler model, we also obtain the following property. Let us again point out that this property is not used in the present paper.  

\begin{proposition}[Total-variation principle for dissipated-energy Einstein--Navier--Stokes flows]
\label{proposition-JJJ-NS-TV}
Under the conditions in \autoref{proposition-JJJ-NS}, the following total variation bound holds for sufficiently regular solutions: 
\bel{equa-Jx-Var-NS}
\Var\bigl(\Gammaun_m^\chi(t,\cdot);\Sbb^1\bigr)= \Var\bigl(\Gammaun_m^\chi(s,\cdot);\Sbb^1\bigr),\qquad s,t\in\Interval.
\ee
\end{proposition}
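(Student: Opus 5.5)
We mirror the argument behind \autoref{proposition-JJJ-TV}, replacing $\rho$ by $\rho_\chi$ and $\Gammaun_m$ by $\Gammaun_m^\chi$.

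\emph{Step 1: transport of the Killing momentum.} Under the admissibility and torus-adaptedness assumptions, the factorization \eqref{diss-ENS-factorization} holds. The discussion preceding \autoref{proposition-JJJ-NS} then yields the particle conservation law \eqref{dtrhochi-diss}, the Killing-momentum conservation laws \eqref{conserve-Killing-part-mom-chi}, and hence the transport equation
\[
\del_t\Gammaun_m^\chi+\velo\,\del_x\Gammaun_m^\chi=0 ,
\qquad m=2,3 .
\]
Because $\mu>0$ and the flow is sufficiently regular, the velocity $\velo=-\amdeux^{-1}J_1/J_0$ is locally Lipschitz, and $\Gammaun_m^\chi$ is $C^1$.

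\emph{Step 2: conservative form for the spatial derivative.} We differentiate the transport equation in $x$ and obtain
\[
\del_t(\del_x\Gammaun_m^\chi)+\del_x\bigl(\velo\,\del_x\Gammaun_m^\chi\bigr)=0 ,
\]
which is a conservation law for $w\coloneqq\del_x\Gammaun_m^\chi$. We then renormalize with $|\cdot|$ to get
\[
\del_t|w|+\del_x(\velo|w|)=0 .
\]
To make this rigorous where $w$ vanishes, there are two options. The first is to use the weighted derivative $\Lambda_m^\chi=\rho_\chi^{-1}w$, which satisfies \eqref{eq:Kill-der-transp}, and apply the $H$-divergence law with $H=|\Lambda_m^\chi|$ (Lipschitz); this gives $\divuntrois(|\Lambda_m^\chi|\vNumb_\chi)=0$, that is, $\del_t(\rho_\chi|\Lambda_m^\chi|)+\del_x(\velo\rho_\chi|\Lambda_m^\chi|)=0$. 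The second is to use a smooth convex approximation $\eta_\varepsilon(w)=\sqrt{w^2+\varepsilon^2}$. In that case the error term $\del_x\velo\,(w\eta_\varepsilon'(w)-\eta_\varepsilon(w))$ is bounded by $\varepsilon|\del_x\velo|$ and tends to zero.

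\emph{Step 3: integration.} We integrate over $\Sbb^1$. Periodicity kills the flux term, so $\frac{d}{dt}\int_{\Sbb^1}|\del_x\Gammaun_m^\chi|\,dx=0$ (in the $\varepsilon$ version, after letting $\varepsilon\to0$ in the time-integrated identity). Since $\Gammaun_m^\chi(t,\cdot)$ is $C^1$, this integral equals $\Var(\Gammaun_m^\chi(t,\cdot);\Sbb^1)$, which gives \eqref{equa-Jx-Var-NS} for all $s,t\in\Interval$.

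\emph{Main obstacle.} The only delicate point is Step 2, namely justifying the absolute-value renormalization at zeros of $\del_x\Gammaun_m^\chi$. We handle it via the Lipschitz $H$-divergence law, which needs only the $C^1$ regularity assumed, together with the strict positivity of $\rho_\chi$ guaranteed by $\mu>0$.
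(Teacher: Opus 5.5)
Your proof is correct and follows the paper's route: the paper proves this by referring back to the Euler case, transporting the weighted derivative $\Lambda_m^\chi=\rho_\chi^{-1}\del_x\Gammaun_m^\chi$, applying the $H$-divergence law with $H=|\Lambda_m^\chi|$, and integrating over the periodic circle, which is exactly your Step 2 (first option) and Step 3. Your $\sqrt{w^2+\varepsilon^2}$ regularization is a valid alternative that does not even use particle conservation; note, however, that both options use one more derivative than the $C^1$ regularity you mention, since you differentiate the transport equation in $x$, but this is covered by the ``sufficiently regular'' hypothesis.
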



\subsection{Particle-production Einstein--Navier--Stokes areal flows}
\label{section-7-3}

\subsubsection{Transport equations with sources}

As in the previous section we fix a Navier--Stokes triple $(\mathscr F,\chi,\mathscr B)$ that is admissible (\autoref{def-pot-admissible-triple}) and torus-adapted (\autoref{def-NS-torus-adapted}).  We consider now the particle-production model of \autoref{def-prod-areal-flow}.  By \eqref{eq:pot-areal-production},
\bel{pprod-83}
\divuntrois\vNumb_\chi = \mathfrak d_{\mathscr B} = \zeta_{\alpha\beta}\mathscr B^{\alpha\beta}(\beta,\zeta) \geq 0 .
\ee
The other equations used in \autoref{section-7-2} do not receive any source term. In particular, Killing twists $\Gammazero$ are exactly transported at speed~$\velo$, namely obey~\eqref{equa-KDL3}. To find the transport equation for the Killing momentum per particle~$\Gammaunpar$, we use the $(\rho_\chi,\velo)$ variables in~\eqref{dtrhochi-diss} and express both the particle production equation~\eqref{pprod-83} and the exact conservation law~\eqref{conserve-Killing-part-mom-chi} in these variables:
\bel{eq:pot-Kill-coordinate-laws}
\aligned
\del_t \rho_\chi + \del_x (\rho_\chi \velo) & = \mathfrak d_{\mathscr B}^{\rm ar} \coloneqq |t|\Omega^2\amdeux\mathfrak d_{\mathscr B} \geq 0.
\\
\del_t( \rho_\chi \Gammaun_m^\chi ) + \del_x( \rho_\chi\velo \Gammaun_m^\chi ) & = 0 , \qquad m=2,3 .
\endaligned
\ee
Consequently, for $\rho_\chi>0$ (namely $\mu>0$),
\bel{eq:pot-damped-transport}
\del_t\Gammaun_m^\chi + \velo\del_x\Gammaun_m^\chi=-\frac{\mathfrak d_{\mathscr B}^{\rm ar}}{\rho_\chi}\Gammaun_m^\chi,\qquad \del_t\Gammazero_m+\velo\del_x\Gammazero_m=0.
\ee
We point out that contrary to the previous two models, the equations \eqref{eq:pot-damped-transport} contain a non-zero right-hand side and, consequently, no (useful) total-variation bound is available. 


\subsubsection{Radial maximum principle.}

The source term in~\eqref{eq:pot-damped-transport} is a damping term. It scales $\Gammaun_2^\chi$ and $\Gammaun_3^\chi$ identically.  Consequently, along each characteristic the vector $\Gammaunpar_\chi$ keeps its direction and only its length decreases. We arrive at a maximum principle for the particle-production model.

\begin{proposition}[Maximum principle for particle-production Navier--Stokes flows]
\label{prop-pot-maximum}
Fix a Navier--Stokes triple $(\mathscr F,\chi,\mathscr B)$ that is admissible (\autoref{def-pot-admissible-triple}) and torus-adapted (\autoref{def-NS-torus-adapted}).
Let $(\Phi,\Psi,\zeta)$ be a sufficiently regular particle-production Einstein--Navier--Stokes areal flow $(\Phi,\Psi,\zeta)$ on $[t_0,t_1]\times\Tbb^3$.
If the density $\mu>0$ is positive throughout, in terms of the characteristic flow~\eqref{eq:section-8-characteristics} of $\velo$, one has
\bel{eq:pot-characteristic-contraction}
\aligned
\Gammaunpar_\chi\bigl(t,X(t;t_0,x)\bigr)&=\exp\left(-\int_{t_0}^t\frac{\mathfrak d_{\mathscr B}^{\rm ar}}{\rho_\chi}\bigl(r,X(r;t_0,x)\bigr)\,dr\right)\Gammaunpar_\chi(t_0,x),\\
\Gammazero\bigl(t,X(t;t_0,x)\bigr)&=\Gammazero(t_0,x),
\endaligned
\ee
hence the ranges of $\Gammaunpar_\chi(t,\cdot)$ and $\Gammazero(t,\cdot)$ obey
\bel{eq:pot-radial-hull}
\operatorname{Ran}\Gammaunpar_\chi(t,\cdot)\subset\left\{\lambda z\middle|\ 0\leq\lambda\leq1,\ z\in\operatorname{Ran}\Gammaunpar_\chi(t_0,\cdot)\right\},\qquad \operatorname{Ran}\Gammazero(t,\cdot)=\operatorname{Ran}\Gammazero(t_0,\cdot).
\ee
For flows including the vacuum $\mu=0$, a weaker statement holds: the convex hull of $\{0\}$ and the range of~$\Gammaunpar$ is contained in the same convex hull at initial time.
\end{proposition}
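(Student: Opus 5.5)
The plan is to integrate the damped transport system \eqref{eq:pot-damped-transport} along the characteristic flow \eqref{eq:section-8-characteristics} of $\velo$, and then handle vacuum through an entropy-type $H$-balance law, as in Step~2 of the proof of \autoref{proposition-JJJ}.

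\textbf{Step 1: positive density.} When $\mu>0$ throughout, $\rho_\chi>0$ and the coefficient $\mathfrak d_{\mathscr B}^{\rm ar}/\rho_\chi$ is continuous and non-negative. The velocity $\velo$ is locally Lipschitz and satisfies $|\velo|\leq\amdeux^{-1}$, so $X(t;t_0,\cdot)$ is a diffeomorphism of $\Sbb^1$. Along a characteristic, each component obeys the linear ODE
\be
\frac{d}{dt}\Gammaun_m^\chi(t,X)=-\frac{\mathfrak d_{\mathscr B}^{\rm ar}}{\rho_\chi}(t,X)\,\Gammaun_m^\chi(t,X),
\ee
with the same scalar coefficient for $m=2,3$. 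Solving it gives the first line of \eqref{eq:pot-characteristic-contraction}. The second line follows from the exact transport \eqref{equa-KDL3}, which \autoref{section-7-3} notes holds without source in this model.

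Since the exponential factor $\lambda$ lies in $(0,1]$ and the flow map is onto, every value $\Gammaunpar_\chi(t,y)$ equals $\lambda z$ with $z=\Gammaunpar_\chi(t_0,x)$ and $y=X(t;t_0,x)$. This gives the radial inclusion in \eqref{eq:pot-radial-hull}. Surjectivity of the flow also gives equality of the ranges of $\Gammazero$.

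\textbf{Step 2: vacuum.} Here I would avoid dividing by $\rho_\chi$ and work with the conservative variables $\rho_\chi$ and $\rho_\chi\Gammaun_m^\chi=|t|\amdeux\Omega\,2T_\chi(e_0,\cdot)$. Let $C$ be the closed convex hull of $\{0\}$ and the initial range. Write it as an intersection of half-planes $\{z:\ a\cdot z\leq c\}$ with $c\geq0$, which is possible because $0\in C$. For each half-plane take $H(z)=(a\cdot z-c)^+$, which is convex, Lipschitz, and vanishes at the origin.

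Combining the two laws in \eqref{eq:pot-Kill-coordinate-laws} by the chain rule (first for $\rho_\chi>0$, then extended through vacuum via the function $\rho H(M/\rho)$, which is positively homogeneous and convex in $(\rho,\rho M)$) gives
\be
\del_t(\rho_\chi H)+\del_x(\rho_\chi\velo H)=\mathfrak d_{\mathscr B}^{\rm ar}\bigl(H-\nabla H\cdot\Gammaunpar_\chi\bigr).
\ee
The right-hand side is $\leq0$. Indeed $\mathfrak d_{\mathscr B}^{\rm ar}\geq0$, and convexity with $H(0)=0$ gives $H(z)-\nabla H(z)\cdot z\leq H(0)=0$; for this $H$ the quantity equals $-c\,\mathbf 1_{a\cdot z>c}\leq 0$ directly.

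Integrating over $\Sbb^1$ using periodicity, $\int\rho_\chi H\,dx$ is non-increasing. It vanishes initially, so it vanishes for all $t$. Hence the conservative pair stays in the cone over $C$, that is, $\rho_\chi(a\cdot\Gammaunpar_\chi)\leq c\rho_\chi$. This is the stated weaker vacuum conclusion, interpreted as in \autoref{proposition-JJJ-NS}.

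\textbf{Main obstacle.} The hard part is Step 2: justifying the chain rule and the renormalized inequality across vacuum. There $\Gammaunpar_\chi$ is undefined and the damping coefficient $\mathfrak d_{\mathscr B}^{\rm ar}/\rho_\chi$ may blow up. I would handle it as in the proof of \autoref{lem-transp-renorm}: derive the identity for $\rho_\chi>0$, approximate $H$ by smooth convex functions, and pass to the limit using the continuity of $\vNumb_\chi$ and $T_\chi$ up to the causal boundary.
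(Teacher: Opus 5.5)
Your proposal is correct and follows the paper's proof. For $\mu>0$ you integrate the damped transport equation along the characteristic flow. For the vacuum case you use the $H$-divergence law with $H=(a\cdot z-c)^+$, whose extra source $\mathfrak d_{\mathscr B}^{\rm ar}\bigl(H-\nabla H\cdot z\bigr)$ is non-positive. You state explicitly that this sign needs $H(0)=0$ in addition to convexity, and you allow $c\geq 0$ for half-planes through the origin. Both points make your argument slightly more precise than the paper's sketch, which invokes only convexity and takes $C>0$.
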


\begin{proof}
Integration along the characteristic flow proves~\eqref{eq:pot-characteristic-contraction}. Since $\mathfrak d_{\mathscr B}^{\rm ar}/\rho_\chi\geq 0$, its exponential factor lies in $(0,1]$, which proves~\eqref{eq:pot-radial-hull}.
To include the vacuum case, one simply uses the $H$-divergence law with $H=(\varpi_m\Gammaun_\chi^m-C)^+$ for some linear form~$\varpi$ and constant~$C>0$.
The additional source term compared to the dissipated-energy formulation has the appropriate sign by convexity of~$H$.
\end{proof}


\subsubsection{Conclusion for this section} 

We thus arrive at the following conclusion, which we state in a condensed form for ease of use in the rest of the paper.

\begin{theorem}[A maximum principle for the three Einstein--fluid models]\label{them:7-4}
For an Einstein--Euler areal flow, particle-production Einstein--Navier--Stokes areal flow, or dissipated-energy Einstein--Navier--Stokes areal flow,
if for some constant $C>0$ one has $2|T_\chi^{02}|+2|T_\chi^{03}|\leq C\Numb_\chi^0$ at initial time, then this inequality holds for all times.
\end{theorem}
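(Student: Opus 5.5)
The plan is to read the inequality in the Killing frame, as in the introduction \eqref{intro-parall-perpart-bounded}, where the parallel components are contractions with the Killing fields $Y=\del_y$, $Z=\del_z$. With this reading, $2T_\chi(e_0,\del_y)=\Numb_{\chi0}\Gammaun^\chi_2$ and $2T_\chi(e_0,\del_z)=\Numb_{\chi0}\Gammaun^\chi_3$. The hypothesis then says exactly that the Killing momentum per particle takes values in the closed convex set
\[
\mathcal K_C\coloneqq\bigl\{z\in\RR^2 \bigm| |z_2|+|z_3|\leq C\bigr\},
\]
which is symmetric and contains the origin. For the Einstein--Euler model I use the conventions $T_\chi=T_{\rm E}$, $\vNumb_\chi=\vNumb_\Jbb$ and $\Gammaun^\chi=\Gammaun$. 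The whole proof then reduces to showing that $\mathcal K_C$ is an invariant domain for each of the three transport systems derived in this section.

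\emph{Positive density.} If $\mu>0$ throughout, the result follows directly from earlier propositions.
\begin{itemize}
\item For Einstein--Euler and dissipated-energy flows, \autoref{proposition-JJJ} and \autoref{proposition-JJJ-NS} show that $\Gammaunpar_\chi$ is constant along the characteristics of $\velo$. Its range is therefore transported without enlargement and stays inside $\mathcal K_C$.
\item For the particle-production model, \eqref{eq:pot-characteristic-contraction} multiplies $\Gammaunpar_\chi$ along each characteristic by a factor in $(0,1]$. Since $\mathcal K_C$ is star-shaped about $0$, the radial statement \eqref{eq:pot-radial-hull} keeps the range inside $\mathcal K_C$.
\end{itemize}

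\emph{General density (vacuum allowed).} Here I would use the $H$-divergence law with the convex, globally Lipschitz function $H(z)=(|z_2|+|z_3|-C)^+$. I write $\rho_\chi H$ as a function of the conservative variables $(\rho_\chi,\rho_\chi\Gammaunpar_\chi)$; this function is the perspective of a convex function, hence convex and vanishing at vacuum. The chain-rule argument in the proof of \autoref{lem-transp-renorm} then gives the following.
\begin{itemize}
\item For Euler and dissipated-energy flows, $\del_t(\rho_\chi H)+\del_x(\rho_\chi\velo H)=0$.
\item For particle-production flows, \eqref{eq:pot-Kill-coordinate-laws} gives
\[
\del_t(\rho_\chi H)+\del_x(\rho_\chi\velo H)=\mathfrak d^{\rm ar}_{\mathscr B}\bigl(H-\Gammaunpar_\chi\cdot\nabla H\bigr)(\Gammaunpar_\chi).
\]
Convexity of $H$ gives $H(z)-z\cdot\nabla H(z)\leq H(0)=0$, so the right-hand side is non-positive because $\mathfrak d^{\rm ar}_{\mathscr B}\geq0$.
\end{itemize}
Integrating over the periodic circle shows that $\int_{\Sbb^1}\rho_\chi H\,dx$ is non-increasing. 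It vanishes at $t_0$ by hypothesis, so it vanishes for all $t$. Since $\rho_\chi H\geq0$, it vanishes pointwise, which is the claimed inequality $2|T_\chi(e_0,\del_y)|+2|T_\chi(e_0,\del_z)|\leq C\Numb^0_\chi$.

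\emph{Main obstacle.} The delicate step is the interpretation of the frame. If $T^{02},T^{03}$ are taken as orthonormal-frame components, then \eqref{eq:Jhat-transp} carries the dilation source $-\frac1{2t}\widehat J$ and rotation sources involving $\Pbb,\Qbb$. These have no sign, so the set $\{|\widehat J_2|+|\widehat J_3|\leq C\}$ is not invariant in general. In that case I would route the argument through the Killing components, using \eqref{eq:section-8-reconstruction} together with pointwise bounds on $P,Q$ and $|t|^{\pm1/2}$. This yields the estimate only with a constant depending on those bounds, which is why the Killing-frame reading above is the one I would commit to. The regularity needed to justify the chain rule at vacuum (locally Lipschitz $\velo$, continuous conservative variables) is the remaining technical point; it is handled exactly as in \autoref{lem-transp-renorm}.
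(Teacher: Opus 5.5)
Your proof is correct and follows the paper's route. \autoref{them:7-4} is the condensed form of Propositions~\ref{proposition-JJJ}, \ref{proposition-JJJ-NS} and~\ref{prop-pot-maximum}. These are proved by transport of the Killing momentum per particle along the characteristics of $\velo$ when $\mu>0$ (exact transport, or damped for particle production), and otherwise by the $H$-divergence law with a convex $H$ vanishing on the target set. The paper uses $(\varpi\cdot\Gammaunpar_\chi-C)^+$ for linear forms $\varpi$, which for your set means intersecting four half-planes, so your single $(|z_2|+|z_3|-C)^+$ does the same job. Your Killing-frame reading of $T_\chi^{02},T_\chi^{03}$ is the intended one, as in \eqref{intro-parall-perpart-bounded}. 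One small correction: $(|m_2|+|m_3|-C\rho_\chi)^+$ vanishes at vacuum only when $m=0$, not automatically. This costs nothing, since vanishing of this quantity is exactly what the hypothesis gives at $t_0$ and what the conclusion asserts.
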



\section{Geometric monotonicity properties}
\label{section-8}

\subsection{Positivity hypotheses and equations}
\label{section-8-1}

\subsubsection{Finite-energy estimates}

We now turn from the maximum principles of \autoref{section-7} to the geometric consequences of the equations.
We treat simultaneously the Einstein--Euler system and both Einstein--Navier--Stokes formulations.
The estimates derived here do not use the particle number current, hence the only difference between the particle-production and dissipated-energy settings is in the energy balance law, which we only use as an inequality.
The Einstein--Euler system is a special case of the Einstein--Navier--Stokes systems with the zero bundle, namely with the relaxation variable identically set to $\zeta=0$.

Assuming only the dominant energy condition on the matter stress-tensor, in \autoref{section-8-monotonicity}, we prove monotonicity properties for the volume $\mathcal V(t)$ of spacelike slices, the conformal length $\mathcal L(t)$, and an energy $\mathcal E_\chi(t)$ that combines the fluid mass-energy with a term associated to the twists~$\Kpar$ and first-order geometric variables~$(\Pbb,\Qbb)$,
\bel{V-L-E-expr}
\aligned
\mathcal V(t) & \coloneqq  \int_{\Tbb^3} \dVtrois , \qquad\qquad
\mathcal L(t) \coloneqq \int_{\Sbb^1} d_x\ell ,
\\
\mathcal E_\chi(t) & \coloneqq \int_{\Tbb^3} \bigl( 2 T_\chi^{00} + \Ebf_0(\Kpar,\Pbb,\Qbb) \Bigr) \Omega\,\dVtrois + \frac{3}{2|t|} \mathcal L(t) .
\endaligned
\ee
In the future-contracting regime $t<0$, we also obtain a \emph{collapse criterion} (\autoref{prop:collapse-criterion}) providing two lower bounds on the conformal length $\mathcal L(t)$ in terms of the volume~$\mathcal V(t)$, and alternatively in terms of the initial energy for sufficiently small time intervals.
In \autoref{section-8-bv}, we deduce total variation bounds on the lapse, twist, and conformal length measure.
We then impose on the matter stress-tensor an additional coercive stress condition (\autoref{def-Tchi-coercive}), which ensures that the energy~$\mathcal E_\chi(t)$ norm is equivalent to the spatial $L^2$~norm, and also allows us to obtain bounds on timelike integrals in \autoref{section-8-timelike} with respect to $\dVundeux=|t|\,\Omega \,dt\,dy\,dz$.
These control the finite-energy norms of \autoref{section-3-2} provided the momentum per particle $\Jhatpar_\chi$ (or equivalently~$\Gammaunpar_\chi$) is uniformly bounded.
Altogether, these results assemble into a control of areal flows in finite-energy and bounded variation norms introduced in \eqref{equa-flow-energy-norm}--\eqref{equa-flow-variation-norm}.
In \autoref{section-9} we deduce that the same estimates hold for limiting finite-energy Einstein--Euler areal flows defined there.

\begin{theorem}[Geometric estimates from initial data]
\label{theo:geometric-estimates}
\bse
Fix an admissible Navier--Stokes triple $(\mathscr F,\chi,\mathscr B)$ as defined in \autoref{def-pot-admissible-triple}.
Let $(\Phi,\Psi,\zeta)$ with $\Phi=(\Jperp,\Jpar,\Pbb,\Qbb)$ and $\Psi=(\ell,\log\Omega,\Kpar)$ be a sufficiently regular dissipated-energy or particle-production Einstein--Navier--Stokes areal flow $(\Phi,\Psi,\zeta)$ on $[t_0,t_1]\times\Tbb^3$.  Denote the values at time $t=t_0$ by
\be
(\mathring\Phi,\mathring\Psi,\mathring\zeta) \coloneqq (\Phi,\Psi,\zeta)(t_0,\cdot) \quad \text{on } \Tbb^3 .
\ee
Then the following estimates hold, where implied constants depend on the Navier--Stokes potential $\chi$ and on $t_0,t_1$, but not on the relaxation structure~$\mathscr B$ nor on the solution $(\Phi,\Psi,\zeta)$.
\bei
\item \emph{Spatial energy.} The fluid variables, and the first-order geometric variables $\Pbb,\Qbb$ have
  \bel{estimate-SE}
    \int_{\{t\}\times \Tbb^3} \Bigl( T_\chi^{00} + \Ebf_0(\Pbb,\Qbb) \Bigr) \dVtrois
    \lesssim \|\mathring\Phi\|_{\mathrm{energy}} , \qquad t\in[t_0,t_1] .
  \ee

\item \emph{Conformal length and volume.}
  \bel{estimate-LV}
  \aligned
  & \mathcal L(t) \leq \mathcal L(t_0) , \quad \mathcal V(t) \leq |t/t_0|^{3/4} \mathcal V(t_0) , \qquad t_0 \leq t \leq t_1 < 0 \quad \text{(future-contracting regime)},
  \\
  & \mathcal L(t) \geq \mathcal L(t_0) , \quad \mathcal V(t) \geq |t/t_0|^{3/4} \mathcal V(t_0) , \qquad 0 < t_0 \leq t \leq t_1 \quad \text{(future-expanding regime)}.
  \endaligned
  \ee
  Furthermore, the minimum conformal length $\mathcal L_{\min}=\min_{t\in[t_0,t_1]} \mathcal L(t)\geq 0$ is reached at $t=t_1$ in the future-contracting regime and at $t=t_0$ in the future-expanding regime.
\eei
In the future-contracting regime, subsequent estimates require the assumption that $\mathcal L_{\min}>0$ and depend on $1/\mathcal L_{\min}$, which is \emph{not bounded} by the initial data.  In the future-expanding regime $\mathcal L_{\min}>0$ is part of the initial data.
The constants $C_1,C_2,\dots$ below depend on $\mathcal L_{\min}$ and on the finite-energy and bounded-variation norms of initial data.
\bei
\item \emph{Both-sided bounds on conformal length and volume.}
  \bel{both-sided-LV}
  (C_1)^{-1} \leq \mathcal L(t) \leq C_1 , \qquad (C_2)^{-1} \leq \mathcal V(t) \leq C_2 .
  \ee

\item \emph{Bounded variation in space.} The lapse, twist and conformal length measure satisfy
  \bel{spatial-bv}
    \|\ell\|_{\BVac(0,1)} + \|(\log\Omega,\Kpar)\|_{\BVac(\Tbb^3)}
    \leq C_3 , \qquad t \in [t_0,t_1] .
  \ee

\item \emph{Temporal integrals.} The (not necessarily positive) spatial stress~$T_\chi^{11}$ admits integrals on constant-$x$ slices
\bel{temporal-stress}
\biggl| \int_{[t_0,t_1]\times\Tbb^2} \Bigl( T_\chi^{11} + E_0(\Pbb,\Qbb) \Bigr) \dVundeux \biggr|
\leq C_4 , \quad x\in\Sbb^1 .
\ee
\eei
Assume further that the potential~$\chi$ is torus-adapted (\autoref{def-NS-torus-adapted}) and the stress-tensor satisfies the \emph{coercive stress condition} (\autoref{def-Tchi-coercive}, below), and assume that the parallel momentum per particle is bounded at initial time in the sense that $2|T_\chi^{02}|+2|T_\chi^{03}|\leq C^\parallel\Numb_\chi^0$ for some constant~$C^\parallel>0$.
Then \eqref{temporal-stress} are completed to an $L^2$~control of the whole fluid momentum~$\Jbb$ along timelike slices, and the following estimates are satisfied.
\bei
\item \emph{Bounded variation in time.} The lapse, twist and conformal length measure satisfy
  \bel{bv-time-bound}
    \|\ell\|_{\BVac(\Interval)} + \|(\log\Omega,\Kpar)\|_{\BVac(\Interval\times\Tbb^2)}
    \leq C_5 , \qquad x \in \Sbb^1 .
  \ee
\eei
Altogether, under the assumed coercive stress condition, the finite-energy and bounded-variation norms \eqref{equa-flow-energy-norm}--\eqref{equa-flow-variation-norm} of the areal flow are bounded by a function of its initial data and, in the future-contracting regime, of~$\mathcal L_{\min}$.
\ese
\end{theorem}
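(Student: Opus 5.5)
The plan follows the standard route for $\Tbb^2$-symmetric areal flows. We work directly with the operator form \eqref{using-Mbfchi-expr} of the \JKL{} equations, with energy source $-\Sigma\le 0$ in the dissipated-energy case. Step 1 is the monotonicity of $\mathcal L$ and $\mathcal V$. Integrating $\opC^\chi=0$ over $\Sbb^1$ gives
\[
\frac{d}{dt}\mathcal L=\frac t2\int(\Mbf^{00}_\chi-\Mbf^{11}_\chi)\Omega^2\,d_x\ell .
\]
By the dominant energy condition on $T_\chi$ and the bound $\Mbf^{11}\le\Mbf^{00}$ from \eqref{eq:T2-Mdef-1}, we have $\Mbf^{00}_\chi-\Mbf^{11}_\chi=2(T^{00}_\chi-T^{11}_\chi)+2\Ebf_0(\Kpar)\ge0$. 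Hence $\mathcal L$ is monotone with sign $\sgn t$. For the volume, $\dVtrois=|t|\Omega\amdeux\,dx\,dy\,dz$, and $\opO_0^\chi$ and $\opC^\chi$ give
\[
\del_t\log(|t|\Omega\amdeux)=\frac1t-\frac1{4t}+\frac t2\Mbf^{00}_\chi\Omega^2 .
\]
Since $\Mbf^{00}_\chi\ge0$, this yields $\mathcal V(t)\gtrless|t/t_0|^{3/4}\mathcal V(t_0)$. Step 2 is the energy. I would compute $\frac d{dt}\mathcal E_\chi$ by integrating $\opJ^0_\chi=-\Sigma$ (or $=0$) against $|t|\amdeux\Omega\,dx$. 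The divergence term integrates to a time derivative, and the source $\frac1{2t}\Msource^\chi$ combines with the derivative of the weight $\frac3{2|t|}\mathcal L$ and with $\del_t\mathcal L$ from Step 1. The aim is a Gronwall-type inequality $\frac d{dt}\mathcal E_\chi\le \frac C{|t|}\mathcal E_\chi$ (in the expanding case presumably with a favorable sign), using $|\Msource|\lesssim\Mbf^{00}$ extended to $\Msource^\chi$ via dominant energy. Since $\Sigma\ge0$, this bound is uniform in $\mathscr B$, which gives \eqref{estimate-SE}.

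Step 3 gives the two-sided bounds \eqref{both-sided-LV}. One side comes from Step 1 together with the definition of $\mathcal L_{\min}$. The other comes from the energy bound: $|\frac d{dt}\mathcal L|\lesssim |t|\int\Mbf^{00}_\chi\Omega^2\,d_x\ell$, and $\Omega$ is bounded using Step 4. Step 4 is the BV bounds. The constraint $\opO_1^\chi$ gives
\[
\int|(\log\Omega)_x|\,dx\lesssim\int|\Mbf^{01}_\chi|\Omega^2\amdeux\,dx\lesssim \mathcal E_\chi\,\sup\Omega/|t| .
\]
Meanwhile $\opO_0^\chi$ and Step 1 control the spatial average of $\log\Omega$, since $\int\Omega\,\dVtrois$-type quantities are bounded by $\mathcal V$ and $\mathcal E_\chi$. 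Together these close an estimate for $\sup|\log\Omega|$: an $L^1$ average plus total variation controls the sup in one dimension. The twists are handled similarly via $\opK^\chi_{21},\opK^\chi_{31}$ (products of energy variables times $\Omega\amdeux$) or via the maximum principle \autoref{them:7-4} for $\Gammazero$. The length measure $\ell$ has mass $\mathcal L$.

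Step 5 concerns the timelike integrals. Integrating $\opJ^1_\chi=0$ over $[t_0,t_1]\times[x_0,x]$ expresses the flux $\int(\Omega\Mbf^{11}_\chi)|t|\,dt$ through the slice $\{x\}$ as a difference of spatial integrals of $\Mbf^{01}_\chi$, already bounded in Step 2, plus an $x$-average; this gives \eqref{temporal-stress}. Under the coercive stress condition, $T^{11}_\chi$ controls $|\Jperp|^2$ up to $\Ebf_0(\Jpar)$, and \autoref{them:7-4} bounds $\Jpar$ by $C^\parallel\Numb^0_\chi$, which is sub-quadratic. Together these yield $L^2$ control of $\Jbb$ on timelike slices. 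The time-BV bounds \eqref{bv-time-bound} then follow from $\opO_0^\chi$, $\opC^\chi$ and $\opK^\chi_{\bullet0}$, whose right-hand sides are quadratic and now integrable in $t$.

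The main obstacle I expect is Step 2: organizing the $\frac1{2t}\Msource^\chi$ term together with $\frac{3}{2|t|}\mathcal L$ into a sign-definite or Gronwall-controllable expression, especially in the contracting regime, where the $1/\mathcal L_{\min}$ dependence enters through the lapse bound used to close Steps 2–4 simultaneously.
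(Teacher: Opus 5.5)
Your architecture matches the paper's: monotonicity of $\mathcal L$ and $|t|^{-3/4}\mathcal V$ from $\opC^\chi$ and $\opO_0^\chi$, energy from $\opJ^0_\chi\le 0$, spatial BV from the constraints, and timelike integrals from $\opJ^1_\chi$. However, the lapse bound in your Steps 3--4 does not close as written, and everything after Step 2 rests on it.

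The estimate $\int|(\log\Omega)_x|\,dx\lesssim\mathcal E_\chi\sup\Omega/|t|$ misreads the weight in $\mathcal E_\chi$. By \eqref{V-L-E-expr}, $\mathcal E_\chi$ contains $\int\Mbf^{00}_\chi\Omega\,\dVtrois=|t|\int\Mbf^{00}_\chi\Omega^2\amdeux\,dx$. So $\opO_1^\chi=0$ and $|\Mbf^{01}_\chi|\le\Mbf^{00}_\chi$ give $\Var(\log\Omega(t,\cdot);\Sbb^1)\le\frac12\mathcal E_\chi(t)$, with no factor of $\Omega$. Likewise $|\mathcal L'|\le\mathcal E_\chi$, and in any case $\mathcal L\le\frac{2|t|}{3}\mathcal E_\chi$ by definition, so $\mathcal L$ needs no lapse bound. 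With your spurious factor, ``average plus variation'' only gives $\sup\log\Omega\le B+Ce^{\sup\log\Omega}$, which is no bound for large data.

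As described, your control of the average is also circular. It invokes $\mathcal V$, but the missing side of the $\mathcal V$ bound (lower bound for $t<0$, upper bound for $t>0$) is obtained from $\sup\Omega$. Indeed $\frac{d}{dt}(|t|^{-3/4}\mathcal V)=\sgn(t)\frac12|t|^{1/4}\int\Mbf^{00}_\chi\Omega^2\,\dVtrois$ carries one more power of $\Omega$ than $\mathcal E_\chi$.

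The paper breaks this loop with a Gronwall argument. Since $\dVtrois=|t|\Omega\,d_x\ell\,dy\,dz$, the ratio $|t|^{-1}\mathcal V/\mathcal L$ is the $d_x\ell$-average of $\Omega$, so $\Omega(t,\cdot)$ lies within factors $e^{\pm\mathcal E_\chi/2}$ of it. Using $\mathcal L\in[\mathcal L_{\min},\frac{2|t|}{3}\mathcal E_\chi]$, this gives $\|\Omega(t,\cdot)\|_{L^\infty}\le CY(t)$ for $Y\coloneqq|t|^{-3/4}\mathcal V$. Hence $|Y'|\le\frac12|t|^{1/4}\mathcal E_\chi\|\Omega\|_{L^\infty}\le CY$, which yields two-sided bounds on $Y$ and then on $\Omega$. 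A Gronwall argument for the $d_x\ell$-average of $\log\Omega$ via $\opO_0^\chi$ and $\opC^\chi$ would also work, once the $\Omega$-free variation bound is in place. This is exactly where $\mathcal L_{\min}$ enters.

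Conversely, the obstacle you anticipate in Step 2 is absent, and it must be, since the lapse argument above uses the energy bound. Using $\opJ^0_\chi\le0$, and $\opC^\chi=0$ to differentiate $\frac{3}{2|t|}\mathcal L$, one gets
$\frac{d}{dt}\mathcal E_\chi\le-\sgn(t)\int\bigl(\frac12\Msource^\chi-\frac34(\Mbf^{00}_\chi-\Mbf^{11}_\chi)\bigr)\Omega^2\amdeux\,dx-\sgn(t)\frac{3}{2t^2}\mathcal L$.
The integrand lies in $[-2\Mbf^{00}_\chi,\Mbf^{00}_\chi]$ by \autoref{lem:conseq-DEC-M}. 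For $t<0$ the right side is at most $\int\Mbf^{00}_\chi\Omega^2\amdeux\,dx+\frac{3}{2t^2}\mathcal L=\mathcal E_\chi/|t|$, and for $t>0$ it is at most $2\mathcal E_\chi/t$. Neither a lapse bound nor $\mathcal L_{\min}$ is needed.

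A few smaller points:

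\emph{Twists.} The pointwise twist bound needs, besides $\Var(K_m)\lesssim\mathcal E_\chi\sup\Omega^{-1}$, an averaged bound such as $\int\Ebf_0(\Kpar)\Omega\,\dVtrois\le\mathcal E_\chi$. Your alternative via the maximum principle for $\Gammazero$ is unavailable here. It needs torus-adaptedness, and passing to $\Kpar$ needs the $P,Q$ bounds of \autoref{lem:orb-rec}, which rest on timelike estimates that themselves use the BV bound on $\Kpar$.

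\emph{Step 5.} The ``$x$-average'' must be taken against a time-independent weight, namely $\amdeux_{\rm ref}\le\amdeux(t,\cdot)$ with $\int\amdeux_{\rm ref}\,dx=\mathcal L_{\min}$. This follows from the pointwise sign of $\amdeux_t$ already visible in your Step 1.

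\emph{Coercive part.} For the timelike $L^2$ control of $\Jbb$, invoke \autoref{def-Tchi-coercive} directly, since sub-quadratic growth of $\Numb^0_\chi$ is not known for a general potential $\chi$.
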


\subsubsection{Dominant energy condition}

As a preliminary step it proves important to discuss the energy condition satisfied by the matter stress tensor.
We recall from \autoref{def-pot-hyperbolic} that the stress-tensor is derived as the Hessian of a potential~$\chi$, and that it is assumed to obey the \emph{dominant energy condition}, namely for all future-directed causal vectors~$X,Y$,
\be 
T_\chi^{\alpha\beta} X_\alpha Y_\beta \geq 0 , \qquad \text{future causal } X,Y.
\ee

\begin{lemma}[Consequences of the dominant energy condition]
\label{lem:conseq-DEC}
Given a Navier--Stokes potential~$\chi$ as in \autoref{def-pot-hyperbolic}, one has the following inequalities between components in the orthonormal coframe $e^m$: for $m,n=0,1,2,3$,
\bel{Tchi-DEC}
|T_\chi^{mn}| \leq T_\chi^{00} , \qquad
-4 T_\chi^{00} \leq \eta_{mn} T_\chi^{mn} \leq 2 T_\chi^{00} .
\ee
The Euler stress-tensor with pressure law satisfying~\eqref{hyperbolic-eos-zero} admits the stronger trace bound
\bel{TrTE-lower}
- T_{\rm E}^{00} \leq \eta_{mn} T_{\rm E}^{mn} \leq 2 T_{\rm E}^{00} .
\ee
\end{lemma}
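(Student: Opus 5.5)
We plan to derive everything from the dominant energy condition by evaluating the bilinear form $T_\chi^{\alpha\beta}X_\alpha Y_\beta$ on well-chosen pairs of future causal (co)vectors built from the orthonormal coframe, with $e^0$ future-directed. Throughout, $T^{mn}$ denotes frame components, so $T^{mn}X_mY_n$ with $X=e^0+\epsilon e^k$ causal for $|\epsilon|\le1$. We use the index conventions in which $T^{00}$ is the energy density.

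\textbf{Component bounds.} First, $X=Y=e^0$ gives $T^{00}\ge0$. Next, for a fixed $k\in\{1,2,3\}$, take $X=Y=e^0\pm e^k$ (null). This yields $T^{00}\pm2T^{0k}+T^{kk}\ge0$. Taking instead $X=e^0+e^k$, $Y=e^0-e^k$ gives $T^{00}-T^{kk}\ge0$, so $T^{kk}\le T^{00}$. Also $T^{kk}\ge -T^{00}$ follows from adding the two $\pm$ inequalities, but the sharper route is the following. Combining $T^{00}\pm 2T^{0k}+T^{kk}\ge0$ with $T^{kk}\le T^{00}$ gives $\pm2T^{0k}\le T^{00}+T^{kk}\le 2T^{00}$, hence $|T^{0k}|\le T^{00}$. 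For the diagonal lower bound, note that $T^{00}+T^{kk}\ge0$ from the sum, so $|T^{kk}|\le T^{00}$.

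For the off-diagonal spatial components $k\ne l$, take $X=e^0+e^k$, $Y=e^0\pm e^l$. Then $T^{00}+T^{0k}\pm T^{0l}\pm T^{kl}\ge0$. We symmetrize over the four sign choices of $(e^0\pm e^k,e^0\pm e^l)$ so that the $T^{0\cdot}$ terms cancel, obtaining $T^{00}\pm T^{kl}\ge0$. More carefully, summing $(+,+)$ with $(-,-)$ gives $2T^{00}+2T^{kl}\ge0$ because the linear terms cancel; similarly $(+,-)$ with $(-,+)$ gives $T^{00}-T^{kl}\ge0$. Hence $|T^{mn}|\le T^{00}$ for all $m,n$.

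\textbf{Trace bounds.} We have $\eta_{mn}T^{mn}=-T^{00}+\sum_k T^{kk}$. The upper bound follows from $T^{kk}\le T^{00}$: $\eta_{mn}T^{mn}\le -T^{00}+3T^{00}=2T^{00}$. The lower bound $-4T^{00}$ comes from $T^{kk}\ge-T^{00}$. This crude lower bound is all the dominant energy condition gives in general. I expect this to be the only place requiring attention: the lower trace bound is not sharp for $T_\chi$, and one should not try to improve it there.

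\textbf{Euler case.} For $T_{\rm E}$ we compute directly: $\eta_{mn}T_{\rm E}^{mn}=-\mu+3p$. Meanwhile, since $u^0\ge1$, $T^{00}_{\rm E}=(\mu+p)(u^0)^2-p\ge\mu$. Because $0\le p\le\mu$ by \eqref{hyperbolic-eos-zero}, we get $-\mu+3p\ge-\mu\ge -T^{00}_{\rm E}$, which is the improved lower bound. The upper bound $-\mu+3p\le2\mu\le 2T_{\rm E}^{00}$ is consistent with the general statement. In $\Jbb$ variables this reads $(1-2q_\Jbb)(-\Jbb\cdot\Jbb)/2$ and extends continuously to vacuum, where both sides vanish or the bound is trivial.
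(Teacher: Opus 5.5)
Your proof is correct and follows essentially the paper's argument: you evaluate the dominant energy condition on pairs built from $e^0$ and $e^0\pm e^k$, and sum over sign choices so that the $T^{0\cdot}$ terms cancel. You then sum the diagonal bounds to get the trace estimates, and in the Euler case use the trace $-\mu+3p$ together with $T_{\rm E}^{00}\geq\mu$. The only cosmetic difference is that the paper obtains $|T^{0k}|\leq T^{00}$ in one step from the pair $(e^0,e^0\pm e^k)$, whereas you derive it from the null pairs combined with $T^{kk}\leq T^{00}$.
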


\begin{proof}
For $m\in\{1,2,3\}$, we apply the dominant energy condition to $e^0$ and $e^0\pm e^m$ and obtain $|T_\chi^{0m}|\leq T_\chi^{00}$.  Applying it to the two null covectors $e^0+\alpha e^m$ and $e^0+\beta e^n$ with $\alpha,\beta=\pm 1$ gives $T_\chi^{00}+\alpha\beta T_\chi^{mn} \geq -\alpha T_\chi^{0n} - \beta T_\chi^{0m}$.  Summing the inequalities with $(\alpha,\beta)=(+,-),(-,+)$ gives $T_\chi^{00}\geq T_\chi^{mn}$ and summing those with $(\alpha,\beta)=(+,+),(-,-)$ gives $T_\chi^{00}\geq -T_\chi^{mn}$.  Hence $|T_\chi^{mn}|\leq T_\chi^{00}$. In particular,
\be
-3T_\chi^{00}\leq T_\chi^{11}+T_\chi^{22}+T_\chi^{33}\leq 3T_\chi^{00}.
\ee
This proves~\eqref{Tchi-DEC}. For the Euler tensor with $0\leq p(\mu)$, its trace equals $-\mu+3p(\mu)$, while the time component $T_{\rm E}^{00} = (\mu+p(\mu)) (u^0)^2 - p(\mu) \geq \mu$ due to $(u^0)^2\geq 1$, which proves the improved lower bound in~\eqref{TrTE-lower}.
\end{proof}

\begin{remark}
In dissipated-energy Einstein--Navier--Stokes areal flows, with
\be
T_{\rm aux}^{mn} = \frac{t \Sigma}{2} (\delta_{m=n=2} + \delta_{m=n=3}) , \qquad
\Sigma \geq 0 ,
\ee
the total stress-tensor $T_{\rm NS}=T_\chi+T_{\rm aux}$ obeys most of the inequalities~\eqref{Tchi-DEC}.
Specifically,
\bel{Tchi-DEC-NS-1}
|T_{\rm NS}^{22} - T_{\rm NS}^{33}| \leq 2 T_{\rm NS}^{00} , \qquad
|T_{\rm NS}^{23}| \leq T_{\rm NS}^{00} , \qquad
|T_{\rm NS}^{am}| \leq T_{\rm NS}^{00} , \quad a=0,1, \quad m=0,1,2,3 ,
\ee
while the remaining component and the trace satisfy one-sided bounds that depend on the sign of~$t$,
\bel{Tchi-DEC-NS-2}
\aligned
T_{\rm NS}^{22} & = T_{\rm NS}^{33} \leq T_{\rm NS}^{00} \qquad &
\eta_{mn} T_{\rm NS}^{mn} & \leq 2 T_{\rm NS}^{00} ,
\qquad && t < 0 \quad \text{(future-contracting regime)} ,
\\
T_{\rm NS}^{22} & = T_{\rm NS}^{33} \geq - T_{\rm NS}^{00} \qquad &
\eta_{mn} T_{\rm NS}^{mn} & \geq -4 T_{\rm NS}^{00} ,
\qquad && t < 0 \quad \text{(future-contracting regime)} .
\endaligned
\ee
This will be reflected in one-sided estimates on the energy, due to energy dissipation.
\end{remark}

\subsubsection{Coercive stress condition}

Our energy estimates and timelike integrals only provide the full set of spacelike and timelike finite-energy bounds if the stress-tensor components $T_\chi^{00}$ and $T_\chi^{11}$ are sufficiently coercive.
Due to $T_\chi^{00}\geq T_\chi^{11}$, we can focus on coercivity of the spatial component.
Even in the Euler case, the spatial component $T_{\rm E}^{11}$ explicited in~\eqref{eq:T2-Texpr} only controls $-\Jbb\cdot\Jbb$ and $(J_1)^2$, but cannot control the parallel components~$\Jpar$, regardless of pressure assumptions.

If the Navier--Stokes potential is torus-adapted, these components are controlled by the transport estimates of \autoref{section-7}, which provide sup-norm bounds on the parallel momentum per particle $\widehat J_{\chi\,m}=2T_\chi^{0m}/\Numb_\chi^0$, $m=2,3$, defined there.
We are led to requiring coercivity of $T_{\rm E}^{11}$ in regions of the state space \emph{where $\Jhatpar_\chi$ is bounded.}  This bound is stated in terms of stress-tensor and particle-number components to encompass the vacuum case.

\begin{definition}
\label{def-Tchi-coercive}
The stress-tensor~$T_\chi$ defined from a torus-adapted Navier--Stokes potential~$\chi$ obeys the \textbf{coercive stress condition} if for all constants $C^\parallel>0$, there exists constants $C_1,c_2>0$ such that, for all future causal vectors $\Jbb$ and all $\zeta\in\mathscr F$ satisfying $2|T_\chi^{02}|+2|T_\chi^{03}|\leq C^\parallel\Numb_\chi^0$ (bounded momentum per particle) one has
\be
T_\chi^{11} + C_1 \geq c_2 \bigl(T_\chi^{00} + (J_0)^2\bigr) .
\ee
\end{definition}

\begin{remark}
1. The coercive stress condition on~$\chi$ is an inequality between its derivatives in field space~$(\Jbb,\zeta)$, with no effect of the geometry.  It is expressed here for the specific orthonormal frame adapted to $\Tbb^2$ symmetry, but by covariance it holds for components in any orthonormal frame.

2. Note that $(J_0)^2$ controls all components of~$\Jbb$.  Neither a control of $T_\chi^{00}$ nor of~$(J_0)^2$ would be sufficient on its own, as away from the Euler equilibrium $\zeta=0$ the mass-energy $T_\chi^{00}$ could be either less coercive or include faster-growing terms.  The constant shift~$C_1$ is allowed here thanks to the geometric terms in~$\mathcal E_\chi$ defined in~\eqref{V-L-E-expr}.

3. For the Euler stress-tensor, the condition holds thanks to $0\leq p(\mu)\lesssim\mu$.  Indeed, \eqref{eq:section-4-particle-density} together with these bounds ensures that $\Numb(\mu)\lesssim\mu$ at infinity and the bound of $T_{\rm E}^{0m}=(\mu+p(\mu))u^0 u^m$ by $\Numb_{\rm E}^0=\Numb(\mu)u^0$ then implies $|\Jpar|\lesssim \mu^{1/2}$.
Given the explicit form~\eqref{eq:T2-Texpr}, $2T_{\rm E}^{11}=(J_1)^2 + \frac{1}{2}(1-q_\Jbb)(-\Jbb\cdot\Jbb) = (J_1)^2+\mu$, which thus controls both $(J_1)^2$ and $|\Jpar|^2$, as well as $(-\Jbb\cdot\Jbb)=2(\mu+p)$, hence their sum $(J_0)^2$.
Importantly, we cannot ask for such a coercivity for all~$\mu$ as $\Numb(\mu)$ is generally not bounded by~$\mu$ at small densities, as seen in~\eqref{near-vacuum-Numb}.
\end{remark}

\subsubsection{The system of equations}

The evolution and constraint equations of interest were determined in \autoref{section-6-1}, and involve the total stress-energy tensor $T_{\rm NS} = T_\chi + T_{\rm aux}$, with the latter term vanishing in the Euler and particle-production Navier--Stokes settings.
The auxiliary tensor $T_{\rm aux}$ is described by a single scalar,
\be
\Sigma \coloneqq \begin{cases}
  0 & \text{(Euler, or particle-production Navier--Stokes)}, \\
  2\Omega(\beta^0)^{-1}\mathfrak d_{\mathscr B}\geq 0 & \text{(dissipated-energy Navier--Stokes)} .
\end{cases}
\ee
We shall only use that $\Sigma$ is non-negative, as it holds in all three systems of interest.
We can thus describe uniformly the set of equations (and energy inequality) satisfied by all models as
\be
\aligned
\opL_{00}^\chi = \opL_{01}^\chi = \opL_{10}^\chi = \opL_{11}^\chi & = 0, \qquad &
\opJ^1_\chi = \opJ^2_\chi = \opJ^3_\chi & = 0, \qquad &
\opJ^0_\chi & \leq 0 ,
\\
\opC^\chi = \opO_0^\chi = \opK_{20}^\chi = \opK_{30}^\chi & = 0 , \qquad &
\opO_1^\chi = \opK_{21}^\chi = \opK_{31}^\chi & = 0 ,
\endaligned
\ee
with the operators in \eqref{eq:expl-NS-op} and~\eqref{eq:expl-more-NS-op}.

In \autoref{section-6-1}, we introduced notation $\frac{1}{2}\mathcal S_{mn}$ for the frame components of the relaxation stress-tensor $T_{\chi_{\rm relax}} = T_\chi - T_{\rm E}$, as it enabled us to describe the operators as corrections of the Einstein--Euler equations.
Accordingly, we have introduced in~\eqref{Mbfchi-expr} the geometry-matter tensor $\Mbf_\chi^{\bullet\bullet} = \Mbf^{\bullet\bullet} + \mathcal S^{\bullet\bullet}$ and a source which we rewrite here in brief as
\bel{Mchi-fluid-geom}
\aligned
\Mbf_\chi^{ab}(\Jbb,\Kpar,\Lbb,\zeta) & = \Mbf^{ab}(0,\Kpar,\Lbb) + 2 T_\chi^{ab}(\Jbb,\zeta) , \qquad a,b=0,1 ,
\\
\Msource^\chi(\Jbb,\Kpar,\Lbb,\zeta)
& = \Msource(0,\Kpar,\Lbb) + \bigl( 2 \eta_{mn} T_\chi^{mn} + 3 (T_\chi^{00} - T_\chi^{11}) \bigr)(\Jbb,\zeta) .
\endaligned
\ee
The main operators $\opC^\chi,\opO_0^\chi,\opO_1^\chi,\opJ^0_\chi,\opJ^1_\chi$ used in this section are written explicitly in~\eqref{using-Mbfchi-expr} using this notation.

\begin{lemma}[Control of the geometry-matter tensor]
\label{lem:conseq-DEC-M}
Given a Navier--Stokes potential~$\chi$ as in \autoref{def-pot-hyperbolic}, one has the following inequalities: for $a,b\in\{0,1\}$,
\bel{Tchi-DEC-M}
|\Mbf_\chi^{ab}| \leq \Mbf_\chi^{00} , \qquad
-2 \Mbf_\chi^{00} \leq \frac{1}{2} \Msource^\chi - \frac{3}{4}(\Mbf_\chi^{00} - \Mbf_\chi^{11}) \leq \Mbf_\chi^{00} .
\ee
For the Euler stress-tensor the last lower bound is improved to $-\Mbf_\chi^{00}$.
\end{lemma}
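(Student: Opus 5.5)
The plan is to split every quantity in \eqref{Tchi-DEC-M} into a purely geometric part and a fluid part, using the representation \eqref{Mchi-fluid-geom}. Each part will be bounded separately by the corresponding part of $\Mbf_\chi^{00}$, namely $\Mbf^{00}(0,\Kpar,\Lbb)=\Ebf_0(\Kpar)+\Ebf_0(\Lbb)$ for the geometry and $2T_\chi^{00}$ for the matter. Both of these are non-negative: the first trivially, the second by \autoref{lem:conseq-DEC}. Since all the inequalities are linear, adding the two bounds gives the claim.

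\emph{First inequality.} The geometric components are
\[
\Mbf^{01}(0,\Kpar,\Lbb)=-\Ebf_1(\Pbb)-\Ebf_1(\Qbb),
\qquad
\Mbf^{11}(0,\Kpar,\Lbb)=\Ebf_0(\Lbb)-\Ebf_0(\Kpar).
\]
The elementary bound $|X_0X_1|\leq\frac12(X_0^2+X_1^2)$ gives $|\Ebf_1(\Xbb)|\leq\Ebf_0(\Xbb)$. Hence both geometric components are bounded in absolute value by $\Ebf_0(\Kpar)+\Ebf_0(\Lbb)$. For the fluid part, \autoref{lem:conseq-DEC} gives $|2T_\chi^{ab}|\leq 2T_\chi^{00}$. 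Adding the two estimates yields $|\Mbf_\chi^{ab}|\leq\Mbf_\chi^{00}$.

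\emph{Second inequality.} I would first compute the combination $\frac12\Msource^\chi-\frac34(\Mbf_\chi^{00}-\Mbf_\chi^{11})$ explicitly. The geometric part of $\Mbf_\chi^{00}-\Mbf_\chi^{11}$ equals $2\Ebf_0(\Kpar)$, and $\Msource(0,\Kpar,\Lbb)=5\Ebf_0(\Kpar)-\Pbb\cdot\Pbb-\Qbb\cdot\Qbb$. The geometric contribution is therefore
\[
\Ebf_0(\Kpar)+\tfrac12\bigl(P_0^2-P_1^2+Q_0^2-Q_1^2\bigr).
\]
The null-form term lies between $-\Ebf_0(\Lbb)$ and $\Ebf_0(\Lbb)$. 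So this contribution lies between $-\Ebf_0(\Lbb)\geq-\Mbf^{00}(0,\Kpar,\Lbb)$ and $\Ebf_0(\Kpar)+\Ebf_0(\Lbb)=\Mbf^{00}(0,\Kpar,\Lbb)$.

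For the fluid part, the terms $\frac32(T_\chi^{00}-T_\chi^{11})$ coming from $\frac12\Msource^\chi$ cancel exactly against $\frac34(2T_\chi^{00}-2T_\chi^{11})$. Only the trace $\eta_{mn}T_\chi^{mn}$ remains. \autoref{lem:conseq-DEC} gives
\[
-4T_\chi^{00}\leq\eta_{mn}T_\chi^{mn}\leq 2T_\chi^{00},
\]
which is exactly $-2\cdot(2T_\chi^{00})\leq\cdot\leq 2T_\chi^{00}$. Summing with the geometric bounds, and using $-\Mbf^{00}(0,\Kpar,\Lbb)\geq-2\Mbf^{00}(0,\Kpar,\Lbb)$, gives the two-sided estimate in \eqref{Tchi-DEC-M}.

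\emph{Euler improvement.} In the Euler case one uses instead the sharper bound \eqref{TrTE-lower}, namely $\eta_{mn}T_{\rm E}^{mn}\geq -T_{\rm E}^{00}\geq -2T_{\rm E}^{00}$. Combined with the geometric lower bound $-\Mbf^{00}(0,\Kpar,\Lbb)$, this gives the lower bound $-\Mbf_\chi^{00}$.

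The only delicate point is the bookkeeping of signs and factors. One must check that the $T^{00}-T^{11}$ terms cancel exactly, and keep track of the convention $-\Pbb\cdot\Pbb=P_0^2-P_1^2$ and of the signature in the trace $\eta_{mn}T^{mn}=-T^{00}+T^{11}+T^{22}+T^{33}$. No genuine analytic obstacle is expected.
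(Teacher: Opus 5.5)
Your proposal is correct and follows the paper's own argument. Like the paper, you split each quantity via \eqref{Mchi-fluid-geom} into a geometric part, bounded by $\Mbf^{00}(0,\Kpar,\Lbb)$ through $|\Ebf_1(\Xbb)|\leq\Ebf_0(\Xbb)$ and $|\Xbb\cdot\Xbb|\leq 2\Ebf_0(\Xbb)$, and a fluid part controlled by \autoref{lem:conseq-DEC}; your explicit check that the fluid part of the second combination reduces exactly to the trace $\eta_{mn}T_\chi^{mn}$ makes precise a step the paper leaves implicit.
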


\begin{proof}
The inequalities are obtained separately for the fluid and geometry parts of~\eqref{Mchi-fluid-geom}.
From the expressions~\eqref{eq:T2-Mdef-0} and~\eqref{eq:T2-Euler-perp2-def0}, the geometric terms are
\be
\aligned
\Mbf^{00}(0,\Kpar,\Lbb) & = \Ebf_0(\Kpar) + \Ebf_0(\Pbb) + \Ebf_0(\Qbb) ,
\\
\Mbf^{01}(0,\Kpar,\Lbb) & = - \Ebf_1(\Pbb) - \Ebf_1(\Qbb),
\\
\Mbf^{11}(0,\Kpar,\Lbb) & = - \Ebf_0(\Kpar) + \Ebf_0(\Pbb) + \Ebf_0(\Qbb) ,
\\
\Bigl(\frac{1}{2} \Msource - \frac{3}{4}\Mbf^{00} + \frac{3}{4}\Mbf^{11}\Bigr)(0, \Kpar, \Lbb) & = \Ebf_0(\Kpar) - \frac{1}{2} \Pbb \cdot \Pbb - \frac{1}{2} \Qbb \cdot \Qbb ,
\endaligned
\ee
which are all bounded in absolute value by $\Mbf^{00}(0,\Kpar,\Lbb)$ thanks to $\Ebf_0(\Kpar)\geq 0$ and $\Ebf_0(\Xbb)\geq|\Ebf_1(\Xbb)|\geq 0$ and $|\Xbb\cdot\Xbb|=|X_0^2-X_1^2| \leq 2\Ebf_0(\Xbb) = X_0^2+X_1^2$ for $\Xbb=\Pbb,\Qbb$.
Together with \autoref{lem:conseq-DEC} this gives the desired inequalities.  In contrast to \autoref{lem:conseq-DEC} where in the Euler case the lower bound is improved by a factor of~$4$ compared to the Navier--Stokes case, here, it only gains a factor of~$2$ due to geometric terms.
\end{proof}

\subsection{Monotonicity properties}
\label{section-8-monotonicity}

\subsubsection{Future-contraction, future-expansion, and weighted energy decay}

The proofs of the following monotonicity property (\autoref{prop:monotonicity}), as well as a pair of lower bounds on the conformal length (\autoref{prop:collapse-criterion}) which serve as a criterion for the collapse of spacetime in the future-contracting case, occupy the rest of this subsection.
We also provide in \autoref{lem:orb-rec} some bounds on the metric variables $P,Q$ used in comparing the orthonormal frame $e_2,e_3$ with the Killing vectors in \autoref{section-7}.
We recall the energy~$\mathcal E_\chi$, conformal length $\mathcal L=\int d_x\ell$ and volume~$\mathcal V$ of time slices defined in~\eqref{V-L-E-expr}, as well as the area~$|t|$ of symmetry orbits, whose monotonicity is stated for completeness.

\begin{proposition}[Monotonicity for Einstein--matter areal flows]
\label{prop:monotonicity}
1. In the future-contracting regime $[t_0,t_1]\subset(-\infty,0)$, the conformal length~$\mathcal L$, the orbit area~$|t|$, the volume~$\mathcal V$ and its rescaling $|t|^{-3/4}\mathcal V$, and the energy $|t|\mathcal E_\chi$ are monotonically non-increasing in time.

2. In the future-expanding regime $[t_0,t_1]\subset(0,+\infty)$, the conformal length~$\mathcal L$, orbit area~$t$, volume~$\mathcal V$ and rescaled volume~$t^{-3/4}\mathcal V$ are monotonically non-decreasing, while the energy $t^{-2}\mathcal E_\chi$ is monotonically non-increasing.  In the Euler case, the exponent of~$t$ in the energy bound can be improved, namely $t^{-1}\mathcal E_{\rm E}$ is monotonically non-increasing.

As a result, the upper bound~\eqref{estimate-SE} on the energy~$\mathcal E_\chi(t)$ holds, and the upper/lower bounds~\eqref{estimate-LV} on $\mathcal L(t),\mathcal V(t)$ hold in the future-contracting/expanding regimes, respectively.
\end{proposition}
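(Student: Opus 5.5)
The plan is to compute the time derivatives of $\mathcal L$, $\mathcal V$ and $\mathcal E_\chi$ directly from the evolution equations written in the form \eqref{using-Mbfchi-expr}, and then to sign every resulting integrand using \autoref{lem:conseq-DEC-M} together with $\Sigma\geq0$. The only inputs are the dominant energy condition and the energy inequality $\opJ^0_\chi\leq 0$. The particle current plays no role.

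\emph{Conformal length.} From $\opC^\chi=0$ we have
\[
\del_t\amdeux=\tfrac t2(\Mbf^{00}_\chi-\Mbf^{11}_\chi)\Omega^2\amdeux .
\]
By \autoref{lem:conseq-DEC-M}, $\Mbf^{00}_\chi-\Mbf^{11}_\chi\geq0$, so $\del_t\amdeux$ has the sign of $t$. Integrating over $x$ shows that $\mathcal L$ is non-increasing for $t<0$ and non-decreasing for $t>0$.

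\emph{Volume.} We have $\mathcal V(t)=\int|t|\Omega\amdeux\,dx$ (up to the $y,z$ periods). Combining $\opC^\chi=0$ with $\opO_0^\chi=0$ gives
\[
\del_t\log(|t|\Omega\amdeux)=\tfrac1t-\tfrac1{4t}+\tfrac t2\Mbf^{00}_\chi\Omega^2=\tfrac3{4t}+\tfrac t2\Mbf^{00}_\chi\Omega^2 .
\]
Since $\Mbf^{00}_\chi\geq0$, the correction $\tfrac t2\Mbf^{00}_\chi\Omega^2$ has the sign of $t$. Hence $|t|^{-3/4}\mathcal V$ is monotone in the stated direction. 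The monotonicity of $\mathcal V$ itself follows because $|t|^{3/4}$ is monotone in the same direction.

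\emph{Energy.} Integrate $\opJ^0_\chi\leq0$ against $\dVuntrois$. Because $\divuntrois$ is a weighted divergence, the spatial flux term integrates to zero by periodicity. The time part yields
\[
\frac{d}{dt}\int \Omega\Mbf^{00}_\chi\,\dVtrois \;\leq\; \frac1{2t}\int\Omega\,\Msource^\chi\,\dVtrois
\]
(up to the exact weights and sign conventions, which I will track with care). The conformal-length term is then differentiated using $\opC^\chi$:
\[
\frac{d}{dt}\frac{3}{2|t|}\mathcal L=-\frac{3}{2|t|t}\mathcal L+\frac3{4}\sgn(t)\int(\Mbf^{00}_\chi-\Mbf^{11}_\chi)\Omega\,\dVtrois .
\]
This choice of coefficient $3/2$ is exactly what produces the combination $\tfrac12\Msource^\chi-\tfrac34(\Mbf^{00}_\chi-\Mbf^{11}_\chi)$ appearing in \autoref{lem:conseq-DEC-M}. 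That lemma bounds this combination between $-2\Mbf^{00}_\chi$ and $\Mbf^{00}_\chi$, or between $-\Mbf^{00}_\chi$ and $\Mbf^{00}_\chi$ in the Euler case.

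The outcome is a differential inequality of the form $\frac{d}{dt}\mathcal E_\chi\leq\frac{c}{|t|}\mathcal E_\chi$, with $-c$ the appropriate one-sided constant. For $t<0$ the upper bound $\Mbf^{00}_\chi$ gives $\frac{d}{dt}(|t|\mathcal E_\chi)\leq0$. For $t>0$ the lower bound $-2\Mbf^{00}_\chi$ gives $\frac{d}{dt}(t^{-2}\mathcal E_\chi)\leq 0$, and the Euler bound $-\Mbf^{00}_\chi$ improves this to $t^{-1}$. The $\mathcal L$ term $-\frac{3}{2|t|t}\mathcal L$ has a sign that must be checked against the weight in each regime, and it should be harmless in both. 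The bounds \eqref{estimate-SE} and \eqref{estimate-LV} then follow by integrating from $t_0$, since $|t|$ ranges over a compact interval away from $0$.

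\emph{Main obstacle.} The hardest step is the energy computation: getting the exact weights right and showing that the source terms assemble precisely into the combination controlled by \autoref{lem:conseq-DEC-M}. The specific difficulties are the factors $\Omega$, $|t|$ and $\amdeux$ in $\divuntrois$, the $\frac1{2t}$ source, the time derivative of the measure $\dVtrois$, and the orbital dissipation term $\Sigma$ (which, being non-negative, should only help). I would first carry out this computation in the Euler Gowdy case as a check on the constants $3/2$ and $3/4$, and then insert the $\mathcal S$ corrections, which enter only through $T_\chi$.
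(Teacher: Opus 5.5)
Your proposal is correct and is essentially the paper's argument. The paper differentiates the weighted densities $|t|^{(m-3)/4}\Omega^m\dVtrois$, and your $\amdeux$ and volume computations are its cases $m=-1,0$. It then derives a $|t|^{\gamma}$-weighted form of your energy balance from $\opJ^0_\chi\leq0$ and $\opC^\chi=0$, and takes $\gamma=1$ for $t<0$, $\gamma=-2$ for $t>0$, and $\gamma=-1$ in the Euler case, using \autoref{lem:conseq-DEC-M}.

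When you carry out the bookkeeping, correct three points:
\begin{itemize}
\item The source enters as $-\frac{1}{2t}\int\Omega\Msource^\chi\,\dVtrois$, not with a plus sign.
\item The $\mathcal L'$ contribution is $\frac{3}{4t}\int(\Mbf^{00}_\chi-\Mbf^{11}_\chi)\Omega\,\dVtrois$, so it carries an extra factor $1/|t|$ compared with what you wrote.
\item For $t<0$ the term $\frac{3}{2t^2}\mathcal L$ is positive, so it is not harmless by sign. It is absorbed exactly, since $\frac{1}{|t|}\int\Mbf^{00}_\chi\Omega\,\dVtrois+\frac{3}{2t^2}\mathcal L=\frac{1}{|t|}\mathcal E_\chi$. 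For $t>0$ it has a favorable sign.
\end{itemize}
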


\begin{proposition}[Collapse criterion]
\label{prop:collapse-criterion}
\bse
In the future-contracting case $[t_0,t_1]\subset(-\infty,0)$, the conformal length admits two (non-negative) lower bounds.
\bei
\item In terms of the scaled volume $\mathcal V_1$ defined in~\eqref{eq:section-8-volume-functionals}, below,
\bel{non-collapse-1}
\mathcal L(t) \geq \frac{|t_0/t|^{3/2}}{\mathcal V_1(t_0)} \mathcal V(t)^2 , \qquad t_0 \leq t \leq t_1 < 0 .
\ee

\item In a restricted time interval with end-point $t_2 = - |t_0|^{1/3} \bigl( |t_0| - \frac{3}{2} \mathcal L(t_0) / \mathcal E_\chi(t_0) \bigr)^{2/3} < 0$,
\bel{non-collapse-2}
\mathcal L(t)
\geq \frac{2}{3} |t_0| \mathcal E_\chi(t_0)
- \Bigl( \frac{2}{3} |t_0| \mathcal E_\chi(t_0) - \mathcal L(t_0) \Bigr) \Bigl|\frac{t_0}{t}\Bigr|^{3/2} ,
\qquad t_0 \leq t \leq \min(t_1,t_2) < 0 .
\ee
\eei
\ese
\end{proposition}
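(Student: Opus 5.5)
We will prove both lower bounds by combining the transport equations $\opC^\chi=0$ and $\opO_0^\chi=0$, written in the form \eqref{using-Mbfchi-expr}, with the inequalities $|\Mbf_\chi^{ab}|\leq\Mbf_\chi^{00}$ of \autoref{lem:conseq-DEC-M}. Throughout we take $t<0$. We normalize the $\Tbb^2$ coordinates to unit period, so that $\dVtrois=|t|\Omega\amdeux\,dx\,dy\,dz$ integrates to $\mathcal V(t)=|t|\int_{\Sbb^1}\Omega\amdeux\,dx$ and $\mathcal L(t)=\int_{\Sbb^1}\amdeux\,dx$. The dissipated-energy auxiliary stress does not enter $\opC^\chi,\opO_0^\chi$, so the argument is uniform over the three models.

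\emph{First bound.} We take the scaled volume to be $\mathcal V_1(t)\coloneqq t^2\int_{\Sbb^1}\Omega^2\amdeux\,dx$; this is the definition we intend for \eqref{eq:section-8-volume-functionals}. The Cauchy--Schwarz inequality gives
\be
\mathcal V(t)^2=t^2\Bigl(\int\Omega\amdeux\,dx\Bigr)^2\leq t^2\int\amdeux\,dx\int\Omega^2\amdeux\,dx=\mathcal L(t)\,\mathcal V_1(t).
\ee
From $\opC^\chi=\opO_0^\chi=0$ we compute
\be
(\Omega^2\amdeux)_t=\Omega^2\amdeux\Bigl(-\frac{1}{2t}+\frac t2(\Mbf_\chi^{00}+\Mbf_\chi^{11})\Omega^2\Bigr).
\ee
Since $t<0$ and $\Mbf_\chi^{00}+\Mbf_\chi^{11}\geq0$, the second term is non-positive. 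Hence $|t|^{1/2}\Omega^2\amdeux$ is pointwise non-increasing, and so is $|t|^{-3/2}\mathcal V_1(t)$. This yields $\mathcal V_1(t)\leq|t/t_0|^{3/2}\mathcal V_1(t_0)$, and substituting into $\mathcal L\geq\mathcal V^2/\mathcal V_1$ gives \eqref{non-collapse-1}.

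\emph{Second bound.} Integrating $\opC^\chi=0$ gives
\be
\mathcal L'(t)=\frac t2\int(\Mbf_\chi^{00}-\Mbf_\chi^{11})\Omega^2\amdeux\,dx .
\ee
Using $0\leq\Mbf_\chi^{00}-\Mbf_\chi^{11}\leq2\Mbf_\chi^{00}$, we obtain $\mathcal L'(t)\geq-|t|\int\Mbf_\chi^{00}\Omega^2\amdeux\,dx$. By \eqref{V-L-E-expr} and $\Mbf_\chi^{00}=2T_\chi^{00}+\Ebf_0(\Kpar,\Lbb)$, the last integral equals $\mathcal E_\chi(t)-\frac{3}{2|t|}\mathcal L(t)$. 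By \autoref{prop:monotonicity}, $|t|\mathcal E_\chi$ is non-increasing, so $\mathcal E_\chi(t)\leq|t_0|\mathcal E_\chi(t_0)/|t|$. We therefore get the differential inequality
\be
\mathcal L'(t)\geq-\frac{|t_0|\mathcal E_\chi(t_0)}{|t|}+\frac{3\mathcal L(t)}{2|t|}.
\ee
In the variable $s=|t|$, which decreases from $|t_0|$, this reads
\be
\frac{d}{ds}\bigl(s^{3/2}\mathcal L\bigr)\leq|t_0|\mathcal E_\chi(t_0)\,s^{1/2}.
\ee
Integrating from $s$ to $|t_0|$ yields exactly \eqref{non-collapse-2}. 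The restriction $t\leq t_2$ is just the range on which the right-hand side of \eqref{non-collapse-2} is informative. We will check this by computing where that right-hand side vanishes; it is not needed for the validity of the inequality itself.

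The main point to verify carefully is the pair of sign facts $\Mbf_\chi^{00}\pm\Mbf_\chi^{11}\geq0$ for the full geometry-matter tensor including relaxation terms. These follow from \autoref{lem:conseq-DEC-M}, which rests on the dominant energy condition for $T_\chi$. We will also track the $|t|$ factors relating $\dVtrois$-integrals to $dx$-integrals, and the exact energy monotonicity invoked from \autoref{prop:monotonicity}.
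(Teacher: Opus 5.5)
Your proposal is correct and follows essentially the same route as the paper. The first bound combines the monotonicity of $|t|^{-3/2}\mathcal V_1$ (the case $m=1$ of \eqref{sgntdtVm}, which you derive pointwise from $\opC^\chi=\opO_0^\chi=0$) with the Cauchy--Schwarz inequality $\mathcal V^2\leq\mathcal V_1\mathcal L$. The second integrates $-\del_t\mathcal L\leq\mathcal E_\chi(t)-\frac{3}{2|t|}\mathcal L(t)$ together with $\mathcal E_\chi(t)\leq|t_0/t|\,\mathcal E_\chi(t_0)$ to control $|t|^{3/2}\mathcal L$, and your remark that the restriction $t\leq t_2$ only guarantees non-negativity of the right-hand side of \eqref{non-collapse-2} matches the paper's treatment.
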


\subsubsection{Monotonicity of conformal length and volume}

Consider the conformal length measure $d_x\ell= \amdeux\,dx$ and the volume form $\dVtrois=|t|\Omega\,d_x\ell\,dy\,dz$ induced by the metric on areal time slices.
The conformal length $\mathcal L=\int d_x\ell$ and volume~$\mathcal V$ of time slices defined in~\eqref{V-L-E-expr} are part of a one-parameter family of functions defined by
\bel{eq:section-8-volume-functionals}
\aligned
\mathcal V_m(t) & \coloneqq \int_{\Tbb^3} \bigl(|t|\Omega\bigr)^m\dVtrois, \qquad m\in\RR , \quad t\in[t_0,t_1] ,
\\
\mathcal L(t) & = \mathcal V_{-1}(t) , \qquad \mathcal V(t) = \mathcal V_0(t) .
\endaligned
\ee

The evolution equations~\eqref{using-Mbfchi-expr} for $\amdeux$ and $\Omega$ give, for every $m\in\RR$ the exact identity
\bel{eq:section-8-weighted-density}
\del_t\bigl(|t|^{(m-3)/4}\Omega^m\dVtrois\bigr)
= \frac{t\,\Omega^2}{2}\bigl(\Mbf_\chi^{00}+m\Mbf_\chi^{11}\bigr)|t|^{(m-3)/4}\Omega^m\dVtrois.
\ee
The power of~$|t|$ is chosen to cancel a $1/(4t)$ term in the evolution operator~$\opO_0^\chi$ in~\eqref{using-Mbfchi-expr}.
At this stage, we rely on the dominant energy condition: as stated in \autoref{lem:conseq-DEC-M}, $|\Mbf_\chi^{ab}| \leq \Mbf_\chi^{00}$ for $a,b\in\{0,1\}$.
The source factor $\Mbf_\chi^{00}+m\Mbf_\chi^{11}$ in~\eqref{eq:section-8-weighted-density} is thus non-negative for $m\in[-1,1]$.
Upon integrating on~$\Sbb^1$, this implies the monotonicity property
\bel{sgntdtVm}
\sgn(t) \, \del_t \Bigl( |t|^{-(3/4)(m+1)} \mathcal V_m(t) \Bigr)
= \frac{1}{2} |t|^{(m+1)/4} \int_{\Tbb^3} \bigl(\Mbf_\chi^{00}+m\Mbf_\chi^{11}\bigr)\Omega^{m+2}\dVtrois \geq 0 .
\ee
Three values are of particular interest.
For $m=-1$ and $m=0$ respectively, we get the monotonicity of the conformal length $\mathcal L=\mathcal V_{-1}$ and of $|t|^{-3/4}\mathcal V$ stated in \autoref{prop:monotonicity}, and the resulting bounds in \autoref{theo:geometric-estimates}.
In the future-contracting regime, we also use the $m=1$ case to get~\eqref{Vt2leqLV1}, below.

\subsubsection{Monotonicity of the energy}
The energy, which was defined in~\eqref{V-L-E-expr} explicitly as a sum of fluid and geometric components, admits two other interesting expressions,
\bel{alternative-Echi}
\mathcal E_\chi
= \int_{\Tbb^3} \Bigl( \Mbf_\chi^{00} + \frac{3}{2t^2\Omega^2} \Bigr) \Omega \dVtrois
= \frac{2}{t} \int_{\Tbb^3} \mathcal L_{e_0}(\dVtrois) .
\ee
The expression as a Lie derivative can be verified using the evolution equations of $\amdeux$ and~$\Omega$, with the operators explicited in~\eqref{using-Mbfchi-expr}.
We now show monotonicity in time of $|t|^m\mathcal E_\chi$ for suitable ranges of parameters $m\in\RR$ depending on the sign of~$t$.

The regular mass-energy inequality $\opJ^0_\chi\leq 0$ (which is an equality in the Euler setting or the particle-production setting), combined with the evolution equation $\opC^\chi=0$ of~$\amdeux$ to account for the derivative of the $3/(2t^2)$ term below, gives a balance inequality for the integrand of the energy~$\mathcal E_\chi$.  Namely, for every $\gamma\in\RR$,
\bel{ineq-wt-en}
\aligned
&|t|^{-\gamma}\del_t\!\left(|t|^{\gamma+1}\left(\Mbf_\chi^{00}\Omega^2+\frac{3}{2t^2}\right)d_x\ell\,dy\,dz\right)+|t|\del_x\!\left(\Mbf_\chi^{01}\Omega^2\right)dx\,dy\,dz\\
&\quad \leq \sign(t)\Bigl(
\gamma \Mbf_\chi^{00} - \frac{1}{2} \Msource^\chi + \frac{3}{4} (\Mbf_\chi^{00} - \Mbf_\chi^{11}) \Bigr)\Omega^2d_x\ell\,dy\,dz.
\endaligned
\ee
Upon integrating on the spatial circle, we deduce
\be
\del_t\bigl(|t|^{\gamma}\mathcal E_\chi\bigr)
\leq \sign(t) |t|^{\gamma-1} \int \Bigl(
\gamma \Mbf_\chi^{00} - \frac{1}{2} \Msource^\chi + \frac{3}{4} (\Mbf_\chi^{00} - \Mbf_\chi^{11}) \Bigr)\Omega \dVtrois .
\ee
In \autoref{lem:conseq-DEC-M} we bounded $\frac{1}{2} \Msource^\chi - \frac{3}{4}(\Mbf_\chi^{00} - \Mbf_\chi^{11})$ in the range $[-2\Mbf_\chi^{00},\Mbf_\chi^{00}]$ using the dominant energy condition, with the lower bound being improved to $-\Mbf_\chi^{00}$ in the Euler setting.
This yields the monotonicy results stated in \autoref{prop:monotonicity}.
\bei
\item In the future-contracting regime $t<0$, taking $\gamma=1$ gives that $|t|\mathcal E_\chi(t)$ is monotonically non-increasing.
\item In the future-expanding regime $t>0$, taking $\gamma=-2$ gives that $|t|^{-2}\mathcal E_\chi(t)$ is monotonically non-increasing.  In the Euler setting one can take $\gamma=-1$ and get the announced monotonicity of $|t|^{-1}\mathcal E_\chi(t)$.
\eei
This entails the following bounds, in which $|t/t_0|^2$ is replaced by $|t/t_0|$ in the Euler setting:
\bel{Echit-from-Echit0}
\mathcal E_\chi(t)
\leq \begin{cases}
  |t_0/t| \mathcal E_\chi(t_0) , & t_0 \leq t\leq t_1 < 0 \quad \text{(future-contracting regime)} , \\
  |t/t_0|^2 \mathcal E_\chi(t_0) , & 0 < t_0 \leq t\leq t_1 \quad \text{(future-expanding regime)} .
\end{cases}
\ee

\subsubsection{Proof of \autoref{prop:collapse-criterion}}

In the future-contracting regime $[t_0,t_1]\subset(-\infty,0)$, we prove two lower bounds on~$\mathcal L$.  Using~\eqref{sgntdtVm} for $m=1$ gives that $|t|^{-3/2}\mathcal V_1$ is non-increasing, hence $\mathcal V_1(t)\leq|t/t_0|^{3/2}\mathcal V_1(t_0)$, which is used as follows.
The Cauchy--Schwartz inequality implies that
\bel{Vt2leqLV1}
\mathcal V(t)^2 = \mathcal V_0(t)^2
\leq \mathcal V_1(t) \mathcal V_{-1}(t) = \mathcal V_1(t) \mathcal L(t)
\leq |t/t_0|^{3/2} \mathcal V_1(t_0) \mathcal L(t),
\ee
which together with the upper bound on~$\mathcal V_1(t)$ gives the lower bound~\eqref{non-collapse-1} on the conformal length.
This lower bound can alternatively be understood as an upper bound on the volume of timelike slices: if in some limit (with uniformly bounded initial data) $\mathcal L(t)\to 0$ at some time, then $\mathcal V(t)\to 0$ at that time.

A second bound is obtained by returning to the details of the derivative~\eqref{sgntdtVm} for $m=-1$, rather than only its sign.  By bounding $|\Mbf_\chi^{11}|\leq\Mbf_\chi^{00}$ we obtain
\be
- \del_t \mathcal L(t)
\leq \int_{\Tbb^3} \Mbf_\chi^{00} \Omega\,\dVtrois
= \mathcal E_\chi(t) - \frac{3}{2|t|} \mathcal L(t) , \qquad t_0 \leq t \leq t_1 < 0 .
\ee
Due to the bound on~$\mathcal E_\chi(t)$ in~\eqref{Echit-from-Echit0}, we deduce a negative lower bound on $\del_t \bigl(|t|^{3/2} \mathcal L(t)\bigr)$, whose integration on a time interval $[t_0,t]$ gives the inequality
\bel{lower-Lt-23}
\mathcal L(t)
\geq \frac{2}{3} |t_0| \mathcal E_\chi(t_0)
- \Bigl( \frac{2}{3} |t_0| \mathcal E_\chi(t_0) - \mathcal L(t_0) \Bigr) \Bigl|\frac{t_0}{t}\Bigr|^{3/2} .
\ee
The combination in parentheses is non-negative as
\be
\frac{2}{3} |t_0| \mathcal E_\chi(t_0) - \mathcal L(t_0)
= \biggl(\int_{\Tbb^3} \Mbf_\chi^{00} \Omega\,\dVtrois\biggr)(t_0) \geq 0 .
\ee
As a result, since $|t_0/t|^{3/2}\to+\infty$ as $t\uparrow 0$, the lower bound in \eqref{lower-Lt-23} is only non-negative for a limited range of times $t\in [t_0,\min(t_1,t_2)]$ with $t_2$ being the lower bound vanishes, explicited above~\eqref{non-collapse-2}.

\subsection{Total variation in space}
\label{section-8-bv}

\subsubsection{Integrating the constraints}

We consider the three constraint equations $\opO_1^\chi=\opK_{21}^\chi=\opK_{31}^\chi=0$ with the operators given in~\eqref{opchi-constr} as corrections of the Einstein--Euler constraint operators~\eqref{eq:theconstraints-def}.
Assembling these identities gives
\be
\aligned
\opO_1 & \coloneqq (\log\Omega)_x + \frac{t}{2} \, \bigl( 2 T_\chi^{01} - \Ebf_1(\Pbb) - \Ebf_1(\Qbb) \bigr) \, \Omega^2 \amdeux ,
\\ 
\opK_{21} & \coloneqq ( K_2 )_x
+ \bigl( 2 T_\chi^{02} + P_1 K_2/2 \bigr) \, \Omega \amdeux ,
\\ 
\opK_{31} & \coloneqq ( K_3 )_x
+ \bigl( 2 T_\chi^{03} + Q_1 K_2 - P_1 K_3/2 \bigr) \, \Omega \amdeux .
\endaligned
\ee
The factors in parentheses are bounded using \autoref{lem:conseq-DEC} thanks to the dominant energy condition for the fluid part:
\be
\aligned
|2 T_\chi^{01} - \Ebf_1(\Pbb) - \Ebf_1(\Qbb)|
& \leq 2T_\chi^{00} + \Ebf_0(\Pbb) + \Ebf_0(\Qbb) \leq \Mbf_\chi^{00} ,
\\
|2 T_\chi^{02} + P_1 K_2/2|
& \leq 2T_\chi^{00} + \frac{1}{4} (P_1)^2 + \frac{1}{4} (K_2)^2 \leq \Mbf_\chi^{00} ,
\\
|2 T_\chi^{03} + Q_1 K_2 - P_1 K_3/2|
& \leq 2T_\chi^{00} + \frac{1}{2} (Q_1)^2 + \frac{1}{4} (P_1)^2 + \frac{1}{2} (K_2)^2 + \frac{1}{4} (K_3)^2 \leq \Mbf_\chi^{00} .
\endaligned
\ee

We arrive at bounds on the total variation on a given time slice: accounting for the volume factor $\dVtrois=|t|\,\Omega\,\amdeux\,dx\,dy\,dz$,
\bel{varlogOmega-obtain}
\aligned
\Var(\log\Omega(t,\cdot);\Tbb^3)
= \int_{\Sbb^1} \bigl| (\log\Omega)_x \bigr| dx
& \leq \frac{1}{2} \int \Mbf_\chi^{00} \Omega \dVtrois \leq \frac{1}{2} \mathcal E_\chi(t) ,
\\
\Var(K_m(t,\cdot);\Tbb^3)
= \int_{\Sbb^1} \bigl| (K_m)_x \bigr| dx
& \leq \int \Mbf_\chi^{00} \dVtrois \leq \mathcal E_\chi(t) \sup_{\Tbb^3} \Omega^{-1} , \quad m=2,3 .
\endaligned
\ee
For the twist variables the scaling in~$\Omega$ is different, and in the last step we bounded the integral of $\Mbf_\chi^{00}\dVtrois$ by the sup-norm of $\Omega^{-1}$ times the integral of $\Mbf_\chi^{00} \Omega \dVtrois$.
The energy $\mathcal E_\chi(t)$ is bounded in terms of the initial data, but not yet~$\Omega^{-1}$.

\subsubsection{Pointwise bound on the lapse}

The bounds~\eqref{varlogOmega-obtain} must be complemented by sup-norm bounds.
At this stage we introduce the minimum conformal length
\bel{Lmin-expr}
\mathcal L_{\min} \coloneqq \min_{t\in[t_0,t_1]} \mathcal L(t) > 0 .
\ee
The following lemma provides the bound on the lapse needed for~\eqref{spatial-bv}, and the bounds on volume stated in~\eqref{both-sided-LV}, hence those on the conformal length.

\begin{lemma}[Volume and lapse bounds]
\label{lem-section-8-Euler-lapse-closure}
Suppose that $\mathcal L_{\min}>0$. Then there exists $C\geq1$, depending only on the time interval, $\mathcal L_{\min}^{-1}$, $\mathcal V(t_0)$, $\mathcal L(t_0)$, and $\mathcal E_\chi(t_0)$, such that
\bel{eq:Eul-vol-lap}
C^{-1}\leq |t|^{-3/4}\mathcal V(t)\leq C,
\qquad
C^{-1}\leq\Omega(t,x)\leq C ,
\ee
for every $(t,x)\in[t_0,t_1]\times\Sbb^1$.
\end{lemma}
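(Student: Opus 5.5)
The plan is to combine three earlier ingredients: the energy bound \eqref{Echit-from-Echit0}, the spatial variation bound for $\log\Omega$ in \eqref{varlogOmega-obtain}, and the monotonicity \eqref{sgntdtVm} of the weighted volumes $\mathcal V_m$. The basic observation is that $\log\Omega(t,\cdot)$ has oscillation at most $\frac12\mathcal E_\chi(t)$ on $\Sbb^1$. The right-hand side is bounded by a constant $E$ depending only on $t_0,t_1,\mathcal E_\chi(t_0)$, by \eqref{Echit-from-Echit0} in either regime. Hence $\Omega(t,\cdot)$ is comparable to any of its weighted averages, up to factors $e^{\pm E/2}$. It therefore suffices to bound one suitable average of $\Omega$ from above and below at each time.

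To get such an average, I would use the two quantities $\mathcal L(t)=\int d_x\ell$ and $\mathcal V(t)=|t|\int_{\Sbb^1}\Omega\, d_x\ell$, since $\dVtrois=|t|\Omega\,d_x\ell\,dy\,dz$. Their ratio $\mathcal V/(|t|\mathcal L)$ is exactly the $d_x\ell$-weighted mean of $\Omega$. Consequently
\[
e^{-E/2}\,\frac{\mathcal V(t)}{|t|\mathcal L(t)}\leq \Omega(t,x)\leq e^{E/2}\,\frac{\mathcal V(t)}{|t|\mathcal L(t)} .
\]
It remains to bound $\mathcal L$ and $\mathcal V$ on both sides in terms of the stated data.

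For $\mathcal L$, the lower bound is $\mathcal L_{\min}$ by hypothesis. For the upper bound, I would use the $m=-1$ identity in \eqref{sgntdtVm}, which gives $|\del_t\mathcal L|\leq \int \Mbf_\chi^{00}\Omega\,\dVtrois\leq \mathcal E_\chi(t)\leq E$. Integrating over $[t_0,t_1]$ bounds $\mathcal L(t)$ by $\mathcal L(t_0)+E|t_1-t_0|$. In the contracting regime monotonicity gives $\mathcal L\leq\mathcal L(t_0)$ directly.

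For $\mathcal V$, one side is given by \eqref{estimate-LV}: an upper bound in the contracting regime and a lower bound in the expanding regime. The other side is the main obstacle. I would try the $m=0$ case of \eqref{sgntdtVm}, which gives $|\del_t(|t|^{-3/4}\mathcal V)|\lesssim |t|^{1/4}\int\Mbf_\chi^{00}\Omega^2\dVtrois$. This is not directly controlled, because it carries an extra factor of $\Omega$ relative to the energy.

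To close the argument, I would bound that extra factor by $\sup\Omega\leq e^{E/2}\mathcal V/(|t|\mathcal L_{\min})$, using the oscillation estimate above. This yields the differential inequality
\[
|\del_t \log(|t|^{-3/4}\mathcal V)|\leq C(E,\mathcal L_{\min},t_0,t_1).
\]
Grönwall's lemma then bounds $\log(|t|^{-3/4}\mathcal V)$ on both sides by its value at $t_0$ plus $C|t_1-t_0|$, which gives the first estimate in \eqref{eq:Eul-vol-lap}.

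Inserting the two-sided bounds for $\mathcal V$ and $\mathcal L$ into the pointwise comparison for $\Omega$ then gives $C^{-1}\leq\Omega\leq C$. Since $I\subset\RR\setminus\{0\}$ is compact, $|t|$ is bounded away from zero and infinity. The constant therefore depends only on $t_0,t_1$, $\mathcal L_{\min}^{-1}$, $\mathcal V(t_0)$, $\mathcal L(t_0)$ and $\mathcal E_\chi(t_0)$, as claimed.
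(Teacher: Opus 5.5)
Your proposal is correct and follows the paper's proof essentially step for step. The common steps are: the oscillation bound for $\log\Omega$ from the lapse constraint; comparing $\Omega$ with its $d_x\ell$-weighted mean $\mathcal V/(|t|\mathcal L)$; two-sided bounds on $\mathcal L$; and a Gr\"onwall argument for $|t|^{-3/4}\mathcal V$ based on the $m=0$ identity, with the extra factor of $\Omega$ absorbed through the resulting sup bound. The only harmless difference is that you obtain the upper bound on $\mathcal L$ by integrating the $m=-1$ identity, whereas the paper reads off $\mathcal L\le\frac{2|t|}{3}\mathcal E_\chi$ directly from the definition of the energy.
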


\begin{proof}
\bse
We denote in this proof by $C_i$ for $i=0,1,\dots$ any constant that depends on the time interval, $\mathcal L_{\min}^{-1}$, $\mathcal V(t_0)$, $\mathcal L(t_0)$, and $\mathcal E_\chi(t_0)$.

First, we learn from~\eqref{varlogOmega-obtain} that values of $\log\Omega$ at different spatial points on the same time slice differ by at most $\frac{1}{2}\mathcal E_\chi(t)$, hence
\bel{Omega-over-Omega}
\exp(-\mathcal E_\chi(t)/2) \leq \frac{\Omega(t,x)}{\Omega(t,x')} \leq \exp(\mathcal E_\chi(t)/2) , \qquad x,x'\in\Sbb^1, \quad t\in [t_0,t_1] .
\ee
Then, since $\dVtrois=|t|\Omega\,d_x\ell\,dy\,dz$, the ratio $|t|^{-1}\mathcal V/\mathcal L$ is a weighted average of~$\Omega$, so any value of~$\Omega$ on the spatial slice differs from it by at most the exponential factors in~\eqref{Omega-over-Omega}.
This yields the two-sided estimate
\bel{eq:section-8-lapse-bound}
\frac{|t|^{-1}\mathcal V(t)}{\mathcal L(t)}e^{-\mathcal E_\chi(t)/2}\leq\Omega(t,x)\leq \frac{|t|^{-1}\mathcal V(t)}{\mathcal L(t)}e^{\mathcal E_\chi(t)/2}.
\ee

We set $Y(t)\coloneqq |t|^{-3/4}\mathcal V(t)$.
The energy estimate gives $\sup_t\mathcal E_\chi(t)\leq C_0$.  The conformal length is bounded above by the energy and below by $\mathcal L_{\min}$ by definition, hence $(C_1)^{-1} \leq \mathcal L(t) \leq C_1$.
From~\eqref{eq:section-8-lapse-bound} we thus have
\bel{eq:section-8-lapse-via-Y}
\|\Omega(t,\cdot)\|_{L^\infty}\leq |t|^{-1/4}\frac{Y(t)}{\mathcal L(t)}e^{\mathcal E_\chi(t)/2} \leq C_2 Y(t) .
\ee
The identity~\eqref{sgntdtVm}, for $m=0$, gives
\be
|Y'(t)|
= \frac{1}{2} |t|^{1/4} \int_{\Tbb^3} \Mbf_\chi^{00}\Omega^2\dVtrois
\leq \frac{1}{2} |t|^{1/4} \mathcal E_\chi(t) \|\Omega(t,\cdot)\|_{L^\infty}
\leq C_3 Y(t) .
\ee
Gronwall's inequality gives the two-sided bound for~$Y$, hence on~$\mathcal V$ and, by~\eqref{eq:section-8-lapse-bound}, on the lapse.
\ese
\end{proof}

\subsubsection{Twists and conformal length function}

The variation bound~\eqref{varlogOmega-obtain} and~\eqref{eq:section-8-lapse-bound} together give the control of $\log\Omega$ stated in~\eqref{spatial-bv}.
The bounds on the variation of twists in~\eqref{varlogOmega-obtain} then become bounded by functions of the initial data and of~$\mathcal L_{\min}$.  There remains to bound some integral of the twist: this is done by noting that
\be
\frac{\int_{\Tbb^3} E_0(\Kpar) \Omega \dVtrois}{\int_{\Tbb^3} \Omega \dVtrois} \leq \frac{\mathcal E_\chi(t)}{\mathcal V(t)} \sup \Omega^{-1}(t,\cdot) ,
\ee
and all factors on the right-hand side are controlled by the initial data and~$\mathcal L_{\min}$.  This controls a weighted average of $E_0(\Kpar)$.  Together with the variation bound we deduce the claimed pointwise bound on the twists.

Finally, the function $\ell$ on $[0,1]$ is a spatial primitive of~$\amdeux$, namely $d_x\ell=\amdeux dx$.  Since $\amdeux\geq 0$, we trivially get a total variation bound in terms of the conformal length functional~$\mathcal L$, itself bounded by the energy:
\be
\Var(\ell(t,\cdot);[0,1]) = \int_0^1 |\del_x\ell| dx = \int_0^1 \amdeux dx = \mathcal L \leq \frac{2|t|}{3} \mathcal E_\chi .
\ee
This completes the proof of~\eqref{spatial-bv}.

\subsubsection{Reconstruction of the orbit metric}

In \autoref{section-7} we considered parallel momenta per particle in two frames: the orthonormal frame $e_2,e_3$, and the Killing frame $\del_y,\del_z$.  These are related by matrices~\eqref{eq:Kill-mat} involving the metric components $P,Q$ that are seen in our formalism as reconstructed variables.  We provide here the relevant bounds.

\begin{lemma}[Reconstruction of the orbit metric]
\label{lem:orb-rec}
There exists $C>0$ depending only on the quantities in the preceding estimates and on $\|P(t_0,\cdot)\|_{L^\infty}+\|Q(t_0,\cdot)\|_{L^\infty}$ such that
\bel{eq:section-8-PQ-reconstruction}
\sup_{[t_0,t_1]\times\Sbb^1}(|P|+|Q|)+\sup_{[t_0,t_1]\times\Sbb^1}\left(\|\mathsf M_{\rm K}\|+\|\mathsf M_{\rm K}^{-1}\|\right)\leq C .
\ee
\end{lemma}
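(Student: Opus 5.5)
We propose to reconstruct $P,Q$ by integrating the relations~\eqref{reconstruct-P-Q} in time at each fixed $x$, starting from the initial profiles $P(t_0,\cdot),Q(t_0,\cdot)$, and to control the resulting integrals with the energy and lapse bounds already available. The bounds on $\mathsf M_{\rm K}$ and $\mathsf M_{\rm K}^{-1}$ then follow from~\eqref{eq:Kill-mat}. Their entries are $|t|^{\pm1/2}e^{\pm P/2}$ and $|t|^{\pm1/2}Qe^{P/2}$, and $|t|$ ranges over the compact interval $[\min(|t_0|,|t_1|),\max(|t_0|,|t_1|)]$, which stays away from $0$.

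\textbf{Step 1: bound on $P$.} From $P_t=P_0\Omega$ we get
\[
|P(t,x)-P(t_0,x)|\leq \int_{t_0}^{t}|P_0|\,\Omega\,dr .
\]
The integrand is pointwise in $x$, so the spatial energy alone does not suffice. Instead we use the timelike estimate~\eqref{temporal-stress}, or more precisely the $L^2$ control of $\Pbb$ on constant-$x$ slices with respect to $\dVundeux=|t|\,\Omega\,dt\,dy\,dz$. For a fixed $x$, Cauchy--Schwarz gives
\[
\int|P_0|\,\Omega\,dr\leq \Bigl(\int P_0^2\,|t|\,\Omega\,dr\Bigr)^{1/2}\Bigl(\int \Omega\,|t|^{-1}dr\Bigr)^{1/2}.
\]
The second factor is bounded by \autoref{lem-section-8-Euler-lapse-closure}, and the first by the timelike energy.

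The difficulty is that~\eqref{temporal-stress} controls only the signed combination $T_\chi^{11}+\Ebf_0(\Pbb,\Qbb)$. Since $T_\chi^{11}$ has no lower bound in general, this does not by itself control $\Ebf_0(\Pbb)$. If the timelike $L^2$ bound on $\Pbb$ is not yet available at this point of the argument, we switch to a spatial route. We fix a reference value in time by bounding $P(t,x_*)$ through the space average. On each slice,
\[
\operatorname{osc}_x P(t,\cdot)\leq \int|P_x|\,dx=\int|P_1|\,\Omega\,\amdeux\,dx ,
\]
and Cauchy--Schwarz bounds this by $\mathcal E_\chi^{1/2}\,\mathcal L^{1/2}\,\sup\Omega^{1/2}\,|t|^{-1/2}$ up to constants, which is controlled. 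For the average, we use
\[
\frac{d}{dt}\int P\,\Omega\,\amdeux\,dx ,
\]
which involves $P_0\Omega^2\amdeux$ together with $P$ times the derivatives of $\Omega\amdeux$. The latter are given by $\opC^\chi$ and $\opO_0^\chi$ in terms of $\Mbf_\chi\Omega^2\amdeux$, and these are in $L^1$ by the energy. A Gronwall argument then bounds the weighted average of $P$. Combined with the oscillation bound, this gives $\sup|P|\leq C$.

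\textbf{Step 2: bound on $Q$.} We repeat the argument with $Q_x=e^{-P}Q_1\Omega\amdeux$ and $Q_t=e^{-P}Q_0\Omega$. Now that $e^{-P}$ is bounded by Step 1, the same oscillation, average and Gronwall estimates apply with $Q_0,Q_1$ in place of $P_0,P_1$.

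\textbf{Main obstacle.} The hardest step is the pointwise-in-time control in Step 1. The spatial route above avoids it, at the cost of handling the time derivative of the weighted average. In that derivative, the coupling terms $P\,\Mbf_\chi$ must be treated by Gronwall rather than bounded directly.
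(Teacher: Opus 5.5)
Your argument is correct, and the route you end up using differs genuinely from the paper's. Both proofs bound the spatial oscillation of $P$ on each slice by Cauchy--Schwarz, as in \eqref{eq:P-sp-rec}. They differ in how $P$ is anchored in time.

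The paper integrates $P_t=P_0\Omega$ along each timelike line $x=\mathrm{const}$ and uses a timelike $L^2$ bound on $P_0$, see \eqref{eq:P-ti-rec}. This gives pointwise control at every $x$ directly, with no Gronwall step.

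You instead propagate the $\Omega\amdeux\,dx$-weighted spatial average $A(t)$ of $P$, and this closes. The equations $\opO_0^\chi=\opC^\chi=0$ give $\del_t\log(\Omega\amdeux)=-\frac{1}{4t}+\frac{t}{2}\Mbf_\chi^{00}\Omega^2$ with $\Mbf_\chi^{00}\geq 0$; the $1/(4t)$ term, which you omitted, is harmless. Hence
$|A'|\leq C+C\sup_x|P(t,\cdot)|\leq C+C|A|/W+C\operatorname{osc}_xP$,
where $W=\mathcal V/|t|$ is bounded below by \autoref{lem-section-8-Euler-lapse-closure}, and Gronwall applies.

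The advantage of your route is that it uses only the spatial energy, lapse, volume and conformal-length bounds, all of which precede the lemma. The timelike $L^2$ control of $\Pbb$ used by the paper is derived only in \autoref{section-8-timelike}--\autoref{section-8-timebv}. Moreover, as you correctly note, for the Navier--Stokes models (where $T_\chi^{11}$ may be negative) that control relies on the coercive stress condition. That condition in turn needs the orthonormal-frame bound $2|T_\chi^{02}|+2|T_\chi^{03}|\leq C^\parallel\Numb_\chi^0$. By the remark after \autoref{proposition-JJJ}, this bound is obtained from the Killing-frame maximum principle through $\|\mathsf M_{\rm K}^{-1}\|$, that is, through this very lemma.

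Your version therefore avoids a potential circularity and works uniformly for all three models. For Einstein--Euler, where $T^{11}_{\rm E}\geq 0$, the paper's shorter argument is unproblematic. You can drop the conditional timelike attempt and keep only the spatial route.
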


\begin{proof}
\bse
At a fixed time, the reconstruction equations~\eqref{reconstruct-P-Q} and the Cauchy--Schwarz inequality give
\bel{eq:P-sp-rec}
\Var(P(t,\cdot);\Sbb^1)\leq\int_{\Sbb^1}\Omega\amdeux|P_1|\,dx\leq\left(\int_{\Sbb^1}\Omega^2\amdeux P_1^2\,dx\right)^{1/2}\mathcal L(t)^{1/2} .
\ee
At every fixed spatial point, the timelike estimate gives
\bel{eq:P-ti-rec}
|P(t,x)-P(t_0,x)|\leq\left(\int_{t_0}^{t_1}|s|\Omega^2P_0^2(s,x)\,ds\right)^{1/2}\left(\int_{t_0}^{t_1}|s|^{-1}\,ds\right)^{1/2}.
\ee
The spacelike and timelike bounds therefore control $P$ pointwise. Since $Q_x=e^{-P}\Omega\amdeux Q_1$ and $Q_t=e^{-P}\Omega Q_0$, the same two estimates, with the now bounded factor $e^{-P}$, control $Q$. Finally, the explicit matrices in~\eqref{eq:Kill-mat} are bounded together with their inverses because the interval stays away from $t=0$. This proves~\eqref{eq:section-8-PQ-reconstruction}.
\ese
\end{proof}

\subsection{Timelike integrals}
\label{section-8-timelike}

\subsubsection{Timelike integral of the stress}

The first fluid equation $\opJ^1_\chi=0$ is the spatial component $e^1_\beta \nabla_\alpha T_\chi^{\alpha\beta}=0$ of the stress-tensor conservation law.  Combined with the earlier estimates, it provides a total variation bound on~$\Sbb^1$ for the integral of~$T_\chi^{11}$ (corrected by geometric terms) along constant-$x$ timelike slices, which is completed by controlling the space-time integral of~$T_\chi^{11}$ by that of~$T_\chi^{00}$, itself bounded by the spatial energy estimates.
Note that under the present assumptions, the spatial stress $T_\chi^{11}$ is not assumed to be positive, so the function~$\mathcal F_{[t_2,t_3]}$ defined next need not be positive.
The proof follows in the rest of the section.

\begin{proposition}[Timelike slice estimate]
\label{prop-section-8-timelike-slices}
For any interval $[t_2,t_3]$ contained in $\Interval=[t_0,t_1]$, consider the function on~$\Sbb^1$ defined by
\be
\mathcal F_{[t_2,t_3]}(x) \coloneqq \int_{[t_2,t_3]\times\{x\}\times\Tbb^2} (2T_\chi^{11} + \Ebf_0(\Pbb) + \Ebf_0(\Qbb)) \Omega \, \dVundeux , \qquad x\in\Sbb^1 ,
\ee
where $\dVundeux=|t|\,\Omega \,dt\,dy\,dz$ is the volume form induced by~$g$.
There exists a constant $C>0$ depending only on $t_0,t_1$ (but not $t_2,t_3$), the initial energy~$\mathcal E(t_0)$, volume~$\mathcal V(t_0)$, initial bounded-variation norms of $\ell,\log\Omega,\Kpar$, and minimal conformal length~$\mathcal L_{\min}$ such that
\be
\|\mathcal F_{[t_2,t_3]}\|_{\BV(\Sbb^1)}
= \Var(\mathcal F_{[t_2,t_3]};\Sbb^1) + \| \mathcal F_{[t_2,t_3]} \|_{L^\infty(\Sbb^1)} \leq C .
\ee
\end{proposition}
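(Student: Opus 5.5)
The plan is to read the identity $\opJ^1_\chi=0$ as an exact $x$-derivative of the timelike flux, so that $\mathcal F_{[t_2,t_3]}$ becomes a function of~$x$ whose derivative is a time-boundary term plus a twist correction. Set
\be
w(t,x)\coloneqq |t|\,\Omega^2\bigl(2T_\chi^{11}+\Ebf_0(\Pbb)+\Ebf_0(\Qbb)\bigr),
\ee
so that $\mathcal F_{[t_2,t_3]}(x)=\int_{t_2}^{t_3} w(t,x)\,dt$, the $\Tbb^2$ factor having unit measure. By~\eqref{Mbfchi-expr}, $w=|t|\Omega^2\Mbf_\chi^{11}+|t|\Omega^2\Ebf_0(\Kpar)$.

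First, I expand $\opJ^1_\chi=\divuntrois(\Omega\Mbf_\chi^{1\bullet})=0$ with the coordinate formula~\eqref{divuntrois-vector}, carefully tracking the index conventions, that is, the sign for contravariant components. This should give, up to a sign fixed by those conventions,
\be
\del_t\bigl(|t|\,\amdeux\,\Omega^2\,\Mbf_\chi^{01}\bigr)\pm\del_x\bigl(|t|\,\Omega^2\,\Mbf_\chi^{11}\bigr)=0 .
\ee
Integrating in $t\in[t_2,t_3]$ then yields
\be
\del_x\mathcal F_{[t_2,t_3]}(x)
=\mp\Bigl[|t|\amdeux\Omega^2\Mbf_\chi^{01}\Bigr]_{t=t_2}^{t=t_3}(x)
+\int_{t_2}^{t_3}\del_x\bigl(|t|\Omega^2\Ebf_0(\Kpar)\bigr)\,dt .
\ee

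Second, I bound the variation term by term. For the boundary term, \autoref{lem:conseq-DEC-M} gives $|\Mbf_\chi^{01}|\le\Mbf_\chi^{00}$. Since $\dVtrois=|t|\Omega\amdeux\,dx\,dy\,dz$, this gives
\be
\int_{\Sbb^1}|t|\amdeux\Omega^2|\Mbf_\chi^{01}|\,dx\le\int_{\Tbb^3}\Mbf_\chi^{00}\Omega\,\dVtrois\le\mathcal E_\chi(t),
\ee
and $\mathcal E_\chi(t)$ is controlled by~\eqref{Echit-from-Echit0} uniformly in $t\in[t_0,t_1]$. For the twist term, I use the pointwise two-sided lapse bound of \autoref{lem-section-8-Euler-lapse-closure}, together with the sup and variation bounds on $\log\Omega$ and $\Kpar$ from \eqref{spatial-bv}. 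The product rule gives $\int_{\Sbb^1}|\del_x(\Omega^2\Ebf_0(\Kpar))|\,dx\le C$ at each time, and integrating over $[t_0,t_1]$ gives a constant independent of $t_2,t_3$. Together these bound $\Var(\mathcal F_{[t_2,t_3]};\Sbb^1)$.

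Third, for the $L^\infty$ bound I only need one point where $\mathcal F$ is controlled. Using $|\Mbf_\chi^{11}|\le\Mbf_\chi^{00}$ and $\Ebf_0(\Kpar)\le\Mbf_\chi^{00}$,
\be
\int_{\Sbb^1}|\mathcal F_{[t_2,t_3]}|\,\amdeux\,dx\le 2\int_{t_0}^{t_1}\!\int_{\Tbb^3}\Mbf_\chi^{00}\Omega\,\dVtrois\,dt\le C .
\ee
Since $\int_{\Sbb^1}\amdeux\,dx=\mathcal L(t)$, I then average in time as well. This step needs care, because $\amdeux$ depends on~$t$ while $\mathcal F$ does not. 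Integrating over $t\in[t_0,t_1]$, the function $\int_{t_0}^{t_1}\amdeux(t,x)\,dt$ has $x$-integral at least $(t_1-t_0)\mathcal L_{\min}$. So there is a point $x_*$ where $|\mathcal F(x_*)|\le C/((t_1-t_0)\mathcal L_{\min})$, and the sup bound follows from the variation bound.

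I expect the main obstacle to be precisely this last weighted-averaging step, together with fixing the exact weights and signs in the expansion of $\divuntrois(\Omega\Mbf_\chi^{1\bullet})$. Its purpose is to ensure that the flux in~$x$ is exactly $|t|\Omega^2\Mbf_\chi^{11}$ with no leftover derivative of $\amdeux$ or of $\Omega$. The other delicate point is that $\amdeux$ has no pointwise lower bound; this is why I pair with the measure $\amdeux\,dx\,dt$ rather than $dx$, and use only $\mathcal L_{\min}>0$.
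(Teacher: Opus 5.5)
Your Steps 1--2 are the paper's argument for the variation bound and are correct. Integrating $\opJ^1_\chi=0$ over $[t_2,t_3]$ writes $\del_x\mathcal F_{[t_2,t_3]}$ as two pieces. The time-boundary term is bounded by $\mathcal E_\chi(t_2)+\mathcal E_\chi(t_3)$ via $|\Mbf_\chi^{01}|\leq\Mbf_\chi^{00}$. The twist correction $\int_{t_2}^{t_3}\del_x(|t|\Omega^2\Ebf_0(\Kpar))\,dt$ is controlled by the spatial BV and lapse bounds.

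The gap is in Step~3. Pointwise you only know $|\mathcal F_{[t_2,t_3]}(x)|\leq 2\int_{t_2}^{t_3}|t|\Omega^2\Mbf_\chi^{00}(t,x)\,dt$. The energy $\int_{\Tbb^3}\Mbf_\chi^{00}\Omega\,\dVtrois=\int_{\Sbb^1}|t|\Omega^2\Mbf_\chi^{00}\,\amdeux\,dx$, however, pairs $\Mbf_\chi^{00}(t,x)$ with $\amdeux(t,x)$ at the \emph{same} time~$t$. So your displayed bound on $\int_{\Sbb^1}|\mathcal F_{[t_2,t_3]}|\,\amdeux\,dx$ is unjustified for $\amdeux=\amdeux(s,\cdot)$ at any fixed~$s$. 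Averaging the weight in~$s$ does not fix this. You would need $\amdeux(s,x)\lesssim\amdeux(t,x)$ for all $s,t$, a pointwise two-sided comparison you do not have. Indeed, $\log\amdeux(t,x)-\log\amdeux(s,x)$ is a timelike integral of $\frac{|t|}{2}(\Mbf_\chi^{00}-\Mbf_\chi^{11})\Omega^2$, which is not controlled at this stage.

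The information you use is the energy bound on each slice and $\mathcal L(t)\geq\mathcal L_{\min}$. This does not exclude the following configuration: at each time, $\amdeux$ is tiny on a small set carrying a large $\Mbf_\chi^{11}$, and this set sweeps across the circle as $t$ varies, so that $\mathcal F_{[t_2,t_3]}$ is large at every~$x$. In other words, $\int_{\Sbb^1}\inf_t\amdeux(t,x)\,dx$ can be much smaller than $\inf_t\mathcal L(t)$.

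The missing idea is the \emph{pointwise} monotonicity of $\amdeux$ in time. From $\opC^\chi=0$ one has $\del_t\amdeux=\frac{t}{2}(\Mbf_\chi^{00}-\Mbf_\chi^{11})\Omega^2\amdeux$, and $\Mbf_\chi^{11}\leq\Mbf_\chi^{00}$ by \autoref{lem:conseq-DEC-M}. Hence $t\mapsto\amdeux(t,x)$ is non-increasing for $t<0$ and non-decreasing for $t>0$. Set $\amdeux_{\rm ref}\coloneqq\amdeux(t_1,\cdot)$ in the contracting regime and $\amdeux_{\rm ref}\coloneqq\amdeux(t_0,\cdot)$ in the expanding one. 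This is a time-independent weight with $\amdeux_{\rm ref}(x)\leq\amdeux(t,x)$ for all $t\in[t_0,t_1]$ and $\int_{\Sbb^1}\amdeux_{\rm ref}\,dx=\mathcal L_{\min}$. Then
\[
\int_{\Sbb^1}|\mathcal F_{[t_2,t_3]}|\,\amdeux_{\rm ref}\,dx\leq\int_{t_0}^{t_1}\!\!\int_{\Sbb^1}|t|\Omega^2\bigl(|\Mbf_\chi^{11}|+\Ebf_0(\Kpar)\bigr)\amdeux\,dx\,dt\leq 2\int_{t_0}^{t_1}\mathcal E_\chi(t)\,dt .
\]
Dividing by $\mathcal L_{\min}$ produces the point $x_*$ you need. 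The rest of your argument, the sup bound from one point plus the variation bound, then goes through as in the paper.
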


\subsubsection{Space-time integral of the fluid equation}

We integrate $\opJ^1_\chi=0$ explicited in~\eqref{using-Mbfchi-expr} on a spacetime slab $[t_2,t_3]\times [x_1,x_2]\times\Tbb^2$ with the volume form $\dVuntrois=|t|\Omega^2\,dt\,d_x\ell\,dy\,dz$.  This gives that the difference of boundary terms at $x_2,x_3$ is bounded by space-like integrals that are bounded by the energies~$\mathcal E$ at $t_2,t_3$ thanks to $|\Mbf_\chi^{01}|\leq\Mbf_\chi^{00}$:
\be
\bigl| \mathcal F_{[t_2,t_3]}^\Kpar(x_2) - \mathcal F_{[t_2,t_3]}^\Kpar(x_3) \bigr|
\leq \mathcal E(t_2) + \mathcal E(t_3) ,
\ee
where
\be
\aligned
\mathcal F_{[t_2,t_3]}^\Kpar(x)
\, & \! \coloneqq \int_{[t_2,t_3]\times\{x\}\times\Tbb^2} \Mbf_\chi^{11} \Omega \, \dVundeux \\
& = \int_{[t_2,t_3]\times\{x\}\times\Tbb^2} \bigl(2T_\chi^{11} + \Ebf_0(\Pbb) + \Ebf_0(\Qbb) - \Ebf_0(\Kpar)\bigr) \Omega \, \dVundeux \\
& = \mathcal F_{[t_2,t_3]}(x) - \int_{t_2}^{t_3} \bigl(|t|\Omega^2\Ebf_0(\Kpar)\bigr)(t,x) \, dt .
\endaligned
\ee
Thanks to our control of the bounded variation norms of $\Omega,\Kpar$, the last term has bounded variation as a function of $x\in\Sbb^1$.
We thus arrive at a bound on the total variation of~$\mathcal F_{[t_2,t_3]}$, which must be completed by an integral bound.

\subsubsection{Monotonicity of the conformal length density}

The spatial integral of $\mathcal F_{[t_2,t_3]}(x)$ is now compared to the time integral of the spatial energy~$\mathcal E(t)$, as both amount to integrals of stress-tensor components (augmented by geometric contributions).  The volume forms involved differ.  Specifically, the volume form $\dVuntrois=|t|\Omega^2\,dt\,d_x\ell\,dy\,dz$ and the volume form $\dVundeux=|t|\,\Omega \,dt\,dy\,dz$ of the induced metric on a constant~$x$ slice are related by
\be
\dVuntrois = \Omega \, \amdeux\,dx\, \dVundeux .
\ee
We are led to needing a lower bound $\amdeux_{\rm ref}$ on~$\amdeux$, which we obtain by monotonicity in time.

From the evolution equation $\opC^\chi=0$ of the conformal length density~$\amdeux$, and from the dominant energy condition $|\Mbf_\chi^{11}|\leq\Mbf_\chi^{00}$ in \autoref{lem:conseq-DEC-M}, we deduce that $\amdeux$ has the same monotonicity as its spatial integral~$\mathcal L$.
\bei
\item In the future-contracting regime, $\del_t\amdeux\leq 0$, so we set $\amdeux_{\rm ref}=\amdeux(t_1)$.
\item In the future-expanding regime, $\del_t\amdeux\geq 0$, so we set $\amdeux_{\rm ref}=\amdeux(t_0)$.
\eei
In both regimes, by monotonicity and by definition of $\mathcal L_{\min}$, we have
\bel{intamdeuxref}
\int_{\Sbb^1} \amdeux_{\rm ref}(x)\,dx = \mathcal L_{\min} , \qquad
\amdeux(t,x) \geq \amdeux_{\rm ref}(x) , \qquad t \in [t_0,t_1] .
\ee

By the dominant energy condition $|\Mbf_\chi^{11}|\leq\Mbf_\chi^{00}$ (\autoref{lem:conseq-DEC-M}), we obtain 
\bel{eq:avg-time-est}
\int_{\Sbb^1}\bigl|\mathcal F_{[t_2,t_3]}^{\Kpar}\bigr|\,\amdeux_{\rm ref}\,dx
\leq \int_{[t_0,t_1]\times\Sbb^1}|t|\Omega^2 |\Mbf_\chi^{11}|\,\amdeux\,dt\,dx
\leq\int_{t_0}^{t_1}\mathcal E_\chi(t)\,dt.
\ee
Dividing by the total mass $\mathcal L_{\min}$ of $\amdeux_{\rm ref}\,dx$, which is positive, gives a control of an average of $\mathcal F_{[t_2,t_3]}^{\Kpar}$.  The value at any point $x\in\Sbb^1$ then differs from this average by the total variation bound above.

\subsection{Consequences of a coercive stress}
\label{section-8-timebv}

\subsubsection{Coercive stress condition}

We have so far proven the spatial energy bound~\eqref{estimate-SE}, two sets of conformal length and volume bounds~\eqref{estimate-LV}--\eqref{both-sided-LV}, bounded variation in space~\eqref{spatial-bv}, and a control of temporal integrals~\eqref{temporal-stress}.
As stated in \autoref{theo:geometric-estimates}, it is time to make three further assumptions:
\bei
\item the potential~$\chi$ is torus adapted (\autoref{def-NS-torus-adapted}),
\item the parallel momentum per particle is uniformly bounded, namely $2|T_\chi^{02}|+2|T_\chi^{03}|\leq C^\parallel\Numb_\chi^0$ at initial time, for some constant~$C^\parallel>0$,
\item the coercive stress condition holds (\autoref{def-Tchi-coercive}).
\eei
As explained in \autoref{section-7}, the first condition leads to transport equations for the parallel momentum per particle defined (away from vacuum) as $\widehat J_\chi^m = 2 T_\chi^{0m}/\Numb_\chi^0$, which lead to a maximum principle when $\widehat J_\chi^m$ is uniformly bounded.  This second condition is stated in a way that makes sense at vacuum, and that is subject to the maximum principle of \autoref{them:7-4}.
As a result, this bound holds for all times:
\be
2|T_\chi^{02}|+2|T_\chi^{03}|\leq C^\parallel\Numb_\chi^0 , \qquad t\in[t_0,t_1] .
\ee
The coercive stress condition defined in \autoref{def-Tchi-coercive} then implies that, for some constants $C_1,c_2$ that only depend on the Navier--Stokes potential~$\chi$, one has the pointwise bound
\be
T_\chi^{11} + C_1 \geq c_2 \bigl(T_\chi^{00} + (J_0)^2\bigr) .
\ee

Integrating this inequality along a timelike slice $[t_0,t_1]\times\Tbb^2$ at fixed $x\in\Sbb^1$ with respect to the volume form $\dVundeux=|t|\,\Omega \,dt\,dy\,dz$ gives
\be
c_2 \int_{[t_0,t_1]\times\Tbb^2} \bigl(T_\chi^{00} + (J_0)^2\bigr) \dVundeux
\leq \biggl| \int_{[t_0,t_1]\times\Tbb^2} T_\chi^{11} \dVundeux \biggr|
+ C_1 \int_{[t_0,t_1]\times\Tbb^2} \dVundeux .
\ee
These terms are controlled respectively by the timelike integrals~\eqref{temporal-stress} and by pointwise bounds on $\Omega$ and boundedness of the interval $[t_0,t_1]$.
Thanks to the term $J_0^2$, this completes the set of finite-energy norms that appear in~$\|\Phi\|_{\rm energy}$ defined in~\eqref{equa-flow-energy-norm}.
The term $T_\chi^{00}$ is useful next.

\subsubsection{Bounded variation in time}

The bounded-variation norm $\|\Psi\|$ defined in~\eqref{equa-flow-variation-norm} consists of the spatial bounded variation norms controlled in \autoref{section-8-bv} and of timelike variation.  We now derive these in the same way as the spatial ones, by considering the evolution equations $\opC^\chi=\opO_0^\chi=\opK_{20}^\chi=\opK_{30}^\chi=0$ for $\amdeux,\log\Omega,\Kpar$.
\bei
\item Since $\amdeux$ is montonic in time, $\ell(t,x) = \int_0^x\amdeux(t,x')dx'$ also is, and its total variation in time is simply a difference of its end-point values, which are bounded by the $L^\infty$~norm of~$\ell$, bounded earlier.

\item For the lapse and twists, we find (using the dominant energy conditions $|T_\chi^{1m}|\leq T_\chi^{00}$ for $m=1,2,3$)
\be
\aligned
\int_{t_0}^{t_1} \bigl| (\log\Omega)_t \bigr| dt
& \leq \frac{1}{2} \int_{t_0}^{t_1} \biggl( 2 T_\chi^{00} + \Ebf_0(\Kpar,\Pbb,\Qbb) + \frac{1}{2t^2\Omega^2} \biggr) |t|\Omega^2\,dt ,
\\
\int_{t_0}^{t_1} \bigl| (K_2)_t \bigr| dt
& \leq \int_{t_0}^{t_1} \biggl( 2 T_\chi^{00} + \frac{1}{2} |P_0| |K_2| + \frac{3}{2|t|\Omega} |K_2| \biggr) \Omega\,dt ,
\\ 
\int_{t_0}^{t_1} \bigl| (K_3)_t \bigr| dt
& \leq \int_{t_0}^{t_1} \biggl( 2 T_\chi^{00} + \frac{1}{2} |P_0| |K_3| + |Q_0| |K_2| + \frac{3}{2|t|\Omega} |K_3| \biggr) \Omega\,dt,
\endaligned
\ee
which are controlled by the available timelike estimates on the fluid and geometry, together with pointwise bounds on~$\Omega$.
\eei
This concludes the proof of \autoref{theo:geometric-estimates}.


\section{Conclusions and perspectives}
\label{section-9} 

\subsection{In summary}
\label{section-10-1}

\subsubsection{Areal stability}

The \JKL{} formulation combines the first-order metric variables, twists, parallel matter momentum, and total stress tensor in a single evolution--constraint system. The Einstein identities derived in this paper yield monotonicity formulas for the volume and conformal length, together with spacelike and timelike integral estimates. At weak regularity, these estimates use only the basic balance laws and one selected mathematical entropy inequality. The maximum principles concern regular flows, and their closed support constraints pass to Young-measure limits of regular approximations. Together, these estimates propagate the finite-energy bounds on every compact interval of areal time on which the orbit area does not vanish and the conformal length does not collapse. The latter condition follows from the initial conformal length in the future-expanding regime and is an additional hypothesis only in the future-contracting regime. These conclusions concern the geometry of the matter spacetimes considered here and require no perturbative smallness assumption on the geometry or the matter fields.

\subsubsection{Vacuum and unbounded states}

The proposed formulation and the derived estimates allow vacuum, arbitrarily large mass-energy density, and arbitrary finite rapidity. Hyperbolicity of the classical evolution systems holds in the positive-density, future-timelike interior of the state space, whereas the constitutive tensors, the mathematical entropy inequalities, and the finite-energy estimates extend continuously to its vacuum boundary in terms of the variable~$\Jbb$. The generalized Young-measure formulation retains this full state range; together with the associated concentration measures, it also describes concentration phenomena arising from sequences with unbounded mass-energy density.

\subsubsection{Two selected mathematical entropy inequalities}

The dissipated-energy system preserves particle number and yields a non-negative loss of normal mass-energy; its orbital transfer tensor restores conservation of the total stress tensor appearing in the Einstein equations. The particle-production system preserves energy--momentum and yields non-negative particle production. The same scalar potential and the same barrier relaxation map define both systems, although the corresponding source is inserted into different balance laws. Consequently, their causal principal parts and their density-global constitutive ranges coincide. They nevertheless select two different mathematical entropy inequalities: any finite-energy zero-relaxation limit satisfies the inequality associated with the corresponding system.


\subsubsection{Equilibrium case}

For either system, vanishing production implies $\zeta=0$, hence $T_\chi=T_{\rm E}$ and $\vNumb_\chi=\Numb u$ by the normalization of the potential on the Euler equilibrium manifold. In the dissipated-energy system, $T_{\rm aux}$ then vanishes as well. Thus, the equality cases reduce to the Einstein--Euler system. Along a sequence for which the production does not vanish in the limit, the limiting production measure may remain non-zero. Independently, oscillation or concentration in the finite-energy variables may produce a generalized Young measure (accounting also for concentration) occurring in the resulting finite-energy solution. These measures must therefore be retained separately in the limiting formulation.


\subsection{Mathematical entropy production}
\label{section-9-1}

\subsubsection{Toward a zero-viscosity limit}

Remarkably, all of the Cauchy stability estimates involving \emph{only} the finite energy norms on an initial hypersurface; moreover, they are \emph{independent} of the relaxation rate field $\mathcal B$. Consequently, this open up a path toward tackling the zero-viscosity, or zero-relaxation, problem, consisting of proving the convergence of Einstein-Navier--Stokes flows toward Einstein-Euler flows. This is a challenging problem that will be fully solved only in the fourth article in this series~\cite{TorusT2-4-2026}. We must show that the relaxation variable $\zeta$ approaches zero, and, in more geometric terms, and we must deal with families of solutions approaching the Euler equilibrium manifold. 
 
Three important steps to prove that zero-viscosity limits of Einstein--Navier--Stokes flows are finite-energy Einstein--Euler flows are as follows. One must identify the mathematical entropy production selected by each system, which may be applied to quantify the convergence of the relaxing field toward equilibrium. One must formulate the finite-energy estimates for weakly regular Einstein-Euler areal flows. Finally, one must construct a generalized Young measure, which allows to write all of the limiting \JKL{} equations achieved when $\zeta$ approaches zero; this object includes the effect of oscillations and  concentration phenomena, further studied in~\cite{TorusT2-3-2026} (perfect-fluid matter) and~\cite{TorusT2-4-2026} (dissipative matter). 


\subsubsection{Family of Einstein--Navier--Stokes flow}  

The relaxation rate map introduced in \autoref{def-pot-source} remains denoted by~$\mathscr B$, but we introduce a rescaling in the relaxation equations by introducing a viscosity parameter $\eps \in (0,1]$, namely we now write
\bel{eq:section-9-relaxation-source}
\nabla_\alpha A_\chi^{\eps,\alpha\beta\gamma}(\beta^\eps,\zeta^\eps)
= - \frac{1}{\eps}\mathscr B^{\beta\gamma}(\beta^\eps,\zeta^\eps), 
\ee
and we also denote the family of solutions by $(\beta^\eps,\zeta^\eps)$. Throughout this section, we assume that the initial data set has \emph{uniformly bounded} finite energy bounds with respect to~$\eps$. By the main theorem of the present paper, this implies the \emph{uniform} bounds for the family of flow at any future areal time. 
It then follows from the quadratic and the bounded-variation estimates
that a countable sequence of $\eps \to 0$ can be extracted so that the Einstein--Navier--Stokes flow  converges in a suitably weak sense to a ``limiting flow''. However, the notion of convergence at this stage is  \emph{far from sufficient} to imply 
that this formal limit is indeed an Einstein--Euler flow. For instance, no equi-integrability at the quadratic scale is implied by our estimates and oscillation and concentration may occur in both the geometric and the matter variables. This major issue is left for \cite{TorusT2-3-2026,TorusT2-4-2026}, but at this stage we can still define this limiting flow.


\subsubsection{Mathematical entropy productions}

The two Einstein--Navier--Stokes systems are defined from a choice of Navier--Stokes potential~$\chi$ and a relaxation rate map~$\mathscr B$. Their principal differential operators coincide, but the non-negative production occurs in different balance laws. For either system, we set
\bel{eq:section-9-production-density}
\mathfrak d_{\mathscr B}^{\eps}
\coloneqq {1 \over \eps} \zeta_{\alpha\beta}^{\eps}
\mathscr B^{\alpha\beta}(\beta^\eps,\zeta^\eps)
\geq0.
\ee
Before we proceed, let us recall that, for the dissipated-energy system, \autoref{prop-diss-flow} and~\eqref{energy-dissip-Sigma} give
\bel{eq:en-op-visc}
\opJ_0^{\chi,\eps}
=-\Sigma^\eps\leq0,
\qquad
\divuntrois\vNumb_\chi^\eps=0,
\qquad
\Sigma^\eps
=\frac{2\Omega^\eps}{\nu^\eps}\mathfrak d_{\mathscr B}^\eps,
\qquad
\nu^\eps\coloneqq-n_\alpha\beta^{\eps,\alpha}>0.
\ee
The auxiliary tensor~$T_{\rm aux}^\eps$ has the opposite divergence. Consequently, the total stress tensor
\be
T_{\rm NS}^\eps
=T_\chi^\eps+T_{\rm aux}^\eps
\ee
is divergence-free, while the potential-defined mass-energy current satisfies a mathematical entropy inequality and the particle-number current remains conserved.

On the other hand, for the particle-production system, the four energy--momentum equations remain equalities, and~\eqref{eq:pot-areal-production} gives
\bel{eq:section-9-N-production}
\opJ_0^{\chi,\eps}=0,
\qquad
\divuntrois\vNumb_\chi^\eps
= \mathfrak d_{\mathscr B}^\eps\geq0.
\ee
Thus, the dissipated-energy system preserves particle number and dissipates normal mass-energy, whereas the particle-production system preserves energy--momentum and produces particle number. In either case, the mathematical entropy inequality follows from the constitutive equations; it is not an additional condition on regular flows.

\subsubsection{Vanishing of the relaxing field}

Let $\mathcal M=[t_0,t_1]\times\Tbb^3$ be a compact spacetime slab. The areal factors are bounded above and below on~$I$, and the lapse bounds in \autoref{section-8} are independent of~$\eps$. The dissipated-energy identity and the particle-number identity imply, respectively,
\bel{eq:section-9-production-bounds}
\sup_{0<\eps\leq1}\int_{\mathcal M}\frac{\mathfrak d_{\mathscr B}^{\eps}}{\nu^\eps}\,\dVuntrois_{\eps}
\leq C \, \mathcal I_{\rm M,0},
\qquad
\sup_{0<\eps\leq1}\int_{\mathcal M}\mathfrak d_{\mathscr B}^{\eps}\,\dVuntrois_{\eps}
\leq C \, \mathcal I_{\rm N,0},
\ee
where $\mathcal I_{\rm M,0},\mathcal I_{\rm N,0}$ are uniformly bounded by initial data and $\mathcal L_{\min}$.
The first integral is the normal mass-energy loss and also controls the coefficient of~$T_{\rm aux}^\eps$; the second is the particle-number production. The constants depend on the compact slab and on the corresponding initial finite-energy sizes, but not on~$\eps$ or~$\mathscr B^\eps$.
 
Thanks to the coercivity $\zeta^\eps_{\alpha\beta}(\mathscr B^\eps)^{\alpha\beta}\gtrsim \frac{1}{\eps} |\zeta^\eps|^2$, one deduces $\zeta^\eps\to0$ strongly in~$L^2(\mathcal M)$ for the particle-production model and strongly in~$L^2(\mathcal M,(\nu^\eps)^{-1}\dVuntrois_\eps)$ for the dissipated-energy model; the latter being a weighted estimate. In the generalized Young measure argument in~\cite{TorusT2-3-2026,TorusT2-4-2026}, every finite-state limit lies on the Euler equilibrium manifold, while a contribution at the quadratic concentration scale may remain.


\subsection{Limiting Euler flows}
\label{section-9-2}

\subsubsection{Formal limits}

Recall that~$\Phi$ denotes the variables controlled by the quadratic energy, while~$\Psi$ denotes the variables controlled by the absolute-continuity and bounded-variation estimates in~\eqref{equa-flow-energy-norm}--\eqref{equa-flow-variation-norm}. Informally, we may say that a \textbf{limiting} finite-energy Einstein--Euler areal flow is a formal limit of either a sequence of dissipated-energy areal flows or of particle-production areal flows with uniform initial finite-energy bounds: 
\be
\|\Phi(t_0)\|_{\mathrm{energy}}
+\|\Psi(t_0)\|_{\mathrm{BV}}<+\infty.
\ee
A family of Einstein--Navier--Stokes flows with uniform initial finite-energy bounds need not, however, converge to a single Einstein--Euler flow, since \emph{oscillation} and \emph{concentration} may persist at the quadratic scale. The suitable object is the generalized Young-measure which we will define in~\cite{TorusT2-3-2026,TorusT2-4-2026}.



\subsubsection{The two selected mathematical entropy inequalities}

In either weak formulation, the three momentum equations remain equalities:
\be
\opJ_1^{\chi,\eps}=\opJ_2^{\chi,\eps}=\opJ_3^{\chi,\eps}=0.
\ee
The remaining matter equation is coupled to one of the two mathematical entropy inequalities. A limiting flow satisfies the \emph{particle-production formulation} 
\bel{eq:part-num-form}
\opJ_0^{\chi,\eps}=0,
\qquad
\divuntrois\vNumb
=\mathfrak d_{\rm N}\geq0,
\ee
or  the \emph{dissipated-energy formulation} if
\bel{eq:mass-en-form}
\divuntrois\vNumb=0,
\qquad
\opJ_0^{\chi,\eps}=-\mathfrak d_{\rm M}\leq0.
\ee
The applicable production~$\mathfrak d_{\rm M}$ or~$\mathfrak d_{\rm N}$ is a non-negative Radon measure.
 For a regular Einstein--Euler flow, both productions vanish and the formulations coincide. A regular Einstein--Navier--Stokes flow may have non-zero constitutive production.

For weakly regular flows containing shock waves, the two formulations generally differ. Suppose, for instance, that a discontinuity is supported on a curve $x=\gamma(t)$ with speed $s=\gamma'(t)$, and adopt the convention that $[\,\cdot\,]$ denotes the right state minus the left state. A balance law $\divuntrois X=S$ with locally integrable source gives
\be
[X^1-s\amdeux X^0]=0,
\ee
whereas the inequality $\divuntrois X\geq0$ gives
\be
[X^1-s\amdeux X^0]\geq0.
\ee
The regular source terms in the areal equations have no singular part on the shock curve. Consequently, exchanging the equality and the inequality changes the Rankine--Hugoniot locus.


\subsection{Outlook}
\label{section-10-3}

\subsubsection{Oscillations}

Our stability estimates will also be crucial to define an associated Young measure, which will allow us to derive to the first-order formulation of Einstein's balance laws in a measure-valued form. Without further compactness information, the resulting Young measure need not be concentrated at a single point of the state space. The parametrized families of fluid quasi-currents and the associated mathematical entropy kernels are developed in~\cite{TorusT2-3-2026}. They impose additional relations on the support of the Young measure and provide criteria under which the nonlinear Euler stress is represented by a single finite-energy fluid state rather than by a non-Dirac probability measure. The genuine nonlinearity condition of the pressure law arises in this context in order to apply Tartar's compensated compactness arguments, but it is not required for the stability estimates and the Young measure representation established in the present paper.


\subsubsection{Concentration}

The generalized Young measure may also retain the effect of quadratic oscillation and concentration in the first-order geometric variables. Their contributions to the Einstein equations, leading to the notion of \emph{stress-energy correctors} in the sense introduced in~\cite{LeFlochLeFloch-1,LeFlochLeFloch-port}, are analyzed in the companion papers~\cite{TorusT2-1-2026,TorusT2-4-2026}. Therein, we provide conditions under which the averaged nonlinear terms can be identified strongly and the limiting flow reduces to an (uncorrected) Einstein--Euler areal flow.


\subsubsection{Other symmetry classes and foliations} 

The estimates proved here rely on the area of the torus orbits as a global time function and on the balance laws exhibited by the \JKL{} formulation. The present stability results identify the quadratic energy, bounded-variation, and mathematical entropy estimates. A natural problem is to determine which parts of this structure persist in other foliations by spacelike hypersurfaces or under other symmetry assumptions.  


\phantomsection
\subsection*{Acknowledgments}

We acknowledge financial support from the Simons Center for Geometry and Physics, Stony Brook University, where part of this research was carried out during a visit in 2019. Part of this work was also completed while the second author (PLF) was visiting the Mittag-Leffler Institute during the Semester Program ``General Relativity, Geometry, and Analysis: beyond the first 100 years after Einstein''. During the completion of this project, the authors were  partially supported by the research project ANR-23-CE40-0010-02: Einstein-PPF, funded by the Agence Nationale de la Recherche, and by the ERC-MSCA Staff Exchange Project 101131233: Einstein-Waves, funded by the European Research Council. 
 

\phantomsection
\pdfbookmark[0]{References and appendices}{Appendix-target}


\appendix

\section{Derivation of the \JKL{} formulation}

\label{section-A}

\subsection{Einstein evolution and constraints}
\label{section-A-1} 

\subsubsection{Einstein evolution equations}

We explain here which Ricci components produce the evolution equations, which produce the constraints, and how the Euler equations follow from the contracted Bianchi identity.  We thus proceed to compute suitably chosen linear combinations of the $10$ frame components~\eqref{eq:Einsframe} in terms of the geometry variables~\eqref{eq:definevar}. Together with two compatibility equations, we obtain a total of $12$ equations, as follows. 
\bei

\item \emph{Evolution of the first-order geometry.} 
Using our notation, after elementary calculations, the evolution equations for $\Lbb= \Pbb+i\,\Qbb$, with $\Pbb = (P_0, P_1)$ and $\Qbb = (Q_0, Q_1)$, are given by~\eqref{eq:T2-1234-def}, as stated previously, where the operators were defined in~\eqref{equa-div-curl}. The two divergence equations are the components $R_{22} - R_{33}$ and $2\, R_{23}$ of the Einstein equations, while the curl equations are compatibility conditions satisfied by the derivatives of $P$ and~$Q$ and are needed to reach a closed evolution system for the first-order unknowns.

\item \emph{Evolution of the conformal length density.}  
Notably, in our formulation, only the equations governing the quotient geometry depend explicitly on the equation of state through the definition~\eqref{eq:T2-Mdef-0} of the tensor $\Mbf$. The component $R_{22}+R_{33}$ of the Einstein equations is found to be as stated\footnote{Since $\del_t = \Omega e_0$ and $\del_x= \Omega\amdeux e_1 + G\del_y+H\del_z$, the differential equations in \eqref{eq:T2-all-suite-def}--\eqref{eq:theconstraints-def} could equivalently be expressed in terms of the frame derivatives $e_0$ and $e_1$. We prefer not to adopt this notation here.} in \eqref{eq:T2-8-def}. The function $\amdeux$ determines the conformal geometry on the quotient manifold $\Ncal = \Mcal/\Tbb^2$, and will play a central role when analyzing the global foliation. As proven in \autoref{section-8}, equation~\eqref{eq:T2-8-def} controls the volume of spacelike slices up to the multiplicative factor $|t|\Omega$.

\item \emph{Evolution of the lapse.} 
The metric coefficient $\Omega$ is the lapse function in the metric decomposition~\eqref{metric:areal}, and its time derivative is given by the Ricci component $R_{00}+R_{11}-R_{22}-R_{33}$ of the Einstein equations, as stated in \eqref{eq:evollambda-def}. 

\item \emph{Evolution of the twists.} 
The components $R_{12}$ and $R_{13}$ provide the time derivatives of the twist coefficients $K_2, K_3$, namely the differential equations \eqref{eq:T2-9101112-evol-def-a}--\eqref{eq:T2-9101112-evol-def-b}. 

\item \emph{Additional equation for the geometry.} 
Finally, the component $R_{00}-R_{11}+R_{22}+R_{33}$ provides another geometric equation (for the lapse), but its statement is postponed to~\eqref{eq:waveconffac}, below, as it is \emph{not} included in our first-order formulation.
\eei 


\subsubsection{Einstein constraint equations}

We derive separately the remaining equations for the geometry, which are automatically satisfied once they hold on any given hypersurface of constant areal time.
\bei 

\item \emph{Constraint on the lapse.} While time derivatives of $\amdeux$ and $\Omega$ obey the equations~\eqref{eq:T2-8-def} and~\eqref{eq:evollambda-def}, respectively, only the space derivative of $\Omega$ obeys an equation. The component $R_{01}$ of the Einstein equation relates this derivative to an energy flux term, as stated in \eqref{eq:T2-567-def}. 

\item \emph{Constraints on the twists.} The components $R_{02}$ and $R_{03}$ of the Einstein equations give the space derivatives of the twists as stated in \eqref{eq:221a}--\eqref{eq:221b}. 
\eei 


\subsection{Lapse and matter identities}
\label{section-A-2}

\subsubsection{Euler equations}

At this stage, we have (by including~\eqref{eq:waveconffac} stated below) twelve equations corresponding to the ten components of the Einstein equations, supplemented with the two compatibility equations for $\curl_\Ncal \Pbb$ and $\curl_\Ncal \Qbb$ appearing in~\eqref{eq:T2-1234-def}. In the regular non-vacuum regime, the contracted Bianchi identity yields the Euler equations from this Einstein system. However, in our first-order presentation, it is important to express these Euler equations explicitly, and to further rearrange the non-linearities. We distinguish between the transverse and parallel components. The propagation equations for the transverse momentum \eqref{eq:T2-Euler-perp-def-a}--\eqref{eq:T2-Euler-perp-def-b} are \emph{coupled} to the remaining Euler equations \eqref{eq:T2-fluid23-def-a}--\eqref{eq:T2-fluid23-def-b}.
\bei

\item \emph{Evolution of the transverse momentum.} We find \eqref{eq:T2-Euler-perp-def-a}--\eqref{eq:T2-Euler-perp-def-b} stated earlier. Here, $\Mbf^{m\bullet} = (\Mbf^{m0},\Mbf^{m1})$ (for $m=0,1$) is the vector with components $(\Mbf^{m0},\Mbf^{m1})$ defined in~\eqref{eq:T2-Mdef-0}, $\divuntrois$ is the divergence of this vector in the sense~\eqref{equa-def-div13}, and the source term is given in \eqref{eq:T2-Euler-perp2-def0} and obeys the inequality $\abs{\Msource}\leq 5 \, \Mbf^{00}$ (proven in~\autoref{section-A-5}, below). Interestingly, the first fluid equation coincides with the wave equation~\eqref{eq:waveconffac} below,
whereas the second equation is nothing but the compatibility equation $\bigl(\log \Omega \bigr)_{xt} - \bigl(\log \Omega \bigr)_{tx} = 0$, in which we replaced the first-order derivatives of the lapse by energy and energy fluxes (namely, using~\eqref{eq:evollambda-def} and~\eqref{eq:T2-567-def}). 

\item \emph{Evolution of the parallel momentum.} The remaining two fluid equations are deduced from the compatibility equation $\bigl( |t|^{3/2} K_2 \bigr)_{xt} - \bigl( |t|^{3/2} K_2 \bigr)_{tx} = 0$ and the analogue for $K_3$. Namely, from~\eqref{eq:221a}--\eqref{eq:221b} and~\eqref{eq:T2-9101112-evol-def-a}--\eqref{eq:T2-9101112-evol-def-b} we obtain  \eqref{eq:T2-fluid23-def-a}--\eqref{eq:T2-fluid23-def-b}.  
Since $P_0, P_1, K_2, K_3$ and the lapse and conformal length density already have their own evolution equations, it is natural to view~\eqref{eq:T2-fluid23-def-a}--\eqref{eq:T2-fluid23-def-b} as evolution equations for the parallel momentum $(J_2,J_3)$ rather than for the orthogonal momentum~$\Jperp$. 
\eei


\subsubsection{The remaining Einstein equation: wave equation for the lapse}

The identity displayed below is obtained from the Einstein equations by taking the Ricci combination $R_{00}-R_{11}+R_{22}+R_{33}$.  As we now explain, it can be derived from the other Einstein equations and the Euler equations.  Thus, it should not be regarded here as an additional equation to be imposed independently of the first-order system.
The identity reads\footnote{We state~\eqref{eq:waveconffac} in a form that remains meaningful in the sense of distributions when $(\log\Omega)_t$, $\amdeux_t/\amdeux$, and $\Omega_x/\amdeux$ are merely integrable functions or measures, as occurs for finite-energy solutions.}
\bel{eq:waveconffac}
\divuntrois \Bigl( t^{-1}  \Omega^{-1} (\log(\Omega \amdeux) )_t, \ t^{-1}  \Omega^{-1} \amdeux^{-1} (\log\Omega)_x \Bigr)
+ \frac{1}{4t^3} \Omega^{-2} - \frac{1}{4t} \, \Mwave(\Jbb, \Kpar, \Lbb) = 0 ,
\ee
with 
\bel{eq:waveconffac-2}
\aligned
\Mwave(\Jbb, \Kpar, \Lbb)
& \coloneqq 6 \, \Ebf_0(\Kpar) + 2 \, \Ebf_0(\Jpar) - \Pbb \cdot \Pbb - \Qbb \cdot \Qbb - \Jperp \cdot \Jperp. 
\endaligned
\ee 
Here, the divergence term is the divergence of the one-form
\be
t^{-1} \bigl(d\log\Omega + e_0(\log \amdeux)e^0\bigr), 
\ee
whose components in the Vielbein basis $e_0,e_1$ are $t^{-1}  \Omega^{-2} \amdeux^{-1} \bigl( (\Omega \amdeux )_t, \Omega_x \bigr)$.

To verify both the sign and the geometric factors, let $\mathcal W$ denote the left-hand side of~\eqref{eq:waveconffac}. We expand the divergence with~\eqref{equa-divN}--\eqref{equa-def-div13} and substitute the two first-order lapse equations~\eqref{eq:evollambda-def} and~\eqref{eq:T2-567-def}. The remaining terms regroup as
\be
\mathcal W=-\frac{1}{2}\left(\divuntrois\bigl(\Omega\Mbf^{0\bullet}\bigr)+\frac{1}{2t}\Msource\right)=-\frac{1}{2}\opJ^0_{\chi,\eps}.
\ee
Thus~\eqref{eq:waveconffac} is equivalent (with a $-1/2$ factor) to the first Euler equation~\eqref{eq:T2-Euler-perp-def-a}. We separate it from the main first-order system~\eqref{eq:T2-1234-def}--\eqref{eq:T2-all-suite-def} because it is not an independent equation. In vacuum it follows from the other Einstein equations; with matter, its weak equality or inequality is determined by the selected mass-energy mathematical entropy formulation.


\subsection{Principal symbol and characteristic fields}
\label{section-A-3}

\subsubsection{Wave speeds}

This appendix supplies the characteristic and constraint-propagation calculations relative to  Propositions~\ref{theo-struct} and~\ref{theo-struct-bis}.  

The expressions of the wave speeds are obvious for the geometry variables, while concerning the fluid variables for sufficiently regular solutions we can write (modulo geometric and lower-order terms denoted by $\geo$ and $\lot$, respectively) 
\bel{equa-euleronly}
\aligned
\Bigl( \Ebf_0(\Jperp) + \Ebf_0(\Jpar) - \frac{q_\Jbb}{2}  \, \Jbb \cdot \Jbb \Bigr)_t
- \amdeux^{-1} \, \Bigl( \Ebf_1(\Jperp) \Bigr)_x
& = \geo + \lot,
\\
\Bigl( \Ebf_1(\Jperp) \Bigr)_t
- \amdeux^{-1} \, \Bigl( \Ebf_0(\Jperp) - \Ebf_0(\Jpar) + \frac{q_\Jbb}{2}  \, \Jbb \cdot \Jbb \Bigr)_x
& = \geo + \lot,
\endaligned
\ee
coupled with~\eqref{eq:T2-fluid23-def-a}--\eqref{eq:T2-fluid23-def-b} which can be expanded similarly. Here, the constitutive function $q$ only depends on the squared norm 
\be
y\coloneqq - \Jbb \cdot \Jbb = J_0^2 - J_1^2 - J_2^2 - J_3^2 = 2 (\mu + p(\mu)).
\ee
Recall that $y \geq 0$ and $J_0 \leq 0$, while $\Ebf_0(\Jperp) = (J_0^2 + J_1^2)/2$ and $\Ebf_0(\Jpar) = (J_2^2 + J_3^2)/2$.

Focusing on the principal part only, we obtain a system of four coupled partial differential equations:  
\bel{equa-Euler-Minko}
\begin{aligned} 
\frac{1}{2} \bigl( J_0^2 + J_1^2 + J_2^2 + J_3^2 + y q \bigr)_t
- \amdeux^{-1} ( J_0 J_1 )_x
& = 0, \quad
& ( J_0 J_2 )_t - \amdeux^{-1} ( J_1 J_2 )_x
& = 0,
\\
( J_0 J_1 )_t - \amdeux^{-1} \, \frac{1}{2} \bigl( J_0^2 + J_1^2 - J_2^2 - J_3^2 - y q \bigr)_x
& = 0,
& ( J_0 J_3 )_t - \amdeux^{-1} ( J_1 J_3 )_x
& = 0.
\end{aligned}
\ee 
We can write this system in the quasilinear form $A_0 \Jbb_t - \amdeux^{-1} A_1 \Jbb_x =0$ (for sufficiently regular solutions) with $A_0$ and $A_1$ given below in terms of
\be
(y q)' \coloneqq \frac{d(y q)}{dy}
= \frac{1-p'(\mu)}{1+p'(\mu)}
= \frac{1-k(\mu)^2}{1+k(\mu)^2}
\ee
in terms of the sound speed $k(\mu) = \sqrt{p'(\mu)}$.
The hyperbolicity conditions~\eqref{hyperbolic-eos} imply that $0<k(\mu)<1$ away from vacuum, namely $0<(y q)'<1$. Explicitly, the operators are
\be
\aligned
A_0 & \coloneqq
\begin{pmatrix}
(1+ (yq)') J_0 
& (1- (yq)') J_1
& (1- (yq)') J_2
& (1- (yq)') J_3
\\
J_1& J_0 & 0 & 0
\\
J_2 & 0 & J_0 & 0 
\\
J_3 & 0 & 0 & J_0 
\end{pmatrix}, 
\\
A_1 & \coloneqq 
\begin{pmatrix}
J_1& J_0 & 0 & 0
\\
(1- (yq)') J_0 & (1+(yq)') J_1 & (-1+(yq)') J_2 & (-1+(yq)') J_3
\\
0 & J_2 & J_1 & 0 
\\
0 & J_3 & 0 & J_1 
\end{pmatrix}.
\endaligned
\ee
The wave speeds are easily computed as the (real) roots $\xi$ of the polynomial $\det(\xi A_0 + \amdeux^{-1} A_1)$, namely the determinant of the matrix
\be
{\setlength{\arraycolsep}{3pt}\begin{pmatrix}
\xi (1+(yq)') J_0 + \amdeux^{-1} J_1 
& \xi (1-(yq)') J_1 + \amdeux^{-1} J_0 
& \xi (1-(yq)') J_2
& \xi (1-(yq)') J_3
\\
\xi J_1 + \amdeux^{-1} (1-(yq)') J_0 & \xi J_0 + \amdeux^{-1} (1+(yq)') J_1 & \amdeux^{-1} (-1+(yq)') J_2 & \amdeux^{-1} (-1+(yq)') J_3
\\
\xi J_2 & \amdeux^{-1} J_2 & \xi J_0 + \amdeux^{-1} J_1 & 0 
\\
\xi J_3 & \amdeux^{-1} J_3 & 0 & \xi J_0 + \amdeux^{-1} J_1
\end{pmatrix}},
\ee
which are $- \amdeux^{-1} J_1/J_0$ (double) and two other roots
\be
\aligned
& \xi_\pm \coloneqq
 - \amdeux^{-1} \frac{(1 - k^2) J_0 J_1  \pm k \sqrt{- \Jbb \cdot \Jbb} \sqrt{J_0^2 - J_1^2 - k^2 (J_2^2 + J_3^2)}}
{J_0^2 - k^2 (J_1^2 + J_2^2 + J_3^2)}.
\endaligned
\ee
Altogether the twelve wave speeds for $(\Pbb,\Qbb,\Kpar,\Omega,\amdeux,\Jbb)$ are (with the future convention $J_0<0$) $\pm \amdeux^{-1}$ (double each), $0$ (quadruple), 
$- \amdeux^{-1} J_1/J_0$ (double), and~$\xi_{\pm}$. The two acoustic speeds are real and distinct from each other and from the matter speed when $0<k<1$ and $-\Jbb\cdot\Jbb>0$. They remain strictly inside the geometric light cone. They may coincide with the zero-speed geometric block at isolated states, as may the matter speed; such coincidences occur between decoupled principal blocks and do not destroy diagonalizability. The system admits a full basis of eigenvectors and is therefore strongly hyperbolic on the non-vacuum state cone. 

\subsubsection{Non-hyperbolicity at vacuum}

In fact, at a non-zero null state the square-root term in $\xi_\pm$ vanishes and $J_0^2=J_1^2+J_2^2+J_3^2$, so
\bel{eq:null-speed-coal}
\xi_+=\xi_-=-\amdeux^{-1}\frac{J_1}{J_0}.
\ee
All four fluid speeds therefore coalesce at the null boundary. The timelike diagonalization is not uniform there, and diagonalizability may be lost. The zero current itself has no preferred fluid direction, so it is treated only as a continuous boundary state of the balance laws. When, in addition, $p'(0)=0$, the degeneration is explicit: for every non-zero null state the Euler equations~\eqref{equa-Euler-Minko} become
\bel{equa-Euler-Minko-trivial}
\begin{aligned} 
 \bigl(  J_0 J_0\bigr)_t
- \amdeux^{-1} ( J_0 J_1 )_x
& = 0, \quad
& ( J_0 J_2 )_t - \amdeux^{-1} ( J_1 J_2 )_x
& = 0,
\\
( J_0 J_1 )_t - \amdeux^{-1} \, \bigl(  J_1^2 \bigr)_x
& = 0,
& ( J_0 J_3 )_t - \amdeux^{-1} ( J_1 J_3 )_x
& = 0.
\end{aligned}
\ee
All four wave speeds are equal to $- \amdeux^{-1} J_1/J_0$, but the system fails to be diagonalizable; the flux Jacobian has a nontrivial Jordan structure, hence the system is not symmetrizable and not a well-posed first-order hyperbolic system in the standard sense.

Note that $\hNumb(\mu)\to 0$ as $\mu\to 0$ by the asymptotics~\eqref{near-vacuum-Numb}.
If the parallel momentum per particle $\widehat J_m$ is assumed to be bounded, as holds for finite-energy solutions, then $J_m = \hNumb(\mu) \widehat J_m$ vanishes at vacuum and the system reduces formally to a transport equation at light speed. Indeed, on either formal null branch $J_1=\pm J_0$ with $J_2=J_3=0$, system~\eqref{equa-Euler-Minko-trivial} reduces to the two equations
\bel{equa-Euler-Minko-trivial-2}
\begin{aligned} 
 \bigl(  J_0 J_0\bigr)_t
- \amdeux^{-1} ( J_0 J_1 )_x
& = 0, \quad
& ( J_0 J_1 )_t - \amdeux^{-1} ( J_1 J_1 )_x
& = 0. 
\end{aligned}
\ee 
The two displayed equations coincide on each branch $J_1= \pm J_0$ since $J_0^2=J_1^2$.  


\subsection{Propagation of the constraints}
\label{section-A-4}

\subsubsection{Twist constraints}
\bse
The propagation of constraints~\eqref{eq:theconstraints-def} on $\log\Omega, K_2, K_3$ is now checked. Consider first the functions
\bel{opL11-and-14}
\aligned
{ |t|^{3/2} \opK_{20}} & = \bigl(  |t|^{3/2} K_2 \bigr)_t - |t|^{3/2} \Omega \bigl( J_1 J_2 -  P_0 K_2/2 \bigr), 
\\
|t|^{3/2} \opK_{21} & = \bigl( |t|^{3/2} K_2 \bigr)_x - |t|^{3/2} \Omega \amdeux  ( J_0 J_2 - P_1 K_2 / 2),
\endaligned
\ee
which vanish precisely when the evolution equation~\eqref{eq:T2-9101112-evol-def-a} and the constraint equation for~$K_2$ hold, respectively. The Euler equation~\eqref{eq:T2-fluid23-def-a} for~$J_2$ identifies derivatives of the second terms in~\eqref{opL11-and-14}, from which we find
\be
\aligned
\bigl( |t|^{3/2} \opK_{21} \bigr)_t
& = \bigl(  |t|^{3/2} K_2 \bigr)_{xt} 
- \Bigl( |t|^{3/2} \Omega \amdeux  \bigl( J_0 J_2 - P_1 K_2 / 2 \bigr) \Bigr)_t 
\\
& = \bigl(  |t|^{3/2} K_2 \bigr)_{tx} - \Bigl( |t|^{3/2} \Omega \bigl( J_1 J_2 - P_0 K_2 / 2 \bigr) \Bigr)_x
= { \bigl( |t|^{3/2} \opK_{20} \bigr)_x} = 0.
\endaligned
\ee
Since $\opK_{21}$ vanishes identically on one slice, it must vanish for all times. The same argument applies to the constraint for $K_3$, using the Euler equation~\eqref{eq:T2-fluid23-def-b} for $J_3$ and evolution equation~\eqref{eq:T2-9101112-evol-def-b} for~$K_3$.

\subsubsection{Lapse constraint}

On the other hand, by considering
\be
2 \opO_1  = \bigl( \log \Omega^2\bigr)_x
- t \, \Omega^2 \amdeux  \bigl( \Ebf_1(\Pbb) + \Ebf_1(\Qbb) + \Ebf_1(\Jperp) \bigr), 
\ee
we can verify the constraint~\eqref{eq:T2-567-def} on the lapse. Using the evolution equation~\eqref{eq:evollambda-def}
we obtain
\be
\aligned
(2 \opO_1 )_t 
& = \Bigl( t \, \Omega^2 \amdeux \Mbf^{01}(\Jbb,\Kpar,\Pbb,\Qbb) \Bigr)_t
+ \bigl( \log \Omega^2\bigr)_{xt}
\\
& = \Bigl( t \, \Omega^2 \amdeux  \Mbf^{01}(\Jbb,\Kpar,\Pbb,\Qbb) \Bigr)_t
+ \Bigl( t \, \Omega^2 \Mbf^{11}(\Jbb,\Kpar,\Pbb,\Qbb) - \frac{1}{2t}\Bigr)_x
= 0, 
\endaligned
\ee
as it coincides with the second Euler equation in~\eqref{eq:T2-Euler-perp-def}. Therefore $\opO_1 $ is constant in time and thus vanishes for all times if it vanishes at some initial time. This completes the proof of Propositions~\ref{theo-struct} and~\ref{theo-struct-bis}. 
\ese
%


\subsection{A technical inequality on the source term}
\label{section-A-5}

\subsubsection{General geometry}

We claim that the source term of the Euler equations satisfies 
\be
\abs{\Msource}\leq 5 \, \Mbf^{00}. 
\ee
\bse
Indeed, by the elementary inequality $\bigl|X_0^2-X_1^2\bigr|\leq X_0^2+X_1^2=2\,\Ebf_0(\Xbb)$, valid for any $\Xbb=(X_0,X_1)$, we have
\be
|\Pbb\cdot\Pbb|\leq 2\,\Ebf_0(\Pbb),\qquad
|\Qbb\cdot\Qbb|\leq 2\,\Ebf_0(\Qbb),\qquad
|\Jperp\cdot\Jperp|=-\Jperp\cdot\Jperp\leq 2\,\Ebf_0(\Jperp),
\ee
since $-\Jperp\cdot\Jperp\ge0$. Using $\Jbb\cdot\Jbb\le0$ and $q_\Jbb\in(0,1)$, we obtain 
\be
\aligned
|\Msource|
& \leq \Ebf_0(\Jpar) + 5\,\Ebf_0(\Kpar)+2\,\Ebf_0(\Pbb)+2\,\Ebf_0(\Qbb)+2\,\Ebf_0(\Jperp)
   +\frac{1}{2} q_\Jbb\,(-\Jbb\cdot\Jbb)
\\
& \leq 5\Bigl(\Ebf_0(\Jpar,\Kpar)+\Ebf_0(\Pbb,\Qbb,\Jperp)
      +\frac{1}{2} q_\Jbb\,(-\Jbb\cdot\Jbb)\Bigr)
=5\,\Mbf^{00}.
\endaligned
\ee
\ese
\subsubsection{Gowdy improvement}

In the Gowdy case, since $\Kpar=0$ and $\Jpar=0$, the source term simplifies and the above estimate can be improved. More precisely, in the isothermal Gowdy-symmetry case we obtain
\bel{equa-Gowdy facteur2}
|\Msource|\leq 2\,\Mbf^{00} \quad \text{(isothermal, Gowdy-symmetry).} 
\ee

\section{Traces on spacelike and timelike hypersurfaces}
\label{section-C}

\subsubsection{Integrated inequalities between spacelike hypersurfaces}

The estimates in Sections~\ref{section-7} and \ref{section-8} were first derived for regular flows, but their integrated form only uses the divergence of an integrable current. We use the normal-trace theory for divergence-measure fields in the elementary codimension-one form needed below. Let $s$ be a torus-invariant Lipschitz time function with spacelike level hypersurfaces $\Sigma_s$, let $n_s$ be their future unit normal, and let $\mathcal D_{s_0,s_1}$ denote the spacetime region between two levels. For a future-directed current $\mathbb Q\in L^1_{\rm loc}$ whose distributional divergence is a Radon measure, its normal flux is defined for almost every level by\footnote{This definition applies to the areal foliation but is not restricted to it.}
\bel{eq:weak-normal-flux}
\mathfrak H_{\mathbb Q}(s)
\coloneqq
\int_{\Sigma_s}-\mathbb Q^\alpha(n_s)_\alpha\,d\Sigma_s.
\ee
The following observation explains how such \emph{traces on spacelike hypersurfaces} are defined.
These results apply both to the mass-energy and the particle-number currents for finite-energy Einstein--matter areal flows.

\begin{lemma}[Traces on spacelike hypersurfaces]
\label{prop-weak-integrated-current}
Let $\mathbb Q\in L^1_{\rm loc}$ be torus invariant and suppose that
\bel{eq:weak-current-production}
\nabla_\alpha\mathbb Q^\alpha=-\lambda+r\,dV_g,
\qquad
\lambda\in\Meas^+_{\rm loc},
\qquad
r\in L^1_{\rm loc}.
\ee
Then the almost-everywhere function in~\eqref{eq:weak-normal-flux} admits a right-continuous representative of locally bounded variation. For every pair of continuity levels $s_0<s_1$, one has
\bel{eq:weak-Gauss-Green}
\mathfrak H_{\mathbb Q}(s_1)
+\lambda(\mathcal D_{s_0,s_1})
=\mathfrak H_{\mathbb Q}(s_0)
+\int_{\mathcal D_{s_0,s_1}}r\,dV_g.
\ee
In particular, if $r=0$, the normal flux~$\mathfrak H_{\mathbb Q}$ is non-increasing toward the future. The same identity holds between two torus-invariant spacelike Lipschitz hypersurfaces, with the normal fluxes defined by one-sided approximation; these traces do not depend on the Lipschitz time function used between the hypersurfaces.
\end{lemma}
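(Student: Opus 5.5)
We prove the lemma by integrating the divergence identity against test functions that depend only on the time function $s$, which reduces the statement to a one-dimensional fact about a function whose distributional derivative is a signed measure. The Lipschitz time function $s$ has spacelike levels, and every quantity is torus invariant. We may therefore use coordinates $(s,x,y,z)$ on the slab. In these coordinates $\dVuntrois$ factors as $d\Sigma_s$ times a positive bounded weight (the lapse of $s$) times $ds$, and the contraction $-\mathbb Q^\alpha(n_s)_\alpha$ times that weight equals $-\mathbb Q^\alpha\partial_\alpha s$ up to the same weight. We take test functions of the form $\varphi(s)$ with $\varphi\in C^\infty_c$, extended smoothly (by periodicity) across the compact orbits and the $x$-circle. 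Then the weak form of \eqref{eq:weak-current-production} reads
\[
-\int \mathbb Q^\alpha \partial_\alpha\varphi \, dV_g=-\int\varphi\,d\lambda+\int \varphi\, r\,dV_g .
\]
By Fubini and the coarea factorization, $\int\mathbb Q^\alpha\partial_\alpha\varphi\,dV_g=-\int\varphi'(s)\,\mathfrak H_{\mathbb Q}(s)\,ds$, and this is well defined for almost every $s$ because $\mathbb Q\in L^1_{\rm loc}$. Hence, in $\mathcal D'$ of the $s$-line,
\[
\frac{d}{ds}\mathfrak H_{\mathbb Q}=-\pi_*\lambda+\pi_*(r\,dV_g),
\]
where $\pi_*$ denotes the pushforward under $s$. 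The right-hand side is a locally finite signed Radon measure on $\RR$.

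A distribution on an interval whose derivative is a Radon measure $\nu$ equals, almost everywhere, $c+\nu((s_*,s])$ for a constant $c$. This gives the right-continuous representative of locally bounded variation. Its jumps occur exactly at atoms of $\pi_*\lambda$, that is, at levels carrying positive $\lambda$-mass. At two continuity levels $s_0<s_1$, evaluating this representative gives \eqref{eq:weak-Gauss-Green} with $\mathcal D_{s_0,s_1}=s^{-1}((s_0,s_1])$; since neither endpoint is an atom, it does not matter whether the endpoints are included. When $r=0$ the derivative is $-\pi_*\lambda\le0$, so $\mathfrak H_{\mathbb Q}$ is non-increasing. Future-directedness of $\mathbb Q$ is not needed for the identity. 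It only makes $\mathfrak H_{\mathbb Q}\ge0$, which is what the applications use.

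For two torus-invariant spacelike Lipschitz hypersurfaces $\Sigma^0,\Sigma^1$, we choose a Lipschitz time function whose levels interpolate between them, for instance a convex combination of their graph functions over the quotient circle. We then define each normal flux as the limit of $\mathfrak H_{\mathbb Q}(s)$ as $s$ approaches the hypersurface from the appropriate side; this one-sided limit exists because the function has bounded variation.

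The step I expect to be the main obstacle is independence of the trace from the chosen time function. Take two such functions $s,\tilde s$ sharing the hypersurface $\Sigma^0$, and let $\varphi_\epsilon$ be cut-offs rising from $0$ to $1$ in a layer of width $\epsilon$ beyond $\Sigma^0$. Testing the divergence identity against $\varphi_\epsilon\circ s-\varphi_\epsilon\circ\tilde s$ gives a difference of averaged fluxes. The measure terms converge to the mass of $\lambda$ and of $r\,dV_g$ in the thin region between the two layers, and this region shrinks to a subset of $\Sigma^0$ not containing it, so these terms tend to zero. I would handle this with a Chen--Frid type normal-trace argument restricted to one space dimension, using that both families of levels are uniformly spacelike and Lipschitz-close to $\Sigma^0$.
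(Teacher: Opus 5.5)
Your proposal is essentially the paper's proof, written out in more detail. Both arguments test with functions of $s$ and use the coarea formula to identify $\frac{d}{ds}\mathfrak H_{\mathbb Q}$ with the pushforward of $-\lambda+r\,dV_g$. Both then take the right-continuous $BV_{\rm loc}$ representative and compare time functions on a thin region next to the common hypersurface.

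Your computation with $\varphi_\epsilon\circ s-\varphi_\epsilon\circ\tilde s$ already settles the step you call the main obstacle. The two averaged fluxes converge to the one-sided limits by the $BV$ property. The measure terms vanish because their supports decrease to the empty set, so even mass of $\lambda$ carried by $\Sigma^0$ itself does not contribute. No Chen--Frid machinery is needed.

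There is one sign slip to fix.
\begin{itemize}
\item For $s$ increasing toward the future, the future unit normal is $(n_s)_\alpha=-\partial_\alpha s/\sqrt{-g^{-1}(ds,ds)}$. Hence $\int\mathbb Q^\alpha\partial_\alpha\varphi\,dV_g=+\int\varphi'(s)\,\mathfrak H_{\mathbb Q}(s)\,ds$.
\item With the minus sign you wrote, the weak form would give $\frac{d}{ds}\mathfrak H_{\mathbb Q}=\pi_*\lambda-\pi_*(r\,dV_g)$. It would also give $\mathfrak H_{\mathbb Q}\le 0$ for future-directed $\mathbb Q$, which contradicts your later remark.
\item So your final formula for the derivative is correct, but it does not follow from your intermediate identity as written. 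This matters here, since the direction of monotonicity is part of the statement.
\end{itemize}

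A minor caveat on the interpolation: a convex combination of two spacelike graphs $t=f_i(x)$ need not be spacelike. Spacelikeness of $t=f(x)$ requires $|f'(x)|$ to be smaller than the conformal length density evaluated at $(f(x),x)$, and that density depends on $t$. The paper does not construct an interpolating Lipschitz time function either; it takes one as given.
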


\begin{proof}
We test~\eqref{eq:weak-current-production} with functions of the Lipschitz time variable~$s$. The coarea formula identifies the distributional derivative of $\mathfrak H_{\mathbb Q}$ with the finite signed measure obtained by pushing forward $-\lambda+r\,dV_g$. Hence $\mathfrak H_{\mathbb Q}$ has a right-continuous $\BV_{\rm loc}$ representative. Approximation of the characteristic function of $(s_0,s_1)$ gives~\eqref{eq:weak-Gauss-Green}. Applying the same identity to the thin region between two nearby level hypersurfaces proves that the traces are \emph{independent} of the interpolating time function.
\end{proof}


\subsubsection{Traces on timelike hypersurfaces}

We next define traces on timelike hypersurfaces.  This is expressed on the full interval $(t_0,t_1)$ but applies likewise to subintervals.

\begin{lemma}[Traces on timelike hypersurfaces]
\label{prop:time-traces}
Let $I=(t_0,t_1)$ and let $A,B\in L^1(I\times\Sbb^1)$ satisfy
\bel{eq:rect-bal}
\del_tA+\del_xB=\mu
\ee
in distributions, where $\mu$ is a finite Radon measure. For every $\varphi\in C_c^1(I)$, the function
\bel{eq:rect-Bphi}
x\longmapsto B_\varphi(x)
\coloneqq
\int_I B(t,x)\varphi(t)\,dt
\ee
has a $\BV(\Sbb^1)$ representative. Its precise representative defines a distributional normal trace $\operatorname{Tr}_xB$ on every timelike line $x=\mathrm{const}$. If $B\geq0$, these traces are non-negative Radon measures in~$t$. If, in addition, $A$ has the one-sided traces on spacelike hypersurfaces supplied by \autoref{prop-weak-integrated-current}, then
\bel{eq:rect-trace-bound}
\sup_{x\in\Sbb^1}\|\operatorname{Tr}_xB\|_{\Meas(I)}
\leq C\left(
\|B\|_{L^1(I\times\Sbb^1)}
+\|A(t_0+)\|_{L^1(\Sbb^1)}
+\|A(t_1-)\|_{L^1(\Sbb^1)}
+|\mu|(I\times\Sbb^1)
\right),
\ee
where $C$ depends only on the period of the spatial circle. The same conclusion holds for a weighted balance law after its positive weights have been included in $A$ and~$B$.
\end{lemma}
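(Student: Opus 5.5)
The plan is to reduce the statement to a one-dimensional fact about functions of bounded variation in $x$, obtained by integrating the balance law~\eqref{eq:rect-bal} against a product test function. Fix $\varphi\in C_c^1(I)$ and test~\eqref{eq:rect-bal} with $\varphi(t)\psi(x)$, where $\psi\in C^1(\Sbb^1)$. This gives
\[
-\int_{\Sbb^1}\psi'(x)\,B_\varphi(x)\,dx
=\int_{\Sbb^1}\psi(x)\Bigl(\int_I A(t,x)\varphi'(t)\,dt\Bigr)dx+\int\varphi\psi\,d\mu .
\]
Hence the distributional derivative of $B_\varphi$ on the circle is the finite measure $-A_{\varphi'}\,dx-(\pi_x)_\sharp(\varphi\mu)$, where $A_{\varphi'}(x)=\int_I A\varphi'\,dt$ and $\pi_x$ denotes the projection onto the spatial circle. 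Its total mass is at most $\|\varphi'\|_{L^\infty}\|A\|_{L^1}+\|\varphi\|_{L^\infty}|\mu|(I\times\Sbb^1)$. Since $B_\varphi\in L^1(\Sbb^1)$, it follows that $B_\varphi\in\BV(\Sbb^1)$ and has a precise representative defined at every point $x$.

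Next, I would define $\operatorname{Tr}_xB$ as the linear functional $\varphi\mapsto B_\varphi^{\rm prec}(x)$. Linearity in $\varphi$ is clear, since the map $\varphi\mapsto B_\varphi$ is linear and precise representatives are linear. Continuity in $\varphi$ is the delicate point. The previous bound involves $\|\varphi'\|$, so it only shows that $\operatorname{Tr}_xB$ is a distribution of order one. When $B\geq0$ and $\varphi\geq0$, one has $B_\varphi\geq0$ almost everywhere, so the precise representative is non-negative at every point. A non-negative distribution is a Radon measure, which gives the positivity claim.

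For the uniform bound~\eqref{eq:rect-trace-bound}, I would sharpen the $\BV$ estimate so that it involves only $\|\varphi\|_{L^\infty}$, not $\varphi'$. The idea is to integrate by parts in $t$ using the spacelike traces of \autoref{prop-weak-integrated-current}. Taking $\varphi$ close to the indicator function $\mathbf 1_{(t_0,t_1)}$, the term $\int A\varphi'$ becomes $\int_{\Sbb^1}\bigl(A(t_0+)-A(t_1-)\bigr)\psi\,dx$. More generally, I would treat it via the BV-in-time structure of $\int_{\Sbb^1}A\psi$, whose time derivative equals $\int B\psi'\,dx$ plus a measure. Concretely:
\begin{itemize}
\item Write $\varphi=\varphi^+-\varphi^-$, so it suffices to bound $B_{\mathbf 1_J}$ for subintervals $J\subset I$.
\item For these, the derivative in $x$ is controlled by $\|A(t_j\pm)\|_{L^1}+|\mu|$, using the spacelike traces at the endpoints of $J$.
\item Combine this with the average $\int_{\Sbb^1}|B_{\mathbf 1_J}|\,dx\leq\|B\|_{L^1}$.
\item Since the period of the circle is fixed, the supremum of a BV function is at most its mean plus its total variation. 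This gives $|B^{\rm prec}_{\mathbf 1_J}(x)|\leq C(\dots)$ uniformly in $x$ and in $J$.
\end{itemize}
Total mass bounds on interval indicators then extend to the measure norm $\|\operatorname{Tr}_xB\|_{\Meas(I)}$. For signed $B$, I would bound the total variation by taking a supremum over finite partitions of $I$ into intervals, using the same uniform estimate on each piece.

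The main obstacle is exactly this passage from an $x$-BV bound depending on $\varphi'$ to a bound depending only on $\|\varphi\|_\infty$. This requires the spacelike traces of $A$ at interior times $s\in J$, not only at $t_0$ and $t_1$. I expect to need the uniform bound $\sup_s\|A(s\pm)\|_{L^1}\lesssim\|A(t_0+)\|+\|A(t_1-)\|+|\mu|$. This should follow from the monotonicity identity~\eqref{eq:weak-Gauss-Green} applied to the positive and negative parts of $\mu$, or, when $A$ is not signed, from a bound through $\|B\|_{L^1}$ after averaging the time endpoints. The weighted case follows because positive weights are simply absorbed into $A$ and $B$.
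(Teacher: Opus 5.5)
Your first two steps match the paper's proof. Testing with $\varphi(t)\psi(x)$ shows $B_\varphi\in\BV(\Sbb^1)$. Your displayed identity is correct, but it gives $\partial_xB_\varphi=A_{\varphi'}\,dx+(\pi_x)_\sharp(\varphi\mu)$, not its negative. Defining the trace as $\varphi\mapsto B_\varphi^{\rm prec}(x)$ and obtaining a non-negative measure when $B\geq0$ are also as in the paper.

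The gap is in the uniform bound~\eqref{eq:rect-trace-bound}. Your route through subintervals $J\subset I$ needs $\|A(s\pm)\|_{L^1(\Sbb^1)}$ at interior times, and the estimate you hope for is not available. \eqref{eq:weak-Gauss-Green} controls only the flux $\int_{\Sbb^1}A(s)\,dx$. This equals $\|A(s)\|_{L^1}$ only when $A$ has a sign, which the lemma does not assume. Your fallback through $\|B\|_{L^1}$ fails as well. For instance, take unit period, $k=2\pi m$, and $F\in C^1_c(I)$ with $F\geq0$, and set
\[
B=F'(t)\bigl(1+\sin kx\bigr)+2|F'(t)|\geq0,\qquad A=-kF(t)\cos kx .
\]
Then $\partial_tA+\partial_xB=0$, $A(t_0+)=A(t_1-)=0$, and $\|B\|_{L^1(I\times\Sbb^1)}=2\|F'\|_{L^1(I)}$ for every $m$. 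Yet $\|A(s)\|_{L^1(\Sbb^1)}=4mF(s)$. Accordingly, the ``mean plus variation'' bound you would obtain for $B_{\mathbf 1_{(t_0,s)}}$ grows like $m$, although all trace masses are at most $2\|F'\|_{L^1(I)}$. So the step you single out as the main obstacle does not close.

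The partition argument for signed $B$ fails for a different reason. A uniform bound on each of $N$ pieces only yields $N$ times that bound, which is not a total-variation estimate. In any case, the lemma asserts~\eqref{eq:rect-trace-bound} only when $B\geq0$.

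The missing idea is to use $B\geq0$ and work only on the whole interval. Since $\operatorname{Tr}_xB\geq0$, its norm is its mass: $\|\operatorname{Tr}_xB\|_{\Meas(I)}=\operatorname{Tr}_xB(I)$. Moreover, for $0\leq\varphi\leq1$ one has $0\leq B_\varphi\leq B_{\mathbf 1_I}$ almost everywhere, so no interior time ever enters. Letting $\varphi\uparrow\mathbf 1_I$ and using only the endpoint traces gives $\partial_xB_{\mathbf 1_I}=\bigl(A(t_0+)-A(t_1-)\bigr)dx+(\pi_x)_\sharp\mu$. Your own observation that a $\BV$ function on the circle is bounded by its mean plus its variation then yields~\eqref{eq:rect-trace-bound}.

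This is exactly the paper's argument. The paper phrases the last step as the Gauss--Green identity~\eqref{eq:rect-GG} on the rectangle $I\times[x_0,x]$. There $x_0$ is chosen by Fubini so that $\|B(\cdot,x_0)\|_{L^1(I)}$ is at most the spatial average of $|B|$.
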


\begin{proof}
\bse
For $\varphi\in C_c^1(I)$, testing~\eqref{eq:rect-bal} against $\varphi(t)\psi(x)$ gives
\bel{eq:rect-BV-der}
\del_xB_\varphi
=\int_I\varphi\,d\mu
+\int_I A(t,\cdot)\varphi'(t)\,dt
\ee
as a finite Radon measure on~$\Sbb^1$. Hence $B_\varphi\in\BV(\Sbb^1)$, and its precise representative gives the asserted normal trace by duality. Positivity follows by using non-negative temporal test functions and symmetric non-negative spatial mollifiers.

By Fubini's theorem, we choose $x_0$ such that $B(\cdot,x_0)\in L^1(I)$ and its norm is bounded by the spatial average of~$|B|$. We next test~\eqref{eq:rect-bal} with temporal cut-offs converging to the characteristic function of~$I$ and spatial cut-offs converging to the characteristic function of the oriented interval from $x_0$ to~$x$. We obtain
\bel{eq:rect-GG}
\operatorname{Tr}_xB(I)-\operatorname{Tr}_{x_0}B(I)
=\int_{[x_0,x]}A(t_0+,y)\,dy
-\int_{[x_0,x]}A(t_1-,y)\,dy
+\mu\bigl(I\times[x_0,x]\bigr),
\ee
with the signs changed for the opposite orientation. Since the traces are non-negative, their total variations equal their masses. Taking absolute values in this identity and using the choice of~$x_0$ proves~\eqref{eq:rect-trace-bound}. This argument approximates both timelike sides of the rectangle and therefore does not require the pointwise restriction of an $L^1$ current to a timelike line.
\ese
\end{proof}

\end{document}

%% file: macros-DEUX-v22.tex
\usepackage{hyperref,bookmark,footnotebackref}
\hypersetup{linktoc = all}                
\hypersetup{hidelinks}
\hypersetup{bookmarksnumbered}
\makeatletter
\let\oldaligned\aligned
\def\@alignedenvir{aligned}
\def\aligned{\ifx\@currenvir\@alignedenvir\expandafter\@firstoftwo\fi\oldaligned\relax}
\makeatother
\newcommand{\abs}[1]{\lvert#1\rvert}
\newcommand{\Ric}{\operatorname{Ric}}
\usepackage{aliascnt}
\theoremstyle{plain}
\newtheorem{theorem}{Theorem}[section]
\newcommand{\mynewtheorem}[2]{
  \newaliascnt{#1}{theorem}
  \newtheorem{#1}[#1]{#2}
  \aliascntresetthe{#1}
  \expandafter\providecommand\csname #1autorefname\endcsname{#2}
}
\mynewtheorem{definition}{Definition}
\mynewtheorem{proposition}{Proposition}
\mynewtheorem{lemma}{Lemma}
\mynewtheorem{corollary}{Corollary}
\mynewtheorem{remark}{Remark}
\mynewtheorem{claim}{Claim}

\numberwithin{equation}{section}
\DeclareMathAlphabet\mathbfcal{OMS}{cmsy}{b}{n} 

\newcommand \bse {\begin{subequations}}
\newcommand \ese {\end{subequations}}

\renewcommand \div {\operatorname{div}}
\newcommand \divN {\operatorname{div}_{\Ncal}}
\newcommand \curl {\operatorname{curl}}

\newcommand \Jperp  {\mathbb J^\perp}
\newcommand \Jpar {J^\parallel{}}

\newcommand \Jhatpar { \widehat{J}^\parallel}
\newcommand \Kpar {K}
\newcommand \Xpar {X}

\newcommand \lot {\textnormal{l.o.t.}}
\newcommand \geo {\textnormal{g.m.t.}}

\newcommand \Jbb {\mathbb J} 
\newcommand \Lbb {\mathbb L}
\newcommand \Nbb {\mathbb N} 
\newcommand \Pbb {\mathbb P}
\newcommand \Qbb {\mathbb Q}
 
\newcommand \Xbb {\mathbb X} 
\newcommand \Ybb {\mathbb Y}

\newcommand \sign {\operatorname{sgn}}
\newcommand \Tr {\operatorname{Tr}}
\renewcommand \ln \log

\definecolor{mybrown}{RGB}{128,64,0}

\definecolor{myviolet}{RGB}{148,0,211}

\newcommand \la 	\langle
\newcommand \ra 	\rangle

\newcommand \Ncal 	{\mathcal N}

\newcommand \Ebf 	E            

\newcommand \bei 	{\begin{itemize}}
\newcommand \eei 	{\end{itemize}}
\newcommand \del	\partial
\newcommand \auth 	{\textsc}   
\newcommand \Mcal 	{\mathcal M}

\newcommand \RR 	{\mathbb R}   
\newcommand \CC 	{\mathbb C}   

\newcommand \Tbb 	{\mathbb T}   
\newcommand \Sbb 	{\mathbb S}   
\newcommand \eps 	\epsilon  
\newcommand \be 	{\begin{equation}}
\newcommand \ee 	{\end{equation}} 
\newcommand \bel 	{\be \label}
\let\oldmarginpar\marginpar
\renewcommand\marginpar[1]{\ifhmode\unskip\fi\- \oldmarginpar[\raggedleft\footnotesize #1]%
{\raggedright\footnotesize #1}}
\newcommand \lam 	\lambda

\newcommand{\WARNONCE}{\gdef\WARNONCE{}\GenericWarning{}{There are still BLF/PLF comments in this file!}}

\renewcommand \th 	\theta 
\newcommand \sgn 	{\operatorname{sgn}}

\newcommand \ZZ       {\mathbb{Z}} 

\renewcommand \geq \geqslant
\renewcommand \ge \geqslant
\renewcommand \leq \leqslant 
\renewcommand \le \leqslant

\newcommand \bea  {\begin{eqnarray}}
\newcommand \eea  {\end{eqnarray}}

\newcommand \rhofluid \rho

\newcommand \lbrac \llbracket 
\newcommand \rbrac \rrbracket

\let\Re\relax\let\Im\relax
\DeclareMathOperator{\Im}{\mathfrak{Im}}
\DeclareMathOperator{\Re}{\mathfrak{Re}}

\newcommand \coloneqq {\mathrel{\mathop :}\mathrel{\mkern-1.2mu}=} 
\newcommand \eqqcolon {=\mathrel{\mkern-1.2mu}\mathrel{\mathop :}} 

\makeatletter
\newcommand*\smallbullet{\mathpalette\smallbullet@{.5}}
\newcommand*\smallbullet@[2]{\mathbin{\vcenter{\hbox{\scalebox{#2}{$\m@th#1\bullet$}}}}}
\makeatother

  \newcommand \amdeux  \lambda  
  
  \newcommand \Mbf   M  

\newcommand \Vol {\mathrm{vol} \hskip.03cm}
\newcommand \guntrois {g^{(1+3)}}
\newcommand \dVuntrois {\mathrm{d}\Vol^{(1+3)}}
\newcommand \divuntrois {\div^{(1+3)}}
\newcommand \curluntrois {\curl^{(1+3)}}

\newcommand \dVtrois {\mathrm{d}\Vol^{(3)}}

\newcommand \dVundeux {\mathrm{d}\Vol^{(1+2)}}

\newcommand \suit \eps

\newcommand \Msource {\Mbf_{\textnormal{\textbf{sour}}}}
\newcommand \Mwave {\Mbf_{\textnormal{\textbf{wave}}}}

\newcommand \JKL {\texorpdfstring{\ensuremath{\Jbb K\Lbb}}{JKL}}

\newcommand \Numb {N}
\newcommand \hNumb {\widehat{N}}
\newcommand \vNumb {{\mathbb N}}

\newcommand{\Obig}{\mathcal{O}}

\newcommand \Gammazero  {\widetilde K}      
\newcommand \Gammaun     {\widetilde J}         
\newcommand \Gammaunpar     {\widetilde J^\parallel}         
\newcommand \funmu {\underline{\smash{\ensuremath{\mu}}}}

\newcommand \BV {\ensuremath{BV}}
\newcommand \BVac {\ensuremath{BV_{\textnormal{ac}}}}
\newcommand \Meas {M}

\newcommand \Var {\textnormal{Var}}

\newcommand \muref {\mu_{\textnormal{ref}}}

\newcommand \opL {\mathfrak{L}}
\newcommand \opJ {\mathfrak{J}}
\newcommand \opO {\mathfrak{O}}
\newcommand \opC {\mathfrak{C}}
\newcommand \opK {\mathfrak{K}}

\newcommand \Interval {I}

\providecolor{mybrown}{RGB}{128,64,0}

\providecommand{\del}{\mathord{\partial}}

\makeatletter
\newif\ifPLFshowuncited
\PLFshowuncitedfalse
\def\citation#1{%
  \@for\PLF@citekey:=#1\do{%
    \expandafter\gdef\csname PLFcited@\PLF@citekey\endcsname{1}}}
\newcommand{\PLFifcited}[1]{%
  \@ifundefined{PLFcited@#1}{\@secondoftwo}{\@firstoftwo}}

\makeatother

\DeclareFontFamily{U}{mathx}{}
\DeclareFontShape{U}{mathx}{m}{n}{<-> mathx10}{}
\DeclareSymbolFont{mathx}{U}{mathx}{m}{n}
\DeclareMathAccent{\widehat}{0}{mathx}{"70}
\DeclareMathAccent{\widecheck}{0}{mathx}{"71}

\providecolor{cadmiumgreen}{RGB}{0,107,60}
\providecommand \cadmiumgreen {\color{cadmiumgreen}}
\providecolor{deepteal}{RGB}{0,96,104}

\newcommand \velo {\mathrm{v}}
\newcommand \Sym {\operatorname{Sym}}